\newtheorem{prop}[subsubsection]{Proposition}
\newtheorem{cor}[subsubsection]{Corollary}
\newtheorem{lem}[subsubsection]{Lemma}
\newtheorem{defn}[subsubsection]{Definition}
\newtheorem{thm}[subsubsection]{Theorem}
\numberwithin{equation}{section}
\newcommand{\lemref}[1]{Lemma~\ref{#1}}
\newcommand{\thmref}[1]{Theorem~\ref{#1}}
\newcommand{\secref}[1]{Sect.~\ref{#1}}
\newcommand{\corref}[1]{Corollary~\ref{#1}}
\newcommand{\propref}[1]{Proposition~\ref{#1}}
\theoremstyle{remark}
\newtheorem{rem}[subsubsection]{Remark}
\newcommand{\nc}{\newcommand}
\nc{\ssec}{\subsection}
\nc{\sssec}{\subsubsection}
\nc{\renc}{\renewcommand}
\nc{\on}{\operatorname}
\nc\ol{\overline}
\nc\wt{\widetilde}
\nc{\Loc}{\on{Loc}}
\nc{\Bun}{\on{Bun}}
\nc{\BQ}{{\mathbb{Q}}}
\nc{\BA}{{\mathbb{A}}}
\nc{\BC}{{\mathbb{C}}}
\nc{\BH}{{\mathbb{H}}}
\nc{\BG}{{\mathbb{G}}}
\nc{\BK}{{\mathbb{K}}}
\nc{\BN}{{\mathbb{N}}}
\nc{\BP}{{\mathbb{P}}}
\nc{\BD}{{\mathbb{D}}}
\nc{\BV}{{\mathbb{V}}}
\nc{\CA}{{\mathcal{A}}}
\nc{\CE}{{\mathcal{E}}}
\nc{\CC}{{\mathcal{C}}}
\nc{\CO}{{\mathcal{O}}}
\nc{\CP}{{\mathcal{P}}}
\nc{\CQ}{{\mathcal{Q}}}
\nc{\CR}{{\mathcal{R}}}
\nc{\CK}{{\mathcal{K}}}
\nc{\CM}{{\mathcal{M}}}
\nc{\CN}{{\mathcal{N}}}
\nc{\CX}{{\mathcal{X}}}
\nc{\CW}{{\mathcal{W}}}
\nc{\CL}{{\mathcal{L}}}
\nc{\CF}{{\mathcal{F}}}
\nc{\CS}{{\mathcal{S}}}
\nc{\CT}{{\mathcal{T}}}
\nc{\CY}{{\mathcal{Y}}}
\nc{\D}{{\mathcal{D}}}
\nc{\fg}{{\mathfrak{g}}}
\nc{\fD}{{\mathfrak{D}}}
\nc{\fh}{{\mathfrak{h}}}
\nc{\fn}{{\mathfrak{n}}}
\nc{\sM}{{\mathsf M}}
\nc{\ppart}{(\!(t)\!)}
\nc{\hg}{{\widehat\fg}}
\nc{\sA}{{\mathsf A}}
\nc{\sF}{{\mathsf F}}
\nc{\sG}{{\mathsf G}}
\nc{\bC}{{\mathbf{C}}}
\nc{\bZ}{{\mathbf{Z}}}
\nc{\bD}{{\mathbf{D}}}
\nc{\bO}{{\mathbf{O}}}
\nc{\bc}{{\mathbf{c}}}
\nc{\be}{{\mathbf{e}}}
\nc{\bu}{{\mathbf{u}}}
\nc{\bM}{{\mathbf{M}}}
\nc{\bA}{{\mathbf{A}}}
\nc{\bK}{{\mathbf{K}}}
\nc{\fW}{{\mathfrak{W}}}
\nc{\reg}{{\text{\rm reg}}}
\nc{\nilp}{{\text{\rm nilp}}}
\nc{\cG}{{\check{G}}}
\nc{\cB}{{\check{B}}}
\nc{\cg}{{\check{\fg}}}
\nc{\cb}{{\check{\fb}}}
\nc{\cn}{{\check{\fn}}}
\nc{\mer}{{\on{mer}}}
\nc{\Const}{\mathsf{Const}}
\nc{\Whit}{\on{Whit}}
\nc{\KL}{\on{KL}}
\nc{\FS}{\on{FS}}
\nc{\LocSys}{\on{LocSys}}
\nc{\QCoh}{\on{QCoh}}
\nc{\Coh}{\on{Coh}}
\nc{\Cat}{\on{Cat}}
\nc{\Op}{\on{Op}}
\nc{\Gr}{\on{Gr}}
\nc{\Fl}{\on{Fl}}
\nc{\Rep}{\on{Rep}}
\renc{\mod}{{\on{-mod}}}
\nc{\bimod}{{\on{-bimod}}}
\nc{\Conn}{\on{Conn}}
\nc{\unit}{{\mathbf{1}}}
\nc{\uHom}{\underline{\Hom}}
\nc{\Hom}{\on{Hom}}
\nc{\End}{\on{End}}
\nc{\Vect}{\on{Vect}}
\nc{\Funct}{\on{Funct}}
\nc{\Av}{\on{Av}}
\nc{\Ind}{\on{Ind}}
\nc{\KG}{K\backslash G}
\nc{\comult}{{co\on{-}mult}}
\nc{\counit}{{co\on{-}unit}}
\nc{\arw}{{(0\to 1)}}
\nc{\Groupoids}{Grpd}
\nc{\inftygroup}{\infty\on{-Grpd}}
\nc{\inftyCat}{\infty\on{-}\on{Cat}}
\nc{\StinftyCat}{\on{DGCat}}
\nc{\MoninftyCat}{\infty\on{-}\on{Cat}^{\on{Mon}}}
\nc{\SymMoninftyCat}{\infty\on{-}\on{Cat}^{\on{SymMon}}}
\nc{\SymMonStinftyCat}{\on{DGCat}^{\on{SymMon}}}
\nc{\MonStinftyCat}{\on{DGCat}^{\on{Mon}}}
\nc{\inftystack}{\infty\on{-}Stk}
\nc{\inftystackalg}{\infty\on{-}Stk^{1\text{-}alg}}
\nc{\inftyprestack}{\infty\on{-}preStk}
\nc{\inftydgstack}{\infty\on{-}DGStk}
\nc{\inftydgstackalg}{\infty\on{-}DGStk^{1\text{-}alg}}
\nc{\inftydgprestack}{\infty\on{-}DGpreStk}
\nc{\Dmod}{\on{D-mod}}
\nc{\psId}{\on{Ps-Id}}
\nc{\sFD}{\mathsf{Four}}
\nc{\one}{{\mathbf{1}}}
\nc{\sotimes}{\overset{!}\otimes}
\nc{\Maps}{\on{Maps}}
\nc{\CMaps}{\mathcal Maps}
\nc{\Ran}{\on{Ran}}
\nc{\IndCoh}{\on{IndCoh}}
\nc{\ind}{{\mathbf{ind}}}
\nc{\oblv}{{\mathbf{oblv}}}
\nc{\dr}{{\on{dR}}}
\title{Functors given by kernels, adjunctions and duality}
\author{Dennis Gaitsgory}
\address{Department of Mathematics, Harvard University, 1 Oxford street, Cambridge MA 02138, USA}
\date{\today}
\begin{document}

\maketitle

\tableofcontents

\section*{Introduction}

\ssec{The goals of this paper}

The goal of this paper is two-fold. One is to explain a certain phenomenon pertaining to adjoint functors between
DG categories of D-modules on schemes of finite type. Two is to explain what this phenomenon
generalizes to when instead of schemes we consider Artin stacks. 

\sssec{}

We begin by describing the situation with schemes. 

\medskip

We will be working over a ground field $k$ of characteristic $0$. By a scheme we shall mean a scheme of
finite type over $X$. 

\medskip

For a scheme $X$ we let $\Dmod(X)$ the DG category of D-modules on $X$; we refer
the reader to \cite[Sect. 5]{DrGa1}, where the basic properties of this category are discussed. In particular,
the category $\Dmod(X)$ is compactly generated; the corresponding subcategory $\Dmod(X)^c$ of
compact objects identifies with $\Dmod(X)_{\on{coh}}$ of cohomologically bounded objects with
coherent cohomologies. 

\medskip

Let $X_1$ and $X_2$ be a pair of schemes, and let $\sF$ be an (exact) functor 
$$\Dmod(X_1)\to \Dmod(X_2).$$

Assume that $\sF$ is \emph{continuous}, i.e., commutes with colimits (which is equivalent to 
commuting with infinite direct sums). The (DG) category of such functors is equivalent to the category
$\Dmod(X_1\times X_2)$. Namely, an object $\CQ\in \Dmod(X_1\times X_2)$ gives rise to the functor
$$\sF_{X_1\to X_2,\CQ}:\Dmod(X_1)\to \Dmod(X_2), \quad \CM\mapsto \on{pr_2}_\bullet(\on{pr}_1^!(\CM)\sotimes \CQ).$$

Here for a morphism $f$, we denote by $f_\bullet$ the de Rham direct image functor between the corresponding 
DG categories of D-modules, and $\sotimes$ is the usual tensor product functor on the DG category of D-modules
on a scheme. We refer the reader to \secref{sss:kernels and D-modules} for details. 

\medskip

In what follows we shall say that the functor $\sF=\sF_{X_1\to X_2,\CQ}$ is given by the kernel $\CQ$.

\sssec{}

It is a general theorem in the theory of DG categories that a functor $\sF$ as above admits a right adjoint. However,
this right adjoint need not be continuous. In fact, by \lemref{l:when right}, the right adjoint in question is continuous
if and only if the functor $\sF$ preserves compactness, i.e., maps $\Dmod(X_1)^c$ to $\Dmod(X_2)^c$. 

\medskip

Let us, however, assume that the right adjoint of $\sF$, denoted $\sF^R$, is continuous. Then, by the above, it is 
also given by a kernel
$$\CP\simeq \Dmod(X_2\times X_1)=\Dmod(X_1\times X_2).$$

The question that we would like to address is the following: can we explicitly relate the kernels of $\sF$ and $\sF^R$?

\sssec{}

Before we give the answer in general, we consider the following well-known example (more details on this example are supplied in \secref{ss:FD}). 
For a $k$-vector space $V$ considered as a 
scheme, take $X_1=V$ and $X_2=V^\vee$. We let $\sF$ be the Fourier-Deligne transform functor
$\Dmod(V)\to \Dmod(V^\vee)$. It is given by the kernel that we symbolically denote by 
$$\on{exp}\in \Dmod(V\times V^\vee),$$
equal to the pullback of the exponential D-module on $\BA^1$ under the evaluation map $$V\times V^\vee\to \BA^1.$$
We normalize $\on{exp}$ so that it lives in cohomological degree $-\dim(V)$. 

\medskip

As is well-known, $\sF$ is an equivalence of categories. Its inverse (and hence also the right adjoint) is
the Fourier-Deligne transform   $\Dmod(V^\vee)\to \Dmod(V)$, given by the kernel 
$$-\on{exp}:=\BD^{\on{Ve}}_{V\times V^\vee}(\on{exp})[2\dim(V)],$$
where $\BD^{\on{Ve}}_X$ denotes the functor of Verdier duality on a scheme $X$. 

\sssec{}

The assertion of our main theorem in the case when $X_1$ is smooth and separated scheme is 
that the above phenomenon is not specific to the Fourier-Deligne transform, but holds for any
functor $\sF$ that preserves compactness. In fact, this generalization was one of the main initial
motivations for this paper. 

\medskip

Namely, \thmref{t:schemes} says that the kernel $\CP$ defining $\sF^R$ is related to $\CQ$ by the following
formula:
\begin{equation} \label{e:preview smooth}
\CP=\BD_{X_1\times X_2}^{\on{Ve}}(\CQ)[2n_1],
\end{equation}
where $n_1=\dim(X_1)$, and where we remind that $\BD_{X_1\times X_2}^{\on{Ve}}$ is the Verdier duality functor on 
$\Dmod(X_1\times X_2)$. 

\medskip

As we will remark in \secref{sss:other functor}, from \eqref{e:preview smooth} we obtain the following isomorphism
of functors
\begin{equation} \label{e:preview smooth other}
\on{pr_2}_!(\on{pr}_1^\bullet(\CM)\overset{\bullet}\otimes \CQ)[-2n_1]\overset{\sim}\to 
\on{pr_2}_\bullet(\on{pr}_1^!(\CM)\sotimes \CQ)=\sF(M),
\end{equation}
where the functors $\on{pr_2}_!$ and $\overset{\bullet}\otimes$ a priori take values in the correponding
pro-categories \footnote{As our D-modules are not neccesarily holonomic, for a morphism $f$, only
the functors $f^!$ and $f_\bullet$ are defined, whereas their respective left adjoints $f_!$ and $f^\bullet$
take values in the corresponding pro-category.}
(so, in particular, we obtain that the right-hand side in \eqref{e:preview smooth other}
takes values in $\Dmod(X_2)$). 

\sssec{}

Let us consider several most basic examples of the isomorphisms \eqref{e:preview smooth}
and \eqref{e:preview smooth other}. In all these examples we will be assuming that $X_1$
is smooth of dimension $n_1$ and separated. 

\medskip

\noindent{{\bf (i)} Proper pushforward.}  Let $\sF$ be the functor $f_\bullet$, where $f:X_1\to X_2$ is a map. 
In this case $$\CQ=(\on{id}_{X_1}\times f)_\bullet(\omega_{X_1})\in \Dmod(X_1\times X_2),$$
where $\omega_X$ denotes the dualizing complex on a scheme $X$, and $\on{id}_{X_1}\times f$
is the graph of the map $f$. We have $\BD_{X_1\times X_2}^{\on{Ve}}(\CQ)[2n_1]\simeq \CQ$,
so 
$$\sF_{X_2\to X_1,\BD_{X_1\times X_2}^{\on{Ve}}(\CQ)[2n_1]}\simeq f^!.$$ 
Assume that $f$ is proper. In this case $f_\bullet$ preserves compactness. So
\eqref{e:preview smooth} expresses the fact that for $f$ proper, we have $f_\bullet\simeq f_!$ and hence $f^!$ 
is the right adjoint of $f_\bullet$.

\medskip 

\noindent{{\bf (ii)} Smooth pullback.} Let $\sF$ be the functor $f^!$, where $f:X_2\to X_1$ is a \emph{smooth} map; in particular,
the functor $f^!$ preserves compactness. Note that since $X_1$ and $f$ are smooth, $X_2$ is also smooth. We have 
$$\CQ=(f\times \on{id}_{X_2})_\bullet(\omega_{X_2})\simeq (f\times \on{id}_{X_2})_!(k_{X_2})[2n_2],$$
so 
$$\on{pr_2}_!(\on{pr}_1^\bullet(\CM)\overset{\bullet}\otimes \CQ)[-2n_1]\simeq f^\bullet(\CM)[2(n_2-n_1)].$$
Thus, \eqref{e:preview smooth other} amounts to the isomorphism
$$f^!(\CM)\simeq f^\bullet(\CM)[2(n_2-n_1)],$$
which is valid since $f$ is smooth. 

\medskip 

\noindent{{\bf (iii)} Tensor product by a lisse D-module.} 
Let $X_1=X_2=X$, and let $\sF$ be the functor $\CM\mapsto \CM_0\sotimes \CM$, where $\CM_0$ is a lisse D-module
on $X$. In this case $\CQ=(\Delta_X)_\bullet(\CM_0)$. The right adjoint to $\sF$ is given by tensor product with the 
D-module $\BD_X^{\on{Ve}}(\CM_0)[2n]$, which is the assertion of \eqref{e:preview smooth}.

\sssec{The ULA property} 
The next example may be less familiar. Let $X_1$ be as above, and let $f:X_2\to X_1$
be a smooth map. Let $\CN$ be an object in $\Dmod(X_2)$. We consider the functor
$$\sF:\Dmod(X_1)\to \Dmod(X_2),\quad \sF(\CM)=f^!(\CM)\sotimes \CN.$$

We shall say that $\CN$ is \emph{universally locally acyclic} (ULA) with respect to $f$ if the functor $\sF$
preserves compactness. 

\medskip

If $\CN$ is ULA with respect to $f$,  \eqref{e:preview smooth other} says that there is a canonical isomorphism:
$$f^\bullet(\CM)\overset{\bullet}\otimes \CN[-2n_1] \simeq f^!(\CM)\sotimes \CN.$$

\ssec{The case of Artin stacks}

\sssec{}

We now let $\CX_1$ and $\CX_2$ be a pair of quasi-compact Artin stacks, locally of finite type
over $k$. We shall require that both $\CX_1$ and $\CX_2$ be QCA in the sense of \cite{DrGa1}.
This means that the automorphism group of any field-valued point is affine. 

\medskip

The category $\Dmod(\CX)$ is defined for any \emph{prestack} (see \cite[Sect. 6.1]{DrGa1}), and
in particular for any Artin stack. When $\CX$ is a QCA Artin stack, \cite[Theorem 8.1.1]{DrGa1}
says that the category $\Dmod(\CX)$ is compactly generated. Moreover, it is self-dual
(see \secref{sss:Verdier stacks} for what this means).  

\medskip

This implies that the DG category of continuous functors
$$\sF:\Dmod(\CX_1)\to \Dmod(\CX_2)$$
is equivalent to $\Dmod(\CX_1\times \CX_2)$. Namely, to $\CQ\in \Dmod(\CX_1\times \CX_2)$ we assign
the functor $\sF_{\CX_1\to \CX_2,\CQ}$ given by
$$\CM\mapsto \on{pr_2}_\blacktriangle(\on{pr}_1^!(\CM)\sotimes \CQ).$$

Here for a morphism $f$ between QCA stacks we denote by $f_\blacktriangle$ the functor of 
\emph{renormalized direct image}, introduced in \cite[Sect. 9.3]{DrGa1}, and reviewed in \secref{sss:ren}.
Here we will just say that $f_\blacktriangle$ is a colimit-preserving version of $f_\bullet$. 

\sssec{}

We now ask the same question as in the case of schemes: let 
$\sF:\Dmod(\CX_1)\to \Dmod(\CX_2)$ be a continuous functor. Assume that 
the right adjoint of $\sF$ is also continuous (i.e., $\sF$ preserves compactness). What is the
relationship between the kernel $\sF$ and the kernel $\sF^R$? 

\medskip

The answer turns out much more interesting than in the case of smooth separated schemes. To 
formulate it we introduce a certain endo-functor
$$\psId_\CX:\Dmod(\CX)\to \Dmod(\CX)$$
defined for any QCA stack $\CX$. Namely, $\psId_\CX$ is given by the kernel 
$$(\Delta_\CX)_!(k_\CX)\in \Dmod(\CX\times \CX),$$
where $\Delta_\CX$ is the diagonal morphism for $\CX$, and $k_\CX$ is the ``constant sheaf" on $\CX$, i.e., 
the Verdier dual of the dualizing complex $\omega_\CX$.

\medskip

The main theorem for QCA stacks asserts that there is a canonical isomorphism
\begin{equation} \label{e:preview stacks}
\psId_{\CX_1}\circ  (\sF_{\CX_1\to \CX_2,\CQ})^R\simeq  \sF_{\CX_2\to \CX_1,\BD^{\on{Ve}}_{\CX_1\times \CX_2}(\CQ)}.
\end{equation}

I.e., what for a smooth separated cheme $X$ was the functor of cohomological shift by $[-2\dim(X)]$, for a QCA stack becomes
the functor $\psId_{\CX}$.

\sssec{}

The idea of considering the functor $\psId_\CX$ was suggested by V.~Drinfeld. 

\medskip

We should also point out, that
the nature of $\psId_\CX$ (and the kernel $(\Delta_\CX)_!(k_\CX)$, by which it is defined) is not a special feature
of categories of D-modules. Rather, it comes from a certain general manipilation that makes sense for
an arbitrary compactly generated DG category, see \secref{ss:the functor}.

\sssec{}

We would like to draw the reader's attention to the analogy between the isomorphism \eqref{e:preview stacks} and 
the formalism of Grothendieck-Verdier categories of \cite{BoDr}. 

\medskip

Namely, consider \cite[Example 2.2]{BoDr}, where $X$ scheme (or, more generally, a \emph{safe} algebraic stack)
of finite type, and $\Gamma$ is the groupoid $X\times X$. 

\medskip

Let $\CM:=\Dmod(X\times X)_{\on{hol}} \subset \Dmod(X\times X)$ be the holonomic subcategory.
We consider $\CM$ as a monoidal category, endowed with the convolution product (denoted $\circ$), 
where, in order to be consistent with \cite{BoDr}, we now use !-pushforward and $\bullet$-pullback 
(instead of the $\bullet$-pushforward and !-pullback).  Then $\CM$ is a Grothendieck-Verdier category, 
with the dualizing object being $(\Delta_X)_\bullet(\omega_X)$. 

\medskip

Let $\CQ\in \Dmod(X\times X)_{\on{hol}}$ be such that the corresponding functor $\sF_{X\to X,\CQ}$
admits a right adjoint, given by a holonomic kernel. Denote $\CP:=\BD^{\on{Ve}}_{X\times X}(\CQ)$. 
Then $\CP$, regarded as an object of the monoidal category $\CM$,
admits a \emph{left} rigid dual, denoted in the notation of \cite{BoDr} by $\underline\Hom'(\CP,{\mathbf 1})$.

\medskip 

The isomorphism \eqref{e:preview stacks} is equivalent to an isomorphism in $\Dmod(X\times X)_{\on{hol}}$
$$(\Delta_X)_\bullet(\omega_X)\circ \underline\Hom'(\CP,{\mathbf 1})\simeq \BD^{\on{Ve}}_{X\times X}(\CP),$$
valid for any left-dualizable object in a Grothendieck-Verdier category. 

\medskip

However, unfortunately, we were unable to formally apply the formalism of \cite{BoDr} to deduce our 
\eqref{e:preview stacks} in general.

\sssec{}

Next we consider the case of non-quasi compact Artin stacks. We will require that our stacks
be locally QCA and \emph{truncatable}, see \secref{sss:truncatable} for what this means. 
The main example of a truncatable stack that we have in mind is $\Bun_G$--the moduli
stack of $G$-bundles on $X$, where $G$ is a reductive group and $X$ is a smooth and
complete curve. 

\medskip

For a truncatable stack $\CX$ there are two categories of D-modules that one can attach to it.
One is the usual category $\Dmod(\CX)$, and the other is the category that we denote by
$\Dmod(\CX)_{\on{co}}$, whose definition uses the truncatability of $\CX$, see \secref{sss:co}. 

\medskip

To elucidate the nature of $\Dmod(\CX)$ and $\Dmod(\CX)_{\on{co}}$ let us describe their
respective categories of compact objects. An object of $\Dmod(\CX)$ is compact if
and only if it equals the !-extension from a compact object on a quasi-compact open substack 
of $\CX$. The category of compact objects of $\Dmod(\CX)_{\on{co}}$ also embeds fully
faithfully into $\Dmod(\CX)$, and its essential image consists of objects that are 
*-extensions from compact objects on quasi-compact open substacks of $\CX$. 
\footnote{One can informally think of $\Dmod(\CX)$ and $\Dmod(\CX)_{\on{co}}$ as obtained by imposing 
different ``growth" conditions.} 

\medskip

What for a QCA stack was the Verdier duality self-equivalence of the DG category of D-modules,
for a truncatable stack becomes an equivalence between the dual of $\Dmod(\CX)$ and 
$\Dmod(\CX)_{\on{co}}$. This implies that for a pair of truncatable stacks $\CX_1$ and $\CX_2$, 
the DG category
$$\Dmod(\CX_1\times \CX_2)$$
is equivalent to that of continuous functors
$$\Dmod(\CX_1)_{\on{co}}\to \Dmod(\CX_2).$$

In particular, for a QCA stack $\CX$ there exists a canonically defined functor
$$\psId_\CX:\Dmod(\CX)_{\on{co}}\to \Dmod(\CX),$$
given by the kernel $(\Delta_\CX)_!\in \Dmod(\CX\times \CX)$.

\medskip

The DG category of continuous functors
$$\Dmod(\CX_1)_{\on{co}}\to \Dmod(\CX_2)_{\on{co}},$$
which is the same as that of continuous functors $\Dmod(\CX_2)\to \Dmod(\CX_1)$, 
is equivalent to the tensor product category
$$\Dmod(\CX_1)\otimes \Dmod(\CX_2)_{\on{co}}.$$

\medskip

Finally, we note that for a \emph{coherent} object $\CQ\in \Dmod(\CX_1)\otimes \Dmod(\CX_2)_{\on{co}}$
there is a well-defined Verdier dual $\BD^{\on{Ve}}_{\CX_1\times \CX_2}(\CQ)$, which is an object of
$\Dmod(\CX_1)\otimes \Dmod(\CX_2)$.

\sssec{}

The main theorem for non-quasi compact stacks reads as follows. Let $$\sF:\Dmod(\CX_1)_{\on{co}}\to \Dmod(\CX_2)_{\on{co}}$$
be a continuous functor given by a coherent kernel
$$\CQ\in \Dmod(\CX_1)\otimes \Dmod(\CX_2)_{\on{co}}.$$
Assume that $\sF$ admits a continuous right adjoint (equivalently, $\sF$ preserves compactness). Then we have
a canonical isomorphism of functors $\Dmod(\CX_2)_{\on{co}}\to \Dmod(\CX_1)$:
\begin{equation} \label{e:preview nonqc}
\psId_{\CX_1}\circ (\sF_{\CX_1\to \CX_2,\CQ})^R\simeq \sF_{\CX_2\to \CX_1,\BD^{\on{Ve}}_{\CX_1\times \CX_2}(\CQ)}.
\end{equation}

We note that in the right-hand side, the object $\BD^{\on{Ve}}_{\CX_1\times \CX_2}(\CQ)$ belongs to 
$$\Dmod(\CX_1\times \CX_2)=\Dmod(\CX_2\times \CX_1),$$ and hence the functor 
$\sF_{\CX_2\to \CX_1,\BD^{\on{Ve}}_{\CX_1\times \CX_2}(\CQ)}$ is understood as a functor
$$\Dmod(\CX_2)_{\on{co}}\to  \Dmod(\CX_1).$$

So, the initial kernel and its Verdier dual define functors between different categories, and the connection
is provided by the functor $\psId_{\CX_1}$, which maps $\Dmod(\CX_1)_{\on{co}}\to \Dmod(\CX_1)$. 
 
\ssec{Contents}

We shall now review the contents of this paper section-by-section.

\sssec{} 

In \secref{s:schemes} we state the main theorem pertaining to schemes, \thmref{t:schemes},
which generalizes the isomorphism \eqref{e:preview smooth} to the case when the scheme $X_1$
is not necessarily smooth and separated. We discuss various corollaries and
particular cases of \thmref{t:schemes}. 

\medskip

We intersperse the discussion about functors between
categories of D-modules with a review of some basic facts concerning DG categories. 

\sssec{}

In \secref{s:nat trans} we give a geometric description of a canonical natural transformation 
\begin{equation} \label{e:peview nat trans}
\on{pr_2}_!(\on{pr}_1^\bullet(\CM)\overset{\bullet}\otimes \psId_{X_1}(\CQ))\to \on{pr_2}_\bullet(\on{pr}_1^!(\CM)\sotimes \CQ),
\end{equation}
which is an isomorphism whenever the functor $\sF_{X_1\to X_2,\CQ}$, defined by $\CQ$, preserves compactness. 

\medskip

When the scheme $X_1$ is separated, the basic ingredient of the map \eqref{e:peview nat trans}
is the natural transformation $f_!\to f_\bullet$ for a separated morphism $f$ between schemes,
and the natural transformation
$$g_1^\bullet \circ f_0^! \to f_1^! \circ g_0^\bullet$$
for a Cartesian diagram
$$
\CD
Y_{11} @>{f_1}>>  Y_{1,0} \\
@V{g_1}VV   @VV{g_0}V  \\
Y_{0,1}  @>{f_0}>> Y_{0,0}.
\endCD
$$

\sssec{}

In \secref{s:O} we study the following question. Let $X_i$, $i=1,2$ be derived schemes, and let
$\CQ$ be an object of $\Dmod(X_1\times X_2)$. 

\medskip

Recall that for a derived scheme $X$ there are natural 
forgetful functors
$$\oblv_X^{\on{left}}:\Dmod(X) \to \QCoh(X) \text{ and } \oblv_X:\Dmod(X) \to \IndCoh(X),$$
where $\QCoh(X)$ is the DG category of quasi-coherent sheaves on $X$ and $\IndCoh(X)$
is its modification introduced in \cite{IndCoh}. The functors $\oblv_X^{\on{left}}$ and $\oblv_X$
are the realizations of D-modules on $X$ as ``left" and ``right" D-modules, respectively
(see \cite[Sect. 2.4]{Crys} for more details). 

\medskip

We would like to know how to express the condition that the functor
$$\sF_{X_1\to X_2,\CQ}:\Dmod(X_1)\to \Dmod(X_2),$$
corresponding to $\CQ\in \Dmod(X_1\times X_2)$, preserve compactness, in terms of the 
corresponding objects
$$\oblv^{\on{left}}_{X_1}\otimes \on{Id}_{\Dmod(X_2)}(\CQ)\in \QCoh(X_1)\otimes \Dmod(X_2)$$ and 
$$\oblv_{X_1}\otimes \on{Id}_{\Dmod(X_2)}(\CQ)\in \IndCoh(X_1)\otimes \Dmod(X_2).$$

\medskip

For example, we show that if the support of $\CQ$ is proper over $X_2$, then 
$\sF_{X_1\to X_2,\CQ}$ preserves compactness if and only if $\oblv_{X_1}\otimes \on{Id}_{\Dmod(X_2)}(\CQ)$
is compact. 

\sssec{}

In \secref{s:proof of subq} we prove the following, perhaps a little unexpected, result: 

\medskip

Let $X_1$ be quasi-projective and smooth. Then then the property of an object $$\CQ\in \Dmod(X_1\times X_2)$$
that the corresponding functor
$$\sF_{X_1\to X_2,\CQ}:\Dmod(X_1)\to \Dmod(X_2)$$
preserve compactness is inherited by any subquotient of any cohomology of $\CQ$ (with respect to the
standard t-structure on $\Dmod(X_1\times X_2)$). 

\sssec{}

In \secref{s:delo} we prove our main result pertaining to functors between D-modules
on schemes, namely, \thmref{t:schemes}. In fact, we prove a more general assertion,
in the general context of DG categories, namely, \thmref{t:general}. 

\medskip

In more detail, \thmref{t:general} describes the following situation. We start with
a continuous functor between DG categories
$$\sF:\bC_1\to \bC_2,$$
given by a kernel $\CQ\in \bC_1^\vee\otimes \bC_2$, and we assume that the right adjoint
of $\sF$ is also continuous. We want to relate this right adjoint $\sF^R$ to the kernel
\emph{dual} to $\CQ$, which is an object
$$\CQ^\vee\in  \bC_1\otimes \bC^\vee_2.$$

\medskip

In describing the relation, we will encounter an endo-functor $\psId_{\bC}$, defined for any
DG category $\bC$. When $\bC=\Dmod(\CX)$, where $\CX$ is a QCA stack, the corresponding
endo-functor is $\psId_\CX$ mentioned above.

\medskip

At the suggestion of Drinfeld, we also introduce the notion of Gorenstein category. Namely, 
this is a DG category $\bC$ for which the functor $\psId_\bC$ is an equivalence. The name
Gorenstein is explained by the following result: for a separated derived scheme $X$ almost of
finite type, the category $\QCoh(X)$ is Gorenstein if and only if $X$ is Gorenstein.

\sssec{}

In \secref{s:Artin qc} our goal is to generalize \thmref{t:schemes} to the case of QCA stacks.
The generalization itself, \thmref{t:stacks}, will be easy to carry out: the corresponding theorem follows from
the general result about DG categories, namely, \thmref{t:general}. 

\medskip

However, there are two important technical points that one needs to pay attention to
in the case of Artin stacks (as opposed to schemes or Deligne-Mumford stacks).

\medskip

First, for a scheme $X$ , the subcategory $\Dmod(X)^c$ of compact objects in $\Dmod(X)$
is the same as $\Dmod(X)_{\on{coh}}$, i.e., the subcategory spanned by cohomologically
bounded objects with coherent cohomologies. This is no longer the case for stacks: for
a QCA stack $\CX$ we always
have an inclusion 
$$\Dmod(\CX)^c\subset \Dmod(\CX)_{\on{coh}},$$
which is an equality if and only if $\CX$ is \emph{safe}. 

\medskip

Second, for a non-schematic map $f:\CX_1\to \CX_2$, the usual de Rham direct image functor
$$f_\bullet:\Dmod(\CX_1)\to  \Dmod(\CX_2)$$
may be ill-behaved (e.g., fails to be continuous). This applies in particular to the functor of de Rham
cohomology $(p_\CX)_\bullet$ of a stack $\CX$, where $p_\CX:\CX\to \on{pt}:=\on{Spec}(k)$. 
To remedy this, one replaces $f_\bullet$ by its \emph{renormalized} version,
introduced in \cite{DrGa1}, and denoted $f_\blacktriangle$. 

\medskip

In the remainder of \secref{s:Artin qc} we consider some applications of \thmref{t:stacks}. For example,
we consider the situation of an open embedding of stacks $j:U\hookrightarrow \CX$, for which
the functor $j_\bullet:\Dmod(U)\to \Dmod(\CX)$ preserves compactness (such open embeddings
are in \cite{DrGa2} called \emph{co-truncative}), and see what \thmref{t:stacks} gives in this case.

\medskip

We also consider the class of stacks $\CX$, for which the functor of de Rham cohomology preserves compactness;
we call such stacks mock-proper. For a mock-proper stack $\CX$ we relate the 
functor $(p_\CX)_\blacktriangle$ (i.e., the renormalized version of de Rham cohomology) and $(p_\CX)_!$
(the functor of de Rham cohomology with compact supports). 

\medskip

Finally, we consider a particular example of a QCA stack, namely, $V/\BG_m$, where $V$ is a vector space with $\BG_m$
acting by dilations. We show that $\Dmod(V/\BG_m)$ is Gorenstein. 

\sssec{}  In \secref{s:Artin gen} we state and prove the theorem relating the adjoint functor
to the Verdier dual kernel for locally QCA truncatable stacks.

\medskip

We first review the definition of what it means for a QCA tack $\CX$ to be truncatable, and introduce the
two versions of the category of D-modules, $\Dmod(\CX)$ and $\Dmod(\CX)_{\on{co}}$. 

\medskip

We proceed to stating and proving \thmref{t:stacks nonqc}, which amounts to the isomorphism
\eqref{e:preview nonqc}. 

\medskip

In the remainder of this section we consider applications of \thmref{t:stacks nonqc}, most of which are
straightforward modifications of the corresponding statements for QCA stacks, once we take into 
account the difference between $\Dmod(\CX)$ and $\Dmod(\CX)_{\on{co}}$. 

\medskip

Finally, we consider the notion of a mock-proper truncatable stack and define the \emph{mock-constant sheaf}
on such a stack. We consider the particular case of $\CX=\Bun_G$, and show that its mock-constant sheaf
$k_{\Bun_G,\on{mock}}$ has some peculiar properties; this object would be invisible if one did not distinguish
between $\Dmod(\CX)$ and $\Dmod(\CX)_{\on{co}}$. 

\ssec{Conventions and notation}

\sssec{}   \label{sss:dag}

The word ``scheme" in this paper means ``derived scheme almost of finite type over $k$, which is eventually
coconnective." We refer the reader to \cite[Sect. 3.1.1]{DrGa1}, where this notion is reviewed.  Sometimes
(more as a matter of convenience) we will use the term ``prestack," by which we will always mean a prestack
locally almost of finite type, see \cite[Sect. 3.1]{DrGa1}.  

\medskip

The ``good news" is that derived algebraic geometry is not needed in this paper, except
in Sects. \ref{s:O} and \ref{s:proof of subq}, in which $\CO$-modules are discussed. So, since the material
of these two sections is not used in the rest of the paper, the reader can skip them and 
work with ordinary schemes of finite type over $k$. 

\medskip

For a scheme/prestack $\CX$ we denote by $p_\CX$ the tautological 
map $\CX\to \on{pt}:=\on{Spec}(k)$. We let $\Delta_\CX$ denote the diagonal morphism $\CX\to \CX\times \CX$.  

\sssec{}    

Conventions and notations regarding DG categories adopted in this papar follow those reviewed in \cite[Sect. 1]{DrGa2}. 

\medskip

In particular, we let $\Vect$ denote the category of chain complexes of $k$-vector spaces. 

\medskip

For a DG category $\bC$ and $\bc_1,\bc_2\in \bC$ we let $\CMaps_\bC(\bc_1,\bc_2)\in \Vect$ denote the resulting chain
complex of maps between them. 

\sssec{}

Conventions and notations regarding the category of D-modules on a scheme follow those of \cite[Sect. 5]{DrGa1},
and on algebraic stack those of \cite[Sect. 6]{DrGa1}. See also \cite[Sect. 2]{DrGa2} (for a brief review), and \cite{Crys}
for a systematic treatment of the foundadtions of the theory. 

\medskip

The
only notational difference between the present paper and \cite{DrGa1}
is that the functor of de Rham direct image with respect to a morphism $f$ is denoted here
by $f_\bullet$ instead of $f_{\dr,*}$. 

\medskip

For a morphism $f:\CX_1\to \CX_2$ between prestacks, we have a tautologically defined functor 
$$f^!:\Dmod(\CX_2)\to \Dmod(\CX_1).$$

The symmetric monoidal structure, denoted $\sotimes$, on the category of D-modules on a prestack $\CX$
is defined by
$$\CM_1\sotimes \CM_2:=\Delta_\CX^!(\CM_1\boxtimes \CM_2).$$

\sssec{} \label{sss:bad functors}

The \emph{partially defined} left adjoint of the functor $f^!$ will be denoted by $f_!$. I.e., for $\CM_1\in \Dmod(\CX_1)$,
the object $f_!(\CM_1)\in  \Dmod(\CX_1)$ is defined if and only if the functor
$$\CM_2\mapsto \CMaps_{\Dmod(\CX_1)}(\CM_1,f^!(\CM_2)),\quad \Dmod(\CX_2)\to \Vect$$
is co-representable. 

\medskip

For a general $\CM_1$, we can view $f_!(\CM_1)$ as an object of $\on{Pro}(\Dmod(\CX_2))$, the pro-completion of
$\Dmod(\CX_1)$. \footnote{For a DG category $\bC$, its pro-completion $\on{Pro}(\bC)$ is the category
of all exact covariant functor $\bC\to \Vect$ that commute with $\kappa$-filtered colimits for some sufficiently
large cardinal $\kappa$.} 

\medskip

For a morphism $f: \CX_1\to \CX_2$ between \emph{Artin stacks} we have the 
functor of de Rham direct image $$f_\bullet:\Dmod(\CX_1)\to \Dmod(\CX_2)$$ see \cite[Sect. 7.4.]{DrGa1}. Its partially
defined left adjoint is denoted $f^\bullet$. 

\medskip

We let $\overset{\bullet}\otimes$ denote the partially defined functor
$$\Dmod(\CX)\otimes \Dmod(\CX)\to \Dmod(\CX)$$
equal to 
$$\CM_1,\CM_2\mapsto (\Delta_\CX)^\bullet(\CM_1\boxtimes \CM_2).$$
I.e., it is defined an on object of $\Dmod(\CX)\otimes \Dmod(\CX)$,
whenever $(\Delta_\CX)^\bullet$ is defined on the corresponding object of $\Dmod(\CX\times \CX)$. 

\medskip

As in the case of $f_!$, in general, the functors $f^\bullet$ and $\overset{\bullet}\otimes$ can be viewed 
as taking values in the pro-completion of the target category. 

\medskip

If $\CX_1$ and $\CX_2$ are Artin stacks, the functors $f_!$, $f^\bullet$ and $\overset{\bullet}\otimes$
are defined on 
any \emph{holonomic} object, i.e., one whose pullback to a scheme mapping smoothly to our
stack has holonomic cohmologies. 

\ssec{Acknowledgments} 

The author would like to thank V.~Drinfeld for many helpful discussions and suggestions related to
this paper. The definition of the key player in the case of Artin stacks, namely, the functor $\psId_\CX$, 
is due to him. 

\medskip

The author is supported by NSF grant DMS-1063470.

\section{Functors between categories of D-modules}  \label{s:schemes}

In this section we state our main theorem in the case of schemes (\thmref{t:schemes}) and discuss
its corollaries. 

\ssec{Continuous functors and kernels: recollections}

\sssec{}  \label{sss:functors and kernels}

Let $\bC$ be a dualizable category, and let $\bC^\vee$ denote its dual. We let
$$\bu_\bC\in \bC\otimes \bC^\vee$$
denote the object correspnding to the unit map
$$\Vect\to \bC\otimes \bC^\vee,$$
and we let
$$\on{ev}_\bC:\bC\otimes \bC^\vee\to \Vect$$
denote the counit map.

\medskip

Let $\bC_1$ and $\bC_2$ be two DG categories. Recall that an exact functor $\sF:\bC_1\to \bC_2$
is said to be \emph{continuous} if it commutes with infinite direct sums (equivalently, all colimits). 
We let $\on{Funct}_{\on{cont}}(\bC_1,\bC_2)$ denote the full DG subcategory of the DG category
$\on{Funct}(\bC_1,\bC_2)$ of all DG functors $\bC_1\to \bC_2$, spanned by continuous functors. 

\medskip

Assume that $\bC_1$ dualizable. In this case, the 
DG category $\on{Funct}_{\on{cont}}(\bC_1,\bC_2)$
identifies with 
$$\bC_1^\vee\otimes \bC_2.$$

Explicitly, an object $\CQ\in \bC_1^\vee\otimes \bC_2$ gives rise to the functor $\sF_{\bC_1\to \bC_2,\CQ}$ equal to
$$\bC_1\overset{\on{Id}_{\bC_1}\otimes \CQ}\longrightarrow \bC_1\otimes \bC_1^\vee\otimes \bC_2
\overset{\on{ev}_{\bC_1}\otimes \on{Id}_{\bC_2}}\longrightarrow \bC_2.$$

\medskip

Vice versa, given a continuous functor $\sF:\bC_1\to \bC_2$ we construct the corresponding object
$\CQ_\sF\in \bC_1^\vee\otimes \bC_2$ as
$$(\on{Id}_{\bC_1^\vee}\otimes \sF) (\bu_{\bC_1}).$$

In particular, $\bu_{\bC_1}\in \bC_1^\vee\otimes \bC_1$ corresponds to the identity functor on $\bC_1$.

\medskip

We shall refer to $\CQ_\sF$ as the \emph{kernel} of $\sF$, and to $\sF_{\bC_1\to \bC_2;\CQ}$ as the \emph{functor defined by}
$\CQ$.

\sssec{}

Let $\bC$ be a compactly generated category. Recall that in this case
$$\bC\simeq \Ind(\bC^c),$$
where $\on{Ind}(-)$ denotes the ind-completion of a given small DG category.

\medskip

Recall also that such $\bC$ is dualizable, and we have a canonical equivalence
\begin{equation} \label{e:compact of dual}
(\bC^c)^{\on{op}}\simeq (\bC^\vee)^c, \quad \bc\mapsto \bc^\vee.
\end{equation}

In particular,
\begin{equation} \label{e:dual as Ind}
\bC^\vee\simeq \on{Ind}((\bC^c)^{\on{op}}).
\end{equation}

Under this identification for $\bc\in \bC^c$ and $\xi\in \bC^\vee$ we have
\begin{equation} \label{e:eval dual}
\CMaps_{\bC^\vee}(\bc^\vee,\xi)\simeq \on{ev}_\bC(\bc\otimes \xi).
\end{equation}

\sssec{}

The following simple observation will be used throughout the paper (see, e.g., \cite[Proposition 1.2.4]{DrGa2} for the proof):

\begin{lem} \label{l:when right}
Let $\sF:\bC_1\to \bC_2$ be a continuous functor. If $\sF$ admits a continuous right adjoint,
then it preserves compactness. Vice versa, if  $\sF$ preserves compactness and $\bC_1$ is
compactly generated, then $\sF$ admits a continuous right adjoint.
\end{lem}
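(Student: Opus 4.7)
The plan is to prove each direction using the standard compact-object characterization: an object $\bc \in \bC$ is compact iff the functor $\CMaps_\bC(\bc,-)$ commutes with filtered colimits (equivalently, direct sums).

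For the forward direction, I would assume $\sF$ has a continuous right adjoint $\sF^R$ and pick an arbitrary compact object $\bc \in \bC_1^c$. To show $\sF(\bc)$ is compact in $\bC_2$, I would compute, for any filtered diagram $\{\bd_\alpha\}$ in $\bC_2$,
\[
\CMaps_{\bC_2}(\sF(\bc), \on{colim}_\alpha \bd_\alpha) \simeq \CMaps_{\bC_1}(\bc, \sF^R(\on{colim}_\alpha \bd_\alpha)) \simeq \CMaps_{\bC_1}(\bc, \on{colim}_\alpha \sF^R(\bd_\alpha)),
\]
where the first isomorphism is the adjunction and the second uses continuity of $\sF^R$. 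Since $\bc$ is compact, this further identifies with $\on{colim}_\alpha \CMaps_{\bC_1}(\bc, \sF^R(\bd_\alpha)) \simeq \on{colim}_\alpha \CMaps_{\bC_2}(\sF(\bc), \bd_\alpha)$, proving compactness.

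For the reverse direction, assuming $\bC_1$ is compactly generated and $\sF$ sends $\bC_1^c$ to $\bC_2^c$, I would first note that the right adjoint $\sF^R$ exists as an abstract functor (since $\sF$ is continuous between presentable DG categories, the adjoint functor theorem applies). The issue is to verify continuity. I would test continuity by mapping in from a compact generator: for $\bc \in \bC_1^c$ and a filtered diagram $\{\bd_\alpha\}$ in $\bC_2$, the chain of isomorphisms
\[
\CMaps_{\bC_1}(\bc, \sF^R(\on{colim}_\alpha \bd_\alpha)) \simeq \CMaps_{\bC_2}(\sF(\bc), \on{colim}_\alpha \bd_\alpha) \simeq \on{colim}_\alpha \CMaps_{\bC_2}(\sF(\bc), \bd_\alpha) \simeq \on{colim}_\alpha \CMaps_{\bC_1}(\bc, \sF^R(\bd_\alpha))
\]
uses compactness of $\sF(\bc) \in \bC_2^c$ (guaranteed by hypothesis) and the adjunction. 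Since $\bc$ itself is compact, this last colimit equals $\CMaps_{\bC_1}(\bc, \on{colim}_\alpha \sF^R(\bd_\alpha))$. As $\bC_1$ is compactly generated, isomorphism on mapping complexes from all compact objects is enough to deduce $\sF^R(\on{colim}_\alpha \bd_\alpha) \simeq \on{colim}_\alpha \sF^R(\bd_\alpha)$.

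The main subtlety — and the only place where the compact-generation hypothesis on $\bC_1$ is really needed — is invoking the fact that a map in $\bC_1$ is an isomorphism iff it induces an isomorphism on $\CMaps_{\bC_1}(\bc,-)$ for all $\bc \in \bC_1^c$; without this, one cannot propagate the pointwise-computed continuity of $\sF^R$ from the compact generators to all of $\bC_1$. The existence of $\sF^R$ itself is formal, so the real content is upgrading its existence to continuity.
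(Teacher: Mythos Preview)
Your argument is correct and is the standard proof of this fact. The paper itself does not supply a proof of this lemma; it simply cites \cite[Proposition 1.2.4]{DrGa2}. Your argument is essentially what one finds there, so there is nothing to compare.
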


\medskip

Let us note the following consequence of \lemref{l:when right}:

\begin{cor} \label{c:preserve compactness tensor}
Let $\bC_1$ be compactly generated, and let $\sF:\bC_1\to \bC_2$ be a continuous functor
that preserves compactness. Then for any DG category $\bC$ the functor 
$$\bC\otimes \bC_1\overset{\on{Id}_\bC\otimes \sF}\longrightarrow \bC\otimes \bC_2$$
also preserves compactness. 
\end{cor}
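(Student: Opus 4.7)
The plan is a very short two-step application of \lemref{l:when right}, bouncing across the equivalence ``preserves compactness $\Leftrightarrow$ admits a continuous right adjoint'' twice.

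First, I would use the nontrivial direction of \lemref{l:when right}: since $\bC_1$ is compactly generated and $\sF$ preserves compactness, there exists a continuous right adjoint $\sF^R:\bC_2\to \bC_1$. Tensoring adjoints is well-behaved for continuous functors between DG categories: the functor $\on{Id}_\bC\otimes \sF^R:\bC\otimes \bC_2\to \bC\otimes \bC_1$ is continuous (tensor products of continuous functors are continuous), and it is the right adjoint of $\on{Id}_\bC\otimes \sF$. One way to see the adjunction is to note that the unit and counit of $(\sF,\sF^R)$ produce, under $\on{Id}_\bC\otimes (-)$, candidate unit and counit for $(\on{Id}_\bC\otimes \sF,\on{Id}_\bC\otimes \sF^R)$ satisfying the triangle identities; these are preserved because $\on{Id}_\bC\otimes(-)$ is a functor on the $(\infty,2)$-category $\StinftyCat$.

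Second, I apply the easy direction of \lemref{l:when right} to the functor $\on{Id}_\bC\otimes \sF$: since it admits a continuous right adjoint, it must preserve compactness. Note that this direction does \emph{not} require compact generation of the source $\bC\otimes \bC_1$, which is important because $\bC$ is an arbitrary DG category and $\bC\otimes \bC_1$ need not be compactly generated.

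The only genuinely nontrivial point — and thus the ``main obstacle,'' though it is a standard fact already used implicitly in \cite{DrGa1} — is that the operation $\on{Id}_\bC\otimes(-)$ on $\StinftyCat$ preserves adjunctions of continuous functors. It would therefore be reasonable either to cite this or to give the one-line argument above based on tensoring the triangle identities.
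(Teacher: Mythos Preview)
Your proposal is correct and follows essentially the same route as the paper: apply \lemref{l:when right} to obtain a continuous right adjoint $\sF^R$, observe that $\on{Id}_\bC\otimes \sF^R$ is a continuous right adjoint of $\on{Id}_\bC\otimes \sF$, and apply \lemref{l:when right} again. Your remark that the second application only uses the easy direction (and hence needs no compact-generation hypothesis on $\bC\otimes\bC_1$) is a useful clarification that the paper leaves implicit.
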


\begin{proof}
By \lemref{l:when right}, the functor $\sF$ admits a continuous right adjoint; denote it $\sG$.
Hence, the functor $\on{Id}_\bC\otimes \sF$ also admits a continuous right adjoint, namely,
$\on{Id}_\bC\otimes \sG$. Now, apply \lemref{l:when right} again.
\end{proof}

\ssec{Continuous functors and kernels: the case of D-modules}

\sssec{}

Let $X$ be a scheme of finite type over $k$.  (We remind that in the present section, as well
as elsewhere in the paper with the exception of Sects. \ref{s:O} and \ref{s:proof of subq}, we can
work within classical algebraic geometry.)

\medskip

Recall (see e.g., \cite[Sect. 5.3.4]{DrGa1}) that the DG category $\Dmod(X)$ canonically identifies
with its own dual:
$$\bD^{\on{Ve}}_X:\Dmod(X)^\vee \simeq \Dmod(X),$$
where the corresponding equivalence on compact objects
$$(\Dmod(X)^c)^{\on{op}}=(\Dmod(X)^\vee)^c \overset{\bD^{\on{Ve}}_X}\longrightarrow \Dmod(X)^c$$
is the usual Verdier duality functor
$$\BD^{\on{Ve}}_X:(\Dmod(X)^c)^{\on{op}}\to \Dmod(X)^c.$$

\medskip

Let us also recall the corresponding evaluation and unit functors. For this we recall  (see, e.g., \cite[Sect. 5.1.7]{DrGa1}) 
that if $X_1$ and $X_2$ are two schemes of finite type, the operation of external tensor product of D-modules defines
an equivalence
$$\Dmod(X_1)\otimes \Dmod(X_2)\simeq \Dmod(X_1\times X_2).$$

\medskip

Under the equivalence $\Dmod(X)\otimes \Dmod(X)\simeq \Dmod(X\times X)$, 
the evaluation functor
$$\on{ev}_{\Dmod(X)}:\Dmod(X)\otimes \Dmod(X)\to \Vect$$
is $(p_X)_\bullet\circ (\Delta_X)^!$, where
$p_X:X\to \on{pt}$ and $\Delta_X$ is the diagonal map $X\to X\times X$.

\medskip

The unit object $$\bu_{\Dmod(X)}\in \Dmod(X)^\vee \otimes \Dmod(X)\simeq \Dmod(X)\otimes \Dmod(X) \simeq 
\Dmod(X\times X)$$
is $(\Delta_X)_\bullet(\omega_X)$, where
$$\omega_X:=p_X^!(k)$$
is the dualizing complex.

\sssec{}  \label{sss:kernels and D-modules}

Let $X_1$ and $X_2$ be two schemes of finite type. By \secref{sss:functors and kernels},
the DG category $$\on{Funct}_{\on{cont}}(\Dmod(X_1),\Dmod(X_2))$$
of continuous functors $\Dmod(X_1)\to \Dmod(X_2)$ identifies with 
$$\Dmod(X_1)^\vee\otimes \Dmod(X_2),$$
and further, using the equivalence $\bD_{X_1}$, with
$$\Dmod(X_1)\otimes \Dmod(X_2)\simeq \Dmod(X_1\times X_2).$$

I.e., continuous functors $\Dmod(X_1)\to \Dmod(X_2)$ are in bijection with kernels, thought of
objects of $\Dmod(X_1\times X_2)$. 

\medskip

Explicitly, for $\CQ\in \Dmod(X_1\times X_2)$ the corresponding 
functor $\sF_{X_1\to X_2;\CQ}$ sends an object $\CM\in \Dmod(X_1)$ to 
$$(\on{pr}_2)_\bullet(\on{pr}_1^!(\CM)\sotimes \CQ),$$
where $\sotimes$ denotes the tensor product on the category of D-modules (see \cite[Sect. 5.1.7]{DrGa1}),
and $$\on{pr}_i:X_1\times X_2\to X_i,\quad i=1,2$$
are the two projections.

\sssec{}

The question we would like to address in this section is the following: suppose that a functor
$\sF:\Dmod(X_1)\to \Dmod(X_2)$ admits a a \emph{continuous} right adjoint $\sF^R$ or 
a left adjoint $\sF^L$ (the latter is automatically continuous). 

\medskip

We would like to relate the kernels of the functors $\sF^R$ or $\sF^L$ (and also of the \emph{conjugate}
functors, see \secref{sss:conj functors}) to that of $\sF$. 

\medskip

The relationship will be particularly explicit when $X_1$ and $X_2$ are separated and smooth,
see Sects. \ref{sss:thm for smooth} and \ref{sss:thm for smooth conj}. In the case of arbitrary schemes of 
finite type, the corresponding assertion is stated in
Sects. \ref{sss:thm for sch} and  \ref{sss:thm for sch conj}. The situation becomes significantly more 
interesting when instead of schemes, we consider Artin stacks, see Sects. \ref{s:Artin qc} and
\ref{s:Artin gen}. 

\sssec{}

Note that from \lemref{l:when right}, we obtain:

\begin{cor} \label{c:when right D}
Let $\CQ$ be an object of $\Dmod(X_1\times X_2)$. Then 
the functor $$\sF_{X_1\to X_2,\CQ}:\Dmod(X_1)\to \Dmod(X_2)$$ admits 
a \emph{continuous} right adjoint if and only if it preserves
compactness.
\end{cor}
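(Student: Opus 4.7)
The corollary is an immediate application of \lemref{l:when right} to the specific case $\bC_1 = \Dmod(X_1)$ and $\bC_2 = \Dmod(X_2)$, so my plan is essentially to verify that the two hypotheses of that lemma are satisfied in this setting.

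First, I would observe that the functor $\sF_{X_1\to X_2,\CQ}$ is continuous. This is automatic from the kernel description: by \secref{sss:functors and kernels}, every functor of the form $\sF_{\bC_1\to \bC_2,\CQ}$ associated with a kernel $\CQ\in \bC_1^\vee\otimes \bC_2$ is continuous, being a composite of the continuous operations $\on{Id}_{\bC_1}\otimes \CQ$ and $\on{ev}_{\bC_1}\otimes \on{Id}_{\bC_2}$. Alternatively, in the concrete D-module realization from \secref{sss:kernels and D-modules}, this follows because $\on{pr}_1^!$, $\sotimes$, and $(\on{pr}_2)_\bullet$ are each continuous on categories of D-modules over schemes of finite type.

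Second, I would recall the fact (reviewed in the introduction, cf.\ \cite[Sect.~5]{DrGa1}) that $\Dmod(X_1)$ is compactly generated, with $\Dmod(X_1)^c \simeq \Dmod(X_1)_{\on{coh}}$.

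With both hypotheses in place, \lemref{l:when right} directly gives: the continuous functor $\sF_{X_1\to X_2,\CQ}$ admits a continuous right adjoint if and only if it preserves compactness, which is the desired statement. There is no real obstacle here — the entire content of the corollary is packaged in the general DG-categorical \lemref{l:when right}, and the only thing one needs to check is the compact generation of the source, which has already been recalled.
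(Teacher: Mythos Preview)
Your proposal is correct and matches the paper's approach exactly: the paper simply states that the corollary follows from \lemref{l:when right}, and you have spelled out the two trivial verifications (continuity of $\sF_{X_1\to X_2,\CQ}$ and compact generation of $\Dmod(X_1)$) needed to invoke that lemma.
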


\ssec{Statement of the theorem: the case of schemes}

\sssec{}

Note that for a scheme of finite type $X$, the object $\omega_X\in \Dmod(X)$ is compact. 

\medskip

We let 
$$k_X:=\BD^{\on{Ve}}_X(\omega_X)\in \Dmod(X)^c.$$

By definition, $k_X$ is the D-module incarnation of the constant sheaf on $X$.  As is well-known, if $X$ is smooth
(or rationally smooth) of dimension $n$, we have 
\begin{equation} \label{e:smooth for dualizing}
k_X\simeq \omega_X[-2n].
\end{equation}

\sssec{}

A fundamental role in this paper is played by the object 
$$(\Delta_X)_!(k_X)\in \Dmod(X\times X).$$
The object $(\Delta_X)_!(k_X)$ is well-defined (see \secref{sss:bad functors}) because $k_X$ is holonomic. Note also
that if $X$ is separated, 
\begin{equation} \label{e:separated for dualizing}
(\Delta_X)_!(k_X)\simeq (\Delta_X)_\bullet(k_X).
\end{equation}

\medskip

We let
$$\psId_{X}:\Dmod(X)\to \Dmod(X)$$
denote the functor, given by the kernel $(\Delta_X)_!(k_X)$.

\medskip

Note that when $X$ is separated, we have
$$\psId_{X}(\CM)\simeq \CM\sotimes k_X,$$
and when $X$ is separated and smooth of dimension $n$, we thus have:
\begin{equation} 
\psId_{X}\simeq \on{Id}_{\Dmod(X)}[-2n].
\end{equation}

\sssec{}  \label{sss:thm for sch}

We have the following theorem, which will be proved in \secref{s:delo}, more precisely, in \secref{sss:proof of schemes}:

\begin{thm} \label{t:schemes} Let $\CQ\in \Dmod(X_1\times X_2)$ be an object such that
the corresponding functor $\sF_{X_1\to X_2,\CQ}:\Dmod(X_1)\to \Dmod(X_2)$ admits a continuous right adjoint.
Then:

\medskip

\noindent{\em(a)} The object $\CQ$ is compact.

\medskip

\noindent{\em(b)} The functor 
$$\sF_{X_2\to X_1,\BD^{\on{Ve}}_{X_1\times X_2}(\CQ)}:\Dmod(X_2)\to \Dmod(X_1),$$
identifies canonically with 
$$\Dmod(X_2)\overset{(\sF_{X_1\to X_2,\CQ})^R}\longrightarrow \Dmod(X_1)\overset{\psId_{X_1}}\longrightarrow \Dmod(X_1).$$

\end{thm}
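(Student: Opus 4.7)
The plan is to deduce \thmref{t:schemes} from a general statement about compactly generated DG categories---the promised \thmref{t:general}---by specialization via the Verdier self-duality $\BD^{\on{Ve}}_X: \Dmod(X)^\vee \simeq \Dmod(X)$. Abstractly, for compactly generated $\bC_1, \bC_2$ and a continuous functor $\sF:\bC_1 \to \bC_2$ with kernel $\CQ \in \bC_1^\vee \otimes \bC_2$ admitting a continuous right adjoint, I would prove that $\CQ$ is compact and that the functor $\bC_2^\vee \to \bC_1$ attached to the abstract dual kernel $\CQ^\vee \in \bC_1 \otimes \bC_2^\vee$ coincides with $\psId_{\bC_1} \circ \sF^R$, for a canonically defined endofunctor $\psId_{\bC_1}$ of $\bC_1$. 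Intrinsically, $\psId_{\bC_1}$ arises from the discrepancy between the unit $\bu_{\bC_1} \in \bC_1^\vee \otimes \bC_1$ and the image of its abstract dual under the natural comparison map, and the factor $\psId_{\bC_1}$ measures the failure of $\bC_1$ to be ``self-dual in a compatible way''.

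For part \emph{(a)}, by \lemref{l:when right} the hypothesis is equivalent to $\sF$ preserving compactness. Under the equivalence $\bC_1^\vee \otimes \bC_2 \simeq \on{Funct}_{\on{cont}}(\bC_1, \bC_2)$, compact objects of the source correspond to continuous functors that preserve compactness: the compact objects of $\bC_1^\vee \otimes \bC_2$ are generated by pure tensors $\bc_1^\vee \boxtimes \bc_2$ with $\bc_i \in \bC_i^c$, and the functor attached to such a tensor sends $\bc \in \bC_1^c$ to $\CMaps(\bc_1, \bc) \otimes \bc_2$, which is manifestly compact. Applied to our $\sF$ this gives compactness of $\CQ$, which for D-modules amounts to coherence.

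For part \emph{(b)}, specializing to $\bC_i = \Dmod(X_i)$, the Verdier self-duality identifies the abstract dual $\CQ^\vee$ with the honest Verdier dual $\BD^{\on{Ve}}_{X_1 \times X_2}(\CQ)$, and so the corresponding functor $\Dmod(X_2) \to \Dmod(X_1)$ becomes $\sF_{X_2 \to X_1, \BD^{\on{Ve}}_{X_1 \times X_2}(\CQ)}$. It remains to identify the abstract $\psId_{\Dmod(X_1)}$ with the concrete $\psId_{X_1}$, i.e., the functor defined by the kernel $(\Delta_{X_1})_!(k_{X_1})$. Using the explicit formulas $\on{ev}_{\Dmod(X_1)} = (p_{X_1})_\bullet \circ \Delta_{X_1}^!$ and $\bu_{\Dmod(X_1)} = (\Delta_{X_1})_\bullet(\omega_{X_1})$, the natural comparison map swaps $\bullet$- with $!$-pushforward and $\omega_{X_1}$ with its Verdier dual $k_{X_1}$, yielding exactly the kernel $(\Delta_{X_1})_!(k_{X_1})$.

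The main obstacle I expect is this last identification of $\psId_{\bC_1}$ with $\psId_{X_1}$ for $X_1$ an arbitrary (possibly non-separated, non-smooth) scheme of finite type. For separated $X_1$ one can replace $(\Delta_{X_1})_!$ by $(\Delta_{X_1})_\bullet$ via \eqref{e:separated for dualizing}, simplifying the bookkeeping; in general, one must work with the partially-defined functor $(\Delta_{X_1})_!$, which is well-defined here because $k_{X_1}$ is holonomic. A reassuring sanity check is the smooth separated case: $(\Delta_{X_1})_!(k_{X_1}) \simeq (\Delta_{X_1})_\bullet(\omega_{X_1})[-2n_1]$, so $\psId_{X_1}$ reduces to the cohomological shift $[-2n_1]$, recovering the isomorphism \eqref{e:preview smooth}.
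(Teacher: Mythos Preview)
Your overall architecture---reduce to an abstract statement about compactly generated DG categories, then specialize via Verdier self-duality---is exactly the paper's approach. But you have misidentified where the content lies, and your argument for (a) runs in the wrong direction.

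For part (a), you argue that compact kernels give compactness-preserving functors: true, but irrelevant. You need the converse, and the converse is \emph{false} for general compactly generated $\bC_1$. Take $\sF=\on{Id}_{\bC_1}$: it trivially preserves compactness, but its kernel is $\bu_{\bC_1}$, which is compact only when $\bC_1$ is \emph{smooth} as a DG category. The paper's proof of (a) uses exactly this: $\CQ \simeq (\on{Id}_{\Dmod(X_1)}\otimes \sF)(\bu_{\Dmod(X_1)})$, the functor $\on{Id}\otimes\sF$ preserves compactness by \corref{c:preserve compactness tensor}, and $\bu_{\Dmod(X_1)}=(\Delta_{X_1})_\bullet(\omega_{X_1})$ is compact because it is a bounded holonomic complex. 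That last fact---that $\Dmod(X_1)$ is smooth---is the key input you never invoke.

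For part (b), the same smoothness hypothesis is doing the work, and again you don't isolate it. The abstract \thmref{t:general} does not assert the identification unconditionally: it requires the comparison map
\[
(\on{Id}_{\bC_1}\otimes \sF^{\on{op}})((\bu_{\bC_1})^\vee)\;\longrightarrow\;\bigl((\on{Id}_{\bC_1^\vee}\otimes \sF)(\bu_{\bC_1})\bigr)^\vee
\]
to be an isomorphism. This map is an instance of the natural transformation $\sG^{\on{op}}(\bc^\vee)\to(\sG(\bc))^\vee$, which is tautologically an isomorphism when $\bc$ is compact. So the hypothesis holds whenever $\bu_{\bC_1}$ is compact, and in the D-module case it holds for the same reason as in (a). By contrast, what you flag as the ``main obstacle''---matching the abstract $\psId_{\Dmod(X_1)}$ with the concrete $\psId_{X_1}$---is immediate once you know $\bu_{\Dmod(X_1)}$ is compact: the abstract dual $(\bu_{\Dmod(X_1)})^\vee$ then agrees with the Verdier dual $\BD^{\on{Ve}}_{X_1\times X_1}((\Delta_{X_1})_\bullet(\omega_{X_1}))=(\Delta_{X_1})_!(k_{X_1})$ on the nose, with no separatedness or smoothness of $X_1$ needed.
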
 

Thus, informally, the functor $(\sF_{X_1\to X_2,\CQ})^R$ is ``almost" given by the kernel, Verdier dual to that of $\sF$, and the correction to the
``almost" is given by the functor $\psId_{X_1}$.

\sssec{}

We emphasize that by \corref{c:when right D}, the condition in the theorem that the functor $$\sF_{X_1\to X_2,\CQ}:\Dmod(X_1)\to \Dmod(X_2)$$ 
admit a continuous right adjoint is equivalent to the condition that it preserve compactness. 

\medskip

We note that point (a) of \thmref{t:schemes} is very simple:

\begin{proof}

The assertion follows from \corref{c:preserve compactness tensor}, using the fact that 
$$\CQ\simeq \left(\on{Id}_{\Dmod(X_1)}\otimes \sF_{X_1\to X_2,\CQ}\right)(\omega_{X_1}),$$
and $\omega_{X_1}\in \Dmod(X_1\times X_1)$ is compact.

\end{proof}

Let us also note the following:

\begin{prop} \label{p:open embedding}
Let $\CQ$ be as in \thmref{t:schemes}. Let $f:X_1\to X'_1$ (resp., $g:X'_1\to X_1$) be a smooth (resp., proper) morphism. 
Then the objects
$$\left(f\times \on{id}_{X_2}\right)_\bullet(\CQ) \text{ and } \left(g\times \on{id}_{X_2}\right)^!(\CQ)$$
of $\Dmod(X'_1\times X_2)$ also satisfy the assumption of \thmref{t:schemes}; in particular, they are compact.
\end{prop}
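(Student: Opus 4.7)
The plan is to reduce both assertions to the hypothesis on $\CQ$ by expressing each new functor as the composition of $\sF_{X_1\to X_2,\CQ}$ with a compactness-preserving operation, and then invoking \corref{c:when right D} together with part (a) of \thmref{t:schemes} (already proved above).

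For the smooth case, set $h:=f\times\on{id}_{X_2}:X_1\times X_2\to X'_1\times X_2$. Unwinding the definition of $\sF_{X'_1\to X_2,h_\bullet(\CQ)}(\CM)$ and applying the projection formula $h_\bullet(\CA)\sotimes\CB\simeq h_\bullet(\CA\sotimes h^!(\CB))$ with $\CA=\CQ$ and $\CB=(\on{pr}'_1)^!(\CM)$, together with the relations $\on{pr}'_1\circ h=f\circ\on{pr}_1$ and $\on{pr}'_2\circ h=\on{pr}_2$, yields a canonical isomorphism of functors
$$\sF_{X'_1\to X_2,(f\times\on{id}_{X_2})_\bullet(\CQ)}\simeq \sF_{X_1\to X_2,\CQ}\circ f^!.$$
Since $f$ is smooth, $f^!$ preserves compactness (cf.\ example (ii) of the introduction: $f^!$ differs from the compactness-preserving functor $f^\bullet$ by a cohomological shift), and $\sF_{X_1\to X_2,\CQ}$ preserves compactness by hypothesis; hence so does the composition.

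For the proper case, set $h':=g\times\on{id}_{X_2}:X'_1\times X_2\to X_1\times X_2$. Using $\on{pr}'_2=\on{pr}_2\circ h'$ together with the projection formula for $h'$ and proper base change (available since $g$ is proper) in the Cartesian square
$$
\CD
X'_1\times X_2 @>{\on{pr}'_1}>> X'_1 \\
@V{h'}VV @VV{g}V \\
X_1\times X_2 @>{\on{pr}_1}>> X_1
\endCD
$$
(which furnishes $h'_\bullet\circ(\on{pr}'_1)^!\simeq(\on{pr}_1)^!\circ g_\bullet$), one obtains
$$\sF_{X'_1\to X_2,(g\times\on{id}_{X_2})^!(\CQ)}\simeq \sF_{X_1\to X_2,\CQ}\circ g_\bullet.$$
For $g$ proper, $g^!$ is a continuous right adjoint of $g_\bullet$, so by \lemref{l:when right} the functor $g_\bullet$ preserves compactness, and again the composition does.

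In both cases, \corref{c:when right D} then supplies the continuous right adjoint demanded by the hypothesis of \thmref{t:schemes}, and compactness of the kernels themselves is immediate from part (a) of that theorem. No new geometric input is needed; the only non-formal ingredients are the projection formula and proper base change for D-modules, both of which are standard, so the main obstacle is simply the bookkeeping of variances and Cartesian squares.
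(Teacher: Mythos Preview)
Your proof is correct and follows essentially the same approach as the paper: identify the functors associated to the new kernels as $\sF_{X_1\to X_2,\CQ}\circ f^!$ and $\sF_{X_1\to X_2,\CQ}\circ g_\bullet$, note that $f^!$ (for $f$ smooth) and $g_\bullet$ (for $g$ proper) preserve compactness, and conclude via \corref{c:when right D} and \thmref{t:schemes}(a). The paper's proof is a one-liner that leaves the identifications implicit; you have spelled out the projection formula and base-change steps that justify them.
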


\begin{proof}
Apply \thmref{t:schemes}(a) to the functors $\Phi_{X_1\to X_2,\CQ}\circ f^!$ and $\Phi_{X_1\to X_2,\CQ}\circ g_\bullet$, respectively, 
and use the fact that the functors $f^!$ and $g_\bullet$ preserve compactness. 
\end{proof}

\sssec{}

Let us now swap the roles of $X_1$ and $X_2$: 

\begin{cor} \label{c:schemes left}
Let $\sF:\Dmod(X_1)\to \Dmod(X_2)$ be a continuous functor that admits a \emph{left} adjoint, and let $\CQ\in  \Dmod(X_1\times X_2)$ 
denote the kernel of $\sF^L$. Then:

\medskip

\noindent{\em(a)} The object $\CQ$ is compact.

\medskip

\noindent{\em(b)} The functor 
$$\sF_{X_1\to X_2,\BD^{\on{Ve}}_{X_1\times X_2}(\CQ)}:\Dmod(X_1)\to \Dmod(X_2),$$ 
identifies canonically with the composition 
$$\Dmod(X_1)\overset{\sF}\longrightarrow \Dmod(X_2)\overset{\psId_{X_2}}\longrightarrow \Dmod(X_2).$$

\end{cor}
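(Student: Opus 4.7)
The plan is to deduce \corref{c:schemes left} by applying \thmref{t:schemes} to the functor $\sF^L$, with the roles of $X_1$ and $X_2$ interchanged, exploiting the symmetry of the kernel formalism under $\Dmod(X_1\times X_2)=\Dmod(X_2\times X_1)$.

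First I would observe that $\sF^L:\Dmod(X_2)\to \Dmod(X_1)$ is automatically continuous (any left adjoint commutes with colimits), and moreover its right adjoint is, by definition, the given functor $\sF$, which is continuous by hypothesis. Hence $\sF^L$ fits into the setup of \thmref{t:schemes} with the pair $(X_1,X_2)$ replaced by $(X_2,X_1)$. The kernel of $\sF^L$ a priori lives in $\Dmod(X_2\times X_1)$, but under the canonical identification $\Dmod(X_2\times X_1)=\Dmod(X_1\times X_2)$ (which intertwines Verdier duality on the two sides), it is precisely the object $\CQ\in\Dmod(X_1\times X_2)$ of the corollary.

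Applying \thmref{t:schemes}(a) to $\sF^L$ under this substitution immediately yields compactness of $\CQ$, giving point (a). Applying \thmref{t:schemes}(b) with the same substitution produces a canonical identification
$$\sF_{X_1\to X_2,\BD^{\on{Ve}}_{X_2\times X_1}(\CQ)}\simeq \psId_{X_2}\circ (\sF^L)^R.$$
Using that $(\sF^L)^R=\sF$ and that $\BD^{\on{Ve}}_{X_2\times X_1}(\CQ)=\BD^{\on{Ve}}_{X_1\times X_2}(\CQ)$ under the swap of factors, this becomes exactly the isomorphism asserted in (b).

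I do not expect any genuine obstacle here: the corollary is a formal consequence of the theorem via the adjunction $(\sF^L,\sF)$ and the $(X_1,X_2)\leftrightarrow(X_2,X_1)$ symmetry of the kernel formalism. The only point to verify carefully is the compatibility of the symmetry-swap with Verdier duality on $\Dmod(X_1\times X_2)$, which is standard.
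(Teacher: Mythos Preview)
Your proposal is correct and is exactly the approach the paper takes: the corollary is introduced with the phrase ``Let us now swap the roles of $X_1$ and $X_2$,'' and no further argument is given. Your unpacking of this swap---applying \thmref{t:schemes} to $\sF^L:\Dmod(X_2)\to\Dmod(X_1)$ and identifying $(\sF^L)^R=\sF$---is precisely what is intended.
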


\sssec{}   \label{sss:thm for smooth}

A particular case of \thmref{t:schemes} reads:

\begin{cor} \label{c:smooth schemes}  Let $\sF:\Dmod(X_1)\to \Dmod(X_2)$ be a continuous functor,
given by a kernel $\CQ\in \Dmod(X_1\times X_2)$. 

\smallskip

\noindent{\em(1)} Let $X_1$ be smooth of dimension $n_1$ and separated, and 
suppose that $\sF$ admits a continuous right adjoint. 
Then $\CQ$ is compact and the functor $\sF^R$ is given by the kernel $\BD^{\on{Ve}}_{X_1\times X_2}(\CQ)[2n_1]$.

\smallskip

\noindent{\em(2)}
Let $X_2$ be smooth of dimension $n_2$ and separated, and suppose that $\sF$ admits a left adjoint. Then $\CQ$ is compact and 
the functor $\sF^L$ is given by the kernel $\BD^{\on{Ve}}_{X_1\times X_2}(\CQ)[2n_2]$.

\end{cor}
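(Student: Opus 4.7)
The plan is to derive Corollary \ref{c:smooth schemes} as a direct consequence of \thmref{t:schemes} (for part (1)) and \corref{c:schemes left} (for part (2)), using the fact that $\psId_X$ degenerates to a shift functor in the smooth separated case.

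For part (1), I would start by applying \thmref{t:schemes}, which gives the identification
$$\sF_{X_2\to X_1,\BD^{\on{Ve}}_{X_1\times X_2}(\CQ)} \simeq \psId_{X_1}\circ \sF^R,$$
together with the compactness of $\CQ$. The next step is to invert $\psId_{X_1}$. Since $X_1$ is smooth of dimension $n_1$ and separated, we have the formulas $k_{X_1}\simeq \omega_{X_1}[-2n_1]$ and $(\Delta_{X_1})_!(k_{X_1})\simeq (\Delta_{X_1})_\bullet(k_{X_1})$ available from \eqref{e:smooth for dualizing} and \eqref{e:separated for dualizing}, which together yield $\psId_{X_1}\simeq \on{Id}_{\Dmod(X_1)}[-2n_1]$. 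Composing both sides of the displayed isomorphism with the shift $[2n_1]$ and noting that shifting the kernel by $[2n_1]$ shifts the associated functor by $[2n_1]$ (since the construction $\CQ\mapsto \sF_{X_2\to X_1,\CQ}$ is $\Vect$-linear via $\on{pr}_1^!$, $\sotimes$, and $\on{pr}_2{}_\bullet$), one concludes
$$\sF^R \simeq \sF_{X_2\to X_1,\BD^{\on{Ve}}_{X_1\times X_2}(\CQ)[2n_1]}.$$

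For part (2), the argument is entirely parallel, but uses \corref{c:schemes left} in place of \thmref{t:schemes}: this gives $\sF_{X_1\to X_2,\BD^{\on{Ve}}_{X_1\times X_2}(\CQ)}\simeq \psId_{X_2}\circ \sF$, where $\CQ$ is now the kernel of $\sF^L$, and one again identifies $\psId_{X_2}$ with $\on{Id}[-2n_2]$.

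There is essentially no obstacle here beyond a bookkeeping check, but the one place to be careful is the compatibility of the functor-from-kernel construction with cohomological shifts and with the equivalence $\BD^{\on{Ve}}_{X_1\times X_2}(\CQ)[2n_i]\simeq \BD^{\on{Ve}}_{X_1\times X_2}(\CQ[-2n_i])$, so that shifting ``inside'' or ``outside'' the Verdier duality symbol produces the correct sign of shift on the kernel appearing in the corollary's statement. Compactness of $\CQ$ in each case is inherited directly from part (a) of \thmref{t:schemes} (applied in case (1) to $\sF$, in case (2) to $\sF^L$, which preserves compactness since it has the continuous right adjoint $\sF$ by \lemref{l:when right}).
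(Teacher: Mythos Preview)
Your proposal is correct and matches the paper's intended argument: the paper presents this corollary simply as ``a particular case of \thmref{t:schemes}'' with no further proof, relying on the already-established identification $\psId_X\simeq \on{Id}_{\Dmod(X)}[-2n]$ for $X$ smooth and separated. One small bookkeeping point in your treatment of part (2): in \corref{c:schemes left} the symbol $\CQ$ denotes the kernel of $\sF^L$, whereas in the statement of \corref{c:smooth schemes} it denotes the kernel of $\sF$; you should make explicit that after identifying $\psId_{X_2}\circ \sF\simeq \sF[-2n_2]$ you read off $\CQ[-2n_2]\simeq \BD^{\on{Ve}}_{X_1\times X_2}(\text{kernel of }\sF^L)$, apply $\BD^{\on{Ve}}$ to both sides, and conclude both that $\CQ$ is compact (being the Verdier dual of a compact object, up to shift) and that the kernel of $\sF^L$ is $\BD^{\on{Ve}}_{X_1\times X_2}(\CQ)[2n_2]$.
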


As a particular case, we obtain:

\begin{cor}  \label{c:smooth schemes same dim}
Let $X_1$ and $X_2$ be both smooth and separated, of dimensions $n_1$ and $n_2$, respectively. 
Let $\sF:\Dmod(X_1)\to \Dmod(X_2)$ be a continuous functor, and 
assume that $\sF$ admits both left and continuous right adjoints. Then
$\sF^L[2(n_1-n_2)]\simeq \sF^R$.
\end{cor}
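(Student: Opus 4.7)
The plan is to deduce this as a direct formal consequence of the immediately preceding \corref{c:smooth schemes}, by applying its two parts to the given functor and comparing the resulting kernels. There is essentially no obstacle: the content of the corollary is entirely contained in the cited statement, modulo routine bookkeeping of cohomological shifts.

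First I would let $\CQ \in \Dmod(X_1 \times X_2)$ denote the kernel defining $\sF$. Invoking \corref{c:smooth schemes}(1) --- which applies since $X_1$ is smooth and separated of dimension $n_1$ and, by hypothesis, $\sF$ admits a continuous right adjoint --- I obtain that $\sF^R$ is given by the kernel $\BD^{\on{Ve}}_{X_1 \times X_2}(\CQ)[2n_1]$. Next, I would invoke \corref{c:smooth schemes}(2) --- which applies since $X_2$ is smooth and separated of dimension $n_2$ and, by hypothesis, $\sF$ admits a left adjoint (automatically continuous) --- to conclude that $\sF^L$ is given by the kernel $\BD^{\on{Ve}}_{X_1 \times X_2}(\CQ)[2n_2]$.

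Finally, I would note that the kernel-to-functor assignment is compatible with cohomological shifts, in the sense that replacing $\CR \in \Dmod(X_1 \times X_2)$ by $\CR[k]$ replaces $\sF_{X_1 \to X_2, \CR}$ by $\sF_{X_1 \to X_2, \CR}[k]$; this is immediate from the defining formula $\CM \mapsto \on{pr_2}_\bullet(\on{pr}_1^!(\CM) \sotimes \CR)$, since both $\on{pr_2}_\bullet$ and $\on{pr}_1^!$ are triangulated. Consequently the kernels computed above differ by the shift $[2n_1 - 2n_2] = [2(n_1 - n_2)]$, and hence so do the functors: $\sF^R \simeq \sF^L[2(n_1 - n_2)]$, as claimed.
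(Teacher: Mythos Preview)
Your proposal is correct and is precisely the argument the paper has in mind: the corollary is stated immediately after \corref{c:smooth schemes} with the phrase ``As a particular case, we obtain,'' and no further proof is given. Applying parts (1) and (2) of \corref{c:smooth schemes} to identify the kernels of $\sF^R$ and $\sF^L$ as the same Verdier dual up to the shifts $[2n_1]$ and $[2n_2]$, and then comparing, is exactly the intended deduction.
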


\sssec{}

Finally, we have the following, perhaps a little unexpected, result that will be proved in \secref{s:proof of subq}:

\begin{thm} \label{t:subquotient}
Assume that $X_1$ is quasi-projective and smooth. Let $\CQ\in \Dmod(X_1\times X_2)$ be such 
that the functor $\sF_{X_1\to X_2,\CQ}$ preserves compactness. Then any subquotient of any of the
cohomologies of $\CQ$ with respect to the standard t-structure on $\Dmod(X_1\times X_2)$, 
has the same property.
\end{thm}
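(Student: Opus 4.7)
The plan is to translate the statement into a condition on $\CQ$ viewed as an object of the $\CO$-module category $\QCoh(X_1)\otimes\Dmod(X_2)$, via the left forgetful functor $\oblv^{\on{left}}_{X_1}\otimes\on{Id}_{\Dmod(X_2)}$, and then exploit the fact that for smooth quasi-projective $X_1$ this category has a concrete system of generators coming from $X_1$. The hope is that the notion of compactness, and hence the notion of a compactness-preserving kernel, becomes much more transparent on the $\CO$-module side, and in particular visibly closed under the operation of passing to subquotients in the abelian heart.

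First I would invoke the results of \secref{s:O} to reformulate the hypothesis that $\sF_{X_1\to X_2,\CQ}$ preserves compactness as a concrete condition on $\oblv^{\on{left}}_{X_1}\otimes\on{Id}_{\Dmod(X_2)}(\CQ)\in \QCoh(X_1)\otimes\Dmod(X_2)$. For $X_1$ smooth the categories $\QCoh(X_1)$ and $\IndCoh(X_1)$ agree (so the distinction between $\oblv^{\on{left}}_{X_1}$ and $\oblv_{X_1}$ is just a twist by $k_{X_1}\simeq\omega_{X_1}[-2n_1]$), and the forgetful functor is t-exact up to this shift. In particular, the standard t-structure cohomologies $H^i(\CQ)\in \Dmod(X_1\times X_2)^\heartsuit$ correspond term-by-term to the cohomologies of $\oblv^{\on{left}}(\CQ)$ in $\QCoh(X_1)\otimes\Dmod(X_2)$, and abelian subquotients on the D-module side match abelian subquotients on the $\CO$-module side.

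Next I would exploit the quasi-projectivity of $X_1$: fixing an ample line bundle $L$ on $X_1$, the objects $L^{\otimes n}$ form a compact generating family for $\QCoh(X_1)$. One can therefore test the relevant compactness property in $\QCoh(X_1)\otimes\Dmod(X_2)$ by computing $\CMaps$ from each $L^{\otimes n}$ to $\oblv^{\on{left}}(\CQ)$ to produce objects of $\Dmod(X_2)$, and the $\CO$-module characterization of preservation of compactness should amount to a uniform finiteness condition on these objects (together with the t-structure boundedness of $\CQ$, which we already get from part (a) of \thmref{t:schemes}). Such a condition is manifestly closed under subquotients in the abelian category, because a short exact sequence in the heart produces long exact sequences of $\CMaps$-complexes that preserve coherence and boundedness componentwise.

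The main obstacle is step one: giving a clean $\CO$-module criterion for $\sF_{X_1\to X_2,\CQ}$ to preserve compactness that applies without any a priori properness assumption on the support of $\CQ$ over $X_2$ (the sufficient criterion advertised in the introduction does assume proper support). Handling the non-proper case will presumably require a reduction to the proper situation via the quasi-projectivity of $X_1$, e.g.\ compactifying in the $X_1$-direction inside a projective embedding and analyzing the boundary contribution, or decomposing by supports. Once that characterization is in hand, the subquotient statement follows essentially formally from the generator argument described above.
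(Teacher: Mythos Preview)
Your overall strategy---forgetful to $\CO$-modules, ample line bundle generators, compactify---is the paper's. But you have inverted where the difficulty lies. The line-bundle criterion you seek already exists without any properness hypothesis (\corref{c:criter with line bundle}): $\sF_{X_1\to X_2,\CQ}$ preserves compactness iff $\sF_{X_1\to X_2,\CQ}\circ\ind^{\on{left}}_{X_1}(\CL^{\otimes i})\in\Dmod(X_2)$ is compact for $i$ in a suitable range. So your ``main obstacle'' is not one.

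The real gap is the claim that this condition is ``manifestly closed under subquotients.'' The functor $\CQ\mapsto\sF_{X_1\to X_2,\CQ}\circ\ind^{\on{left}}_{X_1}(\CL^{\otimes i})$ involves $\Gamma_{X_1}$, which is not t-exact when $X_1$ is not affine, so a subquotient of $H^n(\CQ)$ does not produce a subquotient of anything in $\Dmod(X_2)^\heartsuit$, and compactness does not propagate through the resulting long exact sequences. The paper's fix: choose an \emph{affine} open embedding $j:X_1\hookrightarrow\bar{X}_1$ with $\bar{X}_1$ projective (affineness makes $j^{\IndCoh}_*$ t-exact, so subquotients survive the passage); show via Noetherianness of $\CO_{\bar{X}_1}\otimes\on{D}_{X_2}$ that compacts in $\IndCoh(\bar{X}_1)\otimes\Dmod(X_2)$ are closed under cohomology subquotients (\propref{p:subq compact}); then invoke Serre vanishing on $\bar{X}_1$ (\lemref{l:estimate}) so that for $i\gg 0$ the twisted global sections \emph{are} concentrated in one degree, whence the subquotient argument in $\Dmod(X_2)$ finally applies. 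The compactification serves this step, not the criterion.
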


\begin{rem}
We are nearly sure that in \thmref{t:subquotient}, the assumption that $X_1$ be quasi-projective
can be replaced by that of being separated.
\end{rem}

\ssec{Digression: dual functors}  

\sssec{} 

Let $\bC_1$ and $\bC_2$ be dualizable DG categories.  Recall that there is a canonical equivalence
\begin{equation} \label{e:dualization}
\on{Funct}_{\on{cont}}(\bC_1,\bC_2)\simeq \on{Funct}_{\on{cont}}(\bC^\vee_2,\bC^\vee_1),
\end{equation}
given by the passage to the dual functor, 
$$\sF\mapsto \sF^\vee.$$

In terms of the identification
$$\on{Funct}_{\on{cont}}(\bC_1,\bC_2)\simeq \bC_1^\vee \otimes \bC_2 \text{ and }
\on{Funct}_{\on{cont}}(\bC^\vee_2,\bC^\vee_1)\simeq (\bC_2^\vee)^\vee \otimes \bC^\vee_1,$$
the equivalence \eqref{e:dualization} corresponds to
$$\bC_1^\vee \otimes \bC_2\simeq \bC_2\otimes \bC_1^\vee\simeq (\bC_2^\vee)^\vee \otimes \bC^\vee_1.$$

\sssec{}  \label{sss:adjoint dual pair}

Note that if
$$\sF:\bC_1\rightleftarrows \bC_2:\sG$$
is an adjoint pair of functors, then the pair
$$\sG^\vee:\bC_2^\vee\rightleftarrows \bC_1:\sF^\vee$$
is also naturally adjoint.

\medskip

By duality, from \lemref{l:when right}, we obtain:

\begin{cor}  \label{c:left adj via check}
Let $\sF:\bC_1\to \bC_2$ be a continuous functor, and assume that $\bC_2$
is compactly generated. Then $\sF$ admits a \emph{left} adjoint if and only if the functor 
$\sF^\vee:\bC_2^\vee\to \bC_1^\vee$ preserves compactness.
\end{cor}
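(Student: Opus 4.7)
The corollary is a direct consequence of combining \lemref{l:when right} with the dualization formalism from \secref{sss:adjoint dual pair}, applied to the functor $\sF^\vee$ in place of $\sF$. The plan is to convert the statement about left adjoints of $\sF$ into one about right adjoints of $\sF^\vee$ via duality, and then to apply \lemref{l:when right} to the latter.

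First, I would observe that the hypothesis allows the application of \lemref{l:when right} with $\sF^\vee$ in the role of $\sF$. Indeed, since $\bC_2$ is compactly generated, the equivalence $(\bC_2^c)^{\on{op}}\simeq (\bC_2^\vee)^c$ of \eqref{e:compact of dual} together with \eqref{e:dual as Ind} shows that $\bC_2^\vee$ is also compactly generated. Hence \lemref{l:when right} applies to continuous functors out of $\bC_2^\vee$, and in particular tells us that $\sF^\vee:\bC_2^\vee\to \bC_1^\vee$ admits a continuous right adjoint if and only if it preserves compactness.

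For the forward implication, assume $\sF$ admits a left adjoint $\sF^L:\bC_2\to \bC_1$. Since left adjoints automatically preserve all colimits, $\sF^L$ is continuous. The dualization statement in \secref{sss:adjoint dual pair}, applied to the adjoint pair $(\sF^L,\sF)$, produces an adjoint pair
$$\sF^\vee:\bC_2^\vee\rightleftarrows \bC_1^\vee:(\sF^L)^\vee,$$
in which $(\sF^L)^\vee$ is a continuous right adjoint to $\sF^\vee$. By the preceding paragraph, $\sF^\vee$ preserves compactness.

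For the converse, assume $\sF^\vee$ preserves compactness. By \lemref{l:when right} applied to $\sF^\vee$, it admits a continuous right adjoint $\sH:\bC_1^\vee\to \bC_2^\vee$. Dualizing the adjoint pair $(\sF^\vee,\sH)$ via \secref{sss:adjoint dual pair} and using the canonical identifications $(\bC_i^\vee)^\vee\simeq \bC_i$ and $(\sF^\vee)^\vee\simeq \sF$, we obtain an adjoint pair $\sH^\vee:\bC_1\rightleftarrows \bC_2:\sF$, exhibiting $\sH^\vee$ as a (continuous) left adjoint of $\sF$. There is no real obstacle here: the only point requiring care is the compatibility of biduality with the passage to adjoints, which is precisely the content of \secref{sss:adjoint dual pair}.
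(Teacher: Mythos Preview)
Your argument is correct and is precisely what the paper means by ``By duality, from \lemref{l:when right}'': you apply \lemref{l:when right} to $\sF^\vee$ (using that $\bC_2^\vee$ is compactly generated) and translate adjunctions across duality via \secref{sss:adjoint dual pair}. One cosmetic slip: in your final displayed adjunction the categories are reversed---it should read $\sH^\vee:\bC_2\rightleftarrows \bC_1:\sF$, since $\sH^\vee$ goes from $\bC_2$ to $\bC_1$.
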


\sssec{}

Let us apply the above discussion to $\bC_i=\Dmod(X_i)$, $i=1,2$, where $X_1$ and $X_2$ are 
schemes of finite type.

\medskip

Thus, for 
$$\CQ\in \Dmod(X_1\times X_2)\simeq \Dmod(X_2\times X_1),$$ we have the following canonical isomorphism:
$$(\sF_{X_1\to X_2,\CQ})^\vee\simeq \sF_{X_2\to X_1,\CQ},$$
as functors $\Dmod(X_2)\to \Dmod(X_1)$, where we identify $\Dmod(X_i)^\vee\simeq \Dmod(X_i)$ by means
of $\bD^{\on{Ve}}_{X_i}$. 

\medskip 

In particular, for a scheme of finite type $X$, we have a canonical isomorphism
\begin{equation} \label{e:dual of pseud}
(\psId_{!,X})^\vee\simeq \psId_{!,X}.
\end{equation}

It comes from the equivariance structure on $(\Delta_X)_!(k_X)$ with respect to the flip automorphism of $X\times X$.

\medskip

Applying \corref{c:left adj via check} to D-modules, we obtain: 

\begin{cor} \label{c:when left D}
Let $\CQ$ be an object of $\Dmod(X_1\times X_2)$. Then the functor 
$$\sF_{X_1\to X_2,\CQ}:\Dmod(X_1)\to \Dmod(X_2)$$ admits a
left adjoint of and only if the functor 
$$\sF_{X_2\to X_1,\CQ}:\Dmod(X_2)\to \Dmod(X_1)$$ preserves
compactness.
\end{cor}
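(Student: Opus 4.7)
The statement is essentially a direct translation of \corref{c:left adj via check} into the language of D-module kernels, so my plan is to invoke that corollary and then unpack both the hypothesis ``$\bC_2$ is compactly generated'' and the identification of the dual functor in the specific context of $\bC_i = \Dmod(X_i)$.

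First I would note that by the recollections at the beginning of the paper, $\Dmod(X_2)$ is compactly generated (with compact objects the coherent D-modules), so the hypothesis of \corref{c:left adj via check} is satisfied. Hence, applying it to $\sF = \sF_{X_1\to X_2,\CQ}$, the existence of a left adjoint to $\sF_{X_1\to X_2,\CQ}$ is equivalent to the statement that the dual functor
\[
(\sF_{X_1\to X_2,\CQ})^\vee \colon \Dmod(X_2)^\vee \to \Dmod(X_1)^\vee
\]
preserves compactness.

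Next, I would invoke the identification, spelled out in the paragraph preceding the corollary, of this dual functor. Namely, under the Verdier-duality self-dualities $\bD^{\on{Ve}}_{X_i}\colon \Dmod(X_i)^\vee \simeq \Dmod(X_i)$, one has a canonical isomorphism
\[
(\sF_{X_1\to X_2,\CQ})^\vee \simeq \sF_{X_2\to X_1,\CQ}
\]
as functors $\Dmod(X_2)\to \Dmod(X_1)$, where on both sides we use that $\Dmod(X_1\times X_2)\simeq \Dmod(X_2\times X_1)$. Substituting this equivalence into the conclusion of \corref{c:left adj via check} yields exactly the ``if and only if'' asserted.

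There is no real obstacle here beyond citing the right results and being careful with the self-duality identifications: preservation of compactness is invariant under replacing a functor by one equivalent to it, so the two formulations are literally the same condition. The only mild subtlety is that the Verdier self-duality identifies compact objects of $\Dmod(X_i)^\vee$ with compact objects of $\Dmod(X_i)$ (via \eqref{e:compact of dual}), which is what allows one to transport the compactness-preservation property along the self-duality without any loss.
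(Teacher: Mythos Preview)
Your proposal is correct and follows essentially the same approach as the paper: the corollary is stated immediately after the identification $(\sF_{X_1\to X_2,\CQ})^\vee\simeq \sF_{X_2\to X_1,\CQ}$ with the preamble ``Applying \corref{c:left adj via check} to D-modules, we obtain,'' and your argument spells out exactly this deduction.
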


\ssec{Conjugate functors} \label{ss:conj functors}

\sssec{}  \label{sss:conj functors}

Let $\bC_1$ and $\bC_2$ be compactly generated categories, and let 
$\sF:\bC_1\to \bC_2$ be a functor that preserves compactness.

\medskip

Thus, we obtain a functor 
$$\sF^c:\bC_1^c\to \bC_2^c,$$
and consider the corresponding functor between the opposite categories
$$(\sF^c)^{\on{op}}: (\bC_1^c)^{\on{op}}\to (\bC_2^c)^{\on{op}}.$$

\medskip

Hence, ind-extending $(\sF^c)^{\on{op}}$ and using \eqref{e:dual as Ind}, we obtain a functor 
$$\bC_1^\vee\to \bC_2^\vee.$$
We shall denote it by $\sF^{\on{op}}$ and call it the \emph{conjugate} functor. 

\sssec{}

The following is \cite[Lemma 2.3.3]{DG}:

\begin{lem} \label{l:conjugate}
The functor $\sF^{\on{op}}$ is the left adjoint of $\sF^\vee$.
\end{lem}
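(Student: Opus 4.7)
The plan is to verify the adjunction by evaluating both sides on compact generators and using the defining property of the dual functor $\sF^\vee$.

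First I would reduce the problem to the level of compact objects. Both $\sF^{\on{op}}$ and $\sF^\vee$ are continuous (the former by construction as an ind-extension, the latter because dualization in \eqref{e:dualization} preserves continuity). Since $\bC_1^\vee$ is compactly generated (being the dual of a compactly generated category), it suffices to exhibit a natural isomorphism
$$\CMaps_{\bC_2^\vee}\bigl(\sF^{\on{op}}(\bc_1^\vee),\xi_2\bigr)\simeq \CMaps_{\bC_1^\vee}\bigl(\bc_1^\vee,\sF^\vee(\xi_2)\bigr)$$
for $\bc_1\in \bC_1^c$ and arbitrary $\xi_2\in \bC_2^\vee$, and then promote this to an actual adjunction.

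Second, I would unwind both sides. By the construction of $\sF^{\on{op}}$ via ind-extending $(\sF^c)^{\on{op}}$, and by the identification \eqref{e:compact of dual}, the object $\sF^{\on{op}}(\bc_1^\vee)$ equals $\sF(\bc_1)^\vee$ in $(\bC_2^\vee)^c$. Applying the formula \eqref{e:eval dual} to both sides, the desired isomorphism becomes
$$\on{ev}_{\bC_2}\bigl(\sF(\bc_1)\otimes \xi_2\bigr)\simeq \on{ev}_{\bC_1}\bigl(\bc_1\otimes \sF^\vee(\xi_2)\bigr).$$
But this is precisely the defining property of the dual functor: $\sF^\vee:\bC_2^\vee\to \bC_1^\vee$ is characterized by the commutativity
$$\on{ev}_{\bC_1}\circ (\on{Id}_{\bC_1}\otimes \sF^\vee)\simeq \on{ev}_{\bC_2}\circ (\sF\otimes \on{Id}_{\bC_2^\vee}),$$
as follows from the description of $\sF^\vee$ under the equivalence \eqref{e:dualization}. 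So the isomorphism holds tautologically, and it is functorial in $\bc_1$ and $\xi_2$ because each ingredient (the identification $\sF^{\on{op}}(\bc_1^\vee)\simeq \sF(\bc_1)^\vee$, the formula \eqref{e:eval dual}, and the defining property of $\sF^\vee$) is natural.

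Finally, to upgrade a family of natural isomorphisms of mapping complexes into an honest adjoint pair, I would produce the unit and counit directly. The counit $\sF^{\on{op}}\circ \sF^\vee\to \on{Id}_{\bC_2^\vee}$ on a compact $\bc_2^\vee\in (\bC_2^\vee)^c$ comes from the identity map under the above isomorphism applied to $\xi_2=\sF^\vee(\bc_2^\vee)$ (after swapping the roles, working with $\sF^\vee$ evaluated on compacts and using that $\sF^\vee$ preserves compactness by \corref{c:left adj via check} applied in reverse — but this is not actually needed: one constructs the unit $\on{Id}_{\bC_1^\vee}\to \sF^\vee\circ \sF^{\on{op}}$ on compacts $\bc_1^\vee$ from the identity map $\sF(\bc_1)\to \sF(\bc_1)$ via $\on{ev}_{\bC_2}(\sF(\bc_1)\otimes \sF(\bc_1)^\vee)\simeq \on{ev}_{\bC_1}(\bc_1\otimes \sF^\vee(\sF(\bc_1)^\vee))$, and ind-extends). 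The only real check is the triangle identity, which reduces to a manipulation with the evaluation maps.

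The main obstacle I expect is purely bookkeeping: making sure the construction of unit/counit is coherent as a morphism of functors in the $(\infty,1)$-categorical sense rather than just pointwise, so that it assembles into a genuine adjunction. Once the natural isomorphism on mapping complexes is established for all pairs $(\bc_1,\xi_2)$ with $\bc_1$ compact and one confirms naturality in $\bc_1$ as well, the standard Yoneda-type argument in the DG setting (together with compact generation of $\bC_1^\vee$) produces the adjunction canonically, leaving no further content to verify.
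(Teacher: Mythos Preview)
Your proposal is correct and follows essentially the same route as the paper: reduce to compact generators, identify $\sF^{\on{op}}(\bc_1^\vee)$ with $\sF(\bc_1)^\vee$, rewrite both mapping complexes via \eqref{e:eval dual} as evaluations, and invoke the defining property of $\sF^\vee$. The paper restricts to $\xi_2=\bc_2^\vee$ compact as well, whereas you keep $\xi_2$ arbitrary, but this makes no difference since the identity $\on{ev}_{\bC_2}(\sF(\bc_1)\otimes\xi_2)\simeq\on{ev}_{\bC_1}(\bc_1\otimes\sF^\vee(\xi_2))$ holds for all $\xi_2$; your extra paragraph on manufacturing the unit/counit and checking triangle identities is more scrupulous than the paper, which simply declares that a functorial isomorphism of mapping complexes on compact generators suffices.
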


Combining this with \secref{sss:adjoint dual pair}, we obtain: 

\begin{cor} \label{c:conjugate}
The functor $\sF^{\on{op}}$ is the dual of $\sF^R$.
\end{cor}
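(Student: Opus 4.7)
The plan is to deduce this immediately from \lemref{l:conjugate}, \secref{sss:adjoint dual pair}, and uniqueness of adjoints. There is essentially no new content in this corollary: it just combines two known facts, so I would not expect any real obstacle.

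First, I would observe that the hypothesis that $\sF$ preserves compactness, together with the compact generation of $\bC_1$, allows me to invoke \lemref{l:when right} to produce a \emph{continuous} right adjoint $\sF^R$. Thus we have an honest adjoint pair of continuous functors $\sF:\bC_1\rightleftarrows \bC_2:\sF^R$ between dualizable categories.

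Second, I would apply the general observation of \secref{sss:adjoint dual pair} to this adjoint pair. Passing to duals, one obtains the adjoint pair
$$(\sF^R)^\vee : \bC_2^\vee \rightleftarrows \bC_1^\vee : \sF^\vee,$$
so $(\sF^R)^\vee$ is a (continuous) left adjoint to $\sF^\vee$.

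Finally, by \lemref{l:conjugate}, the conjugate functor $\sF^{\on{op}}$ is also a left adjoint to $\sF^\vee$. Since left adjoints are unique up to canonical isomorphism, I conclude $\sF^{\on{op}}\simeq (\sF^R)^\vee$, which is the desired statement. The only thing to be careful about is that all functors in sight are continuous (so that adjoints and duals interact cleanly), but this is already built into the setup via \lemref{l:when right}.
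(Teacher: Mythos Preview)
Your proof is correct and follows essentially the same approach as the paper: combine \lemref{l:conjugate} (which gives $\sF^{\on{op}}$ as the left adjoint of $\sF^\vee$) with \secref{sss:adjoint dual pair} (which gives $(\sF^R)^\vee$ as the left adjoint of $\sF^\vee$), and conclude by uniqueness of adjoints. Your explicit invocation of \lemref{l:when right} to ensure $\sF^R$ is continuous is a reasonable clarification, though the paper leaves this implicit.
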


\sssec{Proof of \lemref{l:conjugate}}

Since all the functors in question are continuous and the categories are compactly generated, it suffices
to construct a functorial equivalence
\begin{equation} \label{e:adj isom to construct}
\CMaps_{\bC^\vee_2}(\sF^{\on{op}}(\bc^\vee_1),\bc^\vee_2)\simeq \CMaps_{\bC_1^\vee}(\bc^\vee_1,\sF^\vee(\bc^\vee_2)), \quad 
\bc_i\in \bC_i^c.
\end{equation} 

Recall (see \eqref{e:eval dual}) that for $\xi_i\in \bC_i^\vee$, 
$$\CMaps_{\bC^\vee_i}(\bc_i^\vee,\xi_i)\simeq \on{ev}_{\bC_i}(\bc_i\otimes \xi_i).$$

\medskip

Hence, the left-hand side in \eqref{e:adj isom to construct} can be rewritten as
$$\on{ev}_{\bC_2}(\sF(\bc_1)\otimes \bc^\vee_2),$$
while the right-hand side as
$$\on{ev}_{\bC_1}(\bc_1\otimes \sF^\vee(\bc^\vee_2)).$$

\medskip

Finally,
$$\on{ev}_{\bC_2}(\sF(\bc_1)\otimes \bc^\vee_2)\simeq \on{ev}_{\bC_1}(\bc_1\otimes \sF^\vee(\bc^\vee_2)),$$
by the definition of the dual functor.

\qed

Note that the same argument proves the following generalization of \lemref{l:conjugate}:

\begin{lem} \label{l:conjugate bis} Let $\bC_1$ and $\bC_2$ be two compactly generated 
categories, and let $\sG:\bC_2\to \bC_1$ be a continuous functor; let $\sF$ denote its partially defined 
left adjoint. Let $\bc_1\in \bC_1^c$ be an object such that $\sG^\vee(\bc_1^\vee)\in \bC_2^\vee$ is
compact. Then $\sF(\bc_1)$ is defined and canonically isomorphic to
$$\left(\sG^\vee(\bc_1^\vee)\right)^\vee.$$
\end{lem}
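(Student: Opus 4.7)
The plan is to mimic the computation in the proof of \lemref{l:conjugate}, but applied to the single object $\bc_1$ in place of running over all compact objects. Set $\xi := \sG^\vee(\bc_1^\vee) \in \bC_2^\vee$, which by hypothesis lies in $(\bC_2^\vee)^c$. Applying the equivalence \eqref{e:compact of dual} to the dualizable category $\bC_2^\vee$ yields a well-defined object $\xi^\vee \in \bC_2^c$ (i.e., the element of $((\bC_2^\vee)^\vee)^c=\bC_2^c$ Koszul-dual to $\xi$); this is our candidate for $\sF(\bc_1)$.

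I would next check, by a direct computation functorial in $\bc_2\in \bC_2$, that this candidate co-represents the functor $\bc_2\mapsto \CMaps_{\bC_1}(\bc_1,\sG(\bc_2))$. Using \eqref{e:eval dual} applied to the dualizable category $\bC_2^\vee$, the left-hand side rewrites as $\CMaps_{\bC_2}(\xi^\vee,\bc_2)\simeq \on{ev}_{\bC_2^\vee}(\xi\otimes \bc_2)=\on{ev}_{\bC_2^\vee}(\sG^\vee(\bc_1^\vee)\otimes \bc_2)$. By the very definition of the dual functor, the latter identifies with $\on{ev}_{\bC_1^\vee}(\bc_1^\vee\otimes \sG(\bc_2))$, which by a second application of \eqref{e:eval dual} (now with $\bc=\bc_1\in \bC_1^c$) equals $\CMaps_{\bC_1}(\bc_1,\sG(\bc_2))$, as desired.

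By the definition of the partially defined left adjoint recalled in \secref{sss:bad functors}, this co-representability is exactly the statement that $\sF(\bc_1)$ is defined with value canonically isomorphic to $\xi^\vee=(\sG^\vee(\bc_1^\vee))^\vee$. There is no serious obstacle: the argument is simply the proof of \lemref{l:conjugate} carried out one object at a time, and the compactness hypothesis on $\sG^\vee(\bc_1^\vee)$ is inserted precisely so that the candidate $(\sG^\vee(\bc_1^\vee))^\vee$ actually lives in $\bC_2$ rather than merely in $\on{Pro}(\bC_2)$—which is exactly the distinction between $\sF(\bc_1)$ being defined versus being only a pro-object, as explained in \secref{sss:bad functors}.
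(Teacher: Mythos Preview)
Your proof is correct and is exactly the argument the paper has in mind: the paper does not give a separate proof of this lemma but simply says ``the same argument proves the following generalization of \lemref{l:conjugate},'' and what you have written is precisely that argument, specialized to a single compact object $\bc_1$ and run against arbitrary $\bc_2\in\bC_2$. One terminological slip: the object $\xi^\vee$ is just the dual of $\xi$ under \eqref{e:compact of dual}, not ``Koszul-dual.''
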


\ssec{Back to D-modules: conjugate functors}

\sssec{}   \label{sss:thm for sch conj}

By combining \thmref{t:schemes} with \corref{c:conjugate} and Equation \eqref{e:dual of pseud}, we obtain:

\begin{cor} \label{c:right conj}
Under the assumptions and in the notations of of \thmref{t:schemes}, the functor
$$\sF_{X_1\to X_2,\BD^{\on{Ve}}_{X_1\times X_2}(\CQ)}:\Dmod(X_1)\to \Dmod(X_2),$$ 
is canonically isomorphic to the composition
$$\Dmod(X_1)\overset{\psId_{X_1}}\longrightarrow \Dmod(X_1) \overset{(\sF_{X_1\to X_2,\CQ})^{\on{op}}}\longrightarrow  \Dmod(X_2).$$
\end{cor}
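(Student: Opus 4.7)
The plan is to apply passage to the dual functor to the isomorphism established in \thmref{t:schemes}(b) and to recognize the resulting composition via \corref{c:conjugate} and \eqref{e:dual of pseud}.

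Specifically, \thmref{t:schemes}(b) provides a canonical isomorphism of continuous functors $\Dmod(X_2)\to \Dmod(X_1)$:
$$\sF_{X_2\to X_1,\BD^{\on{Ve}}_{X_1\times X_2}(\CQ)}\simeq \psId_{X_1}\circ (\sF_{X_1\to X_2,\CQ})^R.$$
Dualizing both sides with respect to the Verdier self-dualities $\bD^{\on{Ve}}_{X_i}:\Dmod(X_i)^\vee\simeq \Dmod(X_i)$ yields an isomorphism of continuous functors $\Dmod(X_1)\to \Dmod(X_2)$, which I would then identify with the asserted one.

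For the left-hand side, the general identity $(\sF_{X_1\to X_2,\CR})^\vee\simeq \sF_{X_2\to X_1,\CR}$ recorded in the discussion preceding \eqref{e:dual of pseud}, applied to $\CR=\BD^{\on{Ve}}_{X_1\times X_2}(\CQ)$, gives
$$\left(\sF_{X_2\to X_1,\BD^{\on{Ve}}_{X_1\times X_2}(\CQ)}\right)^\vee\simeq \sF_{X_1\to X_2,\BD^{\on{Ve}}_{X_1\times X_2}(\CQ)}.$$
For the right-hand side, dualization reverses the order of composition, so
$$\left(\psId_{X_1}\circ (\sF_{X_1\to X_2,\CQ})^R\right)^\vee\simeq \left((\sF_{X_1\to X_2,\CQ})^R\right)^\vee\circ (\psId_{X_1})^\vee.$$
Now \corref{c:conjugate} identifies $((\sF_{X_1\to X_2,\CQ})^R)^\vee$ with $(\sF_{X_1\to X_2,\CQ})^{\on{op}}$, while \eqref{e:dual of pseud} gives $(\psId_{X_1})^\vee\simeq \psId_{X_1}$. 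Assembling these three canonical isomorphisms produces the desired identification
$$\sF_{X_1\to X_2,\BD^{\on{Ve}}_{X_1\times X_2}(\CQ)}\simeq (\sF_{X_1\to X_2,\CQ})^{\on{op}}\circ \psId_{X_1}.$$

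The argument is a purely formal chain of canonical identifications, and there is no substantive obstacle; the only point demanding care is to keep the composition-reversal under $(-)^\vee$ consistent with the chosen self-dualities on $\Dmod(X_1)$ and $\Dmod(X_2)$, so that the resulting isomorphism is indeed canonical rather than only existing up to a choice.
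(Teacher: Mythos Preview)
Your proposal is correct and follows exactly the approach the paper indicates: the corollary is stated as obtained ``by combining \thmref{t:schemes} with \corref{c:conjugate} and Equation \eqref{e:dual of pseud},'' and you have spelled out precisely that combination---dualizing the isomorphism of \thmref{t:schemes}(b), identifying the dual of the right adjoint via \corref{c:conjugate}, and using the self-duality of $\psId_{X_1}$ from \eqref{e:dual of pseud}.
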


\medskip

Note that in the circumstances of \corref{c:right conj}, the functor
$$(\sF_{X_1\to X_2,\CQ})^{\on{op}}:\Dmod(X_1)\to \Dmod(X_2)$$
also preserves compactness, by construction. 

\sssec{}

We emphasize that the functor $(\sF_{X_1\to X_2,\CQ})^{\on{op}}$ that appears in \corref{c:right conj smooth} is by 
definition the ind-extension
of the functor, defined on $\Dmod(X_1)^c\subset \Dmod(X_1)$ and given by
$$\CM\mapsto \BD^{\on{Ve}}_{X_2}\circ \sF_{X_1\to X_2,\CQ} \circ  \BD^{\on{Ve}}_{X_1}(\CM),$$
where the right-hand side is defined, because 
$\sF_{X_1\to X_2,\CQ}\circ  \BD^{\on{Ve}}_{X_1}(\CM)\in \Dmod(X_2)^c$. 

\medskip

In other words, $(\sF_{X_1\to X_2,\CQ})^{\on{op}}|_{\Dmod(X_1)^c}$
is obtained from $(\sF_{X_1\to X_2,\CQ})|_{\Dmod(X_1)^c}$ by conjugating by Verdier duality (hence the name ``conjugate"). 

\medskip

See \secref{sss:other functor} for an even more explicit description of $(\sF_{X_1\to X_2,\CQ})^{\on{op}}$.

\sssec{}   \label{sss:thm for smooth conj}

By imposing the smoothness and separatedness hypothesis, from \corref{c:right conj} we obtain: 

\begin{cor} \label{c:right conj smooth}
Let $X_1$ be smooth of dimension $n_1$, and separated. 
Let $$\sF:\Dmod(X_1)\to \Dmod(X_2)$$ be a continuous functor,
given by a kernel $\CQ\in \Dmod(X_1\times X_2)$. Assume that $\sF$ preserves compactness. 
Then the conjugate functor $$\sF^{\on{op}}:\Dmod(X_1)\to \Dmod(X_2)$$
is given by the kernel $\BD^{\on{Ve}}_{X_1\times X_2}(\CQ)[2n_1]$.
\end{cor}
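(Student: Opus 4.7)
The plan is to derive this directly from Corollary \ref{c:right conj}, which already gives a general formula, and then specialize using the smoothness/separatedness hypothesis on $X_1$.

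First I would recall that since $X_1$ is smooth of dimension $n_1$ and separated, the earlier discussion establishes
$$\psId_{X_1}\simeq \on{Id}_{\Dmod(X_1)}[-2n_1].$$
This uses both the identification $k_{X_1}\simeq \omega_{X_1}[-2n_1]$ from \eqref{e:smooth for dualizing} and the identification \eqref{e:separated for dualizing} $(\Delta_{X_1})_!(k_{X_1})\simeq (\Delta_{X_1})_\bullet(k_{X_1})$ available in the separated case, which together imply that the kernel defining $\psId_{X_1}$ is $(\Delta_{X_1})_\bullet(\omega_{X_1})[-2n_1]$, i.e., the unit kernel shifted by $[-2n_1]$.

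Next I would invoke Corollary \ref{c:right conj}, which yields a canonical isomorphism
$$\sF_{X_1\to X_2,\BD^{\on{Ve}}_{X_1\times X_2}(\CQ)} \simeq (\sF_{X_1\to X_2,\CQ})^{\on{op}}\circ \psId_{X_1}.$$
Substituting the above identification of $\psId_{X_1}$ gives
$$\sF_{X_1\to X_2,\BD^{\on{Ve}}_{X_1\times X_2}(\CQ)} \simeq (\sF_{X_1\to X_2,\CQ})^{\on{op}}[-2n_1].$$
Shifting by $[2n_1]$ and using that the assignment $\CQ\mapsto \sF_{X_1\to X_2,\CQ}$ intertwines cohomological shifts of the kernel with cohomological shifts of the functor (since the formation of the kernel is a colimit-preserving equivalence of DG categories, hence commutes with shifts), I obtain
$$(\sF_{X_1\to X_2,\CQ})^{\on{op}} \simeq \sF_{X_1\to X_2,\BD^{\on{Ve}}_{X_1\times X_2}(\CQ)[2n_1]},$$
which is the assertion.

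There is no real obstacle here: the entire content has been done in Corollary \ref{c:right conj} (itself built on Theorem \ref{t:schemes} and Corollary \ref{c:conjugate}). The only thing to check is the purely formal input that $\psId_{X_1}$ collapses to a shift in the smooth separated case, which is essentially bookkeeping with the definitions of $k_X$ and $(\Delta_X)_!$.
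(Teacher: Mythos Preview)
Your proposal is correct and follows exactly the paper's approach: the paper simply states that the corollary is obtained from \corref{c:right conj} by imposing the smoothness and separatedness hypothesis on $X_1$, which reduces $\psId_{X_1}$ to the shift $[-2n_1]$.
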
 

Further, from \corref{c:right conj smooth}, we deduce:

\begin{cor} \label{c:stable under dual}
Let $\CQ$ be a compact object of $\Dmod(X_1\times X_2)$.

\smallskip

\noindent{\em(1)} Let $X_1$ be smooth and separated, and assume that the functor 
$\Dmod(X_1)\to \Dmod(X_2)$, defined by $\CQ$, admits a \emph{continuous right} adjoint
(i.e., preserves compactness). Then
so does the functor $\Dmod(X_1)\to \Dmod(X_2)$, defined by $\BD^{\on{Ve}}_{X_1\times X_2}(\CQ)$.

\smallskip

\noindent{\em(2)} Let $X_2$ be smooth and separated, and assume that the functor 
$\Dmod(X_1)\to \Dmod(X_2)$, defined by $\CQ$, admits a \emph{left} adjoint. Then
so does the functor $\Dmod(X_1)\to \Dmod(X_2)$, defined by $\BD^{\on{Ve}}_{X_1\times X_2}(\CQ)$.

\end{cor}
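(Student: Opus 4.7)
The plan is to deduce both statements directly from \corref{c:right conj smooth} together with the duality formalism developed in \secref{ss:conj functors}; no new ingredients are required.

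For part (1), I would first invoke \corref{c:right conj smooth}, which under the hypothesis that $X_1$ is smooth of dimension $n_1$ and separated identifies the conjugate functor $(\sF_{X_1\to X_2,\CQ})^{\on{op}}$ with the functor defined by the kernel $\BD^{\on{Ve}}_{X_1\times X_2}(\CQ)[2n_1]$. By construction, $\sF^{\on{op}}$ is the ind-extension of the functor $\BD^{\on{Ve}}_{X_2}\circ \sF_{X_1\to X_2,\CQ}\circ \BD^{\on{Ve}}_{X_1}$ defined on compact objects, so it automatically preserves compactness: Verdier duality induces equivalences $(\Dmod(X_i)^c)^{\on{op}}\simeq \Dmod(X_i)^c$, and compact-preservation of $\sF_{X_1\to X_2,\CQ}$ is part of the hypothesis. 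A cohomological shift is irrelevant for compactness, hence the functor with kernel $\BD^{\on{Ve}}_{X_1\times X_2}(\CQ)$ also preserves compactness, which by \corref{c:when right D} gives it a continuous right adjoint.

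For part (2), I would reduce to part (1) using the self-duality of $\Dmod(X_i)$. By \corref{c:when left D}, the hypothesis that $\sF_{X_1\to X_2,\CQ}$ admits a left adjoint is equivalent to the assertion that the functor $\sF_{X_2\to X_1,\CQ}:\Dmod(X_2)\to \Dmod(X_1)$ preserves compactness. Applying part (1) with the roles of $X_1$ and $X_2$ swapped (which is legitimate because now it is $X_2$ that plays the role of the smooth separated factor), I obtain that $\sF_{X_2\to X_1,\BD^{\on{Ve}}_{X_1\times X_2}(\CQ)}$ preserves compactness. One final application of \corref{c:when left D} converts this back into the statement that $\sF_{X_1\to X_2,\BD^{\on{Ve}}_{X_1\times X_2}(\CQ)}$ admits a left adjoint.

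There is no genuine obstacle here; the whole argument is a formal assembly of \corref{c:right conj smooth}, the remark following \corref{c:right conj} that conjugate functors preserve compactness by construction, and the Verdier-duality characterization of left-adjointness in \corref{c:when left D}. The only point requiring minor care is the bookkeeping of the symmetric roles of the two factors, namely that in passing between $\sF_{X_1\to X_2,\CQ}$ and $\sF_{X_2\to X_1,\CQ}$ one is using the canonical equivalence $\Dmod(X_1\times X_2)\simeq \Dmod(X_2\times X_1)$ together with the self-duality of each $\Dmod(X_i)$ via $\bD^{\on{Ve}}_{X_i}$.
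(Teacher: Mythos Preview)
Your proposal is correct and follows precisely the route the paper intends: the corollary is stated immediately after \corref{c:right conj smooth} with the phrase ``Further, from \corref{c:right conj smooth}, we deduce,'' and your argument supplies exactly that deduction, including the observation (made in the paper right after \corref{c:right conj}) that the conjugate functor preserves compactness by construction, and the swap via \corref{c:when left D} for part (2).
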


\ssec{Another interpretation of conjugate functors} \label{ss:other functors}

\sssec{}

Consider the functors
$$\on{pr}_1^\bullet:\Dmod(X_1)\to \Dmod(X_1\times X_2);$$
$$\overset{\bullet}\otimes=\Delta_{X_1\times X_2}^\bullet: \Dmod((X_1\times X_2)\times (X_1\times X_2))\to 
\on{Pro}(\Dmod(X_1\times X_2)),$$
and
$$(\on{pr}_2)_!:\on{Pro}(\Dmod(X_1\times X_2))\to \on{Pro}(\Dmod(X_2)),$$
see \secref{sss:bad functors}. 

\medskip

For an object $\CP\in \Dmod(X_1\times X_2)$ consider the functor
$$\sF^{\on{op}}_{X_1\to X_2;\CP}:\Dmod(X_1)\to \on{Pro}(\Dmod(X_2)),$$
defined by 
\begin{equation} \label{e:other functor}
\sF^{\on{op}}_{X_1\to X_2;\CP}(\CM):=
(\on{pr}_2)_!\left(\on{pr}_1^\bullet(\CM)\overset{\bullet}\otimes\CP\right).
\end{equation}

\medskip

The assignment
$$\CP\in \Dmod(X_1\times X_2)\rightsquigarrow \sF^{\on{op}}_{X_1\to X_2;\CP}:\Dmod(X_1)\to \on{Pro}(\Dmod(X_2)),$$ 
is another way to construct a functor from an object on the product, using the Verdier conjugate functors, i.e., by replacing 
$$p_1^!\mapsto p_1^\bullet;\,\, \sotimes\mapsto  \overset{\bullet}\otimes,\,\, (\on{pr}_2)_\bullet\mapsto (\on{pr}_2)_!.$$

\begin{rem}
Let $\CM\in \Dmod(X_1)$ be such that the functors $\Delta_{X_1\times X_2}^\bullet$ and $(\on{pr}_2)_!$ are
defined on the objects $\on{pr}_1^\bullet(\CM)\boxtimes \CP$ and $\on{pr}_1^\bullet(\CM)\overset{\bullet}\otimes\CP$, respectively.
(E.g., this is the case when $\CP$ and $\CM$ are both holonomic.) Then the notation 
$$(\on{pr}_2)_!\left(\on{pr}_1^\bullet(\CM)\overset{\bullet}\otimes\CP\right)\in \Dmod(X_2)\subset \on{Pro}(\Dmod(X_2))$$ 
is unambiguous. 
\end{rem}

\sssec{}  \label{sss:other as left}

Assume that $\CP\in \Dmod(X_1\times X_2)^c$. Denote $\CQ:=\BD^{\on{Ve}}_{X_1\times X_2}(\CP)$. 
Then it is easy to see that the functor 
$$\sF^{\on{op}}_{X_1\to X_2;\CP}:\Dmod(X_1)\to \on{Pro}(\Dmod(X_2))$$ is the left adjoint of the functor
$$\sF_{X_2\to X_1,\CQ}:\Dmod(X_2)\to \Dmod(X_1),$$ 
in the sense that for $\CM_i\in \Dmod(X_i)$ we have a canonical isomorphism
$$\CMaps_{\on{Pro}(\Dmod(X_2))}(\sF^{\on{op}}_{X_1\to X_2;\CP}(\CM_1),\CM_2)\simeq
\CMaps_{\Dmod(X_1)}(\CM_1,\sF_{X_2\to X_1,\CQ}(\CM_2)),$$
where the left-hand side can be also interpreted as the evaluation of an object of the pro-completion
of a DG category on an object of that DG category, see \secref{sss:bad functors}.

\sssec{}  \label{sss:other functor}

Take now $\CP:=\BD^{\on{Ve}}_{X_1\times X_2}(\CQ)$, where $\CQ$ is as in \thmref{t:schemes}
(i.e., the functor $\sF_{X_1\to X_2,\CQ}$ preserves compactness).  

\medskip

By \corref{c:when left D}, the functor $\sF_{X_2\to X_1,\CQ}\simeq \sF_{X_1\to X_2,\CQ}^\vee$ admits a left adjoint.
Hence, by \secref{sss:other as left}, the functor $\sF^{\on{op}}_{X_1\to X_2;\BD^{\on{Ve}}_{X_1\times X_2}(\CQ)}$ takes values in 
$$\Dmod(X_2)\subset \on{Pro}(\Dmod(X_2)),$$
and provides a left adjoint to $\sF_{X_2\to X_1,\CQ}$. By \lemref{l:conjugate}, we obtain an isomorphism of functors 
$\Dmod(X_1)\to \Dmod(X_2)$:
$$\sF^{\on{op}}_{X_1\to X_2;\BD^{\on{Ve}}_{X_1\times X_2}}(\CQ)\simeq
(\sF_{X_1\to X_2,\CQ})^{\on{op}}.$$

\medskip

Thus, we can interpret \corref{c:right conj} as follows:

\begin{cor} \label{c:conj other}
For $\CQ$ as in \thmref{t:schemes} we have a canonical isomorphism
$$\sF^{\on{op}}_{X_1\to X_2;\BD^{\on{Ve}}_{X_1\times X_2}(\CQ)}\circ \psId_{X_1}\simeq 
\sF_{X_1\to X_2,\BD^{\on{Ve}}_{X_1\times X_2}(\CQ)},$$
where
$$\sF^{\on{op}}_{X_1\to X_2;\BD^{\on{Ve}}_{X_1\times X_2}(\CQ)}(\CM)=
(\on{pr}_2)_!\left(\on{pr}_1^\bullet(\CM)\overset{\bullet}\otimes(\BD^{\on{Ve}}_{X_1\times X_2}(\CQ))\right).$$
\end{cor}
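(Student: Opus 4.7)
The plan is to recognize that Corollary \ref{c:conj other} is obtained from Corollary \ref{c:right conj} by substituting the abstract conjugate functor $(\sF_{X_1\to X_2,\CQ})^{\on{op}}$ by the geometric realization $\sF^{\on{op}}_{X_1\to X_2;\BD^{\on{Ve}}_{X_1\times X_2}(\CQ)}$ provided by the formula in \eqref{e:other functor}. Thus the proof consists of two steps: invoke Corollary \ref{c:right conj}, and identify the two descriptions of the conjugate functor.

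First, Corollary \ref{c:right conj} directly gives the canonical isomorphism
$$\sF_{X_1\to X_2,\BD^{\on{Ve}}_{X_1\times X_2}(\CQ)}\simeq (\sF_{X_1\to X_2,\CQ})^{\on{op}}\circ \psId_{X_1}.$$
It therefore suffices to construct a canonical isomorphism of functors $\Dmod(X_1)\to \Dmod(X_2)$,
$$(\sF_{X_1\to X_2,\CQ})^{\on{op}}\simeq \sF^{\on{op}}_{X_1\to X_2;\BD^{\on{Ve}}_{X_1\times X_2}(\CQ)},$$
compatibly with precomposition by $\psId_{X_1}$.

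Second, I would establish this identification by appealing to the uniqueness of left adjoints, as in the discussion of \secref{sss:other functor}. By \lemref{l:conjugate}, the functor $(\sF_{X_1\to X_2,\CQ})^{\on{op}}$ is the left adjoint of $(\sF_{X_1\to X_2,\CQ})^\vee\simeq \sF_{X_2\to X_1,\CQ}$. On the other hand, the general observation in \secref{sss:other as left} shows that for the compact object $\CP:=\BD^{\on{Ve}}_{X_1\times X_2}(\CQ)$ (which is indeed compact since $\CQ$ is compact by \thmref{t:schemes}(a), and Verdier duality preserves compactness) the functor $\sF^{\on{op}}_{X_1\to X_2;\CP}$, a priori valued in $\on{Pro}(\Dmod(X_2))$, is a partially defined left adjoint of $\sF_{X_2\to X_1,\CQ}$. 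To conclude that it in fact takes values in $\Dmod(X_2)$, I would apply \corref{c:when left D} (with the roles of $X_1$ and $X_2$ swapped): since $\sF_{X_1\to X_2,\CQ}$ preserves compactness by hypothesis, $\sF_{X_2\to X_1,\CQ}$ admits an honest continuous left adjoint. By uniqueness of left adjoints among continuous functors, the two candidates agree, yielding the required identification.

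Substituting this identification into the isomorphism provided by Corollary \ref{c:right conj} produces the statement of Corollary \ref{c:conj other}. There is no genuine obstacle in this argument: the substance of the result is carried entirely by \thmref{t:schemes} (to be proved in Section \ref{s:delo}) and the adjunction bookkeeping of Sections \ref{ss:conj functors} and \ref{ss:other functors}; this corollary simply repackages Corollary \ref{c:right conj} in the explicit geometric form \eqref{e:other functor}.
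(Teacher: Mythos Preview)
Your proof is correct and follows essentially the same route as the paper: the corollary is obtained by combining Corollary~\ref{c:right conj} with the identification $(\sF_{X_1\to X_2,\CQ})^{\on{op}}\simeq \sF^{\on{op}}_{X_1\to X_2;\BD^{\on{Ve}}_{X_1\times X_2}(\CQ)}$ established in \secref{sss:other functor}, which in turn rests on \secref{sss:other as left}, \corref{c:when left D}, \lemref{l:conjugate}, and uniqueness of left adjoints---exactly the ingredients you invoke.
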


\sssec{}

Combining \corref{c:conj other} with \corref{c:stable under dual}(1) we obtain:

\begin{cor} \label{c:our functor as other}
Let $X_1$ be smooth of dimension $n_1$ and separated. Let $\CQ\in \Dmod(X_1\times X_2)$
satisfy the assumption of \thmref{t:schemes}. Then there is a canonical isomorphism
$$\sF^{\on{op}}_{X_1\to X_2,\CQ}\simeq \sF_{X_1\to X_2,\CQ}[2n_1],$$
i.e.,
$$(\on{pr}_2)_!\left(\on{pr}_1^\bullet(\CM)\overset{\bullet}\otimes \CQ\right)\simeq
(\on{pr}_2)_\bullet\left(\on{pr}_1^!(\CM)\sotimes \CQ\right)[2n_1],\quad \CM\in \Dmod(X_1);$$
in particular, the left-hand side takes values in $\Dmod(X_2)\subset \on{Pro}(\Dmod(X_2))$. 
\end{cor}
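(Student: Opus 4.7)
The plan is to obtain the corollary by applying Corollary \ref{c:conj other} not to $\CQ$ itself, but to its Verdier dual $\CQ' := \BD^{\on{Ve}}_{X_1\times X_2}(\CQ)$, and then exploiting the simplification of $\psId_{X_1}$ in the smooth separated case.

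First I would verify that Corollary \ref{c:conj other} is applicable to $\CQ'$. Since $X_1$ is smooth and separated and $\sF_{X_1\to X_2,\CQ}$ preserves compactness, Corollary \ref{c:stable under dual}(1) tells us that $\sF_{X_1\to X_2,\CQ'}$ also preserves compactness, so $\CQ'$ satisfies the hypothesis of \thmref{t:schemes}. Applying \corref{c:conj other} to $\CQ'$ in place of $\CQ$, and noting that $\CQ'$ is compact so that $\BD^{\on{Ve}}_{X_1\times X_2}(\CQ') \simeq \CQ$ by the involutivity of Verdier duality on compact objects, we obtain the identification
\begin{equation*}
\sF^{\on{op}}_{X_1\to X_2;\CQ}\circ \psId_{X_1}\simeq \sF_{X_1\to X_2,\CQ}.
\end{equation*}

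Second, since $X_1$ is smooth of dimension $n_1$ and separated, the endofunctor $\psId_{X_1}$ is canonically isomorphic to $\on{Id}_{\Dmod(X_1)}[-2n_1]$ (this was recorded immediately after the definition of $\psId_X$ using \eqref{e:smooth for dualizing} and \eqref{e:separated for dualizing}). Since any DG functor commutes with cohomological shifts, precomposition with $\psId_{X_1}$ is identified with the shift $[-2n_1]$, and the previous display becomes
\begin{equation*}
\sF^{\on{op}}_{X_1\to X_2;\CQ}\simeq \sF_{X_1\to X_2,\CQ}[2n_1],
\end{equation*}
which is exactly the formula stated. The additional assertion that $\sF^{\on{op}}_{X_1\to X_2;\CQ}$ takes values in $\Dmod(X_2)$ rather than only $\on{Pro}(\Dmod(X_2))$ is automatic, because the right-hand side of the isomorphism $\sF^{\on{op}}_{X_1\to X_2;\CQ}\circ \psId_{X_1}\simeq \sF_{X_1\to X_2,\CQ}$ evidently lands in $\Dmod(X_2)$, and $\psId_{X_1}$ is, up to shift, the identity.

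There is no real obstacle here: the statement is essentially a consolidation of \corref{c:conj other} and \corref{c:stable under dual}(1), once the pseudo-identity functor is identified with a shift. The only point to be careful about is the direction of the shift and the use of stability of the hypothesis of \thmref{t:schemes} under Verdier duality, which is precisely the content of \corref{c:stable under dual}(1); without it, one would not know a priori that \corref{c:conj other} can be fed with the dualized kernel.
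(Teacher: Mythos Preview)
Your proof is correct and follows exactly the approach indicated in the paper, which introduces the corollary with the single line ``Combining \corref{c:conj other} with \corref{c:stable under dual}(1) we obtain''. You have faithfully unpacked this: apply \corref{c:conj other} to $\CQ':=\BD^{\on{Ve}}_{X_1\times X_2}(\CQ)$, use \corref{c:stable under dual}(1) to justify that $\CQ'$ satisfies the hypothesis, and then invoke the identification $\psId_{X_1}\simeq \on{Id}[-2n_1]$ valid in the smooth separated case.
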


\sssec{}

We shall now deduce a property of the functors $\sF_{X_1\to X_2,\CQ}$ 
satisfying the assumption of \thmref{t:schemes} with respect to the standard
t-structure on the category of D-modules. 

\medskip

In what follows, for a DG category $\bC$, endowed with a t-structure, we let
$\bC^{\leq 0}$ (resp., $\bC^{\geq 0}$) denote the corresponding
subcategory of connective (resp., coconnective) objects. We let 
$\bC^\heartsuit:=\bC^{\leq 0}\cap \bC^{\geq 0}$ denote the heart of the t-structure. 

\begin{cor}  \label{c:exactness}
Let $X_1$ and $\CQ$ be as in \corref{c:our functor as other}. Assume in addition that
the support of $\CQ$ is affine over $X_2$, and that $\CQ\in \Dmod(X_1\times X_2)^\heartsuit$. 
Then the functor $$\sF_{X_1\to X_2,\CQ}[n_1]:\Dmod(X_1)\to \Dmod(X_2)$$
is t-exact.
\end{cor}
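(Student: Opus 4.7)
The plan is to establish $t$-exactness of $\sF[n_1]$ by proving left and right $t$-exactness separately, using two different descriptions of $\sF$. The first is the standard formula $\sF(\CM)=(\on{pr}_2)_\bullet\bigl(\on{pr}_1^!(\CM)\sotimes \CQ\bigr)$; the second is the conjugate formula $\sF(\CM)[2n_1]\simeq (\on{pr}_2)_!\bigl(\on{pr}_1^\bullet(\CM)\overset{\bullet}\otimes \CQ\bigr)$ provided by \corref{c:our functor as other}. Each formula will furnish a $t$-amplitude bound on one side only, and the two will combine to give $t$-exactness of $\sF[n_1]$ on the nose.

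To make use of the affinity hypothesis, I would factor the $\on{pr}_2$-pushforward through the support $Z$ of $\CQ$. Write $\CQ \simeq i_\bullet \CQ_0$ where $i: Z \hookrightarrow X_1\times X_2$ is the closed embedding and, by $t$-exactness of $i_\bullet$ for closed embeddings, $\CQ_0 \in \Dmod(Z)^\heartsuit$; set $p_1 := \on{pr}_1 \circ i$ and $p_2 := \on{pr}_2 \circ i$. A projection formula then rewrites each description in terms of $(p_2)_\bullet$ or $(p_2)_!$ applied to an expression built out of $p_1^!$ or $p_1^\bullet$, $\CM$, and $\CQ_0$ on $Z$. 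Since $p_2$ is affine by hypothesis, $(p_2)_\bullet$ is $t$-exact for the standard $t$-structure on $D$-modules, and a corresponding one-sided amplitude bound for $(p_2)_!$ follows by Verdier duality on $Z$.

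The remaining step, which I expect to be the main obstacle, is to verify that the inner tensor expressions $p_1^!(\CM)\sotimes_Z \CQ_0$ and $p_1^\bullet(\CM)\overset{\bullet}\otimes_Z \CQ_0$ have the correct $t$-amplitudes when $\CM$ lies in $\Dmod(X_1)^\heartsuit$. This is precisely where the smoothness of $X_1$ of dimension $n_1$ enters: the pullbacks $p_1^!$ and $p_1^\bullet$ differ by a shift $[2n_1]$, which matches the shift appearing in \corref{c:our functor as other} and which accounts for the $[n_1]$ shift in the statement. One natural way to check the inner bounds is to pull back along a smooth cover of $Z$, where the $\sotimes_Z$ and $\overset{\bullet}\otimes_Z$ operations reduce to standard $\CO$-module tensor products whose $t$-exactness is manifest, and where the amplitudes of the pullback functors along $p_1$ can be computed from relative dimensions. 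Combining these amplitude estimates with the bounds for $(p_2)_\bullet$ and $(p_2)_!$ from the previous step then yields the two inclusions $\sF[n_1](\Dmod(X_1)^{\leq 0})\subset \Dmod(X_2)^{\leq 0}$ and $\sF[n_1](\Dmod(X_1)^{\geq 0})\subset \Dmod(X_2)^{\geq 0}$, completing the proof.
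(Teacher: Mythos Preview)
Your overall strategy—prove right $t$-exactness from the defining formula and left $t$-exactness from the conjugate formula supplied by \corref{c:our functor as other}—is exactly what the paper does. The paper is simply terser: it calls the first half ``straightforward from the definition'' and deduces the second from the isomorphism $\sF_{X_1\to X_2,\CQ}[n_1]\simeq \sF^{\on{op}}_{X_1\to X_2,\CQ}[-n_1]$.

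Where your plan goes astray is in the execution of the inner amplitude bound. Once you pass to the support $Z$ you need the amplitudes of $p_1^!$, $p_1^\bullet$ and of $\sotimes_Z$, $\overset{\bullet}\otimes_Z$; but $p_1:Z\to X_1$ is in general neither smooth nor of constant relative dimension, so there is no shift relation $p_1^! \simeq p_1^\bullet[2n_1]$, and $Z$ may be singular, so $\sotimes_Z$ has no clean amplitude bound. (Also, D-module $\sotimes$ never reduces to the $\CO$-module tensor product, even on a smooth cover.) The remedy is to factor only the \emph{pushforward} through $Z$, not the inner expression: on $X_1\times X_2$ one has $\on{pr}_1^!(\CM)\sotimes\CQ \simeq (\Delta_{X_1}\times\on{id}_{X_2})^!(\CM\boxtimes\CQ)$, and the amplitude of $(\Delta_{X_1}\times\on{id}_{X_2})^!$ is bounded by $n_1$ purely because $X_1$ is smooth of dimension $n_1$. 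This object is supported on $Z$, so the subsequent $(\on{pr}_2)_\bullet$ agrees with the $t$-exact $(p_2)_\bullet$, giving right $t$-exactness of $\sF[n_1]$. The left $t$-exactness of $\sF^{\on{op}}[-n_1]$ follows from the parallel estimate with $(\Delta_{X_1}\times\on{id}_{X_2})^\bullet$ in place of $(\Delta_{X_1}\times\on{id}_{X_2})^!$.
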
 

\begin{proof}

The fact that $\sF_{X_1\to X_2,\CQ}[n_1]$ is right t-exact is straightforward from the definition
(no assumption that $\sF_{X_1\to X_2,\CQ}$ preserve compactness is needed). 

\medskip

The fact that $\sF_{X_1\to X_2,\CQ}[n_1]$ is left t-exact follows from the isomorphism
$$\sF_{X_1\to X_2,\CQ}[n_1]\simeq  \sF^{\on{op}}_{X_1\to X_2,\CQ}[-n_1].$$

\end{proof}

\ssec{An example: Fourier-Deligne transform} \label{ss:FD}

Let us consider a familiar example of the situation described in Corollaries \ref{c:our functor as other},
\ref{c:smooth schemes} and \ref{c:smooth schemes same dim}. 

\sssec{}

Namely, let $V$ be a 
finite-dimensional vector space, thought of a scheme over $k$, and let $V^\vee$ be the dual vector space.
We take $X_1=V$ and $X_2=V^\vee$.

\medskip

We take the kernel $\CQ\in \Dmod(V\times V^\vee)$ to be the pullback of exponential D-module on $\BG_a$
under the evaluation map $V\times V^\vee\to \BG_a$. We denote it symbolically by
$$\on{exp}\in  \Dmod(V\times V^\vee),$$
and we normalize it so that it lives in cohomological degree $-\dim(V)$ with respect to the natural t-structure 
on the category $\Dmod(V\times V^\vee)$.

\medskip

The corresponding functor $\Dmod(V)\to \Dmod(V^\vee)$ is by definition the Fourier-Deligne transform 
\begin{equation} \label{e:FD}
\sF_{V\to V^\vee,\on{exp}}=(\on{pr}_2)_\bullet\left(\on{pr}_1^!(\CM)\sotimes \on{exp}\right).
\end{equation}

Since $\sF_{V\to V^\vee,\on{exp}}$ is an equivalence, it admits both left and right adjoints (which are isomorphic). 

\sssec{}  \label{sss:FD}
 
It is well-known that the functor $\sF_{V\to V^\vee,\on{exp}}$ can be rewritten as 
\begin{equation} \label{e:FD !}
\CM\mapsto (\on{pr}_2)_!\left(\on{pr}_1^!(\CM)\sotimes \on{exp}\right),
\end{equation}
(see \secref{sss:bad functors} regarding the meaning of $(\on{pr}_2)_!$).  

\medskip

Now, using the fact that the map $\on{pr}_1$ is smooth and that the D-module $\on{exp}$
on $V\times V^\vee$ is lisse, the expression in \eqref{e:FD !}
can be further rewritten as 
$$(\on{pr}_2)_!\left(\on{pr}_1^\bullet(\CM)\overset{\bullet}\otimes\on{exp}\right)[-2\dim(V)],$$
and the latter functor identifies with the functor 
$$\sF^{\on{op}}_{V\to V^\vee,\on{exp}}[-2\dim(V)].$$

\medskip

Thus, we obtain an isomorphism
$$\sF^{\on{op}}_{V\to V^\vee,\on{exp}}[-2\dim(V)]\simeq \sF_{V\to V^\vee,\on{exp}}.$$

However, we now know that the latter is not a special feature of the Fourier-Deligne trasform,
but rather a particular case of \corref{c:our functor as other} (for $X_1$ smooth and separated). 

\medskip

Note also that the fact that the map 
from \eqref{e:FD !} $\to$ \eqref{e:FD}, coming from the natural transformation 
$(\on{pr}_2)_!\to (\on{pr}_2)_\bullet$ is an isomorphism, follows from the description of the 
isomorphism of \corref{c:right conj} in the separated case; this description will be given in the next section,
specifically, \secref{ss:nat trans sep}.

\sssec{}

The right adjoint of $\sF_{V\to V^\vee,\on{exp}}$, written as $\sF^{\on{op}}_{V\to V^\vee,\on{exp}}[-2\dim(V)]$, identifies with 
\begin{equation} \label{e:FD ! rewrite again}
\CM'\mapsto (\on{pr}_1)_\bullet\left(\on{pr}_2^!(\CM')\sotimes \BD^{\on{Ve}}_{V\times V^\vee}(\on{exp})\right)[2\dim(V)],
\end{equation}
which in turn is the functor $\sF_{V^\vee\to V;\on{-exp}}$, i.e.,  the inverse Fourier-Deligne transform. 

\medskip

The isomorphism
$$(\sF_{V\to V^\vee,\on{exp}})^R\simeq \sF_{V^\vee\to V;\on{-exp}}$$
coincides with the assertion of \corref{c:smooth schemes}(1).

\sssec{}

Finally, we note that the functor $\sF_{V\to V^\vee,\on{exp}}$ admits a \emph{left} adjoint, 
given by
$$\CM'\mapsto (\on{pr}_1)_!\left(\on{pr}_2^\bullet(\CM')\overset{\bullet}\otimes\BD^{\on{Ve}}_{V\times V^\vee}(\on{exp})\right),$$
i.e., $\sF^{\on{op}}_{V^\vee\to V,\on{-exp}}[-2\dim(V)]$, 
which, by \secref{sss:FD} with the roles of $V$ and $V^\vee$ swapped, is well-defined and isomorphic to $\sF_{V^\vee\to V;\on{-exp}}$.

\medskip

The isomorphism
$$(\sF_{V\to V^\vee,\on{exp}})^R\simeq (\sF_{V\to V^\vee,\on{exp}})^L$$
coincides with the assertion of \corref{c:smooth schemes same dim}.

\section{The natural transformations}  \label{s:nat trans}

The goal of this section is to describe geometrically the isomorphisms of \thmref{t:schemes}
and \corref{c:conj other}. This material will not be used elsewhere in the paper. 

\ssec{The adjunction map}  \label{ss:adj map}

\sssec{}

Let $\CQ$ be as in \thmref{t:schemes}. The (iso)morphism
\begin{equation} \label{e:original map}
\psId_{X_1}\circ (\sF_{X_1\to X_2,\CQ})^R\to \sF_{X_2\to X_1,\BD^{\on{Ve}}_{X_1\times X_2}(\BQ)}
\end{equation}
of \thmref{t:schemes}
gives rise to a natural transformation
\begin{equation} \label{e:adj map}
\psId_{X_1}\to \sF_{X_2\to X_1,\BD^{\on{Ve}}_{X_1\times X_2}(\CQ)}\circ \sF_{X_1\to X_2,\CQ}.
\end{equation}

The map \eqref{e:adj map} will be described exlicitly (in the context of general DG categories)
in \secref{sss:adj map abstract}. We will now explain what this abstract description amounts to
in the case of categories of D-modules.

\sssec{}

First, we note that for a scheme $Y$ and $\CM\in \Dmod(Y)^c$ we have a canonical map
$$\CM\boxtimes \BD^{\on{Ve}}_Y(\CM)\to (\Delta_Y)_\bullet(\omega_Y).$$

Applying Verdier duality, we obtain a canonical map
\begin{equation} \label{e:map from k}
(\Delta_Y)_!(k_Y)\to \CM\boxtimes \BD^{\on{Ve}}_Y(\CM).
\end{equation}

\sssec{}   \label{sss:adj map}

The right-hand side in \eqref{e:adj map} is a functor $\Dmod(X_1)\to \Dmod(X_1)$ given by the kernel
\begin{equation} \label{e:kernel of comp}
(\on{id}_{X_1}\times p_{X_2}\times \on{id}_{X_1})_\bullet \circ (\on{id}_{X_1}\times \Delta_{X_2}\times \on{id}_{X_1})^!\circ 
\sigma_{2,3}(\CQ\boxtimes \BD^{\on{Ve}}_{X_1\times X_2}(\CQ)),
\end{equation}
where $\sigma_{2,3}$ is the transposition of the corresponding factors.

\medskip

The datum of a map in \eqref{e:adj map} is equivalent to that of a map from $(\Delta_{X_1})_!(k_{X_1})$ to
\eqref{e:kernel of comp}, and further, by the $((\Delta_{X_1})_!,\Delta_{X_1}^!)$-adjunction, to a map
\begin{equation} \label{e:kernel of comp 1}
k_{X_1}\to \Delta_{X_1}^!\circ 
(\on{id}_{X_1}\times p_{X_2}\times \on{id}_{X_1})_\bullet \circ (\on{id}_{X_1}\times \Delta_{X_2}\times \on{id}_{X_1})^!\circ 
\sigma_{2,3}(\CQ\boxtimes \BD^{\on{Ve}}_{X_1\times X_2}(\CQ)).
\end{equation}

By base change along
$$
\CD
X_1\times X_2  @>{\Delta_{X_1}\times \on{id}_{X_2}}>>   X_1\times X_1\times  X_2  \\ 
@V{\on{id}_{X_1}\times p_{X_2}}VV    @VV{\on{id}_{X_1}\times \on{id}_{X_1}\times p_{X_2}}V   \\
X_1  @>{\Delta_{X_1}}>>   X_1\times X_1,
\endCD
$$
the right-hand side in \eqref{e:kernel of comp 1} identifies with
$$(\on{id}_{X_1}\times p_{X_2})_\bullet(\CQ\sotimes \BD^{\on{Ve}}_{X_1\times X_2}(\CQ)).$$

\sssec{}

Now, the desired map in \eqref{e:kernel of comp 1} comes from
$$k_{X_1}\to (\on{id}_{X_1}\times p_{X_2})_\bullet(k_{X_1\times X_2})\overset{\text{\eqref{e:map from k}}}\longrightarrow
(\on{id}_{X_1}\times p_{X_2})_\bullet(\CQ\sotimes \BD^{\on{Ve}}_{X_1\times X_2}(\CQ)).$$

In the above formula, the first arrow uses the canonical map (defined for any scheme $Y$; in our case $Y=X_2$)
$$k\to (p_Y)_\bullet(k_Y),$$
that arises from the $(p_Y^\bullet,(p_Y)_\bullet)$-adjunction. 

\ssec{The map between two styles of functors}   \label{ss:nat trans}

Let $\CQ$ be again as in \thmref{t:schemes}. We shall now write down explicitly the (iso)morphism 
\begin{equation} \label{e:left to right prel}
(\sF_{X_1\to X_2;\CQ})^{\on{op}}\circ \psId_{X_1}\to \sF_{X_1\to X_2,\BD^{\on{Ve}}_{X_1\times X_2}(\CQ)}
\end{equation} 
of \corref{c:right conj}. 

\sssec{}

By \secref{sss:other functor}, we rewrite $(\sF_{X_1\to X_2;\CQ})^{\on{op}}$ as 
$$\sF^{\on{op}}_{X_1\to X_2;\BD^{\on{Ve}}_{X_1\times X_2}(\CQ)},$$
where $\sF^{\on{op}}_{X_1\to X_2;\BD^{\on{Ve}}_{X_1\times X_2}(\CQ)}$ is as in \eqref{e:other functor}. 

\medskip

Thus, we need to describe the resulting natural transformation
\begin{equation} \label{e:left to right Q}
\sF^{\on{op}}_{X_1\to X_2;\BD^{\on{Ve}}_{X_1\times X_2}(\CQ)}\circ \psId_{X_1}\to \sF_{X_1\to X_2,\BD^{\on{Ve}}_{X_1\times X_2}(\CQ)}.
\end{equation}

More generally, we will write down a natural transformation
\begin{equation} \label{e:left to right}
\sF^{\on{op}}_{X_1\to X_2;\CP}\circ \psId_{X_1}\to \sF_{X_1\to X_2,\CP}
\end{equation}
for any $\CP\in \Dmod(X_1\times X_2)$  (i.e., not necessarily the dual of an object defining a functor
satisfying the assumption of \thmref{t:schemes}).

\medskip

The description of the map \eqref{e:left to right} occupies the rest of this subsection.  The fact that \eqref{e:left to right},
when applied to $\CP:=\BD^{\on{Ve}}_{X_1\times X_2}(\CQ)$, yields \eqref{e:left to right Q} is verified by a diagram chase,
once we interpret \eqref{e:left to right Q} as obtained by passage to the dual functors in \eqref{e:original map},
described explicitly in \secref{sss:adj map}.

\medskip

We note that when the scheme $X_1$ is separated, the map \eqref{e:left to right} can be significantly simplified,
see \secref{ss:nat trans sep}.

\sssec{}

Consider the following diagram of schemes
$$
\CD
& & & & X_1\times X_1\times X_1\times X_2 \\
& & & & @V{\on{id}_{X_1}\times \Delta_{X_1}\times \on{id}_{X_1}\times \on{id}_{X_2}}VV  \\
& & X_1\times X_1\times X_1\times X_2 
@>{\Delta_{X_1}\times\on{id}_{X_1}\times \on{id}_{X_1}\times \on{id}_{X_2}}>> X_1\times X_1\times X_1\times X_1\times X_2 \\
& & @VV{p_{X_1}\times \on{id}_{X_1}\times \on{id}_{X_1}\times \on{id}_{X_2}}V   \\
X_1\times X_2 @>{\Delta_{X_1}\times\on{id}_{X_2}}>> X_1\times X_1\times X_2 \\
@VV{p_{X_1}\times \on{id}_{X_2}}V  \\
X_2
\endCD
$$

For $\CM\in \Dmod(X_1)$ we start with the object
\begin{equation} \label{e:starting object}
\CM\boxtimes k_{X_1}\boxtimes \CP\in \Dmod(X_1\times X_1\times X_1\times X_2).
\end{equation}
The object
$$\sF^{\on{op}}_{X_1\to X_2;\CP}\circ \psId_{X_1}(\CM)\in \on{Pro}(\Dmod(X_2)),$$ 
i.e., the left-hand side of \eqref{e:left to right}, applied to $\CM$, 
equals the result the application to \eqref{e:starting object}
of the following composition of functors 
\begin{multline}  \label{e:stage 1}
(p_{X_1}\times \on{id}_{X_2})_!\circ (\Delta_{X_1}\times\on{id}_{X_2})^\bullet\circ 
(p_{X_1}\times \on{id}_{X_1}\times \on{id}_{X_1}\times \on{id}_{X_2})_\bullet\circ \\
\circ (\Delta_{X_1}\times\on{id}_{X_1}\times \on{id}_{X_1}\times \on{id}_{X_2})^!\circ 
(\on{id}_{X_1}\times \Delta_{X_1}\times \on{id}_{X_1}\times \on{id}_{X_2})_!.
\end{multline}

\sssec{}

Note that for a Cartesian diagram
\begin{equation} \label{e:Cart diagram}
\CD
Y_{11} @>{f_1}>>  Y_{1,0} \\
@V{g_1}VV   @VV{g_0}V  \\
Y_{0,1}  @>{f_0}>> Y_{0,0}
\endCD
\end{equation}
we have a canonically defined natural transformation
$$f_0^\bullet\circ (g_0)_\bullet\to (g_1)_\bullet\circ f_1^\bullet,$$
coming by adjunction from the isomorphism
$$(g_0)_\bullet\circ (f_1)_\bullet\simeq (f_0)_\bullet\circ (g_1)_\bullet.$$

Applying this to the square
$$
\CD
X_1\times X_1\times X_2 @>{\on{id}_{X_1}\times \Delta_{X_1}\times  \on{id}_{X_2}}>>  X_1\times X_1\times X_1\times X_2  \\
@V{p_{X_1}\times  \on{id}_{X_1}\times \on{id}_{X_2}}VV   
@VV{p_{X_1}\times \on{id}_{X_1}\times \on{id}_{X_1}\times \on{id}_{X_2}}V  \\
X_1\times X_2 @>{\Delta_{X_1}\times\on{id}_{X_2}}>> X_1\times X_1\times X_2, 
\endCD
$$

\medskip

we obtain a natural transformation from \eqref{e:stage 1} to
\begin{multline}  \label{e:stage 2}
(p_{X_1}\times \on{id}_{X_2})_!\circ (p_{X_1}\times \on{id}_{X_1}\times \on{id}_{X_2})_\bullet\circ 
(\on{id}_{X_1}\times  \Delta_{X_1} \times \on{id}_{X_2})^\bullet\circ \\
\circ (\Delta_{X_1}\times\on{id}_{X_1}\times \on{id}_{X_1}\times \on{id}_{X_2})^!\circ 
(\on{id}_{X_1}\times \Delta_{X_1}\times \on{id}_{X_1}\times \on{id}_{X_2})_!.
\end{multline}

\medskip

I.e., we are now looking at the diagram

$$
\xy
(5,0)*+{X_1\times X_1\times X_2}="X";
(5,-25)*+{X_1\times X_2}="Y";
(5,-45)*+{X_2.}="Z";
(25,-15)*+{X_1\times X_1\times X_1\times X_2}="W";
(100,-15)*+{\,\,X_1\times X_1\times X_1\times X_1\times X_2}="U";
(100,10)*+{X_1\times X_1\times X_1\times X_2}="V";
{\ar@{->}_{p_{X_1}\times \on{id}_{X_1}\times \on{id}_{X_2}} "X";"Y"}
{\ar@{->}_{p_{X_1}\times \on{id}_{X_2}} "Y";"Z"}
{\ar@{->}^{\on{id}_{X_1}\times  \Delta_{X_1} \times \on{id}_{X_2}} "X";"W"}
{\ar@{->}^{\Delta_{X_1}\times  \on{id}_{X_1}\times  \on{id}_{X_1}\times \on{id}_{X_2}} "W";"U"}
{\ar@{->}_{\on{id}_{X_1}\times \Delta_{X_1}\times \on{id}_{X_1}\times \on{id}_{X_2}} "V";"U"}
\endxy
$$

\sssec{}

Note also that for a Cartesian diagram \eqref{e:Cart diagram} there is a canonical natural transformation
\begin{equation} \label{e:nat trans pullback}
g_1^\bullet\circ f_0^!\to f_1^!\circ g_0^\bullet,
\end{equation}
coming by adjunction from the base change isomorphism
$$f_0^!\circ (g_0)_\bullet\simeq (g_1)_\bullet\circ f_1^!.$$  

\medskip

Applying this to the square
$$
\CD
X_1\times X_1\times X_2   @>{\Delta_{X_1}\times\on{id}_{X_1}\times \on{id}_{X_2}}>>   X_1\times X_1\times X_1\times X_2   \\ 
@V{\on{id}_{X_1}\times  \Delta_{X_1} \times \on{id}_{X_2}}VV    @VV{\on{id}_{X_1}\times \on{id}_{X_1}\times \Delta_{X_1}\times \on{id}_{X_2}}V      \\
X_1\times X_1\times X_1\times X_2 
@>{\Delta_{X_1}\times\on{id}_{X_1}\times \on{id}_{X_1}\times \on{id}_{X_2}}>> X_1\times X_1\times X_1\times X_1\times X_2,
\endCD
$$

\medskip

we obtain a natural transformation from \eqref{e:stage 2} to 
\begin{multline}  \label{e:stage 3}
(p_{X_1}\times \on{id}_{X_2})_!\circ (p_{X_1}\times \on{id}_{X_1}\times \on{id}_{X_2})_\bullet\circ 
(\Delta_{X_1}\times\on{id}_{X_1}\times \on{id}_{X_2})^!\circ \\
\circ  (\on{id}_{X_1}\times \on{id}_{X_1}\times \Delta_{X_1}\times
\on{id}_{X_2})^\bullet\circ (\on{id}_{X_1}\times \Delta_{X_1}\times \on{id}_{X_1}\times \on{id}_{X_2})_!.
\end{multline}

\medskip

I.e., we are now looking at the diagram

\smallskip

$$
\xy
(5,0)*+{X_1\times X_1\times X_2}="X";
(5,-20)*+{X_1\times X_2}="Y";
(5,-40)*+{X_2.}="Z";
(65,0)*+{X_1\times X_1\times X_1\times X_2}="W";
(100,-30)*+{\,\,X_1\times X_1\times X_1\times X_1\times X_2}="U";
(100,10)*+{X_1\times X_1\times X_1\times X_2}="V";
{\ar@{->}_{p_{X_1}\times \on{id}_{X_1}\times \on{id}_{X_2}} "X";"Y"}
{\ar@{->}_{p_{X_1}\times \on{id}_{X_2}} "Y";"Z"}
{\ar@{->}^{\Delta_{X_1} \times \on{id}_{X_1}\times \on{id}_{X_2}} "X";"W"}
{\ar@{->}_{\on{id}_{X_1}\times \on{id}_{X_1}\times   \Delta_{X_1}\times \on{id}_{X_2}} "W";"U"}
{\ar@{->}^{\on{id}_{X_1}\times \Delta_{X_1}\times \on{id}_{X_1}\times \on{id}_{X_2}} "V";"U"}
\endxy
$$

\sssec{}

By base change along
$$
\CD
X_1\times X_1\times X_2 @>{\on{id}_{X_1}\times \Delta_{X_1}\times \on{id}_{X_2}}>>  X_1\times X_1\times X_1\times X_2   \\  
@V{\on{id}_{X_1}\times \Delta_{X_1}\times \on{id}_{X_2}}VV   
@VV{\on{id}_{X_1}\times \Delta_{X_1}\times \on{id}_{X_1}\times \on{id}_{X_2}}V  \\
X_1\times X_1\times X_1\times X_2   
@>>{\on{id}_{X_1}\times \on{id}_{X_1}\times \Delta_{X_1}\times \on{id}_{X_2}}>  X_1\times X_1\times X_1\times X_1\times X_2,
\endCD
$$

\medskip

we rewrite \eqref{e:stage 3} as
\begin{multline}  \label{e:stage 4}
(p_{X_1}\times \on{id}_{X_2})_!\circ (p_{X_1}\times \on{id}_{X_1}\times \on{id}_{X_2})_\bullet\circ \\
\circ (\Delta_{X_1}\times\on{id}_{X_1}\times \on{id}_{X_2})^!\circ (\on{id}_{X_1}\times \Delta_{X_1}\times \on{id}_{X_2})_!\circ 
(\on{id}_{X_1}\times \Delta_{X_1}\times \on{id}_{X_2})^\bullet.
\end{multline}

I.e., our diagram is now

$$
\xy
(5,0)*+{X_1\times X_1\times X_2}="X";
(5,-20)*+{X_1\times X_2}="Y";
(5,-40)*+{X_2.}="Z";
(65,0)*+{X_1\times X_1\times X_1\times X_2}="W";
(65,30)*+{X_1\times X_1\times X_2}="U";
(100,15)*+{X_1\times X_1\times X_1\times X_2}="V";
{\ar@{->}_{p_{X_1}\times \on{id}_{X_1}\times \on{id}_{X_2}} "X";"Y"}
{\ar@{->}_{p_{X_1}\times \on{id}_{X_2}} "Y";"Z"}
{\ar@{->}^{\Delta_{X_1} \times \on{id}_{X_1}\times \on{id}_{X_2}} "X";"W"}
{\ar@{<-}^{\on{id}_{X_1}\times \Delta_{X_1}\times \on{id}_{X_2}} "W";"U"}
{\ar@{<-}_{\,\,\,\,\,\,\,\on{id}_{X_1}\times \Delta_{X_1}\times \on{id}_{X_2}} "V";"U"}
\endxy
$$

\sssec{}

Note now that
$$(\on{id}_{X_1}\times \Delta_{X_1}\times \on{id}_{X_2})^\bullet(\CM\boxtimes k_{X_1}\boxtimes \CP)\simeq \CM\boxtimes \CP.$$

Hence, we are considering the diagram
$$
\xy
(0,0)*+{X_1\times X_1\times X_2}="X";
(0,-15)*+{X_1\times X_2}="Y";
(20,-25)*+{X_2}="Z";
(60,0)*+{\,\,X_1\times X_1\times X_1\times X_2}="W";
(60,15)*+{X_1\times X_1\times X_2}="V"
{\ar@{->}_{p_{X_1}\times \on{id}_{X_1}\times \on{id}_{X_2}} "X";"Y"}
{\ar@{->}^{\,\,\,p_{X_1}\times \on{id}_{X_2}} "Y";"Z"}
{\ar@{->}^{\Delta_{X_1} \times \on{id}_{X_1}\times  \on{id}_{X_2}} "X";"W"}
{\ar@{->}^{\on{id}_{X_1}\times \Delta_{X_1}\times \on{id}_{X_2}} "V";"W"}
\endxy
$$

and we need to calculate the functor
\begin{equation}  \label{e:stage 4 bis}
(p_{X_1}\times \on{id}_{X_2})_!\circ (p_{X_1}\times \on{id}_{X_1}\times \on{id}_{X_2})_\bullet
\circ (\Delta_{X_1}\times\on{id}_{X_1}\times \on{id}_{X_2})^!\circ (\on{id}_{X_1}\times \Delta_{X_1}\times \on{id}_{X_2})_!
\end{equation} 

applied to $\CM\boxtimes \CP\in \Dmod(X_1\times X_1\times X_2)$. 

\sssec{}

Consider again the Cartesian diagarm \eqref{e:Cart diagram}. Note that we have a canonical natural transformation
\begin{equation} \label{e:nat trans dir im}
(f_0)_!\circ (g_1)_\bullet\to (g_0)_\bullet\circ (f_1)_!
\end{equation}
that comes by adjunction from the base change isomorphism
$$(g_1)_\bullet\circ f_1^!\simeq f_0^!\circ (g_0)_\bullet.$$

Applying this to the square
$$
\CD
X_1\times X_1\times X_2   @>{ \on{id}_{X_1}\times  p_{X_1}\times \on{id}_{X_2}}>>   X_1\times X_2  \\
@V{p_{X_1}\times \on{id}_{X_1}\times \on{id}_{X_2}}VV   @VV{p_{X_1}\times \on{id}_{X_2}}V   \\
X_1\times X_2 @>{p_{X_1}\times \on{id}_{X_2}}>>  X_2,
\endCD
$$

we obtain a natural transformation from \eqref{e:stage 4 bis} to the functor
\begin{equation}  \label{e:stage 5}
(p_{X_1}\times \on{id}_{X_2})_\bullet\circ (\on{id}_{X_1}\times p_{X_1}\times \on{id}_{X_2})_!
\circ (\Delta_{X_1}\times\on{id}_{X_1}\times \on{id}_{X_2})^!\circ (\on{id}_{X_1}\times \Delta_{X_1}\times \on{id}_{X_2})_!.
\end{equation} 

I.e., we are now considering the diagram
$$
\xy
(0,0)*+{X_1\times X_1\times X_2}="X";
(30,-15)*+{X_1\times X_2}="Y";
(30,-30)*+{X_2}="Z";
(60,0)*+{\,\,X_1\times X_1\times X_1\times X_2}="W";
(60,15)*+{X_1\times X_1\times X_2}="V"
{\ar@{->}_{\on{id}_{X_1}\times p_{X_1}\times \on{id}_{X_2}\,\,} "X";"Y"}
{\ar@{->}^{\,\,\,p_{X_1}\times \on{id}_{X_2}} "Y";"Z"}
{\ar@{->}^{\Delta_{X_1} \times \on{id}_{X_1}\times  \on{id}_{X_2}} "X";"W"}
{\ar@{->}^{\on{id}_{X_1}\times \Delta_{X_1}\times \on{id}_{X_2}} "V";"W"}
\endxy
$$

\sssec{}

Returing again to \eqref{e:Cart diagram}, we have a natural transformation
$$(g_1)_!\circ f_1^!\to f_0^!\circ (g_0)_!,$$
obtained by adjunction from the isomorphism
$$f_1^!\circ g_0^!\simeq g_1^!\circ f_0^!.$$

\medskip

Applying this to the square
$$
\CD
X_1\times X_1\times X_2 @>{\Delta_{X_1} \times \on{id}_{X_1}\times  \on{id}_{X_2}}>> X_1\times X_1\times X_1\times X_2 \\
@VV{\on{id}_{X_1}\times p_{X_1}\times \on{id}_{X_2}}V  @VV{\on{id}_{X_1}\times \on{id}_{X_1}\times  p_{X_1}\times \on{id}_{X_2}}V   \\
X_1\times X_2 @>{\Delta_{X_1} \times  \on{id}_{X_2}}>>  X_1\times X_1\times X_2, 
\endCD
$$

we obtain a natural transformation from \eqref{e:stage 5} to the functor
\begin{equation}  \label{e:stage 6}
(p_{X_1}\times \on{id}_{X_2})_\bullet\circ (\Delta_{X_1} \times  \on{id}_{X_2})^!\circ (\on{id}_{X_1}\times \on{id}_{X_1}\times  p_{X_1}\times \on{id}_{X_2})_!
\circ (\on{id}_{X_1}\times \Delta_{X_1}\times \on{id}_{X_2})_!.
\end{equation}

I.e., we are now looking at the diagram
$$
\CD
& &  X_1\times X_1\times X_2 \\
& & @VV{\on{id}_{X_1}\times \Delta_{X_1}\times \on{id}_{X_2}}V  \\
& & X_1\times X_1\times X_1\times X_2 \\
& & @VV{\on{id}_{X_1}\times \on{id}_{X_1}\times p_{X_1}\times \on{id}_{X_2}}V  \\
X_1\times X_2 @>{\Delta_{X_1} \times  \on{id}_{X_2}}>> X_1\times X_1\times X_2 \\
@VV{p_{X_1}\times \on{id}_{X_2}}V \\
X_2.
\endCD
$$

However, the composed morphism
$(\on{id}_{X_1}\times \on{id}_{X_1}\times p_{X_1}\times \on{id}_{X_2})\circ (\on{id}_{X_1}\times \Delta_{X_1}\times \on{id}_{X_2})$
equals the identity, and hence, the functor in \eqref{e:stage 6} identifies with 
$$(p_{X_1}\times \on{id}_{X_2})_\bullet\circ (\Delta_{X_1} \times  \on{id}_{X_2})^!.$$

\medskip

When applied to $\CM\boxtimes \CP$, this yields $\sF_{X_1\to X_2,\CP}(\CM)$, i.e., the right-hand side
in \eqref{e:left to right}, applied to $\CM\in \Dmod(X_1)$. 

\sssec{Summary}

Here is the picture of the evolution of diagrams (the highlighted portion is the one to undergo 
base change). 

$$
\CD
& & & & X_1\times X_1\times X_1\times X_2 \\
& & & & @V{\on{id}_{X_1}\times \Delta_{X_1}\times \on{id}_{X_1}\times \on{id}_{X_2}}VV  \\
& & X_1\times X_1\times X_1\times X_2 
@>{\Delta_{X_1}\times\on{id}_{X_1}\times \on{id}_{X_1}\times \on{id}_{X_2}}>> X_1\times X_1\times X_1\times X_1\times X_2 \\
& & @VV{p_{X_1}\times \on{id}_{X_1}\times \on{id}_{X_1}\times \on{id}_{X_2}}V   \\
\mathbf {X_1\times X_2} @>{\Delta_{X_1}\times\on{id}_{X_2}}>> \mathbf{X_1\times X_1\times X_2} \\
@VV{p_{X_1}\times \on{id}_{X_2}}V  \\
\mathbf{X_2}
\endCD
$$

$$
\xy
(5,0)*+{\mathbf{X_1\times X_1\times X_2}}="X";
(5,-25)*+{X_1\times X_2}="Y";
(5,-45)*+{X_2.}="Z";
(25,-15)*+{\mathbf{X_1\times X_1\times X_1\times X_2}}="W";
(100,-15)*+{\,\,\mathbf{X_1\times X_1\times X_1\times X_1\times X_2}}="U";
(100,10)*+{X_1\times X_1\times X_1\times X_2}="V";
{\ar@{->}_{p_{X_1}\times \on{id}_{X_1}\times \on{id}_{X_2}} "X";"Y"}
{\ar@{->}_{p_{X_1}\times \on{id}_{X_2}} "Y";"Z"}
{\ar@{->}^{\on{id}_{X_1}\times  \Delta_{X_1} \times \on{id}_{X_2}} "X";"W"}
{\ar@{->}^{\Delta_{X_1}\times  \on{id}_{X_1}\times  \on{id}_{X_1}\times \on{id}_{X_2}} "W";"U"}
{\ar@{->}_{\on{id}_{X_1}\times \Delta_{X_1}\times \on{id}_{X_1}\times \on{id}_{X_2}} "V";"U"}
\endxy
$$
$$
\xy
(5,0)*+{X_1\times X_1\times X_2}="X";
(5,-20)*+{X_1\times X_2}="Y";
(5,-40)*+{X_2.}="Z";
(65,0)*+{\mathbf{X_1\times X_1\times X_1\times X_2}}="W";
(100,-30)*+{\,\,\mathbf{X_1\times X_1\times X_1\times X_1\times X_2}}="U";
(100,10)*+{\mathbf{X_1\times X_1\times X_1\times X_2}}="V";
{\ar@{->}_{p_{X_1}\times \on{id}_{X_1}\times \on{id}_{X_2}} "X";"Y"}
{\ar@{->}_{p_{X_1}\times \on{id}_{X_2}} "Y";"Z"}
{\ar@{->}^{\Delta_{X_1} \times \on{id}_{X_1}\times \on{id}_{X_2}} "X";"W"}
{\ar@{->}_{\on{id}_{X_1}\times \on{id}_{X_1}\times   \Delta_{X_1}\times \on{id}_{X_2}} "W";"U"}
{\ar@{->}^{\on{id}_{X_1}\times \Delta_{X_1}\times \on{id}_{X_1}\times \on{id}_{X_2}} "V";"U"}
\endxy
$$

$$
\xy
(5,0)*+{X_1\times X_1\times X_2}="X";
(5,-20)*+{X_1\times X_2}="Y";
(5,-40)*+{X_2.}="Z";
(65,0)*+{X_1\times X_1\times X_1\times X_2}="W";
(65,30)*+{X_1\times X_1\times X_2}="U";
(100,15)*+{X_1\times X_1\times X_1\times X_2}="V";
{\ar@{->}_{p_{X_1}\times \on{id}_{X_1}\times \on{id}_{X_2}} "X";"Y"}
{\ar@{->}_{p_{X_1}\times \on{id}_{X_2}} "Y";"Z"}
{\ar@{->}^{\Delta_{X_1} \times \on{id}_{X_1}\times \on{id}_{X_2}} "X";"W"}
{\ar@{<-}^{\on{id}_{X_1}\times \Delta_{X_1}\times \on{id}_{X_2}} "W";"U"}
{\ar@{<-}_{\,\,\,\,\,\,\,\on{id}_{X_1}\times \Delta_{X_1}\times \on{id}_{X_2}} "V";"U"}
\endxy
$$

At this stage we note that the object of interest on $X_1\times X_1\times X_1\times X_2$ 
comes as a pullback under $\on{id}_{X_1}\times p_{X_1}\times \on{id}_{X_1}\times \on{id}_{X_2}$
from $X_1\times X_1\times X_2$. 

\medskip

Hence, we resume with the next diagram:

$$
\xy
(0,0)*+{\mathbf{X_1\times X_1\times X_2}}="X";
(0,-15)*+{\mathbf{X_1\times X_2}}="Y";
(20,-25)*+{\mathbf{X_2}}="Z";
(60,0)*+{\,\,X_1\times X_1\times X_1\times X_2}="W";
(60,15)*+{X_1\times X_1\times X_2}="V"
{\ar@{->}_{p_{X_1}\times \on{id}_{X_1}\times \on{id}_{X_2}} "X";"Y"}
{\ar@{->}^{\,\,\,p_{X_1}\times \on{id}_{X_2}} "Y";"Z"}
{\ar@{->}^{\Delta_{X_1} \times \on{id}_{X_1}\times  \on{id}_{X_2}} "X";"W"}
{\ar@{->}^{\on{id}_{X_1}\times \Delta_{X_1}\times \on{id}_{X_2}} "V";"W"}
\endxy
$$

$$
\xy
(0,0)*+{\mathbf{X_1\times X_1\times X_2}}="X";
(30,-15)*+{\mathbf{X_1\times X_2}}="Y";
(30,-30)*+{X_2}="Z";
(60,0)*+{\,\,\mathbf{X_1\times X_1\times X_1\times X_2}}="W";
(60,15)*+{X_1\times X_1\times X_2}="V"
{\ar@{->}_{\on{id}_{X_1}\times p_{X_1}\times \on{id}_{X_2}\,\,} "X";"Y"}
{\ar@{->}^{\,\,\,p_{X_1}\times \on{id}_{X_2}} "Y";"Z"}
{\ar@{->}^{\Delta_{X_1} \times \on{id}_{X_1}\times  \on{id}_{X_2}} "X";"W"}
{\ar@{->}^{\on{id}_{X_1}\times \Delta_{X_1}\times \on{id}_{X_2}} "V";"W"}
\endxy
$$

$$
\CD
& &  X_1\times X_1\times X_2 \\
& & @VV{\on{id}_{X_1}\times \Delta_{X_1}\times \on{id}_{X_2}}V  \\
& & X_1\times X_1\times X_1\times X_2 \\
& & @VV{\on{id}_{X_1}\times \on{id}_{X_1}\times p_{X_1}\times \on{id}_{X_2}}V  \\
X_1\times X_2 @>{\Delta_{X_1} \times  \on{id}_{X_2}}>> X_1\times X_1\times X_2 \\
@VV{p_{X_1}\times \on{id}_{X_2}}V \\
X_2,
\endCD
$$

while the latter diagram is equivalent to
$$
\CD
X_1\times X_2 @>{\Delta_{X_1} \times  \on{id}_{X_2}}>> X_1\times X_1\times X_2 \\
@VV{p_{X_1}\times \on{id}_{X_2}}V \\
X_2.
\endCD
$$

\ssec{Specializing to the separated case}  \label{ss:nat trans sep}

Assume now that the scheme $X_1$ is separated. In this case the natural transformation \eqref{e:left to right}
can be significantly simplified.

\sssec{}

First, we note if $f:Y\to Z$ is a separated morphism, there is a canonically defined natural transformation
$$f_!\to f_\bullet,$$
described as follows.

\medskip

Consider the Cartesian diagram
$$
\CD
Y\underset{Z}\times Y  @>{\on{pr}_2}>>  Y  \\
@V{\on{pr}_1}VV  @VV{f}V  \\
Y @>{f}>>  Z.
\endCD
$$

By \eqref{e:nat trans dir im}, we have a natural transformation
$$f_!\circ (\on{pr}_1)_\bullet\to f_\bullet\circ (\on{pr}_2)_!.$$

Pre-composing with $(\Delta_{Y/Z})_\bullet\simeq (\Delta_{Y/Z})_!$, where 
$$\Delta_{Y/Z}:Y\to Y\underset{Z}\times Y$$
(it is here that we use the assumption that $\Delta_{Y/Z}$ is a closed embedding), 
we obtain the desired natural transformation
$$f_!\simeq f_!\circ (\on{pr}_1\circ \Delta_{Y/Z})_\bullet\simeq 
f_!\circ (\on{pr}_1)_\bullet\circ (\Delta_{Y/Z})_\bullet\to f_\bullet\circ (\on{pr}_1)_! \circ (\Delta_{Y/Z})_!\simeq
f_\bullet\circ (\on{pr}_1 \circ \Delta_{Y/Z})\simeq f_\bullet.$$

\sssec{}

For $X_1$ separated, the morphism $\on{pr}_2:X_1\times X_2\to X_2$ is separated, and so the
functor $\sF^{\on{op}}_{X_1\to X_2;\CP}$ admits a natural transformation to the functor
$$\CM\mapsto (\on{pr}_2)_\bullet(\on{pr}_1^\bullet(\CM)\overset{\bullet}\otimes\CP).$$

\medskip 

In this case, the natural transformation \eqref{e:left to right} is the composition of the above map
$$(\on{pr}_2)_!(\on{pr}_1^\bullet(\psId_{X_1}(\CM))\overset{\bullet}\otimes\CP)\to 
(\on{pr}_2)_\bullet(\on{pr}_1^\bullet(\psId_{X_1}(\CM))\overset{\bullet}\otimes\CP),$$
and a natural transformation induced by a canonically defined map 
\begin{equation} \label{e:left to right sep}
\on{pr}_1^\bullet(\psId_{X_1}(\CM))\overset{\bullet}\otimes\CP \to 
\on{pr}_1^!(\CM)\sotimes \CP,
\end{equation}
described below. 

\sssec{}

Recall that for $X_1$ separated, 
$$\psId_{X_1}(\CM)\simeq \CM\sotimes k_{X_1}.$$

The map in \eqref{e:left to right sep} comes from \eqref{e:nat trans pullback} applied to the 
Cartesian diagram
$$
\CD
X_1\times X_2 @>{\Delta_{X_1}\times \on{id}_{X_2}}>>  X_1\times X_1\times X_2 \\
@V{\Delta_{X_1}\times \on{id}_{X_2}}VV   @VV{\on{id}_{X_1}\times \Delta_{X_1}\times \on{id}_{X_2}}V   \\
X_1\times X_1\times X_2 @>{\Delta_{X_1}\times \on{id}_{X_1}\times \on{id}_{X_2}}>> X_1\times X_1\times X_1\times X_2,
\endCD
$$
and the object 
$$\CM\boxtimes k_{X_1}\boxtimes \CP\in \Dmod(X_1\times X_1\times X_1\times X_2).$$

\section{Relation to $\CO$-modules}  \label{s:O}

The goal of this section is to express the condition on an object $\CQ\in \Dmod(X_1\times X_2)$
that the corresponding functor $\sF_{X_1\to X_2,\CQ}$ preserve compactness, in terms of the underlying
$\CO$-modules. The material of this section will be used in \secref{s:proof of subq}, but not elsewhere
in the paper. 

\ssec{Recollections} \label{ss:recall IndCoh}

As we will be considering the forgetful functor from D-modules to $\CO$-modules, derived algebraic geometry
comes into play. Henceforth in this and the next section, by a ``scheme" we will understand an 
\emph{eventually coconnective DG scheme almost of finite type}, see \secref{sss:dag}.

\sssec{}

For a scheme $X$ understood as above, we will consider the categories $\IndCoh(X)$ and $\QCoh(X)$
(see \cite[Sect. 1]{IndCoh} for the definition of the former and \cite[Sect. 1]{QCoh} of the latter category). 

\medskip

The category $\IndCoh(X)$
is compactly generated, and $\IndCoh(X)^c=\Coh(X)$, the latter being the full (but not cocomplete)
subcategory of $\QCoh(X)$ consisting of bounded complexes with coherent cohomology sheaves. 

\medskip

By a theorem of Thomason-Trobaugh, the category $\QCoh(X)$ is also compactly generated by the subcategory
$\QCoh(X)^{\on{perf}}$ of perfect complexes. 

\medskip

The categories $\IndCoh(X)$ and $\QCoh(X)$ are connected by a pair of adjoint functors 
$$\Psi_X:\IndCoh(X)\to \QCoh(X):\Xi_X,$$
where $\Psi_X$ is obtained by ind-extending the tautological embedding $\Coh(X)\hookrightarrow \QCoh(X)$,
and $\Xi_X$ by ind-extending the tautological embedding $\QCoh(X)^{\on{perf}}\hookrightarrow \Coh(X)\hookrightarrow \IndCoh(X)$.

\medskip

The functor $\Xi_X$ is fully faithfull by construction. 
%
%
The functors $\Psi_X$ and $\Xi_X$ are mutually inverse equivalences if and only if $X$ is a smooth classical scheme.

\sssec{}

For a pair of schemes $X_1$ and $X_2$, external tensor product defines a functor 
\begin{equation} \label{e:external tensor}
\IndCoh(X_1)\otimes \IndCoh(X_2)\to \IndCoh(X_1\times X_2),
\end{equation}
which is an equivalence by \cite[Proposition 6.4.2]{IndCoh}.





\medskip

For a morphism $f:X_1\to X_2$, we shall denote by $f^{\IndCoh}_*$ and $f^!$
the corresponding functors $\IndCoh(X_1)\to \IndCoh(X_2)$ and $\IndCoh(X_2)\to \IndCoh(X_1)$,
respectively, see \cite[Sects. 3.1 and 5.2.3]{IndCoh}. 

\medskip

In particular, for a scheme $X$ we have the functor 
$$\overset{!}\otimes: \IndCoh(X)\otimes \IndCoh(X)\to \IndCoh(X),$$
that identifies, under the equivalence $\IndCoh(X)\otimes \IndCoh(X)\simeq \IndCoh(X\times X)$, with 
the functor $\Delta_X^!:\IndCoh(X\times X)\to \IndCoh(X)$. 

\medskip

We note that for $f=p_X$, the corresponding functor $(p_X)^{\IndCoh}_*$ is canonically
isomorphic to
$$\IndCoh(X)\overset{\Psi_X}\longrightarrow \QCoh(X) \overset{\Gamma_X}\longrightarrow \Vect,$$
where 
$$\Gamma_X:\QCoh(X)\to \Vect$$
is the usual functor of global sections. 

\sssec{}  \label{sss:duality on IndCoh}

We recall (see \cite[Sect. 9.2.1]{IndCoh}) that Serre duality defines a canonical equvalence
$$\bD_X^{\on{Se}}:\IndCoh(X)^\vee\simeq \IndCoh(X).$$

The corresponding functor
$$(\IndCoh(X)^c)^{\on{op}}=(\IndCoh(X)^\vee)^c \overset{\bD^{\on{Se}}_X}\longrightarrow \IndCoh(X)^c$$
is the usual Serre duality functor
$$\BD^{\on{Se}}_X:\Coh(X)^{\on{op}}\to \Coh(X),$$
see \cite[Sect. 9.5]{IndCoh}.

\medskip

Under this equivalence, the unit object 
$$\bu_{\IndCoh(X)}\in 
\IndCoh(X)^\vee\otimes  \IndCoh(X) 
\simeq \IndCoh(X)\otimes  \IndCoh(X) 
\simeq \IndCoh(X\times X)$$
identifies with $(\Delta_X)^{\IndCoh}_*(\omega_X)$, where $\omega_X=p_X^!(k)$. 

\medskip

We note (see \cite[Proposition 9.6.12]{IndCoh}) that due to the assumption that $X$ is \emph{eventually coconnective}, we have
$\omega_X\in \Coh(X)$. In particular, if $X$ is separated, the object 
$$\bu_{\IndCoh(X)}\in \IndCoh(X)^\vee\otimes  \IndCoh(X)$$
is compact.

\sssec{}  \label{sss:dualities and QCoh}

The category $\QCoh(X)$ is also canonically self dual: the equivalence
$$\bD_{X}^{\on{nv}}:\QCoh(X)^\vee\to \QCoh(X)$$
is uniquely determined by the fact that the corresponding equivalence
$$(\QCoh(X)^c)^{\on{op}}=(\QCoh(X)^\vee)^c \overset{\bD^{\on{nv}}_X}\longrightarrow \QCoh(X)^c$$
is the usual duality functor
$$\BD^{\on{nv}}_X:(\QCoh(X)^{\on{perf}})^{\on{op}}\to \QCoh(X)^{\on{perf}},\quad \CE\mapsto \uHom_X(\CE,\CO_X).$$

\medskip

The corresponding evaluation functor
$$\on{ev}_{\QCoh_X}:\QCoh(X)\otimes \QCoh(X)\to \Vect$$
is 
$$\QCoh(X)\otimes \QCoh(X)\simeq \QCoh(X\times X)\overset{\Delta_X^*}\longrightarrow \QCoh(X)\overset{\Gamma_X}\to \Vect,$$
and the object 
$$\bu_{\QCoh(X)}\in \QCoh(X)\otimes \QCoh(X)$$ identifies with
$$(\Delta_X)_*(\CO_X)\in \QCoh(X\times X)\simeq \QCoh(X)\otimes \QCoh(X).$$

\medskip

We recall (see \cite[Proposition 9.3.3]{IndCoh}) that with respect to the self-dualities $\bD_{X}^{\on{nv}}$ and $\bD_{X}^{\on{Se}}$,
the dual of the functor
$$\Psi_X:\IndCoh(X)\to \QCoh(X)$$
is the functor
$$\Upsilon_X:\QCoh(X)\to \IndCoh(X),\quad \CE\mapsto \CE\underset{\CO_X}\otimes \omega_X,$$
where $\underset{\CO_X}\otimes$ is the functor
$$\QCoh(X)\otimes \IndCoh(X)\to \IndCoh(X)$$
equal to the ind-extension of the action of $\QCoh(X)^{\on{perf}}$ on $\Coh(X)$ by tensor products. 

\sssec{}

We will consider the adjoint
pair of (continuous) functors
$$\ind_X:\IndCoh(X)\rightleftarrows \Dmod(X):\oblv_X,$$
see \cite[Sect. 5.1.5]{DrGa1}.

\medskip

The functor $\oblv_X$ is conservative, which implies that the essential image of $\ind_X$ generates
$\IndCoh(X)$. The latter, in turn, implies that the essential image of $\IndCoh(X)^c\simeq \Coh(X)$
Karoubi-generates $\Dmod(X)^c$.

\medskip

Consider now the functor
$$\ind_X^{\on{left}}:=\ind_X\circ \Upsilon_X:\QCoh(X)\to \Dmod(X).$$

It is shown in \cite[Lemma 3.4.7]{Crys} that $\ind_X^{\on{left}}$ also admits a right adjoint, denoted $\oblv^{\on{left}}_X$, and we have
$$\oblv_X\simeq  \Upsilon_X\circ \oblv^{\on{left}}_X.$$

In particular, $\oblv^{\on{left}}_X$ is also conservative. Hence, the essential image of $\QCoh(X)^{\on{perf}}$ 
under the functor $\ind^{\on{left}}_X$ also Karoubi-generates $\Dmod(X)^c$. 




\sssec{}

For a morphism $f:X_1\to X_2$ we have canonical isomorphisms
$$\ind_{X_2}\circ f^{\IndCoh}_*\simeq f_\bullet\circ \ind_{X_1},\quad \IndCoh(X_1)\to \Dmod(X_2)$$
and
$$\oblv_{X_1}\circ f^!\simeq f^!\circ \oblv_{X_2}, \quad \Dmod(X_2)\to \IndCoh(X_1).$$

\medskip

Finally, we recall (see \cite[Sect. 5.3.4]{DrGa1}) that with respect to the equivalences $\bD_X^{\on{Se}}$ and $\bD_X^{\on{Ve}}$, 
the functors $\ind_X$ and $\oblv_X$ satisfy
\begin{equation} \label{e:oblv and ind}
\ind_X^\vee\simeq \oblv_X.
\end{equation}

\ssec{Criteria for preservation of compactness}

In this subsection we will give more explicit criteria 
for an object $\CQ\in \Dmod(X_1\times X_2)^c$ to 
satisfy the assumption of \thmref{t:schemes}, i.e., for the functor 
$$\sF_{X_1\to X_2,\CQ}:\Dmod(X_1)\to \Dmod(X_2)$$
to preserve compactness (or, equivalently, to admit a continuous right adjoint).  

\begin{rem}
By \corref{c:when left D},
the same criterion,  \emph{with the roles of $X_1$ and $X_2$ swapped}, will tell us when $\sF_{X_1\to X_2,\CQ}$ 
admits a \emph{left} adjoint.
\end{rem}

\sssec{}

For $\sF:\Dmod(X_1)\to \Dmod(X_2)$ consider the functors 
\begin{equation} \label{e:precomp with ind}
\sF\circ \ind_{X_1}:\IndCoh(X_1)\to \Dmod(X_2).
\end{equation} 
\begin{equation} \label{e:precomp with ind Ups}
\sF\circ \ind^{\on{left}}_{X_1}:\QCoh(X_1)\to \Dmod(X_2).
\end{equation} 

\medskip

We claim:

\begin{lem} \label{l:criter}
For a functor $\sF:\Dmod(X_1)\to \Dmod(X_2)$ the following conditions are equivalent:

\smallskip

\noindent{\em(a)} $\sF$ preserves compactness.

\smallskip

\noindent{\em(b)} $\sF\circ \ind_{X_1}$ preserves compactness.

\smallskip

\noindent{\em(c)} $\sF\circ \ind^{\on{left}}_{X_1}$ preserves compactness. 

\end{lem}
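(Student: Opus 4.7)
The plan is to prove the cycle of implications (a) $\Rightarrow$ (b), (a) $\Rightarrow$ (c), (b) $\Rightarrow$ (a) and (c) $\Rightarrow$ (a). The forward implications are formal, and the backward implications rest on the Karoubi-generation statements established in the recollections.

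For (a) $\Rightarrow$ (b) and (a) $\Rightarrow$ (c): both $\ind_{X_1}$ and $\ind_{X_1}^{\on{left}}$ admit continuous right adjoints ($\oblv_{X_1}$ and $\oblv_{X_1}^{\on{left}}$ respectively), so by \lemref{l:when right} each preserves compactness. A composition of two compactness-preserving functors preserves compactness, so this direction is immediate.

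For (b) $\Rightarrow$ (a): I would use the observation recalled in \secref{ss:recall IndCoh} that the essential image of $\Coh(X_1) = \IndCoh(X_1)^c$ under $\ind_{X_1}$ Karoubi-generates $\Dmod(X_1)^c$. Concretely, every $\bc \in \Dmod(X_1)^c$ is a retract of a finite colimit of objects of the form $\ind_{X_1}(\CF)$ with $\CF \in \Coh(X_1)$. Applying $\sF$ (which is continuous, hence preserves finite colimits, and trivially preserves retracts), one sees that $\sF(\bc)$ is a retract of a finite colimit of objects of the form $(\sF\circ \ind_{X_1})(\CF)$, each of which is compact by hypothesis (b). Since compact objects are closed under finite colimits and retracts, $\sF(\bc)$ is compact.

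The argument for (c) $\Rightarrow$ (a) is entirely parallel, using instead that the essential image of $\QCoh(X_1)^{\on{perf}} = \QCoh(X_1)^c$ under $\ind_{X_1}^{\on{left}}$ Karoubi-generates $\Dmod(X_1)^c$. The only potential subtlety is verifying the Karoubi-generation precisely, but that is exactly what is recorded in the recollections: the essential image of $\QCoh(X_1)^{\on{perf}}$ under $\ind_{X_1}^{\on{left}}$ Karoubi-generates $\Dmod(X_1)^c$ because $\oblv_{X_1}^{\on{left}}$ is conservative and continuous, so its left adjoint takes a set of compact generators to a set of compact generators. No step poses any real obstacle.
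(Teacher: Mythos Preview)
Your proof is correct and follows essentially the same approach as the paper: the forward implications use that $\ind_{X_1}$ and $\ind_{X_1}^{\on{left}}$ have continuous right adjoints (hence preserve compactness), and the backward implications use the Karoubi-generation of $\Dmod(X_1)^c$ by the images of $\Coh(X_1)$ and $\QCoh(X_1)^{\on{perf}}$ respectively. Your write-up is slightly more explicit about unpacking what Karoubi-generation means for the argument, but there is no substantive difference.
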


\begin{proof}

The implication (a) $\Rightarrow$ (b) (resp., (c)) follows from the fact that the functor $\ind_{X_1}$ 
(resp., $\ind^{\on{left}}_{X_1}$) preserves compactness, since its right adjoint, i.e., $\oblv_{X_1}$
(resp., $\oblv^{\on{left}}_{X_1}$),  is continuous.

\medskip

The implication (b) (resp., (c)) $\Rightarrow$ (a) follows from the fact that the image of 
$\Coh(X_1)$ under $\ind_{X_1}$ (resp., $\QCoh(X_1)^{\on{perf}}$ under $\ind^{\on{left}}_{X_1}$)
Karoubi-generates $\Dmod(X_1)^c$.

\end{proof}

\sssec{}  \label{sss:crit explained}

The usefulness of \lemref{l:criter} lies in the fact that for $\CQ\in \Dmod(X_1\times X_2)$, 
the functors $\sF_{X_1\to X_2,\CQ}\circ \ind_{X_1}$ and $\sF_{X_1\to X_2,\CQ}\circ \ind^{\on{left}}_{X_1}$ 
are more explicit 
than the original functor $\sF_{X_1\to X_2,\CQ}$.

\bigskip

Namely, for $\CF\in \IndCoh(X)$, the object $\sF_{X_1\to X_2,\CQ}\circ \ind_{X_1}(\CF)\in \Dmod(X)$ is calculated
as follows:

\medskip

Consider the functor
\begin{equation} \label{e:functor on X_1}
\Dmod(X_1)\overset{\oblv_{X_1}}\longrightarrow \IndCoh(X_1) \overset{\CF\sotimes -}\longrightarrow 
\IndCoh(X_1) \overset{(p_{X_1})^{\IndCoh}_*}\to \Vect,
\end{equation}
which is the dual of the functor $\Vect\to \IndCoh(X_1)$, corresponding to the object $\ind_{X_1}(\CF)$. 

\medskip

Then 
$$\sF_{X_1\to X_2,\CQ}\circ \ind_{X_1}(\CF)\simeq 
\left(\text{\eqref{e:functor on X_1}}\otimes \on{Id}_{\Dmod(X_2)}\right)(\CQ).$$

\medskip

Similarly, for $\CE\in \QCoh(X)$, consider the functor
\begin{equation} \label{e:functor on X_1 QCoh}
\Dmod(X_1)\overset{\oblv_{X_1}}\longrightarrow \IndCoh(X_1)\overset{\CE\otimes -}\longrightarrow
\IndCoh(X_1)\overset{(p_{X_1})^{\IndCoh}_*}\longrightarrow \Vect,
\end{equation}
or which is the same
$$\Dmod(X_1)\overset{\oblv_{X_1}}\longrightarrow \IndCoh(X_1)\overset{\Psi_{X_1}}\longrightarrow  \QCoh(X_1)
\overset{\CE\otimes -}\longrightarrow \QCoh(X_1) \overset{\Gamma_{X_1}} \longrightarrow \Vect.$$

Then 
$$\sF_{X_1\to X_2,\CQ}\circ \ind^{\on{left}}_{X_1}(\CE)\simeq  
\left(\text{\eqref{e:functor on X_1 QCoh}}\otimes \on{Id}_{\Dmod(X_2)}\right)(\CQ).$$

\medskip

In other words, the point is that the functors \eqref{e:precomp with ind} and \eqref{e:precomp with ind Ups}
only involve the operation of direct image
$$(p_{X_1})^{\IndCoh}_*:\IndCoh(X_1)\to \Vect \text{ and } \Gamma_{X_1}:\QCoh(X_1)\to \Vect,$$
rather than the more complicated functor of de Rham cohomology
$$(p_{X_1})_\bullet:\Dmod(X_1)\to  \Vect.$$

\sssec{} 

From \lemref{l:criter} we obtain: 

\begin{cor}  \label{c:criter with line bundle}
Assume that $X_1$ is quasi-projective with an ample line bundle $\CL$. 
Let $\CQ$ be an object $\Dmod(X_1\times X_2)$. Then the
functor $\sF_{X_1\to X_2,\CQ}$ preserves compactness 
if and only if the following equivalent conditions hold:

\medskip

\noindent{\em(i)} For any $i\in {\mathbb Z}$, the object
$$\sF_{X_1\to X_2,\CQ}\circ \ind^{\on{left}}_{X_1}(\CL^{\otimes i})\in \Dmod(X_2)$$
is compact. 

\medskip

\noindent{\em(ii)} There exists an integer $i_0$ such that the objects
$$\sF_{X_1\to X_2,\CQ}\circ \ind^{\on{left}}_{X_1}(\CL^{\otimes i})\in \Dmod(X_2)$$
are compact for all $i\geq i_0$. 

\medskip

\noindent{\em(iii)} There exists an integer $i_0$ such that the objects
$$\sF_{X_1\to X_2,\CQ}\circ \ind^{\on{left}}_{X_1}(\CL^{\otimes i})\in \Dmod(X_2)$$
are compact for all $i\leq i_0$. 

\medskip

\noindent{\em(iv)} For some specific interval $[i_1,i_2]$ that only depends on $X_1$, the objects 
$$\sF_{X_1\to X_2,\CQ}\circ \ind^{\on{left}}_{X_1}(\CL^{\otimes i})\in \Dmod(X_2)$$
are compact for all $i_1\leq i\leq i_2$. 

\end{cor}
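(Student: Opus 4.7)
The first step is to invoke \lemref{l:criter}, reducing the preservation of compactness by $\sF_{X_1\to X_2,\CQ}$ to the same property for the continuous composite
$$\sG := \sF_{X_1\to X_2,\CQ}\circ \ind^{\on{left}}_{X_1}:\QCoh(X_1)\to \Dmod(X_2).$$
Since $\QCoh(X_1)$ is compactly generated by $\QCoh(X_1)^{\on{perf}}$, preservation of compactness by $\sG$ is equivalent to $\sG(\CE)\in \Dmod(X_2)^c$ for every $\CE\in \QCoh(X_1)^{\on{perf}}$. The full subcategory of $\QCoh(X_1)^{\on{perf}}$ on which this holds is closed under shifts, cones and retracts, i.e.\ thick. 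Hence for each of (i)--(iv) it suffices to verify that the corresponding set of twists of $\CL$ Karoubi-generates $\QCoh(X_1)^{\on{perf}}$.

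For (i), Karoubi-generation by $\{\CL^{\otimes i}\}_{i\in \bZ}$ is the classical Thomason--Trobaugh statement for a quasi-projective scheme with an ample line bundle. For (iii), given $i_0$, Serre's theorem guarantees that every coherent sheaf $\CF$ on $X_1$ admits a surjection $\CL^{\otimes -N,\oplus n}\twoheadrightarrow \CF$ for $N\gg 0$ with $-N\leq i_0$; iterating gives a bounded-on-the-left resolution of $\CF$ by sums of $\CL^{\otimes j}$ with $j\leq i_0$, which, after truncating and passing to a retract, shows that $\{\CL^{\otimes j}\}_{j\leq i_0}$ Karoubi-generates $\QCoh(X_1)^{\on{perf}}$. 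For (ii), I would deduce the statement from (iii) by applying the anti-auto-equivalence $R\uHom(-,\CO_{X_1})$ of $\QCoh(X_1)^{\on{perf}}$, which carries $\CL^{\otimes i}$ to $\CL^{\otimes -i}$ and so turns a generating set of the form $\{\CL^{\otimes i}\}_{i\leq -i_0}$ into one of the form $\{\CL^{\otimes i}\}_{i\geq i_0}$.

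For (iv), I would fix $N$ such that $\CL^{\otimes N}$ is very ample and use the resulting closed embedding $X_1\hookrightarrow \BP^m$ with $\CL^{\otimes N}\simeq \CO_{\BP^m}(1)|_{X_1}$. Pulling back the Beilinson resolution of the diagonal on $\BP^m$ to $X_1\times X_1$ gives a finite complex whose terms are direct sums of $\CL^{\otimes iN}\boxtimes \CG_i$ with $i$ ranging over $\{0,\ldots,m\}$ (up to a uniform shift). Convolving any perfect complex against this resolution, in the spirit of kernels acting on sheaves, writes it as a finite iterated cone of objects of the form $\CL^{\otimes iN}$, $i\in\{0,\ldots,m\}$; padding by the finite set of twists $\CL^{\otimes j}$ with $0\leq j<N$ then gives Karoubi-generation by $\{\CL^{\otimes i}\}_{i\in [i_1,i_2]}$ for an explicit interval depending only on $X_1$.

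The main obstacle is (iv), which is essentially a Beilinson--Orlov style \emph{bounded} generation statement, in contrast to the asymptotic Serre-vanishing arguments for (ii)--(iii). Since $X_1\hookrightarrow \BP^m$ is not flat in general, the pullback of the Beilinson resolution on $\BP^m\times \BP^m$ to $X_1\times X_1$ is no longer a resolution of $(\Delta_{X_1})_*\CO_{X_1}$, and one must argue more carefully that it nevertheless suffices for Karoubi-generation from the given finite window of twists.
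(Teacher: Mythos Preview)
Your reduction via \lemref{l:criter} and appeal to Karoubi-generation of $\QCoh(X_1)^{\on{perf}}$ by the indicated families of twists is exactly the paper's argument; the paper's proof is a single sentence asserting this generation in all four cases without the case-by-case elaboration you supply.

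On your worry about (iv): since $X_1$ is only quasi-projective the embedding via $\CL^{\otimes N}$ is locally closed, not closed, so first pass to the closure $\ol{X}_1\subset\BP^m$ and then restrict (restriction along an open immersion takes Karoubi-generators to Karoubi-generators, by Thomason--Trobaugh). For the closed part $i:\ol{X}_1\hookrightarrow\BP^m$, the flatness concern dissolves: the Cartesian square with legs $\Delta_{\BP^m}$ and $i\times\on{id}_{\BP^m}$ is Tor-independent, because the local equations $x_k-y_k$ cutting out $\Delta_{\BP^m}$ remain a regular sequence on $\ol{X}_1\times\BP^m$ (the $y_k$ are free variables there). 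Hence the derived pullback of the Beilinson resolution along $i\times\on{id}_{\BP^m}$---which coincides termwise with the naive pullback, the terms being locally free---computes $(\on{graph}\,i)_*\CO_{\ol{X}_1}$, and the associated integral transform is the identity on $\QCoh(\ol{X}_1)$. This places every perfect complex on $\ol{X}_1$ in the thick subcategory generated by finitely many twists $\CO_{\ol{X}_1}(j)$, as you wanted.
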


\begin{proof}

By \lemref{l:criter}, we need to check when the functor $\sF_{X_1\to X_2,\CQ}\circ \ind^{\on{left}}_{X_1}$ 
preserves compactness. The statement of the corollary follows from the fact that
the objects $\CL^{\otimes i}$ in all of the four cases Karoubi-generate
$\QCoh(X)^{\on{perf}}$. 

\end{proof}

In particular, we obtain:

\begin{cor}  \label{c:affine}
Assume that $X_1$ is affine. Then Let $\CQ$ be an object $\Dmod(X_1\times X_2)$. Then the
functor $\sF_{X_1\to X_2,\CQ}$ preserves compactness if and only if
\begin{equation} \label{e:forget down affine}
\left((\Gamma_{X_1}\circ \Psi_{X_1}\circ \oblv_{X_1})\otimes \on{Id}_{\Dmod(X_2)}\right)(\CQ)\in \Dmod(X_2)
\end{equation} 
is compact.
\end{cor}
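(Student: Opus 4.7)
The plan is to deduce this directly from \lemref{l:criter}, specifically the equivalence (a) $\Leftrightarrow$ (c), combined with the explicit description in \sssec{sss:crit explained}. The key observation is that when $X_1$ is affine, the category $\QCoh(X_1)^{\on{perf}}$ is Karoubi-generated by the single object $\CO_{X_1}$ (since every perfect complex on an affine scheme is a retract of a finite complex of free modules of finite rank). Thus, for any continuous functor $\sG : \QCoh(X_1) \to \bC$ into a DG category $\bC$, $\sG$ preserves compactness if and only if $\sG(\CO_{X_1})$ is compact.

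Applying this with $\sG = \sF_{X_1 \to X_2,\CQ} \circ \ind_{X_1}^{\on{left}}$, condition (c) of \lemref{l:criter} reduces to the statement that $\sF_{X_1\to X_2,\CQ} \circ \ind_{X_1}^{\on{left}}(\CO_{X_1}) \in \Dmod(X_2)$ is compact. Alternatively, one may invoke \corref{c:criter with line bundle} directly, taking the trivial line bundle $\CL = \CO_{X_1}$ (which is ample on an affine scheme), in which case all the tensor powers $\CL^{\otimes i}$ collapse to $\CO_{X_1}$ and the four equivalent conditions reduce to the single condition on $\CO_{X_1}$.

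It remains to identify $\sF_{X_1\to X_2,\CQ}\circ \ind^{\on{left}}_{X_1}(\CO_{X_1})$ with the object in \eqref{e:forget down affine}. By \sssec{sss:crit explained}, applied to $\CE = \CO_{X_1}$, the functor \eqref{e:functor on X_1 QCoh} becomes the composition $\Gamma_{X_1} \circ \Psi_{X_1} \circ \oblv_{X_1}$, since $\CO_{X_1} \otimes -$ is the identity functor on $\QCoh(X_1)$. Hence
$$\sF_{X_1\to X_2,\CQ}\circ \ind^{\on{left}}_{X_1}(\CO_{X_1}) \simeq \left((\Gamma_{X_1}\circ \Psi_{X_1}\circ \oblv_{X_1})\otimes \on{Id}_{\Dmod(X_2)}\right)(\CQ),$$
which is the object of \eqref{e:forget down affine}. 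Combining this identification with the criterion above yields the claim. There is no substantive obstacle: the statement is essentially a specialization of \corref{c:criter with line bundle} to the affine setting, together with the unwinding of the identification in \sssec{sss:crit explained}.
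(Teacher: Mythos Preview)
Your proof is correct and follows the same approach as the paper: the corollary is stated there as an immediate consequence of \corref{c:criter with line bundle} (with $\CL=\CO_{X_1}$), and you have spelled out precisely this specialization together with the identification from \secref{sss:crit explained}.
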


\sssec{} \label{sss:subq affine}

Let us note that \corref{c:affine} implies the assertion of \thmref{t:subquotient} in the particular case when $X_1$ is affine:

\medskip

Let us recall that for a scheme $X$ the category $\IndCoh(X)$ carries a canonical t-structure,
see \cite[Sect. 1.2]{IndCoh}. Its basic feature is that the functor $\Psi_X:\IndCoh(X)\to \QCoh(X)$
is t-exact.

\medskip

Note that since $X_1$ is smooth, the functor $\oblv_{X_1}$ is t-exact (see \cite[Proposition 4.2.11(a)]{Crys}).
Since $X_1$ is affine, we obtain that the composed functor 
$$\Gamma_{X_1}\circ \Psi_{X_1}\circ \oblv_{X_1}:\Dmod(X_1)\to \Vect$$
is t-exact. 

\medskip

Hence, the same is true for the functor \eqref{e:forget down affine} (see \secref{ss:ten prod t-structure}, 
where the general statement along these lines is explained). 

\medskip

Now, the assertion of the theorem follows from the fact that if an object
of $\Dmod(X_2)$ is compact, then the same is true for
any subquotient of any of its cohomologies.

\qed

\ssec{Preservation of compactness and compactness of the kernel}

\sssec{}

Consider the category 
$$\IndCoh(X_1)\otimes \Dmod(X_2),$$
which is endowed with a forgetful functor 
\begin{multline} \label{e:oblv mixed}
\Dmod(X_1\times X_2)\simeq 
\Dmod(X_1)\otimes \Dmod(X_2)\overset{\oblv_{X_1}\otimes \on{Id}_{\Dmod(X_2)}}\longrightarrow \\
\to \IndCoh(X_1)\otimes \Dmod(X_2).
\end{multline}

We claim: 

\begin{prop} \label{p:O criter} 
Assume that $X_1$ is separated.  
Let $\CQ$ be an object of $\Dmod(X_1\times X_2)$, such that the functor 
$$\sF_{X_1\to X_2,\CQ}:\Dmod(X_1)\to \Dmod(X_2)$$
preserves compactness. Then the image of $\CQ$ under the
functor \eqref{e:oblv mixed} is compact in 
$$\IndCoh(X_1)\otimes \Dmod(X_2).$$
\end{prop}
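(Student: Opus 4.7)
My plan is to reduce to the same strategy used to prove \thmref{t:schemes}(a), now carried out on the $\IndCoh$-side. Informally: after the right identification, $(\oblv_{X_1}\otimes \on{Id}_{\Dmod(X_2)})(\CQ)$ is the kernel of a continuous functor $\IndCoh(X_1)\to \Dmod(X_2)$ which inherits preservation of compactness from $\sF_{X_1\to X_2,\CQ}$; separatedness of $X_1$ is then exactly what ensures that the relevant unit on $X_1\times X_1$ is compact.

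First, I use the Serre self-duality $\bD_{X_1}^{\on{Se}}$ to identify $\IndCoh(X_1)^{\vee}\simeq \IndCoh(X_1)$, so that $(\oblv_{X_1}\otimes \on{Id}_{\Dmod(X_2)})(\CQ)\in \IndCoh(X_1)\otimes \Dmod(X_2)$ is the kernel of a continuous functor $\tilde\sF:\IndCoh(X_1)\to \Dmod(X_2)$. A formal unwinding of the defining formula for $\sF_\CQ$ from \secref{sss:functors and kernels} shows that applying $\Phi\otimes \on{Id}$ to a kernel corresponds to pre-composing the associated functor with $\Phi^{\vee}$; hence, using \eqref{e:oblv and ind} (which says precisely that $\oblv_{X_1}^{\vee}\simeq \ind_{X_1}$ under the Verdier/Serre self-dualities), I obtain
$$\tilde\sF\,\simeq\, \sF_{X_1\to X_2,\CQ}\circ \oblv_{X_1}^{\vee}\,\simeq\, \sF_{X_1\to X_2,\CQ}\circ \ind_{X_1}.$$

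Since by hypothesis $\sF_{X_1\to X_2,\CQ}$ preserves compactness, so does $\tilde\sF$; this is exactly \lemref{l:criter}, condition (b). I now run the argument of \thmref{t:schemes}(a) verbatim. Namely, $(\oblv_{X_1}\otimes \on{Id}_{\Dmod(X_2)})(\CQ)$ is canonically isomorphic to
$$\bigl(\on{Id}_{\IndCoh(X_1)}\otimes \tilde\sF\bigr)\bigl(\bu_{\IndCoh(X_1)}\bigr).$$
By \corref{c:preserve compactness tensor}, applied to the compactly generated category $\IndCoh(X_1)$ and the functor $\tilde\sF$, the functor $\on{Id}_{\IndCoh(X_1)}\otimes \tilde\sF$ preserves compactness, so it suffices to check that $\bu_{\IndCoh(X_1)}\in \IndCoh(X_1)\otimes \IndCoh(X_1)$ is compact. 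By \secref{sss:duality on IndCoh}, this unit identifies with $(\Delta_{X_1})^{\IndCoh}_{*}(\omega_{X_1})$, and this is compact because $X_1$ is separated (so $\Delta_{X_1}$ is a closed embedding and $(\Delta_{X_1})^{\IndCoh}_{*}$ therefore preserves compactness) and eventually coconnective (so $\omega_{X_1}\in \Coh(X_1)$).

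The one point requiring care is the identification of $\tilde\sF$ with $\sF_{X_1\to X_2,\CQ}\circ \ind_{X_1}$, which hinges on how $\oblv_{X_1}$ and $\ind_{X_1}$ interact with Verdier and Serre self-duality via \eqref{e:oblv and ind}. Everything after that is a routine application of \lemref{l:criter}, \corref{c:preserve compactness tensor}, and the compactness of $\omega_{X_1}$; the separatedness hypothesis enters only, but essentially, to make $\bu_{\IndCoh(X_1)}$ compact.
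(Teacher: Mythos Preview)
Your proof is correct and is essentially the same as the paper's: both identify $(\oblv_{X_1}\otimes\on{Id})(\CQ)$ with the kernel of $\sF_{X_1\to X_2,\CQ}\circ\ind_{X_1}$ via \eqref{e:oblv and ind}, then use \corref{c:preserve compactness tensor} together with the compactness of $\bu_{\IndCoh(X_1)}=(\Delta_{X_1})^{\IndCoh}_*(\omega_{X_1})$ for $X_1$ separated. The only difference is the order of exposition---you start from the object and recover the functor, while the paper starts from the functor and then identifies its kernel.
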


\begin{proof} 

If $\sF_{X_1\to X_2,\CQ}$ preserves compactness, then so does the functor $\sF_{X_1\to X_2,\CQ}\circ \ind_{X_1}$. Hence,
by \corref{c:preserve compactness tensor}, the same is true for the functor 
\begin{equation} \label{e:ind Phi ten}
\on{Id}_{\bC}\otimes (\sF_{X_1\to X_2,\CQ}\circ \ind_{X_1}):\bC\otimes \IndCoh(X_1)\to \bC\otimes \Dmod(X_2)
\end{equation}
for any DG category $\bC$. 

\medskip

Note that the functor $\sF_{\CQ,X_1\to X_2}\circ \ind_{X_1}$ is defined by the kernel
\begin{equation} \label{e:ind Phi ten IndCoh}
\left(\on{Id}_{\IndCoh(X_1)^\vee}\otimes (\sF_{X_1\to X_2,\CQ}\circ \ind_{X_1})\right)(\bu_{\IndCoh(X_1)})\in
\IndCoh(X_1)^\vee\otimes  \Dmod(X_2).
\end{equation}

By \secref{sss:duality on IndCoh},  the assumption that $X_1$ be separated 
implies that the object 
$$\bu_{\IndCoh(X_1)}\in \IndCoh(X_1)^\vee\otimes \IndCoh(X_1)\simeq \IndCoh(X_1)\otimes \IndCoh(X_1)\simeq 
\IndCoh(X_1\times X_1)$$ 
is compact. Hence, taking in \eqref{e:ind Phi ten} $\bC:=\IndCoh(X_1)^\vee$,
we obtain that the object in \eqref{e:ind Phi ten IndCoh} is compact. 

\medskip

Finally, we observe that in terms of the identification 
$$\IndCoh(X_1)^\vee\otimes  \Dmod(X_2)\overset{\bD_{X_1}^{\on{Se}}\otimes \on{Id}_{\Dmod(X_2)}}\longrightarrow
\IndCoh(X_1)\otimes  \Dmod(X_2),$$
and using \eqref{e:oblv and ind}, the kernel of the functor $\sF_{\CQ,X_1\to X_2}\circ \ind_{X_1}$ identifies 
with
$$(\oblv_{X_1}\otimes \on{Id}_{\Dmod(X_2)})(\CQ).$$

\end{proof}

\sssec{}

We shall now prove:

\begin{thm}
Assume that the support of $\CQ$ is \emph{proper} over $X_2$. Then the assertion of \propref{p:O criter} is ``if and only if." 
\end{thm}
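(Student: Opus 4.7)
The ``only if'' direction is \propref{p:O criter}. For the ``if'' direction, I assume that $\CQ' := (\oblv_{X_1}\otimes \on{Id}_{\Dmod(X_2)})(\CQ) \in \IndCoh(X_1)\otimes \Dmod(X_2)$ is compact and that the support of $\CQ$ is proper over $X_2$, and I aim to show that $\sF = \sF_{X_1\to X_2,\CQ}$ preserves compactness. The plan is to use the proper support to factor $\sF$ through a proper morphism, reducing the problem to a ``base case'' where the kernel is supported on the graph of a morphism.

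Let $\iota:Z\hookrightarrow X_1\times X_2$ denote the closed embedding of the support, and put $q_i := \on{pr}_i\circ \iota$, so that $q_2:Z\to X_2$ is proper. Writing $\CQ = \iota_\bullet(\CQ_Z)$ with $\CQ_Z := \iota^!(\CQ)\in \Dmod(Z)$, the projection formula for the closed embedding $\iota$ gives
$$\sF(\CM) \simeq (q_2)_\bullet\bigl(q_1^!(\CM)\sotimes \CQ_Z\bigr).$$
Since $q_2$ is proper, $(q_2)_\bullet$ preserves compactness, so it suffices to show that $\tilde\sF:\Dmod(X_1)\to \Dmod(Z)$, $\CM\mapsto q_1^!(\CM)\sotimes \CQ_Z$, preserves compactness. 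The kernel of $\tilde\sF$ is $\tilde\CQ := (q_1,\on{id}_Z)_\bullet(\CQ_Z)\in \Dmod(X_1\times Z)$, supported on the graph $\Gamma_{q_1}$ of $q_1$, and one verifies from $\CQ \simeq (\on{id}_{X_1}\times q_2)_\bullet(\tilde\CQ)$ and the compatibility of $\oblv$ with proper pushforward that
$$\CQ' \simeq \bigl(\on{Id}_{\IndCoh(X_1)}\otimes (q_2)_\bullet\bigr)(\tilde\CQ'), \qquad \tilde\CQ' := (\oblv_{X_1}\otimes \on{Id}_{\Dmod(Z)})(\tilde\CQ).$$

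By \lemref{l:criter} and the computation in the proof of \propref{p:O criter}, the composite $\tilde\sF\circ \ind_{X_1}:\IndCoh(X_1)\to \Dmod(Z)$ has kernel $\tilde\CQ'$, and its compactness-preservation would follow once we know that $\tilde\CQ'$ is compact and we can verify the implication in this graph-supported setting. For the latter verification, I use that $\Gamma_{q_1}\subset X_1\times Z$ is isomorphic to $Z$ via $\on{pr}_2$, so that compactness of $\tilde\CQ'$ is equivalent to $\oblv_Z(\CQ_Z)\in \Coh(Z)$ via the closed-embedding identification of compacts; with this in hand, one expands $\tilde\sF\circ \ind_{X_1}(\CF)$ for $\CF\in \Coh(X_1)$ via the projection formula for $(\ind_Z,\oblv_Z)$ and concludes compactness by combining the compactness-preservation of $\ind_Z$ with the properness of the graph projection.

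The main obstacle is to propagate compactness from $\CQ'$ to $\tilde\CQ'$: while $\on{Id}\otimes (q_2)_\bullet$ preserves compactness, it does not generally reflect it, so one cannot directly invert the relation displayed above. The plan to circumvent this is to exploit the graph structure of $\tilde\CQ$'s support: the graph embedding $(q_1,\on{id}_Z)$ admits $\on{pr}_2|_{\Gamma_{q_1}}$ as a canonical section of $q_2$, and using proper base change along the Cartesian square built from $\on{id}_{X_1}\times q_2$ one should be able to express $\tilde\CQ'$ as the $!$-pullback of $\CQ'$ along this section, after which compactness of $\tilde\CQ'$ will follow from that of $\CQ'$ together with the properness assumption.
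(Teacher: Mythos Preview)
Your reduction to the support $Z$ and to the graph-supported kernel $\tilde\CQ$ is a reasonable opening, but the argument breaks at exactly the step you flag. There is no section of $q_2:Z\to X_2$ in general, and the Cartesian square you have in mind, namely the pullback of $\iota:Z\hookrightarrow X_1\times X_2$ along $\on{id}_{X_1}\times q_2$, has fiber product $Z\times_{X_2}Z$, not $\Gamma_{q_1}\cong Z$; base change therefore produces $(\on{Id}\otimes q_2^!)(\CQ')$ rather than $\tilde\CQ'$. Even ignoring this, $!$-pullback does not preserve compactness: already for $i:\on{pt}\hookrightarrow\BA^1$ the functor $i^!$ sends $\ind_{\BA^1}(\CO_{\BA^1})$ to a non-compact object. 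So the relation $\CQ'\simeq(\on{Id}\otimes(q_2)_\bullet)(\tilde\CQ')$ cannot be inverted on the level of compactness unless $q_2$ is a closed embedding. Your graph-case argument has a separate problem: the equivalence ``$\tilde\CQ'$ compact $\Leftrightarrow\oblv_Z(\CQ_Z)\in\Coh(Z)$'' conflates forgetting in the $X_1$-direction with forgetting in the $Z$-direction, and the projection-formula step tacitly uses an identification $q_1^!\circ\ind_{X_1}\simeq\ind_Z\circ q_1^{!,\IndCoh}$ which is not available for general $q_1$.

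The paper's proof moves in the opposite direction: instead of restricting to the support, it \emph{compactifies} the source. Choose $j:X_1\hookrightarrow\ol{X}_1$ and set $\ol\CK:=(j^{\IndCoh}_*\otimes\on{Id})(\CK)$ with $\CK:=\CQ'$. Because the support $S$ is proper over $X_2$, it is already closed in $\ol{X}_1\times X_2$, so the opens $U:=X_1\times X_2$ and $V:=(\ol{X}_1\times X_2)\setminus S$ cover everything, with $\ol\CK|_U\simeq\CK$ compact and $\ol\CK|_V=0$; Zariski descent for $\IndCoh$ on the prestack $\ol{X}_1\times(X_2)_\dr$ then forces $\ol\CK$ to be compact. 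Finally, by \lemref{l:criter} and \secref{sss:crit explained} it suffices to check that $((p_{\ol{X}_1})^{\IndCoh}_*\otimes\on{Id})(\CE\underset{\CO_{\ol{X}_1}}\otimes\ol\CK)$ is compact for every $\CE\in\QCoh(\ol{X}_1)^{\on{perf}}$, and this follows from properness of $\ol{X}_1$. The properness hypothesis on the support is thus used to \emph{extend by zero across a compactification}, not to cut down to the support.
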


\begin{proof}

Set
$$\CK:=\left(\oblv_{X_1}\otimes \on{Id}_{\Dmod(X_2)}\right)(\CQ)\in \IndCoh(X_1)\otimes \Dmod(X_2).$$

Let $X_1\overset{j}\hookrightarrow \ol{X}_1$ be a compactification of $X_1$. Consider the object
$$\ol\CK:=\left(j^{\IndCoh}_*\otimes \on{Id}_{\Dmod(X_2)}\right)(\CK)
\in \IndCoh(\ol{X}_1)\otimes \Dmod(X_2).$$

\medskip

We claim that $\ol\CK$ is compact.  Let us assume this and finish the proof of the theorem. 

\medskip

By \lemref{l:criter} and \secref{sss:crit explained}, it suffices to show that for any $\CE\in \QCoh(\ol{X}_1)^{\on{perf}}$, 
we have
$$((p_{X_1})^{\IndCoh}_*\otimes \on{Id}_{\Dmod(X_2)})(\CE|_{X_1}\underset{\CO_{X_1}}\otimes \CK)\in \Dmod(X_2)^c.$$

However,
$$((p_{X_1})^{\IndCoh}_*\otimes \on{Id}_{\Dmod(X_2)})(\CE|_{X_1}\underset{\CO_{X_1}}\otimes \CK)\simeq
((p_{\ol{X}_1})^{\IndCoh}_*\otimes \on{Id}_{\Dmod(X_2)})(\CE\underset{\CO_{\ol{X}_1}}\otimes \ol\CK).$$

Note that the functor
$$\CE\underset{\CO_{\ol{X}_1}}\otimes -:\IndCoh(\ol{X}_1)\otimes \Dmod(X_2)\to \IndCoh(\ol{X}_1)\otimes \Dmod(X_2)$$
preserves compactness. Indeed, it admits a continuous right adjoint, given by $\CE^\vee\underset{\CO_{\ol{X}_1}}\otimes -$.

\medskip

Now, the required assertion follows from the fact that the functor
$$(p_{\ol{X}_1})^{\IndCoh}_*\otimes \on{Id}_{\Dmod(X_2)}: \IndCoh(\ol{X}_1)\otimes \Dmod(X_2)\to \Dmod(X_2)$$
preserves compactness, which follows from the corresponding fact (Serre's theorem) for
$$(p_{\ol{X}_1})_*: \IndCoh(\ol{X}_1)\to \Vect.$$

\medskip

To prove that $\ol\CK$ is compact we proceed as follows. 

\medskip

By \cite[Corollary 10.3.6]{IndCoh}, we interpret the category
$\IndCoh(\ol{X}_1)\otimes \Dmod(X_2)$ as the category $\IndCoh$ of the prestack $\ol{X}_1\times (X_2)_\dr$
(see \cite[Sect. 1.1.1]{Crys} for the definition of the de Rham prestack). 
Recall also that the assignment 
$$\CX\rightsquigarrow \IndCoh(\CX), \quad \CX\in \on{PreStk}_{\on{laft}}$$
satisfies Zariski descent (see \cite[Sect. 10.4.2]{IndCoh}).

\medskip

Note that the Zariski site of $\ol{X}_1\times (X_2)_\dr$ is in bijection with that of 
$\ol{X}_1\times X_2$. Set $$U:=X_1\times X_2 \text{ and } V:=\ol{X}_1\times X_2-S,$$ where $S$ is the support of $\CQ$, which
is closed in $\ol{X}_1\times X_2$, by assumption. By Zariski descent,  the category
$\IndCoh(\ol{X}_1)\otimes \Dmod(X_2)$ identifies with
$$\left(\IndCoh(\ol{X}_1)\otimes \Dmod(X_2)\right)_U\underset{\left(\IndCoh(\ol{X}_1)\otimes \Dmod(X_2)\right)_{U\cap V}}
\times \left(\IndCoh(\ol{X}_1)\otimes \Dmod(X_2)\right)_V.$$
Hence, it suffices to show that the restriction of $\ol\CK$ to both $U$ and $V$ is compact. However, the former yields $\CK$, 
and the latter zero.

\end{proof}

\ssec{The ULA property}  \label{ss:ULA}

\sssec{}

Let $X_1$ and $X_2$ be smooth classical schemes, and let $f:X_2\to X_1$ be a smooth morphism.

\begin{defn}
We say that $\CM\in \Dmod(X_2)$ is ULA with respect to $f$ if the functor 
\begin{equation} \label{e:ULA}
\CN\mapsto \CM\sotimes f^!(\CN),\quad \Dmod(X_1)\to \Dmod(X_2)
\end{equation}
preserves compactness. 
\end{defn}

Note that the question of being ULA is Zariski-local on $X_2$, and hence also on $X_1$. So, 
with no restriction of generailty we can assume that $X_1$ and $X_2$ are affine. 

\sssec{}

For $\CM$ as above take 
$$\CQ:=(f\times \on{id}_{X_2})_\bullet(\CM)\in \Dmod(X_1\times X_2),$$
where by a slight abuse of notation we denote by $f\times \on{id}_{X_2}:X_2\to X_1\times X_2$ the graph of the map 
$f$. 

\medskip

Then the functor \eqref{e:ULA} is the same as the corresponding functor $\sF_{X_1\to X_2,\CQ}$, so the above
analysis applies. 

\medskip 

In particular, we obtain:

\begin{cor}
If $\CM$ is ULA with respect to $f$, then the same is true for any subquotient of any of its cohomologies.
\end{cor}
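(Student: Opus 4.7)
The plan is to reduce the corollary to \thmref{t:subquotient} applied to the kernel $\CQ := (f \times \on{id}_{X_2})_\bullet(\CM) \in \Dmod(X_1 \times X_2)$ exhibited in the paragraph preceding the corollary. By the Zariski-local nature of the ULA property (both for $\CM$ itself and for its subquotients), we may assume that $X_1$ and $X_2$ are affine. In particular, $X_1$ becomes smooth and quasi-projective, which is exactly the hypothesis required by \thmref{t:subquotient}.

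Next, I would exploit the fact that the graph map $\Gamma_f := f \times \on{id}_{X_2} : X_2 \to X_1 \times X_2$ is a closed embedding (since $X_1$ is separated). By Kashiwara's theorem, the functor
\[
(\Gamma_f)_\bullet : \Dmod(X_2) \to \Dmod(X_1 \times X_2)
\]
is t-exact and induces an equivalence between $\Dmod(X_2)^\heartsuit$ and the abelian subcategory of $\Dmod(X_1 \times X_2)^\heartsuit$ consisting of objects set-theoretically supported on the graph. Consequently:
\[
H^i(\CQ) \simeq (\Gamma_f)_\bullet\bigl(H^i(\CM)\bigr),
\]
and subquotients of $H^i(\CQ)$ in $\Dmod(X_1 \times X_2)^\heartsuit$ correspond bijectively, via $(\Gamma_f)_\bullet$, to subquotients of $H^i(\CM)$ in $\Dmod(X_2)^\heartsuit$.

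Now, by assumption $\sF_{X_1 \to X_2, \CQ}$ preserves compactness, so by \thmref{t:subquotient} the same holds for the functor $\sF_{X_1 \to X_2, \CQ'}$ for any subquotient $\CQ'$ of any $H^i(\CQ)$. Given a subquotient $\CM'$ of some $H^i(\CM)$, take $\CQ' := (\Gamma_f)_\bullet(\CM')$; then the same direct computation as in the paragraph preceding the corollary identifies
\[
\sF_{X_1 \to X_2, \CQ'}(\CN) \simeq \CM' \sotimes f^!(\CN),
\]
and the right-hand side is the ULA-defining functor for $\CM'$. Hence $\CM'$ is ULA with respect to $f$, as desired.

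The bulk of the work is packaged into \thmref{t:subquotient}, so the only substantive step here is the Kashiwara-type identification that matches subquotients on $X_2$ with subquotients on $X_1 \times X_2$ supported on the graph. This is routine, so I do not anticipate a real obstacle; the only minor care needed is the Zariski-localization argument that lets us legitimately invoke the quasi-projectivity hypothesis of \thmref{t:subquotient}.
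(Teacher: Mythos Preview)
Your proposal is correct and follows essentially the same route as the paper: reduce to the affine case by Zariski-localization, then invoke \thmref{t:subquotient} for the kernel $\CQ=(\Gamma_f)_\bullet(\CM)$, using Kashiwara to match subquotients of cohomologies of $\CM$ with those of $\CQ$. The paper phrases this as ``follows immediately from \thmref{t:subquotient} in the affine case'' and leaves the Kashiwara step implicit; it also notes an alternative proof via \propref{p:vertical}, which characterizes ULA directly as compactness of $\oblv_{\on{abs}\to\on{rel},X_2}(\CM)$ in $\Dmod_{\on{rel}}(X_2)$ and thereby avoids passing through kernels on $X_1\times X_2$ altogether.
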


This follows immediately from \thmref{t:subquotient} in the affine case, established in \secref{sss:subq affine}.
Another proof follows from \propref{p:vertical} below. 

\sssec{}

Applying \corref{c:stable under dual}, we obtain:

\begin{cor}
If $\CM\in \Dmod(X_2)$ is ULA with respect to $f$, then it is compact, and $\BD^{\on{Ve}}_{X_2}(\CM)$ is also ULA.
\end{cor}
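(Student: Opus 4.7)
The plan is to apply \corref{c:stable under dual}(1) to the kernel
$$\CQ := (f\times \on{id}_{X_2})_\bullet(\CM) \in \Dmod(X_1\times X_2),$$
for which the associated integral transform $\sF_{X_1\to X_2,\CQ}$ is, by base change, precisely the functor $\CN\mapsto \CM\sotimes f^!(\CN)$ of \eqref{e:ULA}. Since the ULA hypothesis says this functor preserves compactness, and since the question is Zariski-local we may assume $X_1$ and $X_2$ are affine, so in particular separated; both are smooth by assumption, so all hypotheses of \corref{c:stable under dual}(1) will be available.

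First I would dispatch compactness of $\CM$. Apply the ULA functor to the compact object $\omega_{X_1}\in \Dmod(X_1)^c$: the output $\CM\sotimes f^!(\omega_{X_1})$ is compact by hypothesis. Because $f$ is smooth and $p_{X_2}=p_{X_1}\circ f$, one has $f^!(\omega_{X_1})\simeq \omega_{X_2}$, and $\omega_{X_2}$ is the unit for the symmetric monoidal structure $\sotimes$ on $\Dmod(X_2)$, so $\CM\simeq \CM\sotimes \omega_{X_2}$ is compact. (Alternatively, \thmref{t:schemes}(a) yields that $\CQ$ is compact, and then $\CM\simeq (f\times \on{id}_{X_2})^!\CQ$ recovers compactness of $\CM$ via the closed-embedding adjunction for the graph.)

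Next I would verify ULA for $\BD^{\on{Ve}}_{X_2}(\CM)$. Since the graph $f\times \on{id}_{X_2}:X_2\hookrightarrow X_1\times X_2$ is a closed embedding, in particular proper, Verdier duality commutes with its $\bullet$-pushforward, giving
$$\BD^{\on{Ve}}_{X_1\times X_2}(\CQ)\simeq (f\times \on{id}_{X_2})_\bullet\bigl(\BD^{\on{Ve}}_{X_2}(\CM)\bigr).$$
By \corref{c:stable under dual}(1) the functor $\sF_{X_1\to X_2,\BD^{\on{Ve}}_{X_1\times X_2}(\CQ)}$ preserves compactness; unwinding the definition, this functor is $\CN\mapsto \BD^{\on{Ve}}_{X_2}(\CM)\sotimes f^!(\CN)$, which is exactly the ULA condition for $\BD^{\on{Ve}}_{X_2}(\CM)$.

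The main obstacle is not any one computation but rather the bookkeeping needed to match the abstract statement of \corref{c:stable under dual}(1), which is a statement about kernels on a product, with the concrete geometric ULA functor living on $X_2$. The key identification, which makes the whole thing go through, is the commutation of Verdier duality with proper (in fact closed) pushforward applied to the graph of $f$; once that is recorded, the corollary applies in a black-box manner.
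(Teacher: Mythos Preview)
Your proposal is correct and follows essentially the same route as the paper: identify the kernel $\CQ=(f\times\on{id}_{X_2})_\bullet(\CM)$, reduce Zariski-locally so that $X_1$ is separated, and invoke \corref{c:stable under dual}(1) together with the commutation of Verdier duality with pushforward along the (closed) graph. The paper's one-line ``Applying \corref{c:stable under dual}'' is exactly this, and your write-up supplies the details.

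One small caveat: your parenthetical alternative for compactness of $\CM$ (``$\CM\simeq (f\times\on{id}_{X_2})^!\CQ$ via the closed-embedding adjunction'') is shakier than the main argument, since $i^!$ for a closed embedding does not preserve compactness in general. Your primary argument---evaluating the ULA functor on $\omega_{X_1}$ and using $f^!(\omega_{X_1})\simeq\omega_{X_2}$---is the clean one; stick with it.
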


Finally, from \corref{c:our functor as other} and \secref{ss:nat trans sep} we obtain:

\begin{cor}
Let $\CM\in \Dmod(X_2)$ be ULA with respect to $f$. Then the functor
$$\CN\in \Dmod(X_1)\, \rightsquigarrow \,  \CM\overset{\bullet}\otimes f^\bullet(\CN)$$ 
takes values in $\Dmod(X_2)\subset \on{Pro}(\Dmod(X_2))$, and the natural map
$$\CM\overset{\bullet}\otimes f^\bullet(\CN)\to \CM\sotimes f^!(\CN)[2\dim(X_1)]$$
coming from \eqref{e:nat trans pullback} for the commutative diagram
$$
\CD
X_2 @>{f\times \on{id}_{X_2}}>>  X_1\times X_2  \\
@V{f\times \on{id}_{X_2}}VV     @V{\Delta_{X_1}\times \on{id}_{X_2}}VV    \\
X_1\times X_2  @>{\on{id}_{X_1}\times (f\times \on{id}_{X_2})}>>  X_1\times X_1\times X_2
\endCD
$$ 
and the object $$\CN\boxtimes k_{X_1}\boxtimes \CM\in \Dmod(X_1\times X_1\times X_2),$$
is an isomorphism.
\end{cor}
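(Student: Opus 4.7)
Since the ULA property is Zariski-local, I may assume $X_1$ (and hence $X_2$) is affine, in particular separated. The strategy is to apply \corref{c:our functor as other} to the kernel associated to $\CM$, namely
\[
\CQ := (\gamma_f)_\bullet(\CM) \in \Dmod(X_1\times X_2), \qquad \gamma_f := f\times\on{id}_{X_2}\colon X_2 \hookrightarrow X_1\times X_2.
\]
Since $X_1$ is separated, $\gamma_f$ is a closed embedding, hence proper, so $(\gamma_f)_\bullet \simeq (\gamma_f)_!$. The ULA hypothesis on $\CM$ says precisely that $\sF_{X_1\to X_2,\CQ}$ preserves compactness. Since $X_1$ is smooth of dimension $n_1=\dim(X_1)$ and separated, \corref{c:our functor as other} applies and yields a canonical isomorphism
\[
\sF^{\on{op}}_{X_1\to X_2,\CQ}(\CN) \;\simeq\; \sF_{X_1\to X_2,\CQ}(\CN)[2n_1],
\]
where the left-hand side, a priori in $\on{Pro}(\Dmod(X_2))$, is shown in loc.\ cit.\ to actually live in $\Dmod(X_2)$.

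Next I would simplify both sides via projection formulas along the closed embedding $\gamma_f$. Using $\on{pr}_1\circ\gamma_f = f$ and $\on{pr}_2\circ\gamma_f = \on{id}_{X_2}$, the standard base change/projection formula for $(\gamma_f)_\bullet$ applied to $\sotimes$ and $(\on{pr}_2)_\bullet$ gives
\[
\sF_{X_1\to X_2,\CQ}(\CN) \;=\; (\on{pr}_2)_\bullet\bigl(\on{pr}_1^!(\CN)\sotimes(\gamma_f)_\bullet(\CM)\bigr) \;\simeq\; \CM \sotimes f^!(\CN),
\]
while the dual projection formula for $(\gamma_f)_!$ with $\overset{\bullet}\otimes$ and $(\on{pr}_2)_!$ gives
\[
\sF^{\on{op}}_{X_1\to X_2,\CQ}(\CN) \;=\; (\on{pr}_2)_!\bigl(\on{pr}_1^\bullet(\CN)\overset{\bullet}\otimes(\gamma_f)_!(\CM)\bigr) \;\simeq\; \CM \overset{\bullet}\otimes f^\bullet(\CN).
\]
Substituting into the isomorphism above delivers both assertions: that $\CM\overset{\bullet}\otimes f^\bullet(\CN)$ belongs to $\Dmod(X_2)$, and that it is isomorphic to $\CM\sotimes f^!(\CN)[2n_1]$.

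The main remaining point, and the subtlest one, is to identify this isomorphism with the specific map arising from \eqref{e:nat trans pullback} applied to the Cartesian square and object named in the statement. For this I would invoke the description of the natural transformation \eqref{e:left to right} given in \secref{ss:nat trans sep} for the separated case: there the map $\sF^{\on{op}}_{X_1\to X_2,\CP}\circ \psId_{X_1} \to \sF_{X_1\to X_2,\CP}$ is built from \eqref{e:nat trans pullback} applied to a Cartesian square of iterated diagonals, with input $\CN\boxtimes k_{X_1}\boxtimes \CP$. Specializing $\CP = \BD^{\on{Ve}}_{X_1\times X_2}(\CQ)$ and pushing this square forward through $\gamma_f$ via the base change and projection formulas used above, the Cartesian square collapses (because $\on{pr}_2\circ\gamma_f = \on{id}_{X_2}$) to precisely the square
\[
\CD
X_2 @>{\gamma_f}>> X_1\times X_2 \\
@V{\gamma_f}VV @VV{\Delta_{X_1}\times\on{id}_{X_2}}V \\
X_1\times X_2 @>{\on{id}_{X_1}\times\gamma_f}>> X_1\times X_1\times X_2
\endCD
\]
displayed in the corollary, while the input collapses to $\CN\boxtimes k_{X_1}\boxtimes \CM$. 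The verification that these two maps agree is a diagram chase using the compatibility of \eqref{e:nat trans pullback} with proper pushforward and external products; this is the step I expect to require the most bookkeeping, though no genuine new idea. Finally, the shift by $2n_1 = 2\dim(X_1)$ matches by construction of $\psId_{X_1}\simeq \on{id}[-2n_1]$ for $X_1$ smooth of dimension $n_1$.
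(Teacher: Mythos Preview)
Your approach is exactly the one the paper intends: the corollary is stated as an immediate consequence of \corref{c:our functor as other} together with the explicit description of the natural transformation in \secref{ss:nat trans sep}, and that is precisely what you do. One small slip: when you invoke the description from \secref{ss:nat trans sep}, you should take $\CP=\CQ=(\gamma_f)_\bullet(\CM)$, not $\CP=\BD^{\on{Ve}}_{X_1\times X_2}(\CQ)$; \corref{c:our functor as other} is obtained from \corref{c:conj other} by replacing $\CQ$ there with its Verdier dual (using \corref{c:stable under dual}(1)), so the relevant instance of \eqref{e:left to right} is the one with $\CP=\CQ$. With that correction your collapse of the diagram and the input object is on the nose, and the rest is the bookkeeping you describe.
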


\sssec{}

Let $\on{D}_{X_2/X_1}$ be the sheaf of vertical differential operators on $X_2$ with respect to $f$.
I.e., this is the subsheaf of rings in $\on{D}_{X_2}$ generated by all functions and $T_{X_2/X_1}\subset T_{X_2}$.
Still equivalently, $\on{D}_{X_2/X_1}$ is the centralizer of $f^\cdot(\CO_{X_1})$ in $\on{D}_{X_2}$.

\medskip

We consider the corresponding DG category $\Dmod_{\on{rel}}(X_2)$ (see, e.g., \cite[Sect. 6.3]{DrGa1}). 
By definition, in the affine situation, the category $\Dmod_{\on{rel}}(X_2)$ is 
compactly generated by the object $\on{D}_{X_2/X_1}$. The category $\Dmod_{\on{rel}}(X_2)$ is endowed with 
continuous conservative functors
$$\Dmod(X_2)\overset{\oblv_{\on{abs}\to\on{rel},X_2}}\longrightarrow \Dmod_{\on{rel}}(X_2) \overset{\oblv_{\on{rel},X_2}}
\longrightarrow \IndCoh(X_2),$$
whose composition is the functor $\oblv_{X_2}$. The functors $\oblv_{\on{abs}\to\on{rel},X_2}$ and $\oblv_{\on{rel},X_2}$
admit left adjoints, 
denoted $\ind_{\on{rel}\to\on{abs},X_2}$ and $\ind_{\on{rel},X_2}$, respectively.

\medskip

In addition, the category $\Dmod_{\on{rel}}(X_2)$ carries a t-structure in which the functor 
$$\oblv_{\on{rel},X_2}:\Dmod_{\on{rel}}(X_2)\to \IndCoh(X_2)$$
is t-exact. This property determines the above t-structure uniquely.

\medskip

Finally, it is easy to see that an object of $\Dmod_{\on{rel}}(X_2)$ is compact if and only if it is cohomologically
bounded, and its cohomologies are finitely generated as $\on{D}_{X_2/X_1}$-modules.

\sssec{}

We claim:

\begin{prop} \label{p:vertical}
An object $\CM\in \Dmod(X_2)$ is ULA with respect to $f$ if and only if its image under the forgetful functor
$$\oblv_{\on{abs}\to\on{rel},X_2}:\Dmod(X_2)\to \Dmod_{\on{rel}}(X_2)$$
is compact.
\end{prop}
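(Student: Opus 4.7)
The plan is to deduce the proposition from Proposition~\ref{p:O criter} and the ``if and only if'' theorem that immediately follows it, applied to the kernel representing the ULA functor. Consider
\[
\CQ := (f\times \on{id}_{X_2})_\bullet(\CM)\in \Dmod(X_1\times X_2),
\]
where $f\times \on{id}_{X_2}:X_2 \to X_1\times X_2$ denotes the graph of $f$. The corresponding functor $\sF_{X_1\to X_2,\CQ}$ is by construction $\CN \mapsto \CM\sotimes f^!(\CN)$, so ULA of $\CM$ with respect to $f$ is exactly the assertion that this functor preserves compactness. Since the support of $\CQ$ is the graph of $f$, which projects isomorphically (hence properly) onto $X_2$, the cited theorem tells us that $\CM$ is ULA if and only if
\[
\CK := \bigl(\oblv_{X_1}\otimes \on{id}_{\Dmod(X_2)}\bigr)(\CQ)\in \IndCoh(X_1)\otimes \Dmod(X_2)
\]
is compact.

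Next, I reduce to the affine case $X_i=\on{Spec}(A_i)$; this is legitimate because both conditions in the proposition are Zariski-local on $X_2$ (and therefore on $X_1$). In this setting, $\IndCoh(X_1)\otimes \Dmod(X_2)$ identifies with the ind-completion of the DG category of $A_1\otimes_k \on{D}_{X_2}$-modules coherent in the $A_1$-direction. The object $\CK$ is then $\CM$ itself, equipped with the $A_1$-action obtained by restricting its $\on{D}_{X_2}$-action along the structural map $A_1\to A_2\hookrightarrow \on{D}_{X_2}$. Because vertical derivations annihilate $f^\sharp(A_1)$, this image lies inside the centralizing subalgebra $\on{D}_{X_2/X_1}\subset \on{D}_{X_2}$, so the combined $A_1\otimes_k \on{D}_{X_2}$-structure on $\CM$ is equivalent data to the $\on{D}_{X_2/X_1}$-module $\oblv_{\on{abs}\to\on{rel},X_2}(\CM)$, with the $A_1$-action recovered automatically as the ``horizontal center''.

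The final step is to match compactness in the two pictures. Using the equivalence $\IndCoh(X_1)\otimes \Dmod(X_2)\simeq \IndCoh(X_1\times (X_2)_\dr)$, one shows that $\CK$ is the pushforward of $\oblv_{\on{abs}\to\on{rel},X_2}(\CM)$ along the closed embedding $\iota:X_2 \to X_1\times (X_2)_\dr$ given by $(f,p_{X_2})$; the source of $\iota$ naturally classifies $\Dmod_{\on{rel}}(X_2)$. Using the local splitting $\on{D}_{X_2}\simeq \on{D}_{X_2/X_1}\otimes_{A_2} f^*\on{D}_{X_1}$ coming from smoothness of $f$, one verifies that this pushforward preserves and reflects compactness on the relevant essential image, yielding the desired equivalence between compactness of $\CK$ and compactness of $\oblv_{\on{abs}\to\on{rel},X_2}(\CM)$.

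The main obstacle is this last step: carefully matching the notion of compactness in $\IndCoh(X_1)\otimes \Dmod(X_2)$---where simultaneous finiteness over $A_1$ and in the D-module direction is delicate---with compactness in $\Dmod_{\on{rel}}(X_2)=\on{D}_{X_2/X_1}\mod$. The smoothness of $f$ and the resulting PBW-style factorization of $\on{D}_{X_2}$ as a $\on{D}_{X_2/X_1}\otimes A_1$-algebra are the key inputs that let one make this comparison precise.
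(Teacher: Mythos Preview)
Your overall strategy is sound and genuinely different from the paper's, but there are two issues worth flagging.

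First, a concrete imprecision: the source of your map $\iota$ is not $X_2$. If you run the base-change square
\[
\begin{CD}
Z @>>> (X_2)_\dr \\
@V{\iota}VV @VV{(f_\dr,\on{id})}V \\
X_1 \times (X_2)_\dr @>{p_{X_1,\dr}\times \on{id}}>> (X_1)_\dr \times (X_2)_\dr
\end{CD}
\]
you find $Z=X_1\times_{(X_1)_\dr}(X_2)_\dr=(X_2/X_1)_\dr$, whose $\IndCoh$ is indeed $\Dmod_{\on{rel}}(X_2)$; the pullback along the top arrow is $\oblv_{\on{abs}\to\on{rel}}$, so your formula $\CK\simeq\iota_*^{\IndCoh}\bigl(\oblv_{\on{abs}\to\on{rel}}(\CM)\bigr)$ is correct once the source is fixed. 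The sentence ``the source of $\iota$ naturally classifies $\Dmod_{\on{rel}}(X_2)$'' is true for $(X_2/X_1)_\dr$ but false for $X_2$.

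Second, and more substantively, the step you yourself call the ``main obstacle''---that $\iota_*^{\IndCoh}$ preserves \emph{and reflects} compactness---is left as an assertion. It is true, and the PBW splitting you invoke is the right tool, but making it precise requires essentially the same work as the endgame of the paper's argument.

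The paper takes a shorter route that avoids the $\IndCoh(X_1)\otimes\Dmod(X_2)$ detour entirely. After reducing to the affine case, it tests the ULA functor on the single generator $\ind_{X_1}(\omega_{X_1})$ of $\Dmod(X_1)$ and computes directly, using the identity $f^!\circ\ind_{X_1}\simeq\ind_{\on{rel}\to\on{abs},X_2}\circ f^!$ together with the projection formula for the $(\ind_{\on{rel}\to\on{abs}},\oblv_{\on{abs}\to\on{rel}})$ adjunction, that
\[
\CM\sotimes f^!\bigl(\ind_{X_1}(\omega_{X_1})\bigr)\simeq \ind_{\on{rel}\to\on{abs},X_2}\bigl(\oblv_{\on{abs}\to\on{rel},X_2}(\CM)\bigr).
\]
The proof then concludes with the (still PBW-flavored, but easier) observation that $\CM'\in\Dmod_{\on{rel}}(X_2)$ is compact iff $\ind_{\on{rel}\to\on{abs},X_2}(\CM')\in\Dmod(X_2)$ is. Your route trades this single induction-formula computation for an appeal to the proper-support theorem plus a compactness comparison in $\IndCoh(X_1)\otimes\Dmod(X_2)$; both arrive at a PBW-style finish, but the paper's path is more direct.
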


\begin{proof}

With no restriction of generality, we can assume that $X_1$ and $X_2$ are affine. Then the functor
\eqref{e:ULA} preserves compactness if and only if it sends $\on{D}_{X_1}$ to a compact object
of $\Dmod(X_2)$. Since $X_1$ is smooth, instead of $\on{D}_{X_1}$ we can take 
$\ind_{X_1}(\omega_{X_1})$; it will still be a generator of $\Dmod(X_1)$. 

\medskip

Thus, we need to show that the object
$$\CM\sotimes f^!(\ind_{X_1}(\omega_{X_1}))\in \Dmod(X_2)$$
is compact if and only if $\oblv_{\on{abs}\to\on{rel},X_2}(\CM)$
is compact.

\medskip

Now, recall (see, e.g., \cite[Sect. 6.3.4]{DrGa1}) that for $\CF\in \IndCoh(X_1)$, the object
$$f^!(\CF)\in \IndCoh(X_2)$$ has a natural structure of object of $\Dmod_{\on{rel}}(X_2)$,
i.e., is the image under $\oblv_{\on{abs}\to\on{rel},X_2}$ of the same-named object of $\Dmod_{\on{rel}}(X_2)$.
Furthermore, by \cite[Lemma 6.3.15]{DrGa1}
$$f^!(\ind_{X_1}(\CF))\simeq \ind_{\on{rel}\to\on{abs},X_2}(f^!(\CF)).$$

\medskip

Combining this with the projection formula of 
\cite[Proposition 6.3.12(b')]{DrGa1}, for $\CM\in \Dmod(X_2)$ we obtain a canonical isomorphism 
$$\CM\sotimes f^!(\ind_{X_1}(\CF))\simeq \ind_{\on{rel}\to\on{abs},X_2}(\oblv_{\on{abs}\to\on{rel},X_2}(\CM)\sotimes f^!(\CF)).$$

\medskip

Hence, we obtain that $\CM$ is ULA if and only if the object
$$\ind_{\on{rel}\to\on{abs},X_2}(\oblv_{\on{abs}\to\on{rel},X_2}(\CM)\sotimes f^!(\omega_{X_1}))\simeq
\ind_{\on{rel}\to\on{abs},X_2}(\oblv_{\on{abs}\to\on{rel},X_2}(\CM)) \in \Dmod(X_2)$$
is compact. 

\medskip

However, it is
easy to see that an object $\CM'\in \Dmod_{\on{rel}}(X_2)$ is compact if and only if 
$$\ind_{\on{rel}\to\on{abs},X_2}(\CM')\in \Dmod(X_2)$$
is compact. 

\end{proof}

\section{Proof of the subquotient theorem}  \label{s:proof of subq}

The goal of this section is to prove \thmref{t:subquotient}. The results of this section will not be used elsewhere
in the paper. 

\ssec{The tensor product t-structure}   \label{ss:ten prod t-structure}

Let $\bC_1$ and $\bC_2$ be two DG categories, each endowed with a t-structure.
Consider the DG category $\bC_1\otimes \bC_2$. It inherits a t-structure where we set
$(\bC_1\otimes \bC_2)^{> 0}$ to be the full subcategory spanned by objects $\bc$ that
satisfy
$$\Maps(\bc_1\otimes \bc_2,\bc)=0, \, \forall \bc_1\in \bC_1^{\leq 0},\bc_2\in \bC_2^{\leq 0}.$$

Equivalently, the subcategory $(\bC_1\otimes \bC_2)^{\leq 0}$ is generated under colimits
by objects of the form $\bc_1\otimes \bc_2$ with $\bc_i\in \bC_i^{\leq 0}$.

\sssec{}  

Let us recall that a t-structure on a DG category $\bC$
is said to be \emph{compactly generated} if the category $\bC^{\leq 0}$ is generated under colimits
by the subcategory $\bC^{\leq 0}\cap \bC^c$. Equivalently, if 
$$\bc\in \bC^{>0}\, \Leftrightarrow \, \Maps(\bc',\bc)=0, \, \forall \,\bc'\in \bC^{\leq 0}\cap \bC^c.$$
E.g., this is the case for the standard t-structures on $\QCoh(X)$,
$\IndCoh(X)$ and $\Dmod(X)$ for a scheme $X$. 

\sssec{}

Let $\bC_1$ and $\bC_2$ be DG categories, both equipped with t-structures. 
Note that, by construction, if the t-structures on $\bC_i$ are compactly generated, the same
will be true for one on $\bC_1\otimes \bC_2$.

\medskip

We will use the following assertion:

\begin{lem} \label{l:ten prod t-structure} 
Let $\bC_1,\bC_2,\wt\bC_2$ be DG categories, each endowed with a t-structure, and let
$\sF:\bC_2\to \wt\bC_2$ be a continuous functor. Consider the functor
$$(\on{Id}_{\bC_1}\otimes \sF):\bC_1\otimes \bC_2\to \bC_1\otimes \wt\bC_2.$$

\smallskip

\noindent{\em(i)} If the functor $\sF$ is right t-exact, then so is $\on{Id}_{\bC_1}\otimes \sF$.

\smallskip

\noindent{\em(ii)} If the functor $\sF$ is left t-exact, and the t-structure on $\bC_1$ is compactly 
generated, then the functor $\on{Id}_{\bC_1}\otimes \sF$ is also left t-exact.
\end{lem}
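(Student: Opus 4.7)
The plan is to treat the two parts separately, with part (i) being a direct consequence of the definition of the tensor product t-structure, and part (ii) requiring an auxiliary adjunction that uses the compactness assumption in an essential way.

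For part (i): by definition $(\bC_1 \otimes \bC_2)^{\leq 0}$ is generated under colimits by pure tensors $\bc_1 \otimes \bc_2$ with $\bc_i \in \bC_i^{\leq 0}$. Since $\on{Id}_{\bC_1} \otimes \sF$ is continuous (it is a tensor product of continuous functors), it preserves colimits, so it suffices to check that it sends each generator into $(\bC_1 \otimes \wt\bC_2)^{\leq 0}$. But $(\on{Id}_{\bC_1} \otimes \sF)(\bc_1 \otimes \bc_2) = \bc_1 \otimes \sF(\bc_2)$, and $\sF(\bc_2) \in \wt\bC_2^{\leq 0}$ by hypothesis, so this lies among the very objects that generate $(\bC_1 \otimes \wt\bC_2)^{\leq 0}$.

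For part (ii): I would use the following device. Given a compact $\bc_1 \in \bC_1^c$, the functor $k \mapsto \bc_1$ from $\Vect$ to $\bC_1$ admits a continuous right adjoint $\CMaps_{\bC_1}(\bc_1, -)$ (continuity is exactly compactness of $\bc_1$). Tensoring this adjoint pair with $\bC_2$ produces a continuous right adjoint $R_{\bc_1}^{(\bC_2)} : \bC_1 \otimes \bC_2 \to \bC_2$ to the insertion $\bc_2 \mapsto \bc_1 \otimes \bc_2$, and analogously $R_{\bc_1}^{(\wt\bC_2)}$ on the $\wt\bC_2$ side. The key compatibility is a canonical isomorphism
\[
\sF \circ R_{\bc_1}^{(\bC_2)} \;\simeq\; R_{\bc_1}^{(\wt\bC_2)} \circ (\on{Id}_{\bC_1} \otimes \sF),
\]
which I would check by continuity on pure tensors $\bc_1' \otimes \bc_2$, where both sides compute $\CMaps_{\bC_1}(\bc_1,\bc_1') \otimes_k \sF(\bc_2)$.

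With this in hand, given $\bc \in (\bC_1 \otimes \bC_2)^{>0}$, I want to show $\Maps_{\bC_1 \otimes \wt\bC_2}(\bc_1 \otimes \wt\bc_2, (\on{Id}_{\bC_1} \otimes \sF)(\bc)) = 0$ for all $\bc_1 \in \bC_1^{\leq 0}$ and $\wt\bc_2 \in \wt\bC_2^{\leq 0}$. Using the compact generation of the t-structure on $\bC_1$, I may restrict to $\bc_1 \in \bC_1^{\leq 0} \cap \bC_1^c$. Then the mapping space becomes, by adjunction, $\Maps_{\wt\bC_2}(\wt\bc_2, R_{\bc_1}^{(\wt\bC_2)}(\on{Id}_{\bC_1} \otimes \sF)(\bc)) \simeq \Maps_{\wt\bC_2}(\wt\bc_2, \sF(R_{\bc_1}^{(\bC_2)}(\bc)))$. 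Since $\sF$ is left t-exact, it suffices to verify that $R_{\bc_1}^{(\bC_2)}(\bc) \in \bC_2^{>0}$; by adjunction again this amounts to the vanishing of $\Maps_{\bC_1 \otimes \bC_2}(\bc_1 \otimes \bc_2, \bc)$ for all $\bc_2 \in \bC_2^{\leq 0}$, which is immediate from $\bc_1 \otimes \bc_2 \in (\bC_1 \otimes \bC_2)^{\leq 0}$ and the hypothesis $\bc \in (\bC_1 \otimes \bC_2)^{>0}$.

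The main obstacle is really a foundational one: constructing the right adjoint $R_{\bc_1}$ and its naturality in the second factor, i.e., making rigorous the intuitive claim that the partial adjunction along $\bC_1$ commutes with any continuous functor in the $\bC_2$-slot. Once this is set up carefully (for instance, by viewing $\bc_1 \in \bC_1^c$ as a dualizable arrow $\Vect \to \bC_1$ in $\StinftyCat$ and tensoring the adjunction with $\sF$), the rest of the argument is formal. Without the compactness of $\bc_1$, the right adjoint would fail to be continuous, and the reduction to compact $\bc_1$ in the t-structure definition — which is exactly the hypothesis in (ii) — is what makes the argument go through.
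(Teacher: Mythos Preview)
Your proof is correct and follows essentially the same route as the paper. The only difference is notational: where you package the key tool as the right adjoint $R_{\bc_1}^{(\bC_2)}$ to the insertion $\bc_2\mapsto \bc_1\otimes \bc_2$, the paper writes the same functor as $(\on{ev}_{\bC_1}\otimes\on{Id}_{\bC_2})(\bc_1^\vee\otimes -)$ using the dual object $\bc_1^\vee\in\bC_1^\vee$, and your compatibility $\sF\circ R_{\bc_1}^{(\bC_2)}\simeq R_{\bc_1}^{(\wt\bC_2)}\circ(\on{Id}_{\bC_1}\otimes\sF)$ is exactly the paper's identity $(\on{ev}_{\bC_1}\otimes\on{Id}_{\wt\bC_2})(\bc_1^\vee\otimes(\on{Id}_{\bC_1}\otimes\sF)(\bc))\simeq \sF\circ(\on{ev}_{\bC_1}\otimes\on{Id}_{\bC_2})(\bc_1^\vee\otimes\bc)$.
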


\begin{rem}
We do not know whether in point (ii) one can get rid of the assumption that the t-structure on $\bC_1$
be compactly generated.
\end{rem}

\begin{proof}

Point (i) is tautological. For point (ii), by the assumption on $\bC_1$, it suffices to show that
for $\bc\in (\bC_1\otimes \bC_2)^{> 0}$ and for $\bc_1\in \bC_1^{\leq 0}\cap \bC_1^c$ and
$\wt\bc_2\in \wt\bC_2^{\leq 0}$, the object 
\begin{equation} \label{e:Hom on prod}
\CMaps_{\bC_1\otimes \wt\bC_2}\Bigl(\bc_1\otimes \wt\bc_2,(\on{Id}_{\bC_1}\otimes \sF)(\bc)\Bigr)\in \Vect
\end{equation}
belongs to $\Vect^{>0}$.

\medskip

Note that for a pair of DG categories $\bC_1$ and $\wt\bC_2$, and objects $\bc_1\in \bC^c_1$, 
$\wt\bc_2\in \wt\bC_2$ and $\bc'\in \bC_1\otimes \wt\bC_2$,
we have a canonical isomorphism
$$\CMaps_{\bC_1\otimes \wt\bC_2}(\bc_1\otimes \wt\bc_2,\bc')\simeq 
\CMaps_{\wt\bC_2}\left(\wt\bc_2,(\on{ev}_{\bC_1}\otimes \on{Id}_{\wt\bC_2})(\bc_1^\vee \otimes \bc')\right),$$
where $\bc_1^\vee$ is the object of $(\bC_1^\vee)^c\simeq (\bC_1^c)^{\on{op}}$ corresponding to $\bc_1\in \bC_1^c$ and 
where 
$$\on{ev}_{\bC_1}:\bC_1^\vee\otimes \bC_1\to \Vect$$
is the canonical evaluation functor. 

\medskip

Hence, we can rewrite \eqref{e:Hom on prod} as
\begin{equation} \label{e:Hom on prod again}
\CMaps_{\wt\bC_2}\left(\wt\bc_2, (\on{ev}_{\bC_1}\otimes \on{Id}_{\wt\bC_2})\left(\bc_1^\vee\otimes 
(\on{Id}_{\bC_1}\otimes \sF)(\bc)\right)\right).
\end{equation}

\medskip

We have
$$(\on{ev}_{\bC_1}\otimes \on{Id}_{\wt\bC_2})\left(\bc_1^\vee\otimes (\on{Id}_{\bC_1}\otimes \sF)(\bc)\right)\simeq
\sF\circ (\on{ev}_{\bC_1}\otimes \on{Id}_{\bC_2})(\bc_1^\vee\otimes \bc).$$

Now, since $\bc_1\in \bC_1^{\leq 0}$ and $\bc\in (\bC_1\otimes \bC_2)^{> 0}$, we have
$$(\on{ev}_{\bC_1}\otimes \on{Id}_{\bC_2})(\bc_1^\vee\otimes \bc)\in \bC_2^{>0}.$$

Hence, $\sF\circ (\on{ev}_{\bC_1}\otimes \on{Id}_{\bC_2})(\bc_1^\vee\otimes \bc)\in \wt\bC_2^{> 0}$, since
$\sF$ is left t-exact. Hence, the expression in \eqref{e:Hom on prod again} belongs to $\Vect^{>0}$ since
$\wt\bc_2\in \wt\bC_2^{\leq 0}$.

\end{proof}

\ssec{The t-structure on $(\CO,\on{D})$-bimodules}

\sssec{}

For a pair of schemes $X_1$ and $X_2$ consider the DG category  
$$\IndCoh(X_1)\otimes \Dmod(X_2),$$
endowed with the t-structure, induced by the t-structures on $\IndCoh(X_1)$ and $\Dmod(X_2)$, respectively. 

\medskip

The goal of this subsection is to prove the following assertion:

\begin{prop}  \label{p:subq compact}
Let $\CK$ be a compact object in $\IndCoh(X_1)\otimes \Dmod(X_2)$. Then any subquotient
of any of its cohomologies (with respect to the above t-structure) is compact. 
\end{prop}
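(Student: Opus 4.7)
The plan is to identify the heart of the tensor product t-structure with an explicit abelian category of $(\CO_{X_1},D_{X_2})$-bimodules, and then invoke Noetherianness to deduce that ``finite generation''—equivalently, compactness in the heart—is closed under passing to subquotients.

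First, I would make the heart explicit and reduce to the affine case. Using the identification $\IndCoh(X_1)\otimes \Dmod(X_2)\simeq \IndCoh(X_1\times (X_2)_\dr)$ together with the definition of the tensor product t-structure, the heart $(\IndCoh(X_1)\otimes \Dmod(X_2))^\heartsuit$ identifies with an abelian category of quasi-coherent sheaves on $X_1\times X_2$ carrying a compatible $D$-module structure along the second factor. Since the t-structure and compactness of $\CK$ can be tested Zariski-locally in both factors, I would cover $X_1,X_2$ by affine opens; after embedding $X_2$ into a smooth ambient scheme and invoking Kashiwara's equivalence, I may further assume $X_2$ is smooth affine. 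In this affine setting the heart becomes the abelian category of ordinary modules over the $k$-algebra $A\otimes_k D(X_2)$, where $A$ is the coordinate ring of $X_1$.

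Second, I would characterize compactness in the heart as finite generation over $A\otimes_k D(X_2)$. The DG category is compactly generated by external products $\CF\boxtimes \CM$ with $\CF\in \Coh(X_1)$ and $\CM\in \Dmod(X_2)^c$, and any compact object is Karoubi-generated by such products via finite colimits and shifts. Each such external product, viewed in the heart after suitable shift, is manifestly finitely generated as an $A\otimes_k D(X_2)$-module. I would then argue that finite generation propagates to all cohomologies of an arbitrary compact object, using that the heart is closed under finite extensions combined with Noetherianness (which controls kernels of surjections between finitely generated modules).

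Third, I would invoke Noetherianness to conclude. The ring $A\otimes_k D(X_2)$ is Noetherian: $A$ is a finitely generated $k$-algebra, $D(X_2)$ is Noetherian via its order filtration, and their tensor product over $k$ is Noetherian by a standard filtration argument. Subquotients of finitely generated modules over a Noetherian ring are finitely generated, hence compact in the heart, hence compact in $\IndCoh(X_1)\otimes \Dmod(X_2)$. The main technical obstacle is the bridge in Step 2 between the abstract DG-categorical notion of compactness in a tensor product and the concrete algebraic notion of finite generation over $A\otimes_k D(X_2)$, together with checking that the local and Kashiwara reductions respect both the t-structure and compactness.
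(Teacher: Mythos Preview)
Your approach is essentially the same as the paper's: identify the heart with quasi-coherent $(\CO_{X_1},D_{X_2})$-modules, show compact objects have finitely generated cohomologies, and conclude via Noetherianness of $\CO_{X_1}\otimes D_{X_2}$. The paper fills in exactly the bridge you flag in Step~2 by proving a renormalization equivalence $\IndCoh(X_1)\otimes \Dmod(X_2)\simeq \Ind\bigl((\QCoh(X_1)\otimes \Dmod(X_2))^{\on{f.g.}}\bigr)$, which makes precise the identification of compact objects with those having bounded, finitely generated cohomologies.
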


The rest of this subsection is devoted to the proof of this proposition; so, the reader, who is willing
to take the assertion of \propref{p:subq compact} on faith, can skip it. 

\sssec{}

Consider the DG category 
$$\QCoh(X_1)\otimes \Dmod(X_2),$$ 
endowed with the t-structure induced by the t-structures on  $\QCoh(X_1)$ and $\Dmod(X_2)$,
respectively. 

\medskip

By \lemref{l:ten prod t-structure}, the functor 
$$\Psi_{X_1}\otimes \on{Id}_{\Dmod(X_2)}:\IndCoh(X_1)\otimes \Dmod(X_2)\to \QCoh(X_1)\otimes \Dmod(X_2)$$
is t-exact. 

\begin{lem}  \label{l:pre-ren}
The functor $\Psi_{X_1}\otimes \on{Id}_{\Dmod(X_2)}$ induces an equivalence 
$$\left(\IndCoh(X_1)\otimes \Dmod(X_2)\right)^{\geq 0}\to \left(\QCoh(X_1)\otimes \Dmod(X_2)\right)^{\geq 0}.$$
\end{lem}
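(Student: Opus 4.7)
The plan is to deduce this from the corresponding fact for $\Psi_{X_1}$ itself and propagate it through the tensor product with $\Dmod(X_2)$. By \cite[Proposition 1.2.4]{IndCoh}, which applies since $X_1$ is eventually coconnective and almost of finite type, the functor $\Psi_{X_1}$ restricts to an equivalence $\IndCoh(X_1)^{\geq 0} \simeq \QCoh(X_1)^{\geq 0}$; moreover both $\Psi_{X_1}$ and its fully faithful left adjoint $\Xi_{X_1}$ are t-exact, and the counit $\Xi_{X_1}\Psi_{X_1} \to \on{Id}_{\IndCoh(X_1)}$ is an isomorphism on $\IndCoh(X_1)^{\geq 0}$.

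Fully faithfulness of $\Xi_{X_1}$ is equivalent to the unit $\on{Id}_{\QCoh(X_1)} \to \Psi_{X_1}\Xi_{X_1}$ being an isomorphism, and this property is preserved under $- \otimes \on{Id}_{\Dmod(X_2)}$. Hence $\Xi_{X_1} \otimes \on{Id}_{\Dmod(X_2)}$ is the fully faithful left adjoint of $\Psi_{X_1} \otimes \on{Id}_{\Dmod(X_2)}$, and by \lemref{l:ten prod t-structure} both functors are t-exact. Essential surjectivity on coconnective parts then follows immediately: for $\CE \in (\QCoh(X_1) \otimes \Dmod(X_2))^{\geq 0}$, the object $(\Xi_{X_1} \otimes \on{Id})(\CE)$ lies in $(\IndCoh(X_1) \otimes \Dmod(X_2))^{\geq 0}$ by left t-exactness, and the unit of the adjunction identifies its image under $\Psi_{X_1} \otimes \on{Id}_{\Dmod(X_2)}$ with $\CE$.

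The crux is fully faithfulness on coconnective parts, i.e., that the counit $(\Xi_{X_1}\Psi_{X_1} \otimes \on{Id}_{\Dmod(X_2)})(\bc) \to \bc$ is an equivalence for every $\bc \in (\IndCoh(X_1) \otimes \Dmod(X_2))^{\geq 0}$. Writing $K := \on{cofib}(\Xi_{X_1}\Psi_{X_1} \to \on{Id}_{\IndCoh(X_1)})$, this amounts to the vanishing $(K \otimes \on{Id}_{\Dmod(X_2)})(\bc) = 0$.

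The main obstacle is precisely this last vanishing, since the tensor-product t-structure is not directly expressible in terms of those on its factors, so the identity $K|_{\IndCoh(X_1)^{\geq 0}} = 0$ does not immediately yield its tensored counterpart. The approach is to first upgrade this identity by iteration: applying $K$ to the fiber sequence $\tau^{\leq -n-1}\CF \to \CF \to \tau^{\geq -n}\CF$ and using $K(\tau^{\geq -n}\CF) = 0$ together with the t-exactness of $\Xi_{X_1}$ and $\Psi_{X_1}$ yields $K(\CF) \simeq K(\tau^{\leq -n-1}\CF) \in \IndCoh(X_1)^{\leq -n-1}$ for every $n$, so that $K$ factors through $\bigcap_n \IndCoh(X_1)^{\leq -n}$. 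Since $\IndCoh(X_1) \otimes \Dmod(X_2)$ is compactly generated by tensor products $\CF \otimes \CM$ of compacts, with $\CF \in \Coh(X_1)$ and $\CM \in \Dmod(X_2)^c$ both bounded, the functor $K \otimes \on{Id}_{\Dmod(X_2)}$ inherits this infinite connectivity. Combining this with the fact that $(K \otimes \on{Id})(\bc)$ is the cofiber of a map between coconnective objects (hence bounded below in the tensor t-structure), and invoking left-separation of the compactly generated tensor-product t-structure, forces the desired vanishing.
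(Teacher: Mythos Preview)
Your argument has a genuine gap: the functor $\Xi_{X_1}$ is \emph{not} left t-exact in general. As a left adjoint to the t-exact functor $\Psi_{X_1}$, it is only guaranteed to be right t-exact, and for non-smooth $X_1$ left t-exactness fails. Concretely, for $X_1=\on{Spec}(k[\epsilon]/\epsilon^2)$ and $\CE=k$ the residue field, the object $\Xi_{X_1}(k)$ lies in ${}^\perp\!\ker(\Psi_{X_1})$ (by adjunction), whereas $k_{\IndCoh}$ does not (the counit map $k_{\IndCoh}\to K(k_{\IndCoh})$ is nonzero since $k$ is not perfect); since $k_{\IndCoh}$ is the unique coconnective lift of $k$, it follows that $\Xi_{X_1}(k)\notin\IndCoh(X_1)^{\geq 0}$. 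This breaks your ``bounded below'' step: for coconnective $\bc$, you cannot conclude that $(\Xi_{X_1}\Psi_{X_1}\otimes\on{Id})(\bc)$ is coconnective, so the cofiber $(K\otimes\on{Id})(\bc)$ need not be bounded below. Your fallback to left-separation is also invalid: the tensor t-structure on $\IndCoh(X_1)\otimes\Dmod(X_2)$ is \emph{not} left-separated when $X_1$ is not smooth, precisely because $\ker(\Psi_{X_1})\otimes\Dmod(X_2)$ furnishes nonzero infinitely connective objects.

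The paper sidesteps this by proving \emph{conservativity} of $\Psi_{X_1}\otimes\on{Id}$ on the coconnective part, rather than analyzing the counit of the untruncated adjunction. Your infinite-connectivity argument (step 9) is in fact the correct core content: it shows that $\ker(\Psi_{X_1}\otimes\on{Id})=\ker(\Psi_{X_1})\otimes\Dmod(X_2)$ lies in $\bigcap_n\bC^{\leq -n}\subset\bC^{<0}$, hence has zero intersection with $\bC^{\geq 0}$. That gives conservativity directly. Combined with the fully faithful left adjoint $\tau^{\geq 0}\circ(\Xi_{X_1}\otimes\on{Id})$ (note the truncation --- this is also how the paper handles essential surjectivity, avoiding your other use of left t-exactness), the equivalence follows. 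So the fix is to reroute your step 9 through conservativity rather than through the counit of $\Xi_{X_1}\Psi_{X_1}\otimes\on{Id}$.
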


\begin{proof}

The functor in question admits a left adjoint, which is also a right inverse, given by 
$$\CM\mapsto \tau^{\geq 0}(\Xi_{X_1}\otimes \on{Id}_{\Dmod(X_2)}(\CM)).$$

Hence, it remains to check that $\Psi_{X_1}\otimes \on{Id}_{\Dmod(X_2)}$ is conservative
when restricted to the subcategory $\left(\IndCoh(X_1)\otimes \Dmod(X_2)\right)^{\geq 0}$. Note that
$$\on{ker}(\Psi_{X_1}\otimes \on{Id}_{\Dmod(X_2)})=\on{ker}(\Psi_{X_1})\otimes \Dmod(X_2).$$

So, we need to show that the essential image of the fully faithful embedding
$$\on{ker}(\Psi_{X_1})\otimes \Dmod(X_2)\hookrightarrow \IndCoh(X_1)\otimes \Dmod(X_2)$$
has a zero intersection with $\left(\IndCoh(X_1)\otimes \Dmod(X_2)\right)^{\geq 0}$. 

\medskip

Note (see \cite[Sect. 1.2.7]{IndCoh})
that the essential image of $\on{ker}(\Psi_{X_1})$ in $\IndCoh(X_1)$ belongs to $\IndCoh(X_1)^{<0}$
(in fact, to $\IndCoh(X_1)^{<-n}$ for any $n$).  Hence, the desired assertion follows from \lemref{l:ten prod t-structure}(i).

\end{proof}

\begin{cor} The functor $\Psi_{X_1}\otimes \on{Id}_{\Dmod(X_2)}$ has the following properties:

\smallskip 

\noindent{\em(i)} It is fully faithful when restricted
to $(\IndCoh_{X_1}\otimes \on{Id}_{\Dmod(X_2)})^c$.

\smallskip 

\noindent{\em(ii)} It induces an equivalence
$$\left(\IndCoh(X_1)\otimes \Dmod(X_2)\right)^\heartsuit\to \left(\QCoh(X_1)\otimes \Dmod(X_2)\right)^\heartsuit.$$
\end{cor}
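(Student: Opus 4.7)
The plan is to derive both statements from \lemref{l:pre-ren} combined with the t-exactness of the functor $F := \Psi_{X_1}\otimes \on{Id}_{\Dmod(X_2)}$. The latter t-exactness follows at once from the t-exactness of $\Psi_{X_1}$ and both parts of \lemref{l:ten prod t-structure}, since the t-structure on $\IndCoh(X_1)$ is compactly generated.

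For \emph{(i)}, I will first record that every compact object of $\IndCoh(X_1)\otimes \Dmod(X_2)$ is cohomologically bounded in the tensor product t-structure. This is because compacts are Karoubi-generated by pure tensors $\bc_1\otimes \bc_2$ with $\bc_1\in \Coh(X_1)$ and $\bc_2\in \Dmod(X_2)_{\on{coh}}$, both of which are bounded complexes, and boundedness survives finite colimits and retracts. Given two compact objects $\bc,\bc'$, choose $N\gg 0$ so that $\bc[N]$ and $\bc'[N]$ both lie in $(\IndCoh(X_1)\otimes \Dmod(X_2))^{\geq 0}$. Since the coconnective part is a full subcategory, \lemref{l:pre-ren} furnishes the isomorphism
$$\CMaps(\bc[N],\bc'[N])\xrightarrow{\sim} \CMaps(F(\bc[N]),F(\bc'[N])),$$
and undoing the shift on both sides (using that $F$ commutes with shifts) yields the desired full faithfulness on $(\IndCoh(X_1)\otimes \Dmod(X_2))^c$.

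For \emph{(ii)}, t-exactness of $F$ automatically restricts it to a functor between the hearts. To upgrade this to an equivalence it suffices to check that the quasi-inverse $G$ of the equivalence supplied by \lemref{l:pre-ren} is itself t-exact: right t-exactness is formal from $F|_{\geq 0}$ being a right t-exact equivalence, while for left t-exactness I will take $\bd\in (\QCoh(X_1)\otimes \Dmod(X_2))^\heartsuit$, set $\bd':=G(\bd)\in (\IndCoh(X_1)\otimes \Dmod(X_2))^{\geq 0}$, and observe that t-exactness of $F$ gives $F(\tau^{>0}\bd')=\tau^{>0}F(\bd')=\tau^{>0}\bd=0$; applying the equivalence of \lemref{l:pre-ren} to $\tau^{>0}\bd'$ then forces $\tau^{>0}\bd'=0$, so $\bd'$ lies in the heart.

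I do not anticipate any serious obstacle. The only non-trivial input beyond \lemref{l:pre-ren} is the observation that compacts in the tensor product are cohomologically bounded, after which both parts are routine bookkeeping with the t-exact equivalence on coconnective subcategories.
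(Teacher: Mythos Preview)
Your argument is correct and is exactly the intended derivation: the paper states this as an unproved corollary of \lemref{l:pre-ren}, and both parts follow precisely as you indicate---compacts are bounded so after a shift lie in the coconnective part where $F$ is an equivalence, and the heart sits inside the coconnective part so the induced functor on hearts is fully faithful with essential surjectivity forced by t-exactness. One cosmetic remark: in part~(ii) your labels ``right t-exact'' and ``left t-exact'' for $G$ are swapped relative to standard usage (the computation $\tau^{>0}\bd'=0$ shows $G$ sends the heart into $\bC^{\leq 0}$, which is right t-exactness), but the mathematics is unaffected.
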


\sssec{}

We claim that the abelian category 
$$\left(\QCoh(X_1)\otimes \Dmod(X_2)\right)^\heartsuit$$
is the usual category of quasi-coherent sheaves of $(\CO_{X_1},\on{D}_{X_2})$-modules on $X_1\times X_2$.

\medskip

Indeed, it is easy to see that the assertion is local, so we can assume that $X_1$ and $X_2$ are affine. In this
case $\left(\IndCoh(X_1)\otimes \Dmod(X_2)\right)^\heartsuit$ admits a projective generator, namely 
$\tau^{\geq 0}(\CO_{X_1})\boxtimes \on{D}_{X_2}$, where $\tau^{\geq 0}$ is the truncation functor.

\sssec{}

Let $$\left(\QCoh(X_1)\otimes \Dmod(X_2)\right)^{\on{f.g.}}\subset \QCoh(X_1)\otimes \Dmod(X_2)$$ be the full
subcategory spanned by cohomologically bounded objects with finitely generated cohomologies. 
As in 
\cite[Proposition 1.2.4]{IndCoh}
one shows that the category $$\left(\QCoh(X_1)\otimes \Dmod(X_2)\right)^{\on{f.g.}}$$ has a unique t-structure such
that the functor
\begin{equation} \label{e:ren}
\Ind\left(\left(\QCoh(X_1)\otimes \Dmod(X_2)\right)^{\on{f.g.}}\right)\to \QCoh(X_1)\otimes \Dmod(X_2),
\end{equation}
obtained by ind-extending the tautological embedding, is t-exact and induces an equivalence
\begin{equation} \label{e:renorm}
\Ind\left(\left(\QCoh(X_1)\otimes \Dmod(X_2)\right)^{\on{f.g.}}\right)^{\geq 0}\simeq 
\left(\QCoh(X_1)\otimes \Dmod(X_2)\right)^{\geq 0}.
\end{equation}

\medskip

The Noetherianness of the sheaf of rings $\CO_{X_1}\otimes \on{D}_{X_2}$ implies that the functor
$\Psi_{X_1}\otimes \on{Id}_{\Dmod(X_2)}$ sends the compact generators of $\IndCoh(X_1)\otimes \Dmod(X_2)$
to $\left(\QCoh(X_1)\otimes \Dmod(X_2)\right)^{\on{f.g.}}$. Hence, we obtain that 
the functor $\Psi_{X_1}\otimes \on{Id}_{\Dmod(X_2)}$ factors as a composition
of a canonically defined functor
\begin{equation} \label{e:to ren}
\IndCoh(X_1)\otimes \Dmod(X_2)\to \on{Ind}(\left(\QCoh(X_1)\otimes \Dmod(X_2)\right)^{\on{f.g.}}),
\end{equation}
followed by \eqref{e:ren}. 

\begin{lem} \label{l:comp ren}
The functor \eqref{e:to ren} is an equivalence and is t-exact. 
\end{lem}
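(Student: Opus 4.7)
The functor in \eqref{e:to ren}, which we denote by $\Phi$, is by construction the ind-extension of the restriction of $\Psi_{X_1}\otimes\on{Id}_{\Dmod(X_2)}$ to $\left(\IndCoh(X_1)\otimes\Dmod(X_2)\right)^c$, and by the Noetherianness observation in the paragraph preceding \eqref{e:to ren} this restriction factors through $\left(\QCoh(X_1)\otimes\Dmod(X_2)\right)^{\on{f.g.}}$. Point (i) of the Corollary following \lemref{l:pre-ren} says that this restriction is fully faithful, hence so is $\Phi$. It therefore remains to show that every object of $\left(\QCoh(X_1)\otimes\Dmod(X_2)\right)^{\on{f.g.}}$ lies, up to retracts, in the essential image of the restriction.

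The plan for this step is as follows. Any $\CL \in \left(\QCoh(X_1)\otimes\Dmod(X_2)\right)^{\on{f.g.}}$ is cohomologically bounded with finitely generated cohomologies, so a Postnikov dévissage reduces the question to the case $\CL$ lies in the abelian heart. By point (ii) of the same Corollary, $\Psi_{X_1}\otimes\on{Id}_{\Dmod(X_2)}$ induces an equivalence
$$\left(\IndCoh(X_1)\otimes\Dmod(X_2)\right)^\heartsuit \overset{\sim}\longrightarrow \left(\QCoh(X_1)\otimes\Dmod(X_2)\right)^\heartsuit,$$
so $\CL$ lifts canonically to some $\wt\CL$ in $\left(\IndCoh(X_1)\otimes\Dmod(X_2)\right)^\heartsuit$, and the problem reduces to showing that $\wt\CL$ is compact. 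This is a Zariski-local question on $X_1\times X_2$, so one may assume $X_1$ and $X_2$ affine; in that case the heart is the module category over the Noetherian sheaf of rings $\tau^{\geq 0}(\CO_{X_1})\boxtimes \on{D}_{X_2}$, with $\tau^{\geq 0}(\CO_{X_1})\boxtimes \on{D}_{X_2}$ itself a compact projective generator of $\IndCoh(X_1)\otimes\Dmod(X_2)$, and finite generation of $\wt\CL$ then yields its compactness.

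Once $\Phi$ is known to be an equivalence, t-exactness follows by noting that the composition of $\Phi$ with \eqref{e:ren} equals $\Psi_{X_1}\otimes\on{Id}_{\Dmod(X_2)}$, which is t-exact by \lemref{l:ten prod t-structure}. The characterizing properties of the t-structure on $\on{Ind}\left(\left(\QCoh(X_1)\otimes\Dmod(X_2)\right)^{\on{f.g.}}\right)$ recalled in \eqref{e:renorm}, combined with \lemref{l:pre-ren}, then force $\Phi$ to match both the $\geq 0$ and $\leq 0$ parts of the two t-structures.

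The main obstacle is the essential surjectivity at the heart level, namely the verification that finitely generated objects of the abelian heart of the bimodule category lift to compact objects on the $\IndCoh$ side. This rests on Noetherianness of $\CO_{X_1}\boxtimes \on{D}_{X_2}$ applied affine-locally; once set up, the rest of the argument is a standard dévissage.
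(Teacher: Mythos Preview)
Your full faithfulness argument via point (i) of the Corollary is correct. The gap is in the essential surjectivity step, specifically in your claim that a finitely generated object $\wt\CL$ in the heart of $\IndCoh(X_1)\otimes\Dmod(X_2)$ is compact.

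The justification you offer---that $\tau^{\geq 0}(\CO_{X_1})\boxtimes\on{D}_{X_2}$ is a ``compact projective generator of $\IndCoh(X_1)\otimes\Dmod(X_2)$''---fails when $X_1$ is singular or genuinely derived. In that case $\tau^{\geq 0}(\CO_{X_1})$ (or $\CO_{X_1}$) generates only $\QCoh(X_1)$, not $\IndCoh(X_1)$; equivalently, in the affine picture $A\otimes D_B$ may have infinite global dimension. Even granting that the object is a compact projective generator \emph{of the heart}, finite generation of $\wt\CL$ gives only an abelian presentation $R^m\to R^n\to\wt\CL\to 0$, and a cokernel in the heart is a truncation of a cone, not a cone, so compactness of the terms does not propagate. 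In fact the implication ``f.g.\ in the heart $\Rightarrow$ compact in the DG category'' is essentially the content of the lemma; note that the paper genuinely needs the singular case, since in the proof of \thmref{t:subquotient} the lemma is applied with $\ol{X}_1$ in place of $X_1$, and $\ol{X}_1$ is a projective compactification with no smoothness assumed.

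The paper sidesteps this by constructing the right adjoint \eqref{e:from ren} via the inclusion $(\QCoh\otimes\Dmod)^{\on{f.g.}}\hookrightarrow(\QCoh\otimes\Dmod)^+\simeq(\IndCoh\otimes\Dmod)^+$ (the last equivalence being \lemref{l:pre-ren}), and then checking that this adjoint induces an equivalence on eventually coconnective parts by playing \lemref{l:pre-ren} against \eqref{e:renorm}. Since compact objects on both sides lie in the $+$ parts, the equivalence follows. This argument never needs to verify directly that f.g.\ heart objects are compact; it gets that as a consequence. Your t-exactness argument is essentially along the same lines as the paper's and is fine once the equivalence is in hand.
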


\begin{proof}

The functor  \eqref{e:to ren} is right t-exact by construction. We construct a functor
\begin{equation} \label{e:from ren}
\on{Ind}(\left(\QCoh(X_1)\otimes \Dmod(X_2)\right)^{\on{f.g.}})\to \IndCoh(X_1)\otimes \Dmod(X_2),
\end{equation}
right adjoint to \eqref{e:to ren} by ind-extending
\begin{multline*} 
\left(\QCoh(X_1)\otimes \Dmod(X_2)\right)^{\on{f.g.}}\hookrightarrow \left(\QCoh(X_1)\otimes \Dmod(X_2)\right)^+\simeq \\
\simeq \left(\IndCoh(X_1)\otimes \Dmod(X_2)\right)^+,
\end{multline*}
where the last equivalence is given by \lemref{l:pre-ren}. 

\medskip

Being the right adjoint to a right t-exact functor, the functor \eqref{e:from ren} is left t-exact. Consider the
composition
\begin{multline*} 
\on{Ind}(\left(\QCoh(X_1)\otimes \Dmod(X_2)\right)^{\on{f.g.}})^+\overset{\text{\eqref{e:from ren}}}\longrightarrow \\
\to \left(\IndCoh(X_1)\otimes \Dmod(X_2)\right)^+\to \left(\QCoh(X_1)\otimes \Dmod(X_2)\right)^+.
\end{multline*}

By \lemref{l:pre-ren} and \eqref{e:renorm}, we obtain that the functor \eqref{e:from ren} is t-exact and induces an equivalence
$$\on{Ind}(\left(\QCoh(X_1)\otimes \Dmod(X_2)\right)^{\on{f.g.}})^+\to 
\left(\IndCoh(X_1)\otimes \Dmod(X_2)\right)^+.$$

Since the compact objects of both 
$$\on{Ind}(\left(\QCoh(X_1)\otimes \Dmod(X_2)\right)^{\on{f.g.}}) \text{ and }
\IndCoh(X_1)\otimes \Dmod(X_2)$$
are contained in their eventually coconnective parts,
we obtain that \eqref{e:from ren} is an equivalence.

\end{proof}

\sssec{Proof of \propref{p:subq compact}}

Follows from the fact that the sheaf of rings $\CO_{X_1}\otimes \on{D}_{X_2}$ is Noetherian, combined with \lemref{l:comp ren}.

\qed

\ssec{Proof of \thmref{t:subquotient}}  

\sssec{Step 1} Set
$$\CK:=\oblv_{X_1}\otimes \on{Id}_{\Dmod(X_2)}(\CQ)\in \IndCoh(X_1)\otimes \Dmod(X_2).$$

The functor $\oblv_{X_1}$ is t-exact because $X_1$ is smooth. Hence, the
functor $\oblv_{X_1}\otimes \on{Id}_{\Dmod(X_2)}$ is also t-exact by \lemref{l:ten prod t-structure}.

\medskip

Therefore, if 
$\CQ'$ is a subquotient of the $n$-th cohomology of $\CQ$, we obtain that 
$$\CK':=\oblv_{X_1}\otimes \on{Id}_{\Dmod(X_2)}(\CQ')\in (\IndCoh(X_1)\otimes \Dmod(X_2))^\heartsuit$$
is a subquotient of the $n$-th cohomology of $\CK$. 

\sssec{Step 2}

Choose an affine open embedding $X_1\overset{j}\hookrightarrow \ol{X}_1$, where $\ol{X}_1$ is projective, but
not necessarily smooth (for aesthetic reasons we do not want to use desingularization; the latter allows to choose $\ol{X}_1$
smooth as well).  

\medskip

Set $\ol\CQ:=(j\times \on{id}_{X_2})_\bullet(\CQ)$, and
$$\ol\CK:=\oblv_{\ol{X}_1}\otimes \on{Id}_{\Dmod(X_2)}(\ol\CQ)\simeq
(j^{\IndCoh}_*\otimes \on{Id}_{\Dmod(X_2)})(\CK).$$

\medskip

Since $j$ is affine, the functor $j^{\IndCoh}_*:\IndCoh(X_1)\to \IndCoh(\ol{X}_1)$ is t-exact. Hence, by 
\lemref{l:ten prod t-structure}, so is the functor
$$(j^{\IndCoh}_*\otimes \on{Id}_{\Dmod(X_2)}):
\IndCoh(X_1)\otimes \Dmod(X_2)\to \IndCoh(\ol{X}_1)\otimes \Dmod(X_2).$$

\medskip

Hence, if $\CK'$ is a subquotient of the $n$-th cohomology of $\CK$, we obtain that
$$\ol\CK':=(j^{\IndCoh}_*\otimes \on{Id}_{\Dmod(X_2)})(\CK')\in (\IndCoh(\ol{X}_1)\otimes \Dmod(X_2))^\heartsuit$$
is a subquotient of the $n$-th cohomology of $\ol\CK$.

\medskip

By \propref{p:open embedding}, the object $\ol\CQ$ is compact. Hence, $\ol\CK$ is compact by
\propref{p:O criter}. By \propref{p:subq compact}, we obtain that the object $\ol\CK'$ is compact as well. 

\sssec{Step 3}

We have have the following assertion, proved in \secref{sss:proof of estimate} below:

\begin{lem}  \label{l:estimate}
Let $Y_1$ be projective with ample line bundle $\CL$. Then for 
$$\CT\in \left(\IndCoh(Y_1)\otimes \Dmod(Y_2)\right)^c\cap \left(\IndCoh(Y_1)\otimes \Dmod(Y_2)\right)^\heartsuit,$$
there exists an integer $i_0$ such that for all $i\geq i_0$, the non-zero cohomologies of
$$\left((p_{Y_1})^{\IndCoh}_*\otimes \on{Id}_{\Dmod(Y_2)}\right)(\CL^{i}\underset{\CO_{Y_1}}\otimes \CT)$$
vanish.
\end{lem}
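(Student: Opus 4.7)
The strategy is to reduce to classical Serre vanishing on $Y_1$ via a Čech complex computation, exploiting the Noetherian structure of compact objects in the heart identified earlier in this section.

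First, I would fix a finite affine open cover $\{U_\alpha\}_{\alpha\in A}$ of $Y_1$ and set $d=|A|-1$. For any nonempty intersection $U=\bigcap_k U_{\alpha_k}$, affineness ensures that $(p_U)^{\IndCoh}_*:\IndCoh(U)\to\Vect$ is t-exact; hence by \lemref{l:ten prod t-structure} the functor $(p_U)^{\IndCoh}_*\otimes \on{Id}_{\Dmod(Y_2)}$ is also t-exact. Writing
$$\sF_i(\CT):=\left((p_{Y_1})^{\IndCoh}_*\otimes\on{Id}_{\Dmod(Y_2)}\right)\!\left(\CL^i\underset{\CO_{Y_1}}\otimes\CT\right),$$
the Čech complex for this cover computes $\sF_i(\CT)$ and, for $\CT$ in the heart, is a complex in $\Dmod(Y_2)^\heartsuit$ of length at most $d+1$. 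Consequently, $\sF_i(\CT)\in\Dmod(Y_2)^{[0,d]}$ uniformly in $i$ and in $\CT$ coherent in the heart.

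Next, by \lemref{l:comp ren} and the Noetherianness of $\CO_{Y_1}\boxtimes\on{D}_{Y_2}$, the category of compact objects of $(\IndCoh(Y_1)\otimes\Dmod(Y_2))^\heartsuit$ is a Noetherian abelian category, generated by ``product'' objects $\CF\boxtimes\CM$ with $\CF\in\Coh(Y_1)^\heartsuit$ and $\CM\in \Dmod(Y_2)^c\cap\Dmod(Y_2)^\heartsuit$. On such a product,
$$\sF_i(\CF\boxtimes\CM)\simeq R\Gamma(Y_1,\CL^i\otimes\CF)\boxtimes\CM,$$
and classical Serre vanishing yields $i_0(\CF)$ such that $R\Gamma(Y_1,\CL^i\otimes\CF)$ is concentrated in degree $0$ for $i\geq i_0(\CF)$; whence $\sF_i(\CF\boxtimes\CM)\in\Dmod(Y_2)^\heartsuit$, and similarly for any finite direct sum of products.

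I would then conclude by descending induction on $j$ from $d+1$ down to $1$, with the statement: ``for every coherent $\CT$ in the heart there exists $i_0(\CT,j)$ such that $H^{j'}\sF_i(\CT)=0$ for all $i\geq i_0(\CT,j)$ and all $j'\geq j$''. The base case $j=d+1$ follows from the amplitude bound. For the inductive step, pick by Noetherianness a surjection $\CT_0\twoheadrightarrow\CT$ with $\CT_0$ a finite sum of product objects and with kernel $\CT'$ again coherent in the heart. The long exact sequence associated to $0\to\CT'\to\CT_0\to\CT\to 0$ squeezes $H^j\sF_i(\CT)$ between $H^j\sF_i(\CT_0)$, which vanishes for $i$ large by the product case, and $H^{j+1}\sF_i(\CT')$, which vanishes for $i$ large by the inductive hypothesis applied to $\CT'$. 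Taking $j=1$ finishes the proof. The main obstacle to this plan is verifying the structural claim used in the third paragraph: that compact objects in $(\IndCoh(Y_1)\otimes\Dmod(Y_2))^\heartsuit$ admit surjections from finite sums of box-products $\CF\boxtimes\CM$; this relies on exhibiting compact projective-like generators of $\Dmod(Y_2)^\heartsuit$ (essentially localized $\on{D}_{Y_2}$-modules on affine opens of $Y_2$) and combining them with coherent sheaves on $Y_1$, which is in spirit a relative version of the classical fact that coherent sheaves on a product are quotients of box-products.
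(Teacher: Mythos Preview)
Your proposal is correct and follows essentially the same approach as the paper: both arguments use the bounded cohomological amplitude of $(p_{Y_1})^{\IndCoh}_*\otimes\on{Id}_{\Dmod(Y_2)}$ together with the fact that a compact $\CT$ in the heart admits a left resolution by (finite sums of) box-products $\CF\boxtimes\CM$, reducing to classical Serre vanishing on $Y_1$. The only difference is bookkeeping---the paper truncates the resolution to its first $\dim(Y_1)+1$ terms and applies Serre vanishing to those finitely many $\CF_n$ at once, whereas you unwind the same resolution one step at a time via descending induction on the cohomological degree; the structural input you flag as the ``main obstacle'' is exactly what the paper also asserts (without further justification) when it says such a resolution exists.
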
 

\medskip

Let $\ol\CK$ and $\ol\CK'$ be as in Step 2. By \lemref{l:estimate}, we obtain that there exists an integer $i_0$ such that
for $i\geq i_0$, the object 
$$\left(\Gamma_{\ol{X}_1}\otimes \on{Id}_{\Dmod(X_2)}\right)(\CL^{i}\underset{\CO_{\ol{X}_1}}\otimes \ol\CK')$$
is acyclic off cohomological degree $n$, and appears as a subquotient of the $n$-th cohomology of
$$\left(\Gamma_{\ol{X}_1}\otimes \on{Id}_{\Dmod(X_2)}\right)(\CL^{i}\underset{\CO_{\ol{X}_1}}\otimes \ol\CK).$$

\medskip

Hence, we obtain that the assertion of the theorem follows from \corref{c:criter with line bundle}(ii), combined with the fact
for a scheme $X$, if an object $\CM\in \Dmod(X)$ is compact,
then the same is true for any subquotient of any cohomology of $\CM$.

\qed

\sssec{Proof of \lemref{l:estimate}}  \label{sss:proof of estimate}

The functor 
$$(p_{Y_1})_*^{\IndCoh}\otimes \on{Id}_{\Dmod(Y_2)}:\IndCoh(Y_1)\otimes \Dmod(Y_2)\to \Dmod(Y_2)$$
is left t-exact by \lemref{l:ten prod t-structure}(ii).

\medskip

Hence, for any 
$$\CT\in \left(\IndCoh(Y_1)\otimes \Dmod(Y_2)\right)^\heartsuit$$
the object 
\begin{equation} \label{e:estimate}
\left((p_{Y_1})_*^{\IndCoh}\otimes \on{Id}_{\Dmod(Y_2)}\right)(\CK)\in \Dmod(Y_2)
\end{equation}
lives in $\Dmod(Y_2)^{\geq 0}$. 

\medskip

Now, any $\CT$ as above admits a \emph{left} resolution $\CT_\bullet$ whose terms $\CT_n$ are of the form
$$\CF_n\boxtimes \CM_n,\quad \CF_n\in \Coh(Y_1)^\heartsuit,\,\, \CM_n\in \Dmod(Y_2)^c\cap \Dmod(Y_2)^\heartsuit.$$

\medskip

Note that the functor $(p_{Y_1})^{\IndCoh}_*\otimes \on{Id}_{\Dmod(Y_2)}$ has cohomological amplitude bounded
\emph{on the right} by $\dim(Y_1)$, because this is trie for $(p_{Y_1})^{\IndCoh}_*$. Hence, it is enough to show that
for $n=0,...,\dim(Y_1)$ and $i\gg 0$, the higher cohomologies of
\begin{equation} \label{e:to vanish}
\left((p_{Y_1})^{\IndCoh}_*\otimes \on{Id}_{\Dmod(Y_2)}\right)(\CL^{i}\underset{\CO_{Y_1}}\otimes \CT_n)
\end{equation} 
vanish. However, the expression in \eqref{e:to vanish} is isomorphic to
$$\Gamma(Y_1,\CL^{\otimes i}\otimes \CF_n)\otimes \CM_n,$$
and the assertion follows from the correponding fact for $\CF_n$. 

\qed

\section{Proof of the main theorem for schemes, and generalizations}  \label{s:delo}

In this section we will prove \thmref{t:schemes} by establishing a general result along the same lines 
for arbitrary DG categories. 

\ssec{Duality in a compactly generated category}  \label{ss:abstract Verdier}

\sssec{}

Let $\bC$ be a compactly generated category. Recall that we have a natural equivalence
$$(\bC^c)^{\on{op}} \simeq (\bC^\vee)^c, \quad \bc\mapsto \bc^\vee.$$

We shall now extend the above assignment to a (non-continuous) functor 
\begin{equation} \label{e:abs duality}
\bC^{\on{op}}\to \bC^\vee.
\end{equation}
Namely, for $\bc\in \bC$ we let $\bc^\vee$ be the object of $\bC^\vee$ characterized by the
property that
$$\Hom_{\bC^\vee}(\bc^\vee_1,\bc^\vee):=\Hom_\bC(\bc,\bc_1) \text { for }\bc_1\in \bC^c.$$

\sssec{}

Explicitly, if $\bc=\underset{i}{colim}\, \bc_i$ with $\bc_i\in \bC^c$,
then 
\begin{equation} \label{e:duality as limit}
\bc^\vee=\underset{i}{lim}\, \bc^\vee_i.
\end{equation}

By construction, the assignment $\bc\mapsto \bc^\vee$ 
sends colimits to limits. In general, it is very ill-behaved.

\sssec{}

From \eqref{e:duality as limit} we obtain:

\begin{lem}  \label{l:RKE}
The functor \eqref{e:abs duality} is the \emph{right Kan extension} of its restriction to $(\bC^c)^{\on{op}}$.
\end{lem}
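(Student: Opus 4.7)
The plan is to verify the pointwise formula for the right Kan extension and match it against \eqref{e:duality as limit}. Denote by $i \colon (\bC^c)^{\on{op}} \hookrightarrow \bC^{\on{op}}$ the inclusion and by $F$ the restriction of $(-)^\vee$ to $(\bC^c)^{\on{op}}$, i.e., the fully faithful functor $\bc_1 \mapsto \bc_1^\vee$ of \eqref{e:compact of dual}. By the pointwise formula, the right Kan extension at $\bc \in \bC^{\on{op}}$ is
$$(\mathrm{RKE}_i F)(\bc) \simeq \underset{(\bc \to i(\bc_1)) \in \bc \downarrow i}{lim}\, F(\bc_1),$$
and a morphism $\bc \to i(\bc_1)$ in $\bC^{\on{op}}$ is the same as a morphism $\bc_1 \to \bc$ in $\bC$ with $\bc_1 \in \bC^c$, so the comma category $\bc \downarrow i$ is naturally identified with $((\bC^c)_{/\bc})^{\on{op}}$.

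Second, since $\bC$ is compactly generated, every $\bc \in \bC$ admits the canonical colimit presentation $\bc \simeq \underset{(\bC^c)_{/\bc}}{colim}\, \bc_1$ indexed by the under-category of compacts over $\bc$. Applying \eqref{e:duality as limit} to this tautological colimit yields
$$\bc^\vee \simeq \underset{((\bC^c)_{/\bc})^{\on{op}}}{lim}\, \bc_1^\vee,$$
which matches $(\mathrm{RKE}_i F)(\bc)$; the functoriality in $\bc$ and the compatibility with the restriction of the Kan extension back to compacts are immediate from the construction.

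I do not foresee any serious obstacle: the lemma is essentially a reformulation of \eqref{e:duality as limit} in the language of Kan extensions, combined with the standard fact that in a compactly generated category every object is the colimit of the tautological diagram of compacts mapping to it. The only thing to spell out, if desired, is the identification of the comma category $\bc \downarrow i$ with $((\bC^c)_{/\bc})^{\on{op}}$, which is a direct unwinding of definitions. Alternatively, one can invoke the universal property: a natural transformation $G \to (-)^\vee$ from any functor $G \colon \bC^{\on{op}} \to \bC^\vee$ is determined by its restriction to $(\bC^c)^{\on{op}}$, because $(-)^\vee$ sends colimits in $\bC$ to limits in $\bC^\vee$ by \eqref{e:duality as limit}, and this is exactly the universal property characterizing the right Kan extension.
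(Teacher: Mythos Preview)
Your proof is correct and essentially the same as the paper's, just unpacked: the paper's proof is the one-liner ``This is the property of any functor from $\bC^{\on{op}}$ that commutes with limits,'' which is precisely your alternative argument at the end, while your main argument verifies this general principle explicitly via the pointwise Kan-extension formula.
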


\begin{proof}
This is the property of any functor from $\bC^{\on{op}}$ that commutes with limits.
\end{proof}

\sssec{}

Let $\bc_1$ and $\bc_2$ be two objects of $\bC$. We claim that there is a canonical map
\begin{equation} \label{e:duality and pairing}
\on{ev}_\bC(\bc^\vee_1\otimes \bc_2)\to \CMaps_{\bC}(\bc_1,\bc_2).
\end{equation}

\medskip

Indeed, for $\bc_2$ compact, the map \eqref{e:duality and pairing} is the isomorphism resulting
from the tautological isomorphism
$$\on{ev}_\bC(-\otimes \bc_2)\simeq \CMaps_{\bC^\vee}(\bc_2^\vee,-).$$

\medskip

In general, the map \eqref{e:duality and pairing} results from the fact that the left-hand side is
continuous as a functor of $\bc_2$, and hence is the left Kan extension from its restriction to $\bC^c$.

\sssec{}   \label{sss:d-compact}

Note that for $\bc\in \bC$ we have a canonical map
\begin{equation} \label{duality map}
\bc \to (\bc^\vee)^\vee.
\end{equation}

We shall say that $\bc$ is \emph{reflexive} if the map
\eqref{duality map} is an isomorphism.  

\medskip

It is clear that every compact object is reflexive. But the converse is obviously false.

\sssec{Interaction with functors}

Let $\sF:\bC_1\to \bC_2$ be a functor that sends compact objects to compact ones.
Consider the conjugate functor $\sF^{\on{op}}:\bC^\vee_1\to \bC^\vee_2$, see \secref{sss:conj functors}.

\medskip

We claim that for $\bc_1\in \bC_1$ we have a canonical map
\begin{equation} \label{e:functor and duality}
\sF^{\on{op}}(\bc^\vee_1)\to (\sF(\bc_1))^\vee,
\end{equation}
that extends the tautological isomorphism for $\bc_1\in \bC_1^c$. 

\medskip

The natural transformation \eqref{e:functor and duality} follows by adjunction from the fact that the functor 
$$\bc_1\mapsto (\sF(\bc_1))^\vee,\quad (\bC_1)^{\on{op}}\to \bC_2$$
is the right Kan extension of its restriction to $(\bC_1^c)^{\on{op}}$ (i.e., takes colimits in $\bC_1$
to limits in $\bC_2^\vee$). 









\ssec{A general framework for \thmref{t:schemes}}   \label{ss:the functor}

\sssec{}

Let $\bC$ be a compactly generated DG category. Recall that $\bu_{\bC}\in \bC\otimes \bC^\vee$
denotes the object that defines the identity functor. 

\medskip

We consider the object 
$$(\bu_\bC)^\vee\in (\bC\otimes \bC^\vee)^\vee=\bC^\vee\otimes \bC.$$

\medskip

We let $\psId_\bC$ be the functor $\bC\to \bC$ defined by the kernel $(\bu_\bC)^\vee$. I.e., in the
notations of \secref{sss:functors and kernels},
$$\psId_\bC:=\sF_{\bC\to \bC,(\bu_\bC)^\vee}.$$

Note that by construction
\begin{equation} \label{e:dual of ps}
(\psId_\bC)^\vee\simeq \psId_{\bC^\vee},\quad \bC^\vee\to \bC^\vee.
\end{equation}

\sssec{}

Let $\bC_1$ and $\bC_2$ be two compactly generated categories, and let $\sF:\bC_1\to \bC_2$ be a functor 
between them that preserves compactness. 

\medskip

Let $\CQ\in \bC^\vee_1\otimes \bC_2$ be
kernel of $\sF$ i.e.,
$$\CQ=(\on{Id}_{\bC_1^\vee}\otimes \sF)(\bu_{\bC_1}).$$

Consider the functor
$$(\on{Id}_{\bC^\vee_1}\otimes \sF):\bC_1^\vee\otimes \bC_1\to \bC_1^\vee\otimes \bC_2.$$
By \corref{c:preserve compactness tensor}, this functor still preserves compactness. Applying 
\eqref{e:functor and duality} to this functor and the object $\bu_{\bC_1}\in \bC_1^\vee\otimes \bC_1$,
we obtain a map 
\begin{equation} \label{e:general}
(\on{Id}_{\bC_1}\otimes \sF^{\on{op}})((\bu_{\bC_1})^\vee) \to ((\on{Id}_{\bC^\vee_1}\otimes \sF)(\bu_{\bC_1}))^\vee,
\end{equation}
where we note that 
\begin{equation} \label{e:general ident}
((\on{Id}_{\bC^\vee_1}\otimes \sF)(\bu_{\bC_1}))^\vee\simeq \CQ^\vee. 
\end{equation}

\medskip

\begin{thm} \label{t:general}
Assume that the map \eqref{e:general} is an isomorphism. Then the composed functor
$$\bC_2\overset{\sF^R}\longrightarrow \bC_1\overset{\psId_{\bC_1}}\longrightarrow \bC_1$$
is given by the kernel 
$$\CQ^\vee \in \bC_1\otimes \bC_2^\vee\simeq \bC^\vee_2\otimes \bC_1.$$
\end{thm}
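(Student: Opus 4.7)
The strategy is to compute the kernel of the composed functor $\psId_{\bC_1}\circ \sF^R : \bC_2 \to \bC_1$ directly, and then identify it with $\CQ^\vee$ via the hypothesis \eqref{e:general}. The proof is almost entirely a formal unwinding: all of the real content of the theorem has already been packaged into the assumption that \eqref{e:general} is an isomorphism.

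The one auxiliary fact to record is a general statement about pre-composition at the level of kernels. Namely, for dualizable categories $\bD, \bC, \bC'$ and continuous functors $\sT : \bD \to \bC$ and $\sG : \bC \to \bC'$, where $\sG$ is given by a kernel $\CK \in \bC^\vee \otimes \bC'$, the kernel of $\sG \circ \sT$ in $\bD^\vee \otimes \bC'$ is $(\sT^\vee \otimes \on{Id}_{\bC'})(\CK)$. This is verified by writing $\sG(\sT(\bd)) = (\on{ev}_\bC \otimes \on{Id}_{\bC'})(\sT(\bd) \otimes \CK)$ and applying the defining property of the dual functor
$$\on{ev}_\bC(\xi \otimes \sT(\bd)) \simeq \on{ev}_\bD(\sT^\vee(\xi) \otimes \bd)$$
to rewrite the result as the functor attached to the stated kernel.

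Apply this auxiliary fact with $\sT = \sF^R$ and $\sG = \psId_{\bC_1}$. By definition, $\psId_{\bC_1}$ has kernel $(\bu_{\bC_1})^\vee \in \bC_1^\vee \otimes \bC_1$, and by \corref{c:conjugate} we have $(\sF^R)^\vee \simeq \sF^{\on{op}}$. Hence the kernel of $\psId_{\bC_1} \circ \sF^R$ in $\bC_2^\vee \otimes \bC_1$ is
$$(\sF^{\on{op}} \otimes \on{Id}_{\bC_1})\bigl((\bu_{\bC_1})^\vee\bigr).$$
This is precisely the left-hand side of \eqref{e:general}. Combined with the identification \eqref{e:general ident} of the right-hand side as $\CQ^\vee$, the hypothesis that \eqref{e:general} is an isomorphism yields an isomorphism between the kernel of $\psId_{\bC_1} \circ \sF^R$ and $\CQ^\vee$, which is the content of the theorem.

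The only point requiring care is notational: one must keep consistent track of the natural symmetry $\bC \otimes \bC' \simeq \bC' \otimes \bC$ and the identification $(\bC \otimes \bC')^\vee \simeq \bC^\vee \otimes (\bC')^\vee$, under which objects like $\bu_{\bC_1}$, $(\bu_{\bC_1})^\vee$, $\CQ$, and $\CQ^\vee$ are viewed in the appropriate tensor-ordering. Beyond this bookkeeping there is no genuine obstacle, which fits with the fact that \thmref{t:general} is essentially a reinterpretation of the hypothesis \eqref{e:general} as a statement about the kernel of $\psId_{\bC_1} \circ \sF^R$.
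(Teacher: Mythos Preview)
Your proposal is correct and follows essentially the same approach as the paper's proof: both compute the kernel of $\psId_{\bC_1}\circ\sF^R$ as $((\sF^R)^\vee\otimes\on{Id}_{\bC_1})((\bu_{\bC_1})^\vee)$, identify $(\sF^R)^\vee$ with $\sF^{\on{op}}$ via \corref{c:conjugate}, and then invoke the hypothesis \eqref{e:general} together with \eqref{e:general ident}. The paper in fact calls this a ``tautology'' and gives the same short argument you have written out.
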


\begin{proof}
This is a tautology: 

\medskip

The kernel of the composition 
$$\bC_2\overset{\sF^R}\longrightarrow \bC_1\overset{\psId_{\bC_1}}\longrightarrow \bC_1,$$
viewed as an object of $\bC_2^\vee\otimes \bC_1$, is obtained from the kernel of $\psId_{\bC_1}$,
viewed as an object of $\bC_1^\vee\otimes \bC_1$, by applying the functor
$$(\sF^R)^\vee\otimes \on{Id}_{\bC_1}:\bC_1^\vee\otimes \bC_1\to \bC_2^\vee\otimes \bC_1.$$

By \lemref{l:conjugate}, the latter is the same as
$$(\sF^{\on{op}}\otimes \on{Id}_{\bC_1})((\bu_{\bC_1})^\vee),$$
which identifies with $\CQ$ by \eqref{e:general ident} and the assumption of the theorem. 

\end{proof}

\ssec{The smooth case}  \label{ss:diag compact}

\sssec{}

Recall that a DG category $\bC$ is called \emph{smooth} if the object 
$$\bu_\bC\in \bC\otimes \bC^\vee$$
is compact. 

\begin{rem}
The terminology ``smooth" originates in the fact that for a separated scheme $X$, the DG category $\QCoh(X)$
is smooth if and only if $X$ is a smooth classical scheme (see \secref{ss:recall IndCoh} for our conventions
regarding schemes).
\end{rem}

\sssec{}  \label{sss:diag compact}

Note that the assumption of \thmref{t:general} is trivially satisfied when $\bC_1$ is \emph{smooth}.
Indeed, the map \eqref{e:functor and duality} is by definition an isomorphism for $\bc_1$ compact. 

\sssec{Proof of \thmref{t:schemes}}  \label{sss:proof of schemes}

This is follows immediately from \thmref{t:general} and \secref{sss:diag compact}, using the fact that
$$\bu_{\Dmod(X)}=(\Delta_X)_\bullet(\omega_X)\in \Dmod(X\times X),$$
being a bounded holonomic complex, is compact.

\qed

\sssec{The natural transformation}  \label{sss:adj map abstract} 

Let us continue to assume that $\bC_1$ is smooth, and let us be in the situation of 
\thmref{t:general}. 

\medskip

The (iso)morphism of functors
$$\psId_{\bC_1}\circ (\sF_{\bC_1\to \bC_2,\CQ})^R\to \sF_{\bC_2\to \bC_1,\CQ^\vee}$$
gives rise to (and is determined by) the natural transformation 
\begin{equation} \label{e:nat trans abs}
\psId_{\bC_1}\to \sF_{\bC_2\to \bC_1,\CQ^\vee}\circ \sF_{\bC_1\to \bC_2,\CQ}. 
\end{equation}

Let us write down the natural transformation \eqref{e:nat trans abs} explicitly:

\medskip

The datum of a map
$$\psId_{\bC_1}\to \sF_{\bC_2\to \bC_1,\CQ^\vee}\circ \sF_{\bC_1\to \bC_2,\CQ}$$
is equivalent to that of a map between the corresponding kernels, i.e., 
$$(\bu_{\bC_1})^\vee \to (\on{Id}_{\bC_1^\vee}\otimes \on{ev}_{\bC_2}\otimes \on{Id}_{\bC_1})(\CQ\otimes \CQ^\vee),$$
and the latter is the same as a datum of a vector in
$$\on{ev}_{\bC^\vee_1}\left((\on{Id}_{\bC_1^\vee}\otimes \on{ev}_{\bC_2}\otimes \on{Id}_{\bC_1})(\CQ\otimes \CQ^\vee)\right)
\simeq \on{ev}_{\bC^\vee_1\otimes \bC_2}(\CQ\otimes \CQ^\vee).$$

Now vector corresponding to \eqref{e:nat trans abs} is the canonical vector in
$$\on{ev}_\bC(\bc\otimes \bc^\vee)\simeq \CMaps_\bC(\bc,\bc),$$
defined for any DG category $\bC$ and $\bc\in \bC^c$ (where we take $\bC=\bC_1^\vee\otimes \bC_2$ and $\bc=\CQ$). 

\ssec{Gorenstein categories}  \label{ss:Gorenstein}

\sssec{}

Following Drinfeld, we shall say that a compactly generated category $\bC$ is \emph{Gorenstein}
if the functor
$$\psId_\bC:\bC\to \bC$$
is an equivalence. 

\medskip

For example, the category $\Dmod(X)$ on a smooth separated scheme $X$ is Gorenstein.

\sssec{}

The origin of the name is explained by the following assertion. Recall (see \cite[Sect. 7.3.3]{IndCoh}) that a scheme
$X$ is said to be Gorenstein if $\omega_X$, regarded as an object of $\Coh(X)$, is invertible 
(i.e., a cohomologically shifted line bundle). 

\begin{prop} \label{p:Gore}
For a separated scheme $X$ the following assertions are equivalent:

\smallskip

\noindent{\em(a)} The scheme $X$ is Gorenstein;

\smallskip

\noindent{\em(b)} The category $\QCoh(X)$ is Gorenstein;

\smallskip

\noindent{\em(c)} The category $\IndCoh(X)$ is Gorenstein.

\end{prop}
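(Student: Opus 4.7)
The plan is to compute $\psId_\bC$ explicitly in each case, and recognize its invertibility as the Gorenstein condition on $X$.

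I will begin with $\bC = \IndCoh(X)$. Since $X$ is separated and eventually coconnective, the unit $\bu_{\IndCoh(X)} = (\Delta_X)_*^{\IndCoh}(\omega_X)$ is compact in $\IndCoh(X \times X)$ by \secref{sss:duality on IndCoh}, so $(\bu_{\IndCoh(X)})^\vee$ coincides with its Serre dual. Using the compatibility of Serre duality with proper pushforward along $\Delta_X$ together with $\BD^{\on{Se}}_X(\omega_X) \simeq \CO_X$, one obtains $(\bu_{\IndCoh(X)})^\vee \simeq (\Delta_X)_*^{\IndCoh}(\CO_X)$. The projection formula for the closed embedding $\Delta_X$ then identifies the resulting functor as
$$\psId_{\IndCoh(X)}(\CF) \simeq \CF \sotimes \CO_X.$$
Thus the equivalence (a) $\Leftrightarrow$ (c) reduces to the claim that $\CO_X$ is invertible in $(\IndCoh(X), \sotimes)$ if and only if $\omega_X$ is invertible in $(\QCoh(X), \otimes)$. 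I would verify this using the symmetric monoidal functor $\Upsilon_X : (\QCoh(X), \otimes) \to (\IndCoh(X), \sotimes)$, which carries units to units: when $X$ is Gorenstein, one identifies $\CO_X^{\IndCoh} \simeq \Upsilon_X\bigl((\omega_X^{\QCoh})^{-1}\bigr)$, manifestly invertible by monoidality; conversely, any $!$-inverse of $\CO_X$ yields a $\QCoh(X)$-linear autoequivalence of $\IndCoh(X)$, which by the module-theoretic structure must be tensoring with an object in the image of $\Upsilon_X$ on invertible perfect complexes, and applying $\Psi_X$ recovers an inverse of $\omega_X^{\QCoh}$.

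For (a) $\Leftrightarrow$ (b), I would run an analogous but more delicate computation for $\QCoh(X)$. Here $\bu_{\QCoh(X)} = (\Delta_X)_*(\CO_X)$ is compact in $\QCoh(X \times X)$ exactly when $\Delta_X$ is a perfect morphism, equivalently when $X$ is smooth; for general $X$ one must compute $(\bu_{\QCoh(X)})^\vee$ via right Kan extension. Grothendieck duality along the closed embedding $\Delta_X$ then identifies $\psId_{\QCoh(X)}$ with the functor of tensoring with the relative dualizing object $\Delta_X^!(\CO_{X\times X})$, whose invertibility is again the Gorenstein condition on $X$. Alternatively, and perhaps more cleanly, one transports the $\IndCoh$ result using the duality $\Upsilon_X \simeq \Psi_X^\vee$ of \secref{sss:dualities and QCoh} together with the identity $(\psId_\bC)^\vee \simeq \psId_{\bC^\vee}$.

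The main obstacle in both equivalences is the converse direction---extracting the Gorenstein condition on $X$ from the abstract invertibility of $\psId$. The key step is to exploit the $\QCoh(X)$-linearity of $\psId$ and the structure of $\IndCoh(X)$ (respectively $\QCoh(X)$) as a module category over $\QCoh(X)$ to force any putative inverse to arise from an invertible perfect complex on $X$, which is precisely the content of the Gorenstein condition.
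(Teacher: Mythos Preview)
Your kernel computations are correct and match the paper's: $\psId_{\IndCoh(X)}$ is $\sotimes$-tensoring by $\CO_X$, and $\psId_{\QCoh(X)}$ is tensoring by $\Delta_X^!(\CO_{X\times X})$. The forward implications (a) $\Rightarrow$ (b), (c) go through as you indicate.

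The gap is in the converse direction, and your proposed route differs from the paper's. You assert that invertibility of $\Delta_X^!(\CO_{X\times X})$ in $\QCoh(X)$ ``is again the Gorenstein condition on $X$,'' but this is precisely the nontrivial step. Granting that tensoring by $\CL' := \Delta_X^!(\CO_{X\times X})$ is an equivalence forces $\CL'$ to be a shifted line bundle, it does \emph{not} immediately follow that $\omega_X$ is one. The paper bridges this by restricting to $k$-points: for $i_x : \on{pt} \to X$ one computes
$$i_x^!(\CO_X) \otimes i_x^!(\CO_X) \simeq (i_x \times i_x)^!(\CO_{X\times X}) \simeq i_x^!(\CL') \simeq i_x^!(\CO_X) \otimes i_x^*(\CL'),$$
forcing $i_x^!(\CO_X)$ to be invertible in $\Vect$; this is dual to $i_x^*(\omega_X)$, and invertibility at every point gives Gorenstein. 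The same pointwise argument handles (c) $\Rightarrow$ (a): if $\CO_X$ is $\sotimes$-invertible then $i_x^!(\CO_X)$ is invertible in $\Vect$, and one concludes as before.

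Your alternative via module-category structure---that a $\QCoh(X)$-linear autoequivalence of $\IndCoh(X)$ must be tensoring by $\Upsilon_X$ of an invertible perfect complex---is exactly the content of Arinkin's observation recorded in the remark following the proof, but the paper notes that this is established by \emph{the same} pointwise argument, not independently. So invoking it here is circular unless you supply a separate proof. Likewise, your proposed transport via $\Upsilon_X \simeq \Psi_X^\vee$ and $(\psId_\bC)^\vee \simeq \psId_{\bC^\vee}$ does not do what you want: these identities relate $\psId_{\QCoh(X)}$ to its own dual (under $\bD_X^{\on{nv}}$), not to $\psId_{\IndCoh(X)}$.
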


\begin{proof}

First we note that for a separated scheme $X$ the object 
$$(\bu_{\QCoh(X)})^\vee\in \QCoh(X)\otimes\QCoh(X)\simeq \QCoh(X\times X)$$
identifies with
\begin{equation} \label{e:psidqcoh}
\uHom_{X\times X}((\Delta_X)_*(\CO_X),\CO_{X\times X})\simeq 
(\Delta_X)_*(\Delta_X^!(\CO_{X\times X})).
\end{equation}
and the object 
$$(\bu_{\IndCoh(X)})^\vee\in \IndCoh(X)\otimes\IndCoh(X)\simeq \IndCoh(X\times X)$$
identifies with
\begin{equation} \label{e:psidindcoh}
\BD_{X\times X}^{\on{Se}}((\Delta_X)_*^{\IndCoh}(\omega_X))\simeq
(\Delta_X)^{\IndCoh}_*(\CO_X).
\end{equation}

\medskip

Assume first that $X$ is Gorenstein, i.e., $\omega_X\simeq \CL$, where $\CL$ is a cohomologically
shifted line.  

\medskip

In this case $\Delta_X^!(\CO_{X\times X})$ identifies with $\omega_X\otimes \CL^{\otimes -2}\simeq \CL^{\otimes -1}$.
I.e., the functor $\psId_{\QCoh(X)}$ is given by tensor product by $\CL^{\otimes -1}$, and thus is an equivalence.

\medskip

Similarly, $\psId_{\IndCoh(X)}$ is also given by the action of $\CL^{\otimes -1}$, in the sense of the monoidal action
of $\QCoh(X)$ on $\IndCoh(X)$, and hence is also an equivalence.

\medskip

Vice versa, assume that $\psId_{\QCoh(X)}$ is an equivalence.  It suffices to show that for every $k$-point $i_x:\on{pt}\to X$, the object
$$i_x^*(\omega_X)\in \Vect$$
is invertible. By duality (in $\Vect$) it suffices to show that $(i_x^*(\omega_X))^\vee$ is invertible. However,
$$(i_x^*(\omega_X))^\vee=\CMaps_{\Vect}(i_x^*(\omega_X),k)\simeq \CMaps_{\Coh(X)}(\omega_X,(i_x)_*(k)),$$
which by Serre duality identifies with $(i_x)^!(\CO_X)$. 

\medskip

By \eqref{e:psidqcoh}, the assumption that $\psId_{\QCoh(X)}$ is an equivalence means that that the object
$$\Delta_X^!(\CO_{X\times X})\in \QCoh(X)$$ has the property that the functor of tensoring by it
is an equivalence. Hence, $\Delta_X^!(\CO_{X\times X})$
is a cohomologically shifted line bundle; denote it by $\CL'$. Hence, for $i_x$ as above, 
$$i_x^!(\CO_X)\otimes i_x^!(\CO_X)\simeq (i_x\times i_x)^!(\CO_{X\times X})\simeq i_x^!\circ \Delta_X^!(\CO_{X\times X})
\simeq i_x^!(\CL')\simeq i_x^!(\CO_X)\otimes i_x^*(\CL'),$$
from which it follows that $i_x^!(\CO_X)$ is invertible, as required. 

\medskip

Assume now that $\psId_{\IndCoh(X)}$ is an equivalence. By \eqref{e:psidindcoh}, 
this implies that $\CO_X$, regarded as object of $\IndCoh(X)$,
is invertible with respect to the $\sotimes$ symmetric monoidal structure on $\IndCoh(X)$. In particular, for every
$i_x:\on{pt}\to X$ as above, $(i_x)^!(\CO_X)$ is invertible in $\Vect$. By the above, this implies that $i_x^*(\omega_X)$
is invertible, as required.

\end{proof}

\begin{rem}
The following observation is due to A.~Arinkin: the same proof as above shows that the functor
$$\CE\mapsto \Upsilon(\CE):=\CE\otimes \omega_X$$
establishes an equivalence between $\QCoh(X)^{\on{perf}}$ and the category of \emph{dualizable}
objects of $\IndCoh$ with respect to the $\sotimes$ symmetric monoidal structure.
\end{rem}

\sssec{}

We shall now give a criterion for a compactly generated DG category $\bC$ to be Gorenstein.

\begin{prop}  \label{p:when Gor}
Suppose that the functors $\psId_\bC:\bC\to \bC$ and $\psId_{\bC^\vee}:\bC^\vee\to \bC^\vee$ both
satisfy the assumption of \thmref{t:general}.
Suppose also that $\bu\in \bC\otimes \bC^\vee$ is reflexive. Then $\bC$ is Gorenstein. 
\end{prop}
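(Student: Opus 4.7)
The plan is to apply \thmref{t:general} twice --- once to $\sF = \psId_\bC$ and once to $\sF = \psId_{\bC^\vee}$ --- and combine the two conclusions to exhibit both a right inverse and a left inverse of $\psId_\bC$.

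First, I take $\sF = \psId_\bC : \bC \to \bC$, whose kernel is $(\bu_\bC)^\vee$ by definition. By assumption, \thmref{t:general} applies to this functor, and so I obtain a canonical isomorphism
$$\psId_\bC \circ (\psId_\bC)^R \simeq \sF_{\bC \to \bC,\, ((\bu_\bC)^\vee)^\vee}.$$
The reflexivity hypothesis identifies $((\bu_\bC)^\vee)^\vee$ with $\bu_\bC$, and the latter is by definition the kernel of $\on{Id}_\bC$. Hence $\psId_\bC \circ (\psId_\bC)^R \simeq \on{Id}_\bC$, so that $(\psId_\bC)^R$ is a right inverse to $\psId_\bC$.

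Next, I apply the same argument with $\bC$ replaced by $\bC^\vee$. Under the canonical identification $(\bC^\vee)^\vee \simeq \bC$, the unit object $\bu_{\bC^\vee}$ corresponds to $\bu_\bC$ up to swap of the two tensor factors, so reflexivity of $\bu_\bC$ is the same as reflexivity of $\bu_{\bC^\vee}$. The theorem then yields $\psId_{\bC^\vee} \circ (\psId_{\bC^\vee})^R \simeq \on{Id}_{\bC^\vee}$. Dualizing this isomorphism via \eqref{e:dualization}, and using \secref{sss:adjoint dual pair} together with \eqref{e:dual of ps}, the functor $((\psId_{\bC^\vee})^R)^\vee$ is the continuous \emph{left} adjoint $(\psId_\bC)^L$ of $(\psId_{\bC^\vee})^\vee = \psId_\bC$. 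The dualized isomorphism becomes $(\psId_\bC)^L \circ \psId_\bC \simeq \on{Id}_\bC$, so $(\psId_\bC)^L$ is a left inverse of $\psId_\bC$.

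Finally, any functor admitting both a left inverse and a right inverse is an equivalence: indeed,
$$(\psId_\bC)^L \simeq (\psId_\bC)^L \circ \psId_\bC \circ (\psId_\bC)^R \simeq (\psId_\bC)^R,$$
and this common value is then a two-sided inverse of $\psId_\bC$. Thus $\psId_\bC$ is an equivalence and $\bC$ is Gorenstein. No step is particularly delicate; the only place requiring care is the bookkeeping in the second step --- namely, that passage to dual functors exchanges left and right adjoints, and that the reflexivity condition is symmetric under the identification $\bC^\vee{}^\vee \simeq \bC$.
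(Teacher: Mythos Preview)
Your proof is correct and follows essentially the same approach as the paper: apply \thmref{t:general} to $\psId_\bC$ to get a right inverse, apply it to $\psId_{\bC^\vee}$ and dualize to get a left inverse, then conclude. The paper is terser about the second step and does not explicitly spell out that reflexivity of $\bu_\bC$ yields reflexivity of $\bu_{\bC^\vee}$, nor does it bother identifying the left inverse with a left adjoint, but the substance is identical.
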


\begin{rem}
Note that \propref{p:when Gor} has the following flavor: \emph{certain finiteness properties of a functor
imply that this functor is an equivalence}.  
\end{rem}

\begin{proof}

We apply \thmref{t:general} to $\sF=\psId_\bC$. Combining with the assumtion that
$$((\bu_\bC)^\vee)^\vee\simeq \bu_\bC,$$
we obtain
\begin{equation} \label{e:left inverse}
\psId_\bC\circ \sF^R\simeq \on{Id}_\bC.
\end{equation}

I.e., we obtain that $\psId_\bC$ admits a right inverse. Passing to the dual functors in \eqref{e:left inverse} for $\bC^\vee$,
and using the fact that $(\psId_\bC)^\vee\simeq \psId_{\bC^\vee}$, we obtain that $\psId_\bC$ also has a left
inverse. Hence, it is an equivalence.

\end{proof} 

\section{Generalization to Artin stacks: quasi-compact case}  \label{s:Artin qc}

\ssec{QCA stacks: recollections}

\sssec{}

In this section all algebraic stacks will be assumed QCA. Recall (see \cite[Definition 1.1.8]{DrGa1}) that an algebraic
stack $\CX$ is said to be QCA if it is quasi-compact and the automorphism group of every field-valued
point is \emph{affine}.

\medskip

We recall (see \cite[Theorem 8.1.1]{DrGa1}) that for a QCA stack the category $\Dmod(X)$ is compactly generated.
Furthermore, by \cite[Corollary 8.3.4]{DrGa1}, for \emph{any} prestack $\CX'$, the operation of external tensor product defines 
an equivalence
$$\Dmod(\CX)\otimes \Dmod(\CX')\to \Dmod(\CX\times \CX').$$

\medskip

We let $\Dmod(\CX)_{\on{coh}}\subset \Dmod(\CX)$ be the full subcategory of \emph{coherent} D-modules.
We remind that an object of $\Dmod(\CX)$ is called coherent if its pullback to any scheme, mapping
smoothly to $\CX$, is compact (see \cite[Sect. 7.3.1]{DrGa1}). 

\medskip

We always have 
$$\Dmod(\CX)^c\subset \Dmod(\CX)_{\on{coh}},$$
and the containment is an equality if and only if $\CX$ is \emph{safe}, which means that the automorphism 
group of every field-valued point is such that its neutral connected component is unipotent (\cite[Corollary 10.2.7]{DrGa1}). For example, 
any Deligne-Mumford stack (and, in particular, any algebraic space) is safe. 

\medskip

The category $\Dmod(\CX)_{\on{coh}}$ carries a canonical Verdier duality anti-involution
$$\BD_\CX^{\on{Ve}}:(\Dmod(\CX)_{\on{coh}})^{\on{op}}\to \Dmod(\CX)_{\on{coh}}.$$

\sssec{}  \label{sss:Verdier stacks}

The basic property of the functor $\BD_\CX^{\on{Ve}}$ is that it preserves the subcategory $\Dmod(\CX)^c$,
thereby inducing an equivalence
$$\BD_\CX^{\on{Ve}}:(\Dmod(\CX)^c)^{\on{op}}\to \Dmod(\CX)^c$$
(see \cite[Corollary 8.4.2]{DrGa1}).

\medskip

Hence, it induces an equivalence $\Dmod(\CX)^\vee\to \Dmod(\CX)$ that we denote by 
$\bD_\CX^{\on{Ve}}$. The unit and counit corresponding to the identification $\bD_\CX^{\on{Ve}}$ are described below,
see \secref{sss:unit and counit stacks}. 

\sssec{}  \label{sss:ren}

For a morphism $f:\CX_1\to \CX_2$ we have the functor $f^!:\Dmod(\CX_2)\to \Dmod(\CX_1)$.
The usual de Rham direct image functor (defined as in \cite[Sect. 7.4.1]{DrGa1})
$$f_\bullet:\Dmod(\CX_1)\to \Dmod(\CX_2)$$
is in general non-continuous. 

\medskip

In fact, $f_\bullet$ is continuous if and only if $f$ is \emph{safe} (i.e., its geometric fibers are safe algebraic stack). 
E.g., any schematic or representable morphism is safe. 

\medskip

In \cite[Sect. 9.3]{DrGa1} another functor 
$$f_\blacktriangle:\Dmod(\CX_1)\to \Dmod(\CX_2)$$
is introduced, which is by definition the ind-extension of the restriction of the functor $f_\bullet$ to $\Dmod(\CX_1)^c$. 
I.e., $f_\blacktriangle$ is the unique continuous functor which equals $f_\bullet$ when restricted to $\Dmod(\CX_1)^c$. 

\medskip

We have a natural transformation
\begin{equation} \label{e:tr to bullet}
f_\blacktriangle\to f_\bullet,
\end{equation}
which is an isomorphism if $f$ is safe. For any $f$, \eqref{e:tr to bullet} is an isomorphism when evaluated
on compact objects. 

\sssec{}  \label{sss:unit and counit stacks}

We can now describe explicitly the unit and the counit of the identification
$\bD_{\CX}^{\on{Ve}}$. Namely,
the unit is given by the object
$$(\Delta_\CX)_\bullet(\omega_\CX)\in \Dmod(\CX\times \CX)\simeq \Dmod(\CX)\otimes \Dmod(\CX)
\simeq \Dmod(\CX)^\vee\otimes \Dmod(\CX),$$
where $(\Delta_\CX)_\bullet\simeq (\Delta_\CX)_\blacktriangle$ since the morphism $\Delta_\CX$ is representable
and hence safe. The object $\omega_\CX$ is, as in the case of scheme, $(p_\CX)^!(k)$, where 
$p_\CX:\CX\to \on{pt}$.

\medskip

The counit corresponds to the functor
$$\Dmod(\CX\times \CX)\overset{\Delta_\CX^!}\longrightarrow \Dmod(\CX)\overset{(p_\CX)_\blacktriangle}\longrightarrow \Vect.$$

\sssec{}

For a morphism $f:\CX_1\to \CX_2$, with respect to the equivalences $$\bD_{\CX_i}^{\on{Ve}}:\Dmod(\CX_i)^\vee\to \Dmod(\CX_i),$$ we have
$$(f_\blacktriangle)^\vee\simeq f^!.$$

\medskip

For a pair of QCA algebraic stacks, we have an equivalence of DG categories
$$\Dmod(\CX_1\times \CX_2)\simeq \on{Funct}_{\on{cont}}(\Dmod(\CX_1),\Dmod(\CX_2)),$$
$$\CQ\mapsto \sF_{\CX_1\to\CX_2,\CQ},\quad \sF_{\CX_1\to\CX_2,\CQ}(\CM)=
(\on{pr}_2)_\blacktriangle(\on{pr}_1^!(\CM)\sotimes \CQ).$$
and 
$$\sF\mapsto \CQ_\sF:=(\on{Id}_{\Dmod(\CX_1)}\otimes \sF)((\Delta_{\CX_1})_\bullet(\omega_{\CX_1})).$$

\sssec{}

We have the following assertion to be used in the sequel:

\begin{lem}  \hfill  \label{l:Verdier and check}
The restriction of the functor $\CF\mapsto \CF^\vee$
to $\Dmod(\CX)_{\on{coh}}$ identifies canonically with $\BD_\CX^{\on{Ve}}$.
\end{lem}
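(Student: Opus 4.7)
The plan is to match Hom-functors on both sides. Via the self-duality equivalence $\bD_\CX^{\on{Ve}}:\Dmod(\CX)^\vee\simeq \Dmod(\CX)$ recalled in \secref{sss:Verdier stacks}, the claim is equivalent to the assertion that for $\CF\in \Dmod(\CX)_{\on{coh}}$, the image $\bD_\CX^{\on{Ve}}(\CF^\vee)\in \Dmod(\CX)$ is canonically isomorphic to $\BD_\CX^{\on{Ve}}(\CF)$. To verify this, I will unwind the definition of $\CF^\vee$ from \secref{ss:abstract Verdier}: for any $\CF_1\in \Dmod(\CX)^c$ one has by construction
$$\CMaps_{\Dmod(\CX)^\vee}(\CF_1^\vee,\CF^\vee)\simeq \CMaps_{\Dmod(\CX)}(\CF,\CF_1).$$
Under $\bD_\CX^{\on{Ve}}$ the compact object $\CF_1^\vee$ corresponds to $\BD_\CX^{\on{Ve}}(\CF_1)$ (this is the characterizing property of $\bD_\CX^{\on{Ve}}$ on compacts stated in \secref{sss:Verdier stacks}), so the displayed isomorphism becomes
$$\CMaps_{\Dmod(\CX)}(\BD_\CX^{\on{Ve}}(\CF_1),\bD_\CX^{\on{Ve}}(\CF^\vee))\simeq \CMaps_{\Dmod(\CX)}(\CF,\CF_1).$$

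On the other hand, since both $\CF$ and $\CF_1$ lie in $\Dmod(\CX)_{\on{coh}}$ and $\BD_\CX^{\on{Ve}}$ is an anti-involution on this subcategory, the right-hand side above is canonically isomorphic to $\CMaps_{\Dmod(\CX)}(\BD_\CX^{\on{Ve}}(\CF_1),\BD_\CX^{\on{Ve}}(\CF))$. Combining, $\bD_\CX^{\on{Ve}}(\CF^\vee)$ and $\BD_\CX^{\on{Ve}}(\CF)$ corepresent the same functor on $\Dmod(\CX)^c$ in the variable $\BD_\CX^{\on{Ve}}(\CF_1)$. Since $\BD_\CX^{\on{Ve}}$ is an autoequivalence of $\Dmod(\CX)^c$, as $\CF_1$ runs over $\Dmod(\CX)^c$ so does $\BD_\CX^{\on{Ve}}(\CF_1)$, and these objects generate $\Dmod(\CX)$ under colimits; hence Yoneda yields the desired canonical isomorphism. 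Naturality in $\CF$ is automatic from the naturality of all maps involved.

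There is no real obstacle here beyond keeping the bookkeeping straight: one must not confuse the abstract categorical duality $\CF\mapsto \CF^\vee$ valued in $\Dmod(\CX)^\vee$, which is a priori only well-behaved on compacts (cf.\ \lemref{l:RKE}), with the concrete Verdier duality $\BD_\CX^{\on{Ve}}$ acting on $\Dmod(\CX)_{\on{coh}}$, and then check that the specific identification $\bD_\CX^{\on{Ve}}$ intertwines them on the overlap. The fact that $\Dmod(\CX)_{\on{coh}}$ may strictly contain $\Dmod(\CX)^c$ (when $\CX$ is non-safe) causes no trouble, because the comparison is tested against compact objects on one side and uses only the anti-involution property of $\BD_\CX^{\on{Ve}}$ on the other.
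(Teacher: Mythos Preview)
Your proof is correct and follows essentially the same approach as the paper: both test the defining property of $\CF^\vee$ against compact objects $\CF_1$, invoke that $\BD_\CX^{\on{Ve}}$ is an anti-self equivalence on $\Dmod(\CX)_{\on{coh}}$, and conclude by Yoneda. The paper's version is more terse, but your added bookkeeping (explicitly transporting through $\bD_\CX^{\on{Ve}}$ and noting that $\BD_\CX^{\on{Ve}}(\CF_1)$ ranges over all of $\Dmod(\CX)^c$) is the same argument spelled out.
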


\begin{proof}

We need to show that for $\CF\in \Dmod(\CX)_{\on{coh}}$ and $\CF_1\in \Dmod(\CX)^c$ there exists
a canonical isomorphism
$$\CMaps(\CF_1,\BD_\CX^{\on{Ve}}(\CF))\simeq \CMaps(\CF,\CF_1^\vee).$$

However, this follows from the fact that $\CF_1^\vee=\BD_\CX^{\on{Ve}}(\CF_1)$ and the fact that
$\BD_\CX^{\on{Ve}}$ is an anti-self equivalence on $\Dmod(\CX)_{\on{coh}}$.

\end{proof}

\begin{cor}  \label{c:Verdier and check}
Every object of $\Dmod(\CX)_{\on{coh}}\subset \Dmod(\CX)$ is reflexive.
\end{cor}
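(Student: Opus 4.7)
The plan is to deduce the corollary by applying Lemma \ref{l:Verdier and check} twice. Take $\CF \in \Dmod(\CX)_{\on{coh}}$. The lemma identifies $\CF^\vee$ (an object of $\Dmod(\CX)^\vee$, transported to $\Dmod(\CX)$ via $\bD_\CX^{\on{Ve}}$) with $\BD_\CX^{\on{Ve}}(\CF)$. Since Verdier duality preserves the coherent subcategory, $\BD_\CX^{\on{Ve}}(\CF)$ again belongs to $\Dmod(\CX)_{\on{coh}}$, so the lemma applies once more to give $(\CF^\vee)^\vee \simeq \BD_\CX^{\on{Ve}}(\BD_\CX^{\on{Ve}}(\CF))$. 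Using that $\BD_\CX^{\on{Ve}}$ is an anti-self-equivalence of $\Dmod(\CX)_{\on{coh}}$, biduality yields a canonical isomorphism $\BD_\CX^{\on{Ve}}\circ \BD_\CX^{\on{Ve}}(\CF) \simeq \CF$.

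The remaining, and essentially the only, thing to verify is that under the two applications of Lemma \ref{l:Verdier and check}, the canonical map $\CF \to (\CF^\vee)^\vee$ introduced in \secref{sss:d-compact} coincides with the biduality isomorphism for $\BD_\CX^{\on{Ve}}$. I would check this by naturality: both the assignment $\CF \mapsto (\CF^\vee)^\vee$ and the assignment $\CF \mapsto \BD_\CX^{\on{Ve}}\circ \BD_\CX^{\on{Ve}}(\CF)$ are functors $\Dmod(\CX)_{\on{coh}} \to \Dmod(\CX)$, each equipped with a natural transformation from $\on{Id}$; and the identification of Lemma \ref{l:Verdier and check} is natural in $\CF$. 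On compact objects, both natural transformations are tautologically the identity (every compact object is reflexive, and biduality is tautological in that case), so they agree on $\Dmod(\CX)^c$.

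To conclude, I would use that any coherent object $\CF$ admits a presentation by compact objects to propagate the comparison from $\Dmod(\CX)^c$ to $\Dmod(\CX)_{\on{coh}}$. Since $(-)^\vee$ converts colimits in $\bC$ to limits in $\bC^\vee$ (see \eqref{e:duality as limit}), while $\BD_\CX^{\on{Ve}}$ similarly converts colimits to limits on $\Dmod(\CX)_{\on{coh}}$, the two natural transformations are determined by their restriction to $\Dmod(\CX)^c$. This identifies the reflexivity map with the biduality isomorphism, giving that every coherent object is reflexive.

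The main obstacle is the compatibility check in the second paragraph; once naturality is set up carefully, the two cohomological calculations collapse onto each other on compact objects, where the statement is automatic. Everything else is a formal consequence of Lemma \ref{l:Verdier and check} together with the fact that Verdier duality is an anti-self-equivalence on the coherent subcategory.
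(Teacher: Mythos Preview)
Your first two paragraphs capture exactly the argument the paper has in mind: apply \lemref{l:Verdier and check} twice and use that $\BD_\CX^{\on{Ve}}$ is an involutive anti-equivalence on $\Dmod(\CX)_{\on{coh}}$. The paper states the corollary with no further proof, so this is the intended route.

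The third paragraph, however, is both unnecessary and problematic. The subcategory $\Dmod(\CX)_{\on{coh}}$ is not closed under colimits, so the phrase ``$\BD_\CX^{\on{Ve}}$ converts colimits to limits on $\Dmod(\CX)_{\on{coh}}$'' has no clear meaning. If instead you write a coherent $\CF$ as a colimit of compact objects in the ambient category $\Dmod(\CX)$ and try to compare $\lim_i \BD_\CX^{\on{Ve}}(\bc_i)$ with $\BD_\CX^{\on{Ve}}(\CF)$, you are asserting precisely the identification $\CF^\vee \simeq \BD_\CX^{\on{Ve}}(\CF)$ that the lemma provides---so the argument becomes circular.

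The compatibility you worry about in paragraph two is in fact immediate from the \emph{proof} of \lemref{l:Verdier and check}, without any reduction to compacts. For $\bc_1\in\Dmod(\CX)^c$, the component of the reflexivity map at $\bc_1$ is the map
\[
\CMaps(\bc_1,\CF)\longrightarrow \CMaps(\CF^\vee,\bc_1^\vee)\simeq \CMaps(\bc_1,(\CF^\vee)^\vee)
\]
obtained by applying the functor $(-)^\vee$. Under the identification of the lemma this is exactly
\[
\CMaps(\bc_1,\CF)\longrightarrow \CMaps(\BD_\CX^{\on{Ve}}(\CF),\BD_\CX^{\on{Ve}}(\bc_1)),
\]
i.e.\ the map induced by $\BD_\CX^{\on{Ve}}$, which is an isomorphism since $\BD_\CX^{\on{Ve}}$ is an anti-equivalence on $\Dmod(\CX)_{\on{coh}}$. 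As $\bc_1$ ranges over a generating set, Yoneda gives that the reflexivity map is an isomorphism. So you can simply delete the third paragraph and replace it with this one-line observation.
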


We will need the following generalization of \corref{c:Verdier and check}:

\begin{prop} \label{p:Verdier and check}
Every object of $\Dmod(\CX)$ with coherent cohomologies is reflexive.
The functor $\CF\mapsto \CF^\vee$, restricted to the full subcategory of $\Dmod(\CX)$ spanned
by objects with coherent cohomologies, is an involutive anti-self equivalence and is of bounded
cohomological amplitude. 
\end{prop}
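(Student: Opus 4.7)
My plan is to reduce to the bounded coherent case---where Lemma \ref{l:Verdier and check} identifies $\CF^\vee$ with Verdier duality $\BD_\CX^{\on{Ve}}(\CF)$---by a truncation argument, with the key technical input being that $\BD_\CX^{\on{Ve}}$ has bounded cohomological amplitude on $\Dmod(\CX)_{\on{coh}}$.

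First, I would establish the bounded cohomological amplitude of Verdier duality on $\Dmod(\CX)_{\on{coh}}$. This should follow by smooth descent from a scheme atlas $U \to \CX$: on the atlas, Verdier duality is of bounded amplitude (coherent D-modules on a scheme of finite type are preserved up to a shift by the dimension), and the QCA hypothesis ensures that the dimension of the fiber groupoids is bounded, so descent introduces only a uniformly bounded shift. Let $d$ denote a uniform bound for this amplitude.

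Second, for $\CF \in \Dmod(\CX)$ with coherent cohomologies, I would consider the truncations $\CF^{[a,b]} := \tau^{\geq a}\tau^{\leq b}(\CF)$, which lie in $\Dmod(\CX)_{\on{coh}}$. Since the natural transition maps present $\CF$ as the filtered colimit $\underset{a \to -\infty,\, b \to \infty}{\on{colim}}\, \CF^{[a,b]}$ (using left- and right-completeness of the t-structure on $\Dmod(\CX)$), and the functor $\CF \mapsto \CF^\vee$ sends colimits to limits by \eqref{e:duality as limit}, we obtain
\[
\CF^\vee \;\simeq\; \underset{a \to -\infty,\, b \to \infty}{\on{lim}}\, \BD_\CX^{\on{Ve}}(\CF^{[a,b]}).
\]
The bounded amplitude of $\BD_\CX^{\on{Ve}}$ implies that each $\BD_\CX^{\on{Ve}}(\CF^{[a,b]})$ is concentrated in a range of degrees determined by $[a,b]$ and $d$, so in each fixed cohomological degree the inverse system stabilizes after finitely many steps. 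This shows $\CF^\vee$ has coherent cohomologies and the functor $\CF \mapsto \CF^\vee$ has cohomological amplitude bounded by $d$.

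Third, for reflexivity, I would apply the same analysis to $\CF^\vee$, yielding a doubly-indexed limit-colimit presentation for $(\CF^\vee)^\vee$. Since each $\CF^{[a,b]}$ is reflexive by \corref{c:Verdier and check} and $\BD_\CX^{\on{Ve}}$ is an anti-involution on $\Dmod(\CX)_{\on{coh}}$, each term in the presentation reduces to $\CF^{[a,b]}$, and the uniform bound $d$ allows me to interchange the order of limit and colimit in each cohomological degree, recovering $\CF$. The involutivity and anti-self-equivalence on the larger subcategory then follow formally from the bounded-case statements. The main obstacle will be the first step---establishing bounded cohomological amplitude of $\BD_\CX^{\on{Ve}}$ on $\Dmod(\CX)_{\on{coh}}$ for a general (not necessarily safe) QCA stack---because the non-safe automorphism groups can cause the dualizing complex to be unbounded; once this bound is secured, the remaining steps are a formal t-structure manipulation combined with the limit-colimit yoga underlying the abstract duality $(-)^\vee$.
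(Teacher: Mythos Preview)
Your Step 2 contains a genuine gap: the object $\CF$ is \emph{not} a filtered colimit of its two-sided truncations $\CF^{[a,b]}$. In the $b$-direction the transition maps $\CF^{[a,b]}\to\CF^{[a,b']}$ (for $b<b'$) do form a direct system, but in the $a$-direction the natural maps go $\CF^{[a',b]}\to\CF^{[a,b]}$ for $a'<a$, i.e., this is an \emph{inverse} system; left-completeness says $\CF\simeq\lim_{a\to-\infty}\tau^{\geq a}(\CF)$, not a colimit. Since $(-)^\vee$ sends colimits to limits but has no reason to send limits to colimits, you cannot derive your displayed formula for $\CF^\vee$ this way. This is not a bookkeeping slip: the entire mechanism of Step 2 is the colimit-to-limit rule for $(-)^\vee$, and with a mixed limit--colimit presentation of $\CF$ there is no such direct reduction to the bounded case.

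The paper circumvents this by reversing the logic. Rather than computing $\CF^\vee$ from a presentation of $\CF$, it \emph{defines} a candidate $\wt\CF$ by prescribing its truncations,
\[
\tau^{\geq -n,\leq n}(\wt\CF)\simeq\tau^{\geq -n,\leq n}\bigl(\BD_\CX^{\on{Ve}}(\tau^{\geq -n-k,\leq n+k}(\CF))\bigr),
\]
using completeness of the t-structure to glue, and then verifies the universal property $\CMaps(\CF_1,\wt\CF)\simeq\CMaps(\CF,\BD_\CX^{\on{Ve}}(\CF_1))$ for $\CF_1\in\Dmod(\CX)^c$, separately for $\CF$ bounded above and for $\CF$ bounded below. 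The key input making those Hom-computations stabilize is that $\BD_\CX^{\on{Ve}}(\CF_1)$ is itself bounded (compact implies coherent). Your Step 3 gestures at a limit--colimit interchange, which is closer in spirit to what is actually needed, but Step 2 would have to be redone along these lines before that interchange can even be formulated. (On your Step 1: the bounded amplitude of $\BD_\CX^{\on{Ve}}$ is indeed the essential input, and the paper simply takes it as known; the non-safe issue you flag does not obstruct it, since the dualizing complex on a QCA stack is bounded.)
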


\begin{proof}

Note that the functor $\BD_\CX^{\on{Ve}}$ has a bounded cohomological amplitude,
say by $k$. We claim that for $\CF\in \Dmod(\CX)$ with coherent cohomologies we have
\begin{equation} \label{e:stimate on dual}
\tau^{\geq -n,\leq n}(\CF^\vee)\simeq \tau^{\geq -n,\leq n}\left(\BD_\CX^{\on{Ve}}(\tau^{\geq -n-k,\leq n+k}(\CF))\right), \quad \forall n\geq 0.
\end{equation}
This would prove the assertion of the proposition. 

\medskip

To prove \eqref{e:stimate on dual}, we note that since the t-structure on $\Dmod(\CX)$ is left and right complete,
for $\CF$ with coherent cohomologies there is a canonically defined object $\wt\CF\in \Dmod(\CX)$ such that
$$\tau^{\geq -n,\leq n}(\wt\CF)= \tau^{\geq -n,\leq n}\left(\BD_\CX^{\on{Ve}}(\tau^{\geq -n-k,\leq n+k}(\CF))\right), \quad \forall n\geq 0.$$

\medskip

We have to show that for $\CF_1\in \Dmod(\CX)^c$, there is a canonical isomorphism
$$\CMaps(\CF_1,\wt\CF)\simeq \CMaps(\CF,\BD_\CX^{\on{Ve}}(\CF_1)).$$

We shall do it separately in the cases $\CF\in \Dmod(\CX)^-$ and $\CF\in \Dmod(\CX)^+$ in such a way that the two
isomorphisms coincide for $\CF\in \Dmod(\CX)^-\cap \Dmod(\CX)^+$, i.e., when $\CF$ belongs $\Dmod(\CX)_{\on{coh}}$.

\medskip

For $\CF\in \Dmod(\CX)^-$, we have
$$\wt\CF\simeq  \underset{n}{colim}\, \BD_{\CX}^{\on{Ve}}(\tau^{\geq -n}(\CF)).$$
Hence, since $\CF_1$ is compact, 
$$\CMaps(\CF_1,\wt\CF)\simeq  \underset{n}{colim}\, \CMaps\left(\CF_1, \BD_{\CX}^{\on{Ve}}(\tau^{\geq -n}(\CF))\right)
\simeq \underset{n}{colim}\, \CMaps(\tau^{\geq -n}(\CF),\BD_{\CX}^{\on{Ve}}(\CF_1)).$$
However, since $\BD_{\CX}^{\on{Ve}}(\CF_1)$ is in $\Dmod(\CX)^+$, the last colimit stabilizes to 
$$\CMaps(\CF,\BD_{\CX}^{\on{Ve}}(\CF_1)),$$
as required.

\medskip

For $\CF\in \Dmod(\CX)^+$, we have 
$$\wt\CF\simeq  \underset{n}{lim}\, \BD_{\CX}^{\on{Ve}}(\tau^{\leq n}(\CF)).$$
Hence,
\begin{multline*}
\CMaps(\CF_1,\wt\CF)\simeq  \underset{n}{lim}\, \CMaps\left(\CF_1, \BD_{\CX}^{\on{Ve}}(\tau^{\leq n}(\CF))\right)\simeq \\ 
\simeq \underset{n}{lim}\, \CMaps(\tau^{\leq n}(\CF),\BD_{\CX}^{\on{Ve}}(\CF_1))\simeq
\CMaps(\CF,\BD_{\CX}^{\on{Ve}}(\CF_1)).
\end{multline*}

\end{proof}

In what follows, for $\CF\in \Dmod(\CX)$ with coherent cohomologies we shall denote
$$\BD_{\CX}^{\on{Ve}}(\CF):=\CF^\vee.$$

\ssec{Direct image with compact supports}

\sssec{}

Let $f:\CX_1\to \CX_2$ be a morphism between QCA stacks. Let 
$f_!$ denote the partially defined left adjoint to the functor $f^!:\Dmod(\CX_2)\to \Dmod(\CX_1)$.

\medskip

The following is a particular case of \lemref{l:conjugate bis}: 

\begin{lem} \label{l:!}
Let $\CF_1$ be an object of $\Dmod(\CX_1)^c$ for which the object
$$f_\blacktriangle(\BD_{\CX_1}^{\on{Ve}}(\CF_1))\in \Dmod(\CX_2)$$
belongs to $\Dmod(\CX_2)^c$.  Then the functor $f_!$ is defined on
$\CF_1$ and we have a canonical isomorphism
$$f_!(\CF_1)\simeq \BD^{\on{Ve}}_{\CX_2}\left(f_\blacktriangle(\BD_{\CX_1}^{\on{Ve}}(\CF_1))\right).$$
\end{lem}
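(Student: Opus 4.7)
The plan is to apply \lemref{l:conjugate bis} directly to the adjunction $(f_!, f^!)$. Specifically, I would take $\bC_1 := \Dmod(\CX_1)$, $\bC_2 := \Dmod(\CX_2)$, let $\sG := f^!\colon \Dmod(\CX_2)\to \Dmod(\CX_1)$ (which is continuous) with partially defined left adjoint $\sF := f_!$, and set $\bc_1 := \CF_1 \in \Dmod(\CX_1)^c$. The content of the lemma will then be a translation of the conclusion of \lemref{l:conjugate bis} through the Verdier self-dualities $\bD_{\CX_i}^{\on{Ve}} \colon \Dmod(\CX_i)^\vee \simeq \Dmod(\CX_i)$.

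Next I would identify both the input and output of the hypothesis of \lemref{l:conjugate bis}. On the one hand, under $\bD_{\CX_1}^{\on{Ve}}$ the abstract dual $\CF_1^\vee \in \Dmod(\CX_1)^\vee$ of the compact object $\CF_1$ corresponds to the Verdier dual $\BD_{\CX_1}^{\on{Ve}}(\CF_1) \in \Dmod(\CX_1)$; this is the compatibility recorded in \secref{sss:Verdier stacks}, and also matches \lemref{l:Verdier and check} applied to $\CF_1 \in \Dmod(\CX_1)_{\on{coh}}$. On the other hand, the dual functor $(f^!)^\vee \colon \Dmod(\CX_1)^\vee \to \Dmod(\CX_2)^\vee$ is identified, via the $\bD_{\CX_i}^{\on{Ve}}$, with $f_\blacktriangle \colon \Dmod(\CX_1)\to \Dmod(\CX_2)$ — this is the relation $(f_\blacktriangle)^\vee \simeq f^!$ recalled in \secref{sss:unit and counit stacks}, read in the other direction. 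Hence the object $\sG^\vee(\bc_1^\vee)$ appearing in \lemref{l:conjugate bis} is canonically isomorphic to $f_\blacktriangle(\BD_{\CX_1}^{\on{Ve}}(\CF_1))$, whose compactness is exactly our hypothesis.

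Finally, \lemref{l:conjugate bis} asserts that $\sF(\bc_1) = f_!(\CF_1)$ is defined and canonically isomorphic to $\bigl(\sG^\vee(\bc_1^\vee)\bigr)^\vee = \bigl(f_\blacktriangle(\BD_{\CX_1}^{\on{Ve}}(\CF_1))\bigr)^\vee$, the dual being taken in $\Dmod(\CX_2)^\vee$. Since by hypothesis $f_\blacktriangle(\BD_{\CX_1}^{\on{Ve}}(\CF_1))$ is compact in $\Dmod(\CX_2)$, applying the same compatibility between abstract duality and Verdier duality on compacts (i.e.\ \lemref{l:Verdier and check}) transports this isomorphism into the stated formula
\[
f_!(\CF_1)\simeq \BD^{\on{Ve}}_{\CX_2}\!\left(f_\blacktriangle(\BD_{\CX_1}^{\on{Ve}}(\CF_1))\right).
\]
There is no real obstacle here: the proof is entirely a bookkeeping exercise, and the only point requiring any care is the consistent use of the two identifications $\bc^\vee \leftrightarrow \BD_{\CX_i}^{\on{Ve}}(\bc)$ and $(f^!)^\vee \leftrightarrow f_\blacktriangle$ on both sides of the equivalence produced by \lemref{l:conjugate bis}.
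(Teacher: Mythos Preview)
Your proposal is correct and is exactly the approach the paper takes: the paper simply states that \lemref{l:!} is a particular case of \lemref{l:conjugate bis}, and your argument spells out precisely the dictionary (via $\bD_{\CX_i}^{\on{Ve}}$ and $(f^!)^\vee \simeq f_\blacktriangle$) needed to make that specialization.
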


We shall now prove its generalization where instead of compact objects we consider coherent ones:

\begin{prop} \label{p:!}
Let $\CF_1$ be an object of $\Dmod(\CX_1)_{\on{coh}}$ for which 
the object $$f_\bullet(\BD_{\CX_1}^{\on{Ve}}(\CF_1))\in \Dmod(\CX_2)$$
belongs to $\Dmod(\CX_2)_{\on{coh}}$.  Then $f_!(\CF_1)$ is well-defined and we have a canonical
isomorphism
$$f_!(\CF_1)\simeq \BD^{\on{Ve}}_{\CX_2}\left(f_\bullet(\BD_{\CX_1}^{\on{Ve}}(\CF_1))\right).$$
\end{prop}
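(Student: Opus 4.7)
The goal is to extend Lemma \ref{l:!} from the compact case to the coherent case; the compact case was an immediate application of Lemma \ref{l:conjugate bis} to $\sG := f^!$ (whose dual, via the Verdier self-duality of $\Dmod(\CX_i)$, is $f_\blacktriangle$). My strategy is to prove a coherent analogue of Lemma \ref{l:conjugate bis}: if $\bc_1$ is reflexive and $\sG^\vee(\bc_1^\vee)$ is reflexive, then $\sF(\bc_1)$ is defined and canonically isomorphic to $(\sG^\vee(\bc_1^\vee))^\vee$. The enabling ingredient is Proposition \ref{p:Verdier and check}: on objects with coherent cohomologies, the abstract dual $\CH\mapsto \CH^\vee$ of Section \ref{ss:abstract Verdier} agrees with $\BD_\CX^{\on{Ve}}$, and this restricted duality is an involutive anti-self equivalence. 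Thus the reflexivity-based manipulations underlying Lemma \ref{l:conjugate bis} become available for coherent (not merely compact) objects.

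The proof of this coherent analogue of Lemma \ref{l:conjugate bis} follows the original pattern. For $\bc_2\in \bC_2^c$, one constructs the chain
\begin{align*}
\CMaps_{\bC_2}(\sF(\bc_1),\bc_2) &\simeq \CMaps_{\bC_1}(\bc_1,\sG(\bc_2))  \\
&\simeq \on{ev}_{\bC_1}(\bc_1^\vee\otimes \sG(\bc_2)) \\
&\simeq \on{ev}_{\bC_2}(\sG^\vee(\bc_1^\vee)\otimes \bc_2) \\
&\simeq \CMaps_{\bC_2}((\sG^\vee(\bc_1^\vee))^\vee,\bc_2),
\end{align*}
using in the second and fourth steps the extended evaluation identity $\on{ev}_\bC(\bc\otimes \xi)\simeq \CMaps_{\bC^\vee}(\bc^\vee,\xi)$ for reflexive $\bc$ (which follows, in our D-module context, by combining Proposition \ref{p:Verdier and check} with the original formula for compact $\bc$); the isomorphism is then promoted from compact $\bc_2$ to all of $\bC_2$ via the reflexivity of both $\sF(\bc_1)$ and its candidate description $(\sG^\vee(\bc_1^\vee))^\vee$. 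Applied with $\bc_1=\CF_1$ and $\sG^\vee=f_\blacktriangle$, this yields
$$f_!(\CF_1)\simeq \BD^{\on{Ve}}_{\CX_2}(f_\blacktriangle(\BD^{\on{Ve}}_{\CX_1}(\CF_1))),$$
once we know that $f_\blacktriangle(\BD^{\on{Ve}}_{\CX_1}(\CF_1))$ is reflexive, i.e., has coherent cohomologies.

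To conclude, it remains to identify $f_\blacktriangle(\BD^{\on{Ve}}_{\CX_1}(\CF_1))$ with $f_\bullet(\BD^{\on{Ve}}_{\CX_1}(\CF_1))$: the canonical natural transformation \eqref{e:tr to bullet} $f_\blacktriangle\to f_\bullet$ is an iso on compact inputs, and I will show it is also an iso on the coherent input $\BD^{\on{Ve}}_{\CX_1}(\CF_1)$ under the hypothesis that $f_\bullet(\BD^{\on{Ve}}_{\CX_1}(\CF_1))$ is coherent. This compatibility between the renormalized and unrenormalized direct images on coherent inputs with coherent outputs is the principal obstacle of the proof; it requires unwinding the renormalization via a smooth atlas of $\CX_1$ and $\CX_2$ and exploiting the QCA hypothesis to reduce to the safe case, where $f_\blacktriangle$ and $f_\bullet$ tautologically coincide. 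A secondary subtlety is the coherent-object extension of the evaluation-Hom identity above, which must be verified directly using Proposition \ref{p:Verdier and check} rather than imported from the compact case by continuity.
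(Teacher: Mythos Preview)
Your proposal has a genuine gap at the ``extended evaluation identity'' used in the second step of your chain. You claim that for reflexive $\bc$ and arbitrary $\xi$,
\[
\on{ev}_\bC(\bc\otimes \xi)\simeq \CMaps_{\bC^\vee}(\bc^\vee,\xi),
\]
and that this follows from \propref{p:Verdier and check}. It does not: reflexivity concerns the map $\bc\to(\bc^\vee)^\vee$, not the evaluation pairing. In the D-module setting (identifying $\Dmod(\CX)$ with its dual via $\bD^{\on{Ve}}_\CX$), your identity unwinds to
\[
(p_\CX)_\blacktriangle(\bc\sotimes \xi)\;\simeq\;(p_\CX)_\bullet(\bc\sotimes \xi)\qquad\text{for all }\xi,
\]
which, specializing to $\xi=\omega_\CX$, says precisely that $\bc$ is \emph{safe}. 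Coherent objects on a non-safe stack (e.g.\ $\omega_{\on{pt}/\BG_m}$) are reflexive but not safe, and the identity fails there. More abstractly, the left-hand side is continuous in $\xi$ while the right-hand side is not, once $\bc^\vee$ fails to be compact.

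The paper's proof is organized precisely so as to avoid this difficulty. Rather than attempting an evaluation--Hom identity for coherent $\CF_1$, it first uses boundedness of $\CMaps$ from coherent objects (and left-completeness of the t-structure) to reduce $\CF_2$ to be coherent; then it reduces, via the definition of $f_\bullet$, to the case where $\CX_1$ is a scheme. At that point $\CF_1$ \emph{is} compact and $f$ is safe, so $f_\bullet=f_\blacktriangle$ and the evaluation--Hom identity holds trivially on the source side. The remaining issue---comparing $\on{ev}_{\CX_2}(\CF_2\otimes f_\blacktriangle(\BD^{\on{Ve}}_{\CX_1}(\CF_1)))$ with $\CMaps_{\CX_2}(\BD^{\on{Ve}}_{\CX_2}(\CF_2),f_\blacktriangle(\BD^{\on{Ve}}_{\CX_1}(\CF_1)))$---is handled by a nontrivial input from \cite{DrGa1}: the object $f_\blacktriangle(\BD^{\on{Ve}}_{\CX_1}(\CF_1))$ is \emph{safe} on $\CX_2$, and the map \eqref{e:duality and pairing} is an isomorphism whenever the second argument is safe. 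Your final step (identifying $f_\blacktriangle$ with $f_\bullet$ on the coherent input $\BD^{\on{Ve}}_{\CX_1}(\CF_1)$) is likewise nontrivial and is not addressed by the sketch you give; in the paper this issue simply evaporates after the reduction to schemes.
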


\begin{rem}
Note that in \propref{p:!} we use the functor $f_\bullet$ rather than $f_\blacktriangle$. This does not
contradict \lemref{l:!} since the two functors coincide on compact objects. We also remind that the two functors
coincide when $f$ is safe (e.g., schematic or representable). 
\end{rem}

\begin{proof}

We need to establish a functorial isomorphism
$$\CMaps_{\Dmod(\CX_2)}\left(\BD^{\on{Ve}}_{\CX_2}\left(f_\bullet(\BD_{\CX_1}^{\on{Ve}}(\CF_1))\right),\CF_2\right)\simeq 
\CMaps_{\Dmod(\CX_1)}(\CF_1,f^!(\CF_2)), \quad \CF_2\in \Dmod(\CX_2).$$

Since both $\BD^{\on{Ve}}_{\CX_2}\left(f_\bullet(\BD_{\CX_1}^{\on{Ve}}(\CF_1))\right)$ and $\CF_1$
are coherent, and the functor $f^!$ has a bounded cohomological amplitude, we can assume that $\CF_2\in \Dmod(\CX_2)^-$.
Furthermore, since both $\Dmod(\CX_1)$ and $\Dmod(\CX_2)$ are left complete in their respective t-structures, and
we can moreover assume that $\CF_2\in \Dmod(\CX_2)^b$.

\medskip

Note that for a QCA stack $\CX$ and $\CF\in \Dmod(\CX)_{\on{coh}}$, the functor $\CMaps_{\Dmod(\CX)}(\CF,-)$
commutes with colimits \emph{taken in $\Dmod(\CX)^{\geq -n}$}, for any fixed $n$. 

\medskip

This allows to assume that $\CF_2\in \Dmod(\CX_2)_{\on{coh}}$. Hence, we need to establish an isomorphism
\begin{equation} \label{e:!2}
\CMaps_{\Dmod(\CX_2)}\left(\BD^{\on{Ve}}_{\CX_2}(\CF_2), f_\bullet(\BD_{\CX_1}^{\on{Ve}}(\CF_1))\right)
\simeq \CMaps_{\Dmod(\CX_1)}(\CF_1,f^!(\CF_2)), \quad  \Dmod(\CX_2)_{\on{coh}}.
\end{equation}

We claim that the latter isomorphism holds  for any $\CF_i\in \Dmod(\CX_i)_{\on{coh}}$, $i=1,2$. 

\medskip

Indeed, the definition of $f_\bullet$ (see \cite[Sect. 7.4.1]{DrGa1}) allows to reduce the proof of \eqref{e:!2} 
to the case when $\CX_1$ is a scheme. 
Thus, we can assume that $\CF_1\in \Dmod(\CX_1)^c$ and that $f$ is safe, so $f_\bullet=f_\blacktriangle$. In this case,
the right-hand side of \eqref{e:!2} identifies with
$$\on{ev}_{\Dmod(\CX_1)}\left(\BD_{\CX_1}^{\on{Ve}}(\CF_1)\otimes f^!(\CF_2)\right)\simeq 
\on{ev}_{\Dmod(\CX_2)}\left(\CF_2\otimes f_\blacktriangle(\BD_{\CX_1}^{\on{Ve}}(\CF_1))\right).$$
Moreover, by \cite[Lemma 10.4.2(a)]{DrGa1}, the object $f_\blacktriangle(\BD_{\CX_1}^{\on{Ve}}(\CF_1))\in \Dmod(\CX_2)$ is \emph{safe}.

\medskip 

Let $\CX$ be any QCA stack, and $\CF\in \Dmod(\CX)_{\on{coh}}$, $\CF'\in \Dmod(\CX)$. The morphism
\eqref{e:duality and pairing} gives rise to a map
\begin{equation} \label{e:!3}
\on{ev}_{\Dmod(\CX)}\left(\CF\otimes \CF'\right)\to 
\CMaps_{\Dmod(\CX)}\left(\BD^{\on{Ve}}_{\CX}(\CF), \CF'\right).
\end{equation}

The map $\leftarrow$ in \eqref{e:!2} will be the map \eqref{e:!3} for $\CX:=\CX_2$, $\CF:=\CF_2$, $\CF'= f_\blacktriangle(\BD_{\CX_1}^{\on{Ve}}(\CF_1))$.
Hence, it remains to show that the map \eqref{e:!3} is an isomorphism whenever $\CF'$ is safe.

\medskip

We have:
$$\on{ev}_{\Dmod(\CX)}\left(\CF\otimes \CF'\right)\simeq (p_\CX)_\blacktriangle(\CF\sotimes \CF'),$$
and by \cite[Lemma 7.3.5]{DrGa1},
$$\CMaps_{\Dmod(\CX)}\left(\BD^{\on{Ve}}_{\CX}(\CF), \CF'\right)=(p_\CX)_\bullet(\CF\sotimes \CF'),$$
and the map \eqref{e:!3} comes from the natural transformation $(p_\CX)_\blacktriangle\to (p_\CX)_\bullet$.

\medskip

Finally, if $\CF'$ is safe, then so is $\CF\sotimes \CF'$, and hence the map 
$$(p_\CX)_\blacktriangle(\CF\sotimes \CF')\to (p_\CX)_\bullet(\CF\sotimes \CF')$$
is an isomorphism by \cite[Proposition 9.2.9]{DrGa1}.

\end{proof}

\sssec{}

For a QCA algebraic stack, we consider the object 
$$k_\CX:=\BD_\CX^{\on{Ve}}(\omega_\CX) \in \Dmod(\CX)_{\on{coh}}.$$

By \propref{p:!}, the object
$$(\Delta_\CX)_!(k_\CX)\in \Dmod(\CX\times \CX)_{\on{coh}}$$
is well-defined and is isomorphic to
$$\BD^{\on{Ve}}_{\CX\times \CX}\left((\Delta_\CX)_\blacktriangle(\omega_\CX)\right),$$
where we recall that $(\Delta_\CX)_\blacktriangle\simeq (\Delta_\CX)_\bullet$, since $\Delta_\CX$
is representable and hence safe. 

\medskip

Note, however, that neither $(\Delta_\CX)_\blacktriangle(\omega_\CX)$ nor $(\Delta_\CX)_!(k_\CX)$
are in general compact. 

\medskip

We define the functor
$$\psId_\CX:\Dmod(\CX)\to \Dmod(\CX)$$
to be given by the kernel $(\Delta_\CX)_!(k_\CX)$ in the sense of \secref{sss:unit and counit stacks}

\ssec{The theorem for stacks}

\sssec{}

We have the following analog of \thmref{t:schemes} for QCA stacks. 

\begin{thm}  \label{t:stacks}
Let $\CQ$ be an object of $\Dmod(\CX_1\times \CX_2)_{\on{coh}}$. Assume that the corresponding
functor
$$\sF_{\CX_1\to \CX_2,\CQ}:\Dmod(X_1)\to \Dmod(X_2)$$
admits a continuous right adjoint.  Then the functor
$$\sF_{\CX_2\to \CX_1,\BD^{\on{Ve}}_{\CX_1\times \CX_2}(\CQ)}:\Dmod(X_2)\to \Dmod(X_1)$$
identifies canonically with
$$\Dmod(\CX_2)\overset{(\sF_{\CX_1\to \CX_2,\CQ})^R}\longrightarrow \Dmod(\CX_1)
\overset{\psId_{\CX_1}}\longrightarrow \Dmod(\CX_1).$$
\end{thm}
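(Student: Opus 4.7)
The plan is to deduce Theorem~\ref{t:stacks} from the abstract Theorem~\ref{t:general}, applied with $\bC_1 := \Dmod(\CX_1)$, $\bC_2 := \Dmod(\CX_2)$, and $\sF := \sF_{\CX_1 \to \CX_2, \CQ}$. Under the Verdier self-dualities $\bD^{\on{Ve}}_{\CX_i}:\Dmod(\CX_i)^\vee \simeq \Dmod(\CX_i)$, the kernel $\CQ$ already lives in the required slot $\bC_1^\vee \otimes \bC_2$, and Proposition~\ref{p:Verdier and check}, combined with the coherence hypothesis on $\CQ$, identifies $\CQ^\vee$ with $\BD^{\on{Ve}}_{\CX_1\times\CX_2}(\CQ)$. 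Hence, once we verify the hypothesis of Theorem~\ref{t:general}, its conclusion yields exactly the claimed isomorphism $\psId_{\CX_1}\circ (\sF_{\CX_1\to\CX_2,\CQ})^R \simeq \sF_{\CX_2\to\CX_1,\BD^{\on{Ve}}_{\CX_1\times\CX_2}(\CQ)}$.

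The entire burden therefore lies in verifying that the canonical comparison map from \eqref{e:general},
\[
(\on{Id}_{\bC_1^\vee} \otimes \sF^{\on{op}})((\bu_{\bC_1})^\vee) \;\longrightarrow\; \bigl((\on{Id}_{\bC_1^\vee}\otimes \sF)(\bu_{\bC_1})\bigr)^\vee,
\]
is an isomorphism in $\bC_1^\vee \otimes \bC_2^\vee \simeq \Dmod(\CX_1\times\CX_2)$. In the scheme case this was automatic, because $\bu_{\Dmod(X)} = (\Delta_X)_\bullet(\omega_X)$ is a bounded holonomic complex and hence compact (see \secref{sss:diag compact} and \secref{sss:proof of schemes}). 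For a QCA stack $\CX_1$, however, $\bu_{\Dmod(\CX_1)} = (\Delta_{\CX_1})_\bullet(\omega_{\CX_1})$ is only \emph{coherent}, not compact in general, so the argument must be genuinely refined at this point.

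To carry out the verification, I would first use Proposition~\ref{p:!} to identify the unit's Verdier dual explicitly as $(\bu_{\bC_1})^\vee \simeq (\Delta_{\CX_1})_!(k_{\CX_1})$, which is the kernel of $\psId_{\CX_1}$ by construction. Next, I would test the comparison map by mapping into it from compact objects of $\Dmod(\CX_1\times\CX_2)$: for any $\bc\in (\bC_1^\vee\otimes\bC_2)^c$, the adjunction $\sF^{\on{op}} \dashv \sF^\vee$ from Lemma~\ref{l:conjugate} reduces $\CMaps$ into the left-hand side to a mapping space in $\bC_1^\vee\otimes\bC_1$ paired against $(\on{Id}\otimes\sF^\vee)(\bc)$, which is compact, and then \eqref{e:duality and pairing} together with Proposition~\ref{p:Verdier and check} identifies this pairing with $\CMaps(\bc, \CQ^\vee)$. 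The two descriptions then agree by the definition $\CQ = (\on{Id}_{\bC_1^\vee}\otimes\sF)(\bu_{\bC_1})$ together with the fact that $\bu_{\bC_1}$ is reflexive (Proposition~\ref{p:Verdier and check}, which applies because $(\Delta_{\CX_1})_\bullet(\omega_{\CX_1})$ has coherent cohomologies, as $\Delta_{\CX_1}$ is representable).

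The main obstacle is precisely this last identification: one must propagate the isomorphism \eqref{e:functor and duality} from compact inputs (where it is tautological) to the coherent-but-non-compact object $\bu_{\bC_1}$. The key is that the bounded cohomological amplitude of $\BD^{\on{Ve}}$ on objects with coherent cohomologies (as recorded in Proposition~\ref{p:Verdier and check}) allows one to commute the duality operation past the relevant limits/colimits, using reflexivity as a crutch; this is what distinguishes the QCA case from the scheme case and is the substantive content inherited from the foundational work of \cite{DrGa1}. All remaining steps are formal manipulations with the symmetric pairings $\on{ev}_{\bC_i}$ and the kernel-composition formalism.
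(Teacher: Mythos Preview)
Your overall strategy is right, and matches the paper's: reduce to \thmref{t:general}, so that the whole content is verifying that the comparison map \eqref{e:general} is an isomorphism. You also correctly identify that the difficulty, compared with the scheme case, is that $\bu_{\Dmod(\CX_1)}=(\Delta_{\CX_1})_\bullet(\omega_{\CX_1})$ is merely coherent, not compact.

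The gap is in your verification of \eqref{e:general}. Your ``test against compacts'' step uses the adjunction $\sF^{\on{op}}\dashv\sF^\vee$ to pull $\on{Id}\otimes\sF^{\on{op}}$ from the target to the source. But that adjunction goes the wrong way: $\sF^{\on{op}}$ is a \emph{left} adjoint, so one can only move it out of the \emph{source} of a $\CMaps$, not the target. To move it out of the target you would need a left adjoint for $\sF^{\on{op}}$, equivalently (by \corref{c:left adj via check}) that $(\sF^{\on{op}})^\vee=\sF^R$ preserve compactness, which is not assumed. If instead you try to dualize via $\on{ev}$, the functor that appears is again $\on{Id}\otimes\sF^R$, and you face the same problem. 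In short, reflexivity of $\bu_{\bC_1}$ and bounded amplitude of $\BD^{\on{Ve}}$ alone do not suffice; some further geometric input is required.

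What the paper actually supplies is this missing input. It approximates $\CM:=(\Delta_{\CX_1})_\bullet(\omega_{\CX_1})$ by compact objects $\CM_n$ with highly coconnective cones, and then reduces the question to showing that $\sG(\CM)\to\sG(\CM_n)$ is an isomorphism in any fixed range of cohomological degrees for $n\gg 0$ (here $\sG=\on{Id}_{\Dmod(\CX_1)}\otimes\sF$). Since $\sG$ is built from the renormalized pushforward $(\on{pr}_2)_\blacktriangle$, which is continuous but has no a priori cohomological bound, this is not automatic. The key is \lemref{l:rel safety}: for compact $\CN$, the natural transformation $(\on{pr}_2)_\blacktriangle(\on{pr}_1^!(\CN)\sotimes\CP)\to(\on{pr}_2)_\bullet(\on{pr}_1^!(\CN)\sotimes\CP)$ is an isomorphism (a safety argument), and a direct base-change computation shows the same holds for $\CN=\CM$. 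One can then use that $(\on{pr}_2)_\bullet$ is left t-exact up to a shift, together with the bounded amplitude of $\BD^{\on{Ve}}$ from \propref{p:Verdier and check}, to run the approximation. This interplay between $\blacktriangle$ and $\bullet$ is the substantive step you are missing.
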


\sssec{}

Using \lemref{l:conjugate}, from \thmref{t:stacks} we obtain: 

\begin{cor} \label{c:stacks}
Let $\CQ$ be an object of $\Dmod(\CX_1\times \CX_2)_{\on{coh}}$. Assume that the corresponding
functor
$$\sF_{\CX_1\to \CX_2,\CQ}:\Dmod(X_1)\to \Dmod(X_2)$$
admits a continuous right adjoint.  Then the functor
$$\sF_{\CX_1\to \CX_2,\BD^{\on{Ve}}_{\CX_1\times \CX_2}(\CQ)}:\Dmod(X_1)\to \Dmod(X_2)$$
identifies canonically with
$$\Dmod(\CX_1)\overset{\psId_{\CX_1}}\longrightarrow \Dmod(\CX_1)
\overset{(\sF_{\CX_1\to \CX_2,\CQ})^{\on{op}}}\longrightarrow \Dmod(\CX_2).$$
\end{cor}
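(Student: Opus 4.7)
The plan is to deduce the corollary from \thmref{t:stacks} purely formally by passing to dual functors, exactly as \corref{c:right conj} was deduced from \thmref{t:schemes} in the scheme case via \corref{c:conjugate} and the self-duality \eqref{e:dual of pseud}. All we need to do is assemble three separate pieces of self-duality and one abstract identification.

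First I would record three auxiliary identifications, each of which extends a fact already used for schemes to the QCA-stack setting. (1) Under the Verdier self-duality $\bD^{\on{Ve}}_{\CX_i}:\Dmod(\CX_i)^\vee\simeq \Dmod(\CX_i)$ of \secref{sss:Verdier stacks}, and the fact that kernels on the product $\Dmod(\CX_1\times \CX_2)$ are symmetric in the two factors, we have a canonical identification
\[
(\sF_{\CX_1\to \CX_2,\CQ})^\vee \simeq \sF_{\CX_2\to \CX_1,\CQ},
\]
proved exactly as in the scheme case by unwinding the definition of the dual functor in terms of kernels. (2) The kernel $(\Delta_{\CX_1})_!(k_{\CX_1})$ defining $\psId_{\CX_1}$ is equivariant under the flip automorphism $\sigma$ of $\CX_1\times\CX_1$, since it is constructed from the diagonal; hence, as in \eqref{e:dual of pseud},
\[
(\psId_{\CX_1})^\vee \simeq \psId_{\CX_1}.
\]
(This is also the instance of \eqref{e:dual of ps} for $\bC=\Dmod(\CX_1)$.) (3) By \corref{c:conjugate}, for any continuous functor $\sF$ between compactly generated categories that preserves compactness, we have $(\sF^R)^\vee\simeq \sF^{\on{op}}$.

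Next I would apply \thmref{t:stacks}, which yields
\[
\sF_{\CX_2\to \CX_1,\BD^{\on{Ve}}_{\CX_1\times \CX_2}(\CQ)}\simeq \psId_{\CX_1}\circ (\sF_{\CX_1\to \CX_2,\CQ})^R,
\]
and then pass to dual functors on both sides. Since passage to duals reverses the order of composition, the right-hand side becomes
\[
\bigl((\sF_{\CX_1\to \CX_2,\CQ})^R\bigr)^\vee\circ (\psId_{\CX_1})^\vee \simeq (\sF_{\CX_1\to \CX_2,\CQ})^{\on{op}}\circ \psId_{\CX_1},
\]
by items (3) and (2) above. By item (1), the left-hand side becomes $\sF_{\CX_1\to \CX_2,\BD^{\on{Ve}}_{\CX_1\times \CX_2}(\CQ)}$, which yields the desired isomorphism.

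The only step that is not completely automatic is checking item (1) in the stack setting, i.e.\ that the dual functor of $\sF_{\CX_1\to \CX_2,\CQ}$ with respect to the Verdier self-dualities on $\Dmod(\CX_1)$ and $\Dmod(\CX_2)$ is indeed given by the same kernel $\CQ$ viewed as an object of $\Dmod(\CX_2\times \CX_1)$. This amounts to verifying that the counit
\[
\on{ev}_{\Dmod(\CX)}:\Dmod(\CX)\otimes \Dmod(\CX)\to \Vect,\qquad \CF_1\boxtimes \CF_2\mapsto (p_\CX)_\blacktriangle(\CF_1\sotimes \CF_2),
\]
of \secref{sss:unit and counit stacks} is symmetric in its two arguments, which is manifest. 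Everything else is an abstract manipulation with dual and conjugate functors, so no separate obstacle arises beyond what is already handled by \thmref{t:stacks}.
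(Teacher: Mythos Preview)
Your proposal is correct and follows essentially the same route as the paper, which simply says ``Using \lemref{l:conjugate}, from \thmref{t:stacks} we obtain'' and leaves the dualization implicit. You have spelled out the three ingredients (self-duality of $\psId_{\CX_1}$, the kernel-swap identification $(\sF_{\CX_1\to\CX_2,\CQ})^\vee\simeq\sF_{\CX_2\to\CX_1,\CQ}$, and \corref{c:conjugate}) that make this one-line deduction work.
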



\ssec{Proof of \thmref{t:stacks}}

\sssec{}

Let $\CY_1$ and $\CY_2$ be QCA stacks, let $\CM$ be an object of $\Dmod(\CY_1)_{\on{coh}}$, 
and let
$$\sG:\Dmod(\CY_1)\to \Dmod(\CY_2),$$
given by a kernel $\CP\in \Dmod(\CY_1\times \CY_2)_{\on{coh}}$. Assume that $\sG$ preserves compactness. 

\medskip

We wish to know when the map 
\begin{equation} \label{e:functor and duality corr}
\sG^{\on{op}}(\BD_{\CY_1}^{\on{Ve}}(\CM))=\sG^{\on{op}}(\CM^\vee)
\to \sG(\CM)^\vee
\end{equation}
of \eqref{e:functor and duality} is an isomorphism.

\medskip

Consider the map
\begin{equation} \label{e:tr to bull corr}
\sG(\CM)=(\on{pr}_2)_\blacktriangle(\on{pr}_1^!(\CM)\sotimes \CP)\to (\on{pr}_2)_\bullet(\on{pr}_1^!(\CM)\sotimes \CP)
\end{equation}
of \eqref{e:tr to bullet}. 

\medskip

\begin{lem} \label{l:bdd estimate}
If \eqref{e:tr to bull corr} is an isomorphism, then so is \eqref{e:functor and duality corr}.
\end{lem}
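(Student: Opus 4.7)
The strategy is to verify the claimed isomorphism after pairing with compact objects. Specifically, for any $\CN\in \Dmod(\CY_2)^c$, it suffices to show that applying $\Hom(-,\CN^\vee)$ to \eqref{e:functor and duality corr} yields an isomorphism of Hom spaces in $\Vect$. The plan is to reduce both sides to cohomology (on $\CY_1\times\CY_2$) of the single object
$$\CK':=\on{pr}_1^!(\CM)\sotimes \on{pr}_2^!(\BD^{\on{Ve}}(\CN))\sotimes \CP,$$
and check that the natural maps involved coincide.

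For the target side, the definition of $(-)^\vee$ gives $\Hom(\sG(\CM)^\vee,\CN^\vee)=\Hom(\CN,\sG(\CM))$. I would use the hypothesis to replace $\sG(\CM)=(\on{pr}_2)_\blacktriangle(\on{pr}_1^!(\CM)\sotimes \CP)$ by $(\on{pr}_2)_\bullet(\on{pr}_1^!(\CM)\sotimes \CP)$, then apply the Hom-pairing formula from \cite[Lemma 7.3.5]{DrGa1} (using coherence of $\CN$) together with the projection formula for $(\on{pr}_2)_\bullet$, obtaining the expression $(p_{\CY_1\times \CY_2})_\bullet(\CK')$.

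For the source side, by Lemma \ref{l:conjugate} we have $\Hom(\sG^{\on{op}}(\CM^\vee),\CN^\vee)=\Hom(\CM^\vee,\sG^\vee(\CN^\vee))$, which under the self-duality identifications $\bD_{\CY_i}^{\on{Ve}}$ and the fact that $\sG^\vee$ corresponds to the kernel-swap functor $\sF_{\CY_2\to \CY_1,\CP}$ rewrites as $\Hom(\BD^{\on{Ve}}(\CM),(\on{pr}_1)_\blacktriangle(\on{pr}_2^!(\BD^{\on{Ve}}(\CN))\sotimes \CP))$. Applying the same Hom-pairing formula to the coherent object $\CM$ and the projection formula (which holds for $(\on{pr}_1)_\blacktriangle$ by ind-extending the compact case), this becomes $(p_{\CY_1})_\bullet\circ (\on{pr}_1)_\blacktriangle(\CK')$. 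One then verifies that the comparison map constructed from \eqref{e:functor and duality} agrees with the canonical natural transformation $(p_{\CY_1})_\bullet\circ (\on{pr}_1)_\blacktriangle(\CK')\to (p_{\CY_1})_\bullet\circ (\on{pr}_1)_\bullet(\CK')=(p_{\CY_1\times \CY_2})_\bullet(\CK')$.

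The hard part will be bridging the two computations. The hypothesis \eqref{e:tr to bull corr} enters through the factorization $(p_{\CY_1\times \CY_2})_\bullet=(p_{\CY_2})_\bullet\circ (\on{pr}_2)_\bullet$: combining the projection formula with the assumption yields $(p_{\CY_2})_\bullet\circ (\on{pr}_2)_\blacktriangle(\CK')\simeq (p_{\CY_1\times\CY_2})_\bullet(\CK')$, and the idea is that this coincidence forces the corresponding statement for $(\on{pr}_1)_\blacktriangle(\CK')$ after post-composing with $(p_{\CY_1})_\bullet$, because both expressions represent the same Hom pairing $\Hom(\BD^{\on{Ve}}(\CM)\boxtimes \CN,\CP)$ via \cite[Lemma 7.3.5]{DrGa1} applied on $\CY_1\times \CY_2$ to the coherent object $\CM\boxtimes \BD^{\on{Ve}}(\CN)$. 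Making this last identification precise, and checking that the comparison map it produces agrees with \eqref{e:functor and duality corr}, is the main technical obstacle.
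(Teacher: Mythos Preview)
Your approach has two genuine gaps that prevent it from closing.

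First, the testing criterion is wrong. You propose to check that $\Hom(-,\CN^\vee)$ induces an isomorphism for all $\CN\in\Dmod(\CY_2)^c$. But mapping \emph{into} compact objects does not detect isomorphisms in a compactly generated category; what detects isomorphisms is mapping \emph{out of} compacts, i.e.\ $\Hom(\CN',-)$. Under the identification $\bD_{\CY_2}^{\on{Ve}}$ this becomes $\on{ev}_{\Dmod(\CY_2)}(\CN\otimes -)$, and applying it to \eqref{e:functor and duality corr} one is led to the map
\[
\on{ev}_{\Dmod(\CY_1)}\bigl(\sG^R(\CN)\otimes \CM^\vee\bigr)\;\longrightarrow\;\CMaps_{\Dmod(\CY_1)}(\CM,\sG^R(\CN)),
\]
which is the instance of \eqref{e:duality and pairing} with $\bc_1=\CM$, $\bc_2=\sG^R(\CN)$. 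That map is known to be an isomorphism only when $\sG^R(\CN)$ is compact, which is not available here.

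Second, and relatedly, your identity $\Hom(\sG(\CM)^\vee,\CN^\vee)=\Hom(\CN,\sG(\CM))$ is not valid as stated: the defining property of $(-)^\vee$ gives $\Hom_{\bC^\vee}(\CN^\vee,\sG(\CM)^\vee)=\Hom(\sG(\CM),\CN)$, with the compact object in the \emph{first} slot. Your version would require $\sG(\CM)$ to be compact, or at least reflexive with $\sG(\CM)^\vee=\BD^{\on{Ve}}_{\CY_2}(\sG(\CM))$. That $\sG(\CM)$ has coherent cohomologies (so that \propref{p:Verdier and check} applies) is in fact a \emph{consequence} of the lemma, not an input; you cannot assume it at the outset.

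The paper takes an entirely different route that sidesteps both issues. It approximates the coherent object $\CM$ by an inverse system of \emph{compact} objects $\CM_n$ with $\on{Cone}(\CM\to\CM_n)\in\Dmod(\CY_1)^{\geq n}$; on compacts the map \eqref{e:functor and duality} is tautologically an isomorphism, so the left-hand side of \eqref{e:functor and duality corr} becomes $\underset{n}{colim}\,\BD^{\on{Ve}}_{\CY_2}(\sG(\CM_n))$. One then uses the commutative square comparing $(\on{pr}_2)_\blacktriangle$ and $(\on{pr}_2)_\bullet$, together with \lemref{l:rel safety} and the hypothesis \eqref{e:tr to bull corr}, to reduce to showing that $\sG(\CM)\to\sG(\CM_n)$ is an isomorphism in any fixed range of cohomological degrees for $n\gg0$; this follows from bounded amplitude of $\sotimes$ and left t-exactness (up to shift) of $(\on{pr}_2)_\bullet$. \propref{p:Verdier and check} then finishes. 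The hypothesis is used precisely to replace $\blacktriangle$ by $\bullet$, where these t-structure bounds are available.
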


\begin{rem}
The proof of \lemref{l:bdd estimate} will show that if \eqref{e:tr to bull corr} is an isomorphism, then 
$\sG(\CM)$ has coherent cohomologies and hence $\sG(\CM)^\vee$ is the same as 
$\BD_{\CY_2}^{\on{Ve}}(\sG(\CM))$.
\end{rem}

\sssec{}

Let us assume \lemref{l:bdd estimate} and finish the proof of the theorem. 
We need to show that the functor $\sF_{\CX_1\to \CX_2,\CQ}$ satisfies the condition of \thmref{t:general}. 

\medskip

We will apply \lemref{l:bdd estimate} in the following situation. We take
$$\CY_1=\CX_1\times \CX_1,\,\, \CY_2=\CX_1\times \CX_2,\,\, \CM=(\Delta_{\CX_1})_\blacktriangle(\omega_{\CX_1}),\,\,
\sG=\on{Id}_{\Dmod(\CX_1)}\otimes \sF_{\CX_1\to \CX_2,\CQ},$$
so that 
$$\CP\in \Dmod(\CX_1\times \CX_1\times \CX_1\times \CX_2)$$
is 
$$\sigma_{2,3}\left((\Delta_{\CX_1})_\blacktriangle(\omega_{\CX_1})\boxtimes \CQ\right),$$
where $\sigma_{2,3}$ is the transposition of the corresponding factors.  

\medskip

Base change for the $\blacktriangle$-pushforward and $!$-pullback for the Cartesian diagram
$$
\CD
\CX_1\times \CX_1\times \CX_1 \times \CX_2  
@>{\sigma_{4,5}(\Delta_{\CX_1\times \CX_1}\times \on{id}_{\CX_1\times \CX_2})}>>  
\CX_1\times \CX_1\times \CX_1\times \CX_1\times \CX_1 \times \CX_2 \\
@A{\Delta^2_{\CX_1}\times \on{Id}_{\CX_2}}AA   
@AA{\sigma_{2,3}(\Delta_{\CX_1\times \CX_1}\times \on{id}_{\CX_1\times \CX_2})}A   \\
\CX_1\times \CX_2  @>{\Delta^2_{\CX_1}\times \on{Id}_{\CX_2}}>> \CX_1\times \CX_1\times \CX_1 \times \CX_2 
\endCD
$$
(here $\Delta^2_{\CX_2}$ denotes the diagonal morphism $\CX_1\to \CX_1\times \CX_1\times \CX_1$) 
implies that in our case
the left-hand side in \eqref{e:tr to bull corr} is canonically isomorphic to
$$\CQ\in \Dmod(\CX_1\times \CX_2).$$

Now, the base change morphism for the $\bullet$-pushforward and $!$-pullback is not always an isomorphism,
but by \cite[Proposition 7.6.8]{DrGa1} it is an isomorphism for eventually coconnective objects. Hence, the
right-hand side in \eqref{e:tr to bull corr} identifies with
\begin{equation} \label{e:re-obtain Q}
(p_{\CX_1\times \CX_1}\times \on{id}_{\CX_1\times \CX_2})_\bullet\circ
(\Delta^2_{\CX_1}\times \on{Id}_{\CX_2})_\bullet(\CQ).
\end{equation}

\medskip

Again, the $\bullet$-pushforward is not always functorial with respect to compositions of morphisms
(see \cite[Sect. 7.8.7]{DrGa1}), but it is functorial when evaluated on eventually coconnective 
objects by \cite[Sect. 7.8.6(iii)]{DrGa1}. Hence, \eqref{e:re-obtain Q} is isomorphic to $\CQ$,
as required. 

\qed[\thmref{t:stacks}]

\sssec{Proof of \lemref{l:bdd estimate}}



By \cite[Lemma 9.4.7(b)]{DrGa1}, we can find an inverse system of objects $\CM_n\in \Dmod(\CY_1)^c$, equipped
with a compatible system of maps 
$$\CM\to \CM_n,$$
such that $\on{Cone}(\CM\to \CM_n)\in \Dmod(\CY_1)^{\geq n}$. 
Then 
$$\BD_{\CY_1}^{\on{Ve}}(\CM) \simeq \underset{n}{colim}\, \BD_{\CY_1}^{\on{Ve}}(\CM_n),$$
since the functor $\BD_{\CY_1}^{\on{Ve}}$ has a bounded cohomological
amplitude.

\medskip

Hence, the left-hand side in \eqref{e:functor and duality corr} is given by
$$\underset{n}{colim}\,  \sG^{\on{op}}(\BD_{\CY_1}^{\on{Ve}}(\CM_n))\simeq
\underset{n}{colim}\, \BD_{\CY_2}^{\on{Ve}}\left(\sG(\CM_n)\right).$$

\medskip

By \propref{p:Verdier and check}, in order to prove that \eqref{e:functor and duality corr} is an
isomorphism, it suffices to show that for every integer $k$ there exists $n_0$ such that 
the map
$$\sG(\CM)\to \sG(\CM_n)$$
induces an isomorphism in cohomological degrees $\leq k$ for $n\geq n_0$. 

\medskip

Consider the commutative diagram
\begin{equation} \label{e:comp corr}
\CD
\sG(\CM_n)  @>{=}>> (\on{pr}_2)_\blacktriangle(\on{pr}_1^!(\CM_n)\sotimes \CP) @>>>  (\on{pr}_2)_\bullet(\on{pr}_1^!(\CM_n)\sotimes \CP) \\
@AAA   @AAA  @AAA   \\ 
\sG(\CM) @>{=}>> (\on{pr}_2)_\blacktriangle(\on{pr}_1^!(\CM)\sotimes \CP) @>>>   (\on{pr}_2)_\bullet(\on{pr}_1^!(\CM)\sotimes \CP).
\endCD
\end{equation}

By assumption, the bottom horizontal arrows in \eqref{e:comp corr} are isomorphisms. We have the following assertion, proved below:

\begin{lem} \label{l:rel safety}
For any $\CN\in \Dmod(\CY_1)^c$ and $\CP\in \Dmod(\CY_1\times \CY_2)$, the map
$$(\on{pr}_2)_\blacktriangle(\on{pr}_1^!(\CN)\sotimes \CP) \to  (\on{pr}_2)_\bullet(\on{pr}_1^!(\CN)\sotimes \CP)$$
is an isomorphism.
\end{lem}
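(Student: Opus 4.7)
The plan is to test the displayed map by mapping in from compact objects of $\Dmod(\CY_2)$. Since $\Dmod(\CY_2)$ is compactly generated, it will suffice to show that for every $\CM_2 \in \Dmod(\CY_2)^c$, the natural transformation becomes an isomorphism after applying $\CMaps_{\Dmod(\CY_2)}(\BD^{\on{Ve}}_{\CY_2}(\CM_2), -)$.

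For the right-hand side, I would first use \cite[Lemma 7.3.5]{DrGa1} (as quoted in the proof of \propref{p:!}) to identify $\CMaps_{\Dmod(\CY_2)}(\BD^{\on{Ve}}_{\CY_2}(\CM_2), -)$ with $(p_{\CY_2})_\bullet(\CM_2 \sotimes -)$. Next I would apply the projection formula for $\bullet$-pushforward along $\on{pr}_2$ together with the identity $\on{pr}_2^!(\CM_2) \sotimes \on{pr}_1^!(\CN) \simeq \CN \boxtimes \CM_2$, and then the composition $(p_{\CY_2})_\bullet \circ (\on{pr}_2)_\bullet = (p_{\CY_1 \times \CY_2})_\bullet$, to reach $(p_{\CY_1 \times \CY_2})_\bullet((\CN \boxtimes \CM_2) \sotimes \CP)$. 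A second application of \cite[Lemma 7.3.5]{DrGa1} finally rewrites this as $\CMaps_{\Dmod(\CY_1 \times \CY_2)}(\CN \boxtimes \CM_2, \CP)$, using that $\CN \boxtimes \CM_2$ is compact by the external tensor product equivalence \cite[Corollary 8.3.4]{DrGa1}.

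For the left-hand side, I would run the analogous chain of manipulations with $\blacktriangle$ in place of $\bullet$, invoking the description of $\on{ev}_{\Dmod(\CX)}$ via $(p_\CX)_\blacktriangle \circ \Delta_\CX^!$ recalled in \secref{sss:unit and counit stacks}, together with the projection and composition identities for the continuous functor $\blacktriangle$ (which are automatic since everything in sight is continuous). This again produces $\CMaps_{\Dmod(\CY_1 \times \CY_2)}(\CN \boxtimes \CM_2, \CP)$, so the two sides become canonically isomorphic. A diagram chase then verifies that the resulting identification is compatible with the natural transformation $\blacktriangle \to \bullet$ evaluated on $\on{pr}_1^!(\CN) \sotimes \CP$, which is exactly the map in question.

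The main obstacle I anticipate is justifying the projection formula and composition of $\bullet$-pushforwards in the non-safe setting, since $(\on{pr}_2)_\bullet$ and $(p_{\CY_2})_\bullet$ are not continuous in general. However, both identities ought to hold on the specific objects that arise here, because the argument ultimately reexpresses everything as a mapping space out of the compact (hence coherent and safe) object $\CN \boxtimes \CM_2$, where Lemma 7.3.5 controls the behavior of $\bullet$ regardless of safety, and where the formalism of \cite[Sects. 7.6 and 7.8]{DrGa1} guarantees the required functorialities.
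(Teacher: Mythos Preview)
Your overall strategy---testing the map by $\CMaps$ from compact objects of $\Dmod(\CY_2)$ and reducing to an expression of the form $(p_{\CY_1\times\CY_2})_\bullet\bigl((\CN\boxtimes\CM_2)\sotimes\CP\bigr)$ with $\CN\boxtimes\CM_2$ compact (hence safe)---is exactly the paper's. The endpoint is the same, and the key ingredient (safety of the compact external product) is the same.

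The difference is in how you get there, and this is where your acknowledged obstacle becomes a genuine gap. You want to use a projection formula for $(\on{pr}_2)_\bullet$ and then compose $(p_{\CY_2})_\bullet\circ(\on{pr}_2)_\bullet\simeq(p_{\CY_1\times\CY_2})_\bullet$. But as the paper itself notes (in the proof of \thmref{t:stacks}, citing \cite[Sect.~7.8.6--7.8.7]{DrGa1}), $\bullet$-pushforward is \emph{not} functorial under composition in general---only on eventually coconnective objects---and the analogous projection formula is similarly restricted. Since $\CP$ is arbitrary, your chain of manipulations for the $\bullet$-side is not justified as written, and the vague appeal to \cite[Sects.~7.6 and 7.8]{DrGa1} does not close this.

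The paper sidesteps both issues cleanly. First, rather than projection formula plus composition, it uses the $(\on{pr}_2^\bullet,(\on{pr}_2)_\bullet)$-adjunction in one step: $\CMaps_{\Dmod(\CY_2)}(\CM,(\on{pr}_2)_\bullet(-))\simeq\CMaps_{\Dmod(\CY_1\times\CY_2)}(k_{\CY_1}\boxtimes\CM,-)$, which is always valid since $\on{pr}_2^\bullet(\CM)=k_{\CY_1}\boxtimes\CM$ is defined. Then \cite[Lemma~7.3.5]{DrGa1} on the product gives the desired $(p_{\CY_1\times\CY_2})_\bullet$ expression directly. Second, rather than computing both sides and chasing a diagram, the paper frames the whole argument as: (i) the map $\blacktriangle\to\bullet$ is an isomorphism for compact $\CP$ by \cite[Proposition~9.3.7]{DrGa1}; (ii) the $\bullet$-side is continuous in $\CP$ (which is what the hom computation actually shows, via safety); (iii) hence a natural transformation between two continuous functors that agree on compacts is an isomorphism. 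This avoids any need to separately compute the $\blacktriangle$-side.
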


Assuming \lemref{l:rel safety}, we obtain that the top horizontal arrows in \eqref{e:comp corr} are also isomorphisms. Hence,
it is sufficient to show that for every integer $k$ there exists $n_0$ such that 
the map
$$(\on{pr}_2)_\bullet(\on{pr}_1^!(\CM)\sotimes \CP)\to (\on{pr}_2)_\bullet(\on{pr}_1^!(\CM_n)\sotimes \CP)$$
induces an isomorphism in cohomological degrees $\leq k$ for $n\geq n_0$. 

\medskip

However, this follows from the fact that the functor $\sotimes$ has a bounded cohomological amplitude,
and the functor of $\bullet$-direct image is left t-exact up to a cohomological shift.

\qed[\lemref{l:bdd estimate}]

\sssec{Proof of \lemref{l:rel safety}}

First, by \cite[Proposition 9.3.7]{DrGa1}
the map
$$(\on{pr}_2)_\blacktriangle(\on{pr}_1^!(\CN')\sotimes \CP') \to  (\on{pr}_2)_\bullet(\on{pr}_1^!(\CN')\sotimes \CP')$$
is an isomorphism for any $\CN'\in \Dmod(\CY_1)$ and $\CP'\in \Dmod(\CY_1\times \CY_2)^c$. 
Hence, it suffices to show that for $\CN\in \Dmod(\CY_1)^c$, the functor
$$\CP\mapsto  (\on{pr}_2)_\bullet(\on{pr}_1^!(\CN)\sotimes \CP)$$
is continuous.

\medskip

This is equivalent to showing that for any fixed $\CM\in \Dmod(\CY_2)^c$, the functor
$$\CP\mapsto \CMaps_{\Dmod(\CY_2)}\left(\CM,(\on{pr}_2)_\bullet(\on{pr}_1^!(\CN)\sotimes \CP)\right)$$
is continuous. We have:
$$\CMaps_{\Dmod(\CY_2)}\left(\CM,(\on{pr}_2)_\bullet(\on{pr}_1^!(\CN)\sotimes \CP)\right)\simeq
\CMaps_{\Dmod(\CY_1\times \CY_2)}(k_{\CY_1}\boxtimes \CM,\on{pr}_1^!(\CN)\sotimes \CP),$$
which by \cite[Lemma 7.3.5]{DrGa1} can be rewritten as
$$(p_{\CY_1\times \CY_2})_\bullet\left(\BD_{\CY_1\times \CY_2}^{\on{Ve}}(k_{\CY_1}\boxtimes \CM)\sotimes
\on{pr}_1^!(\CN)\sotimes \CP\right)\simeq 
(p_{\CY_1\times \CY_2})_\bullet\left( (\CN\boxtimes \BD_{\CY_2}^{\on{Ve}}(\CM))\sotimes \CP\right).$$

Now, the object $$\CN\boxtimes \BD_{\CY_2}^{\on{Ve}}(\CM)\in \Dmod(\CY_1\times \CY_2)$$ is compact,
and hence, by \cite[Proposition 9.2.3]{DrGa1}, safe. This implies the assertion of the lemma, by the definition
of safety.

\qed[\lemref{l:rel safety}]

\ssec{Mock-proper stacks}

We shall now discuss some applications of \thmref{t:stacks}.

\sssec{}

Let us call a QCA stack $\CX$ \emph{mock-proper} if the functor $(p_\CX)_\blacktriangle$ preserves
compactness. (Recall that $(p_\CX)_\blacktriangle|_{\Dmod(\CX)^c}=(p_\CX)_\bullet|_{\Dmod(\CX)^c}$,
so the above condition is equivalent to $(p_\CX)_\bullet$ preserving compactness.) 

\medskip

An example of a mock-proper stack will be given in \secref{ss:vector space}. Another set
of examples is supplied by \corref{c:cotrunk of BunG}.

\medskip

Note that from \corref{c:left adj via check}, we obtain that $\CX$ is mock-proper if and only if the functor
$(p_\CX)_!$, left adjoint to $p_\CX^!$, is defined. 

\begin{prop}  \label{p:mock-proper}
Let $\CX$ be mock-proper and smooth of dimension $n$. Then we have a canonical isomorphism of functors
$$(p_\CX)_\blacktriangle \simeq (p_\CX)_!\circ \psId_\CX[2n].$$
\end{prop}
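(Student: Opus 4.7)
The plan is to apply \corref{c:stacks} to the functor $\sF := (p_\CX)_\blacktriangle$, viewed as a continuous functor $\Dmod(\CX)\to \Vect = \Dmod(\on{pt})$, and then to translate both sides of the resulting identity.

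First I would identify the kernel of $\sF$. Since the projection $\on{pr}_2:\CX\times \on{pt}\to \on{pt}$ is $p_\CX$, $\on{pr}_1$ is the identity, and $\omega_\CX$ is the unit for the $\sotimes$ product on $\Dmod(\CX)$, we have $\sF_{\CX\to \on{pt},\omega_\CX}(\CM) = (p_\CX)_\blacktriangle(\CM\sotimes \omega_\CX) \simeq (p_\CX)_\blacktriangle(\CM)$, so the kernel is $\CQ=\omega_\CX$, which is coherent because $\CX$ is QCA. By hypothesis $\CX$ is mock-proper, so $\sF$ preserves compactness and, by \lemref{l:when right}, admits a continuous right adjoint. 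Hence the assumptions of \corref{c:stacks} are met, giving
\[
\sF_{\CX\to \on{pt},\BD^{\on{Ve}}_\CX(\omega_\CX)} \;\simeq\; \bigl((p_\CX)_\blacktriangle\bigr)^{\on{op}} \circ \psId_\CX.
\]

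Next I would unpack the left-hand side. By definition $\BD^{\on{Ve}}_\CX(\omega_\CX)=k_\CX$, and since $\CX$ is smooth of dimension $n$, the analogue of \eqref{e:smooth for dualizing} gives $k_\CX\simeq \omega_\CX[-2n]$; this can be checked after pullback along a smooth atlas, where it reduces to the scheme case. Therefore the left-hand side sends $\CM\mapsto (p_\CX)_\blacktriangle(\CM\sotimes k_\CX) \simeq (p_\CX)_\blacktriangle(\CM)[-2n]$.

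For the right-hand side I would identify $\bigl((p_\CX)_\blacktriangle\bigr)^{\on{op}}$ with $(p_\CX)_!$. By the definition of the conjugate functor (\secref{sss:conj functors}) together with \lemref{Verdier and check}, on a compact $\CM\in \Dmod(\CX)^c$ it is given by $\BD^{\on{Ve}}_{\on{pt}}\bigl((p_\CX)_\bullet(\BD^{\on{Ve}}_\CX(\CM))\bigr)$, and mock-properness of $\CX$ ensures that $(p_\CX)_\bullet(\BD^{\on{Ve}}_\CX(\CM))$ is compact. Then \lemref{l:!} applies and yields $\bigl((p_\CX)_\blacktriangle\bigr)^{\on{op}}(\CM)\simeq (p_\CX)_!(\CM)$ on compacts, hence the two continuous functors agree on all of $\Dmod(\CX)$.

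Combining these two identifications gives $(p_\CX)_\blacktriangle(-)[-2n] \simeq (p_\CX)_! \circ \psId_\CX$, which is the desired isomorphism $(p_\CX)_\blacktriangle \simeq (p_\CX)_!\circ \psId_\CX[2n]$. The only potential subtlety is the verification $k_\CX\simeq \omega_\CX[-2n]$ for a smooth QCA stack and the careful matching of the conjugate functor with $(p_\CX)_!$; both are essentially bookkeeping given the results already in the paper, so I do not expect any serious obstacle.
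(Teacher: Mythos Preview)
Your proposal is correct and follows essentially the same route as the paper: apply \corref{c:stacks} to $(p_\CX)_\blacktriangle$ with kernel $\omega_\CX$, identify the conjugate functor with $(p_\CX)_!$, and use $k_\CX\simeq\omega_\CX[-2n]$ for smooth $\CX$. The only minor difference is that the paper invokes $(p_\CX)_!\simeq ((p_\CX)_\blacktriangle)^{\on{op}}$ directly via \lemref{l:conjugate} (as the left adjoint of $((p_\CX)_\blacktriangle)^\vee\simeq p_\CX^!$), whereas you route this through \lemref{l:!}; both are equivalent.
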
 

\begin{proof}

We apply \corref{c:stacks} to the functor $(p_\CX)_\blacktriangle$. The functor in question is given by the kernel
$\omega_\CX\in \Dmod(\CX)$. Since
$(p_\CX)_!\simeq ((p_\CX)_\blacktriangle)^{\on{op}}$, we obtain that the functor 
$(p_\CX)_!\circ \psId_\CX$ is given by the kernel $k_\CX$. Since $\CX$ is smooth
of dimension $n$, we obtain that $(p_\CX)_!\circ \psId_\CX[2n]$ is given by the kernel
$\omega_\CX$, i.e., the same as $(p_\CX)_\blacktriangle$.

\end{proof} 

\begin{rem}
Retracing the proof of \thmref{t:stacks} one can prove the following generalization of
\propref{p:mock-proper}. Let $\CX$ be mock-proper, but not necessarily smooth. Then
there is a canonical isomorphism
$$(p_\CX)_\blacktriangle \simeq (p_\CX)_!\circ \sF_{\CX\to \CX,(\Delta_\CX)_!(\omega_\CX)}.$$
\end{rem}

\sssec{}

Passing to dual functors in \propref{p:mock-proper}, and using \lemref{l:conjugate}, we obtain:

\begin{cor} \label{c:mock-proper}
Let $\CX$ be mock-proper and smooth of dimension $n$. Then we have a canonical isomorphism of functors
$$p_\CX^!\simeq \psId_\CX\circ ((p_\CX)_\blacktriangle)^R[2n].$$
\end{cor}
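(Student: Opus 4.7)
The plan is to deduce the corollary from \propref{p:mock-proper} by passing to dual functors. Starting from the isomorphism
$$(p_\CX)_\blacktriangle \simeq (p_\CX)_!\circ \psId_\CX[2n]$$
between continuous functors $\Dmod(\CX) \to \Vect$, dualization reverses compositions to give
$$((p_\CX)_\blacktriangle)^\vee \simeq (\psId_\CX)^\vee[2n] \circ ((p_\CX)_!)^\vee,$$
as functors $\Vect^\vee \to \Dmod(\CX)^\vee$.

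Under the self-duality $\bD^{\on{Ve}}_\CX : \Dmod(\CX)^\vee \simeq \Dmod(\CX)$, the left-hand side is $p_\CX^!$; this is just the statement, recorded in \secref{sss:ren}, that $p_\CX^!$ is the dual of $(p_\CX)_\blacktriangle$. Three further identifications are needed for the right-hand side. First, by mock-properness $(p_\CX)_\blacktriangle$ preserves compactness, so $((p_\CX)_\blacktriangle)^R$ is continuous, and by \corref{c:conjugate} it is the dual of the conjugate functor $((p_\CX)_\blacktriangle)^{\on{op}}$. Second, \lemref{l:!} (a case of \lemref{l:conjugate bis}) identifies, on compact objects, $((p_\CX)_\blacktriangle)^{\on{op}}$ with $(p_\CX)_!$; ind-extending then yields $((p_\CX)_!)^\vee \simeq ((p_\CX)_\blacktriangle)^R$ under $\bD^{\on{Ve}}_\CX$. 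Third, by \eqref{e:dual of ps} we have $(\psId_\CX)^\vee \simeq \psId_{\Dmod(\CX)^\vee}$, which transports to $\psId_\CX$ under $\bD^{\on{Ve}}_\CX$ thanks to the manifest flip symmetry of the defining kernel $(\Delta_\CX)_!(k_\CX) \in \Dmod(\CX \times \CX)$.

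Assembling these three identifications produces the desired isomorphism
$$p_\CX^! \simeq \psId_\CX \circ ((p_\CX)_\blacktriangle)^R[2n].$$
The only mildly subtle step is the identification of $((p_\CX)_!)^\vee$ with $((p_\CX)_\blacktriangle)^R$, which rests on the conjugate-dual formalism of \secref{ss:conj functors} together with the compatibility of $(p_\CX)_!$ with Verdier duality on compacts supplied by \lemref{l:!}; everything else is formal bookkeeping with dual and adjoint functors.
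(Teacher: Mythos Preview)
Your proof is correct and follows the same approach as the paper: dualize the isomorphism of \propref{p:mock-proper} and identify each piece. The paper's one-line proof just says ``passing to dual functors and using \lemref{l:conjugate}''; your write-up spells out the same identifications in detail. One small streamlining: rather than invoking \lemref{l:!} to match $((p_\CX)_\blacktriangle)^{\on{op}}$ with $(p_\CX)_!$ on compacts and then ind-extending, you can apply \lemref{l:conjugate} directly---it says $((p_\CX)_\blacktriangle)^{\on{op}}$ is left adjoint to $((p_\CX)_\blacktriangle)^\vee \simeq p_\CX^!$, hence coincides with $(p_\CX)_!$ outright.
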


For a mock-proper stack, we shall denote by $\omega_{\CX,\on{mock}}$ the object
$$((p_\CX)_\blacktriangle)^R(k)\in  \Dmod(\CX).$$

We note that when $\CX$ is a proper scheme, $\omega_{\CX,\on{mock}}=\omega_\CX$. 

\medskip

Note that \corref{c:mock-proper} can be reformulated as saying that for $\CX$ smooth of dimension $n$ we have:
$$\psId_\CX(\omega_{\CX,\on{mock}})[2n]\simeq \omega_\CX.$$

\begin{rem}
Again, if $\CX$ is mock-proper, but not necessarily smooth, we have
$$\sF_{\CX\to \CX,(\Delta_\CX)_!(\omega_\CX)}(\omega_{\CX,\on{mock}})\simeq \omega_\CX.$$
\end{rem}

\ssec{Truncative and co-truncative substacks}  \label{ss:trunc}

\sssec{Co-truncative substacks}

Let $j:\CX_1\hookrightarrow \CX_2$ be an open embedding of QCA stacks. Recall that
according to \cite[Definition 3.1.5]{DrGa2}, $j$ is said to be \emph{co-truncative} if the partially defined
left adjoint to $j^!$, i.e., the functor $j_!$, is defined on all $\Dmod(\CX_1)$. 

\medskip

According to \corref{c:left adj via check}, this condition is equivalent to the functor $j_\bullet$ (which is the
same as $j_\blacktriangle$) preserving compactness.

\medskip

A typical example of a co-truncative open embedding will be considered in \secref{ss:vector space}.
Another series of examples is supplied in \cite{DrGa2}, where it is shown that the moduli stack
$\Bun_G$ of $G$-bundles on $X$ (here $G$ is a reductive group and $X$ a smooth complete curve)
can be written as a union of quasi-compact substacks under co-truncative open embeddings.

\begin{prop}[Drinfeld] \label{p:!and*}
Let $j$ be co-truncative. Then there is a canonical isomorphism of functors 
$$\psId_{\CX_2}\circ j_\bullet\simeq j_!\circ \psId_{\CX_1}.$$
\end{prop}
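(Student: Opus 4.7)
The strategy is to show both $\psId_{\CX_2} \circ j_\bullet$ and $j_! \circ \psId_{\CX_1}$ are given by the kernel $(\Gamma_j)_!(k_{\CX_1}) \in \Dmod(\CX_1 \times \CX_2)$, where $\Gamma_j := (\on{id}_{\CX_1}, j): \CX_1 \to \CX_1 \times \CX_2$ is the graph of $j$.

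For the right-hand side, co-truncativity of $j$ means $j_\bullet$ preserves compactness, hence admits a continuous right adjoint by \lemref{l:when right}; its kernel $\CQ := (\Gamma_j)_\bullet(\omega_{\CX_1})$ is coherent, and by \propref{p:!} we have $\BD^{\on{Ve}}(\CQ) \simeq (\Gamma_j)_!(k_{\CX_1})$. The conjugate functor $(j_\bullet)^{\on{op}}$, being by \lemref{l:conjugate} the left adjoint of $(j_\bullet)^\vee \simeq j^!$, equals $j_!$. Applying \corref{c:stacks} to $\sF = j_\bullet$ then yields
$$j_! \circ \psId_{\CX_1} \simeq \sF_{\CX_1 \to \CX_2, (\Gamma_j)_!(k_{\CX_1})},$$
identifying the right-hand side of the proposition.

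For the left-hand side, I compute the kernel of $\psId_{\CX_2} \circ j_\bullet$ via convolution: since $\psId_{\CX_2}$ has kernel $(\Delta_{\CX_2})_!(k_{\CX_2})$, the composition has kernel
$$\on{pr}_{13, \blacktriangle}\bigl(\on{pr}_{12}^!(\CQ) \sotimes \on{pr}_{23}^!((\Delta_{\CX_2})_!(k_{\CX_2}))\bigr) \in \Dmod(\CX_1 \times \CX_2),$$
with projections from $\CX_1 \times \CX_2 \times \CX_2$. Applying base-change for $\on{pr}_{12}^!/(\Gamma_j)_\bullet$ and $\on{pr}_{23}^!/(\Delta_{\CX_2})_!$, then the projection formula for $(\Gamma_j \times \on{id}_{\CX_2})_\bullet$ combined with the $!$-base-change $(\Gamma_j \times \on{id}_{\CX_2})^! \circ (\on{id}_{\CX_1} \times \Delta_{\CX_2})_! \simeq (\Gamma_j)_! \circ \Gamma_j^!$ around the Cartesian square
$$
\begin{CD}
\CX_1 @>{\Gamma_j}>> \CX_1 \times \CX_2 \\
@V{\Gamma_j}VV @VV{\on{id}_{\CX_1} \times \Delta_{\CX_2}}V \\
\CX_1 \times \CX_2 @>{\Gamma_j \times \on{id}_{\CX_2}}>> \CX_1 \times \CX_2 \times \CX_2
\end{CD}
$$
(encoding $(\on{id}_{\CX_1} \times \Delta_{\CX_2}) \circ \Gamma_j = (\Gamma_j \times \on{id}_{\CX_2}) \circ \Gamma_j$), together with the projection formula for $(\Gamma_j)_!$ and the identity $j^!(k_{\CX_2}) \simeq k_{\CX_1}$ (valid because $j$ is étale), reduces the kernel to
$$\on{pr}_{13, \blacktriangle}\bigl((\Gamma_j \times \on{id}_{\CX_2})_\bullet((\Gamma_j)_!(k_{\CX_1}))\bigr).$$
Since $\on{pr}_{13} \circ (\Gamma_j \times \on{id}_{\CX_2}) = \on{id}_{\CX_1 \times \CX_2}$, the functoriality of $\bullet$-pushforward on cohomologically bounded objects (\cite[Sect. 7.8.6(iii)]{DrGa1}) then collapses this expression to $(\Gamma_j)_!(k_{\CX_1})$, matching the right-hand side.

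The main obstacle is this last collapse. Since $\on{pr}_{13}$ is not a safe morphism (its fibers are $\CX_2$), in general $\on{pr}_{13, \blacktriangle}$ and $\on{pr}_{13, \bullet}$ do not agree, so one must verify that the intermediate object $(\Gamma_j \times \on{id}_{\CX_2})_\bullet((\Gamma_j)_!(k_{\CX_1}))$ lies in the locus where the two pushforwards coincide. This hinges on the representability (hence safety) of $\Gamma_j \times \on{id}_{\CX_2}$ together with the coherence of $(\Gamma_j)_!(k_{\CX_1})$, ensuring the pushed-forward object is safe (cf.\ \cite[Sect. 9.3, Sect. 10.4]{DrGa1}), after which $\on{pr}_{13, \blacktriangle}$ and $\on{pr}_{13, \bullet}$ agree on it and the expected cancellation goes through.
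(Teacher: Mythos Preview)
Your treatment of the right-hand side matches the paper exactly: apply \corref{c:stacks} to $j_\bullet$ and identify $(j_\bullet)^{\on{op}}$ with $j_!$.

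For the left-hand side, however, the paper does something much slicker than your direct convolution. It observes that the functor $j^!:\Dmod(\CX_2)\to\Dmod(\CX_1)$ is given by the \emph{same} kernel $\CQ=(\on{id}_{\CX_1}\times j)_\bullet(\omega_{\CX_1})$ (being the dual of $j_\bullet$), and that since $j$ is an open embedding, $j^!\simeq j^\bullet$ and hence $(j^!)^R\simeq j_\bullet$. Then \thmref{t:stacks} applied to $j^!$ immediately gives
$$\psId_{\CX_2}\circ j_\bullet \simeq \psId_{\CX_2}\circ (j^!)^R \simeq \sF_{\CX_1\to\CX_2,\BD^{\on{Ve}}_{\CX_1\times\CX_2}(\CQ)}.$$
No convolution computation, no $\blacktriangle$-vs-$\bullet$ issue. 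The point is that \thmref{t:stacks} and \corref{c:stacks}, applied to the pair $(j^!,j_\bullet)$ sharing the same kernel, directly yield the two sides of the proposition.

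Your direct approach can be made to work, but the safety argument you give for the last collapse has a gap. You invoke ``coherence of $(\Gamma_j)_!(k_{\CX_1})$'' together with safety of the morphism to conclude the pushforward is safe. But \cite[Lemma 10.4.2]{DrGa1} requires the \emph{object} to be safe, not merely coherent; and for a non-safe stack $\CX_1$ the object $k_{\CX_1}$ is coherent but not compact, hence not safe (since for coherent objects safe is equivalent to compact), and the same goes for $(\Gamma_j)_!(k_{\CX_1})$. The correct fix is simpler: use functoriality of $\blacktriangle$-pushforward directly. Since $\Gamma_j\times\on{id}_{\CX_2}$ is representable, $(\Gamma_j\times\on{id}_{\CX_2})_\bullet=(\Gamma_j\times\on{id}_{\CX_2})_\blacktriangle$, and then
$$\on{pr}_{13,\blacktriangle}\circ(\Gamma_j\times\on{id}_{\CX_2})_\blacktriangle \simeq \bigl(\on{pr}_{13}\circ(\Gamma_j\times\on{id}_{\CX_2})\bigr)_\blacktriangle = \on{id},$$
since $\blacktriangle$-pushforward is functorial (being dual to $!$-pullback). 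This bypasses safety entirely.
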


Just as an illustration, we will give a proof of \propref{p:!and*} using \thmref{t:stacks}. However,
one can give a more direct proof, see \lemref{l:on open}.

\begin{proof}

Consider the functor $j_\bullet$. It is given by the kernel
\begin{equation} \label{e:ker direct image}
\CQ:=(\on{id}_{\CX_1}\times j)_\bullet(\omega_{\CX_1})\in \Dmod(\CX_1\times \CX_2),
\end{equation}
where by a slight abuse of notation we denote by $\on{id}_{\CX_1}\times j$ the graph of the map $j$.

\medskip

Note that $j_!\simeq (j_\bullet)^{\on{op}}$ by \lemref{l:conjugate}. Hence, by \corref{c:stacks} applied to $j_\bullet$, the functor
$j_!\circ \psId_{\CX_1}$ is given by the kernel 
$$\BD_{\CX_1\times \CX_1}^{\on{Ve}}(\CQ).$$

\medskip

Consider now the functor $j^!$. It is also given by the kernel \eqref{e:ker direct image}. 
Since $j$ is an open embedding, we have $j^!\simeq j^\bullet$, and hence
$(j^!)^R\simeq j_\bullet$. Hence, by \thmref{t:stacks} applied
to $j^!$, we obtain that $\psId_{\CX_2}\circ j_\bullet$ is also given by
$$\BD_{\CX_1\times \CX_1}^{\on{Ve}}(\CQ),$$
as required. \footnote{Note that the above kernel is isomorphic to $(\on{id}_{\CX_1}\times j)_!(k_{\CX_1})$.}

\end{proof}

Passing to the dual functors, we obtain:

\begin{cor}  \label{c:!and*}
There is a canonical isomorphism of functors
$$\psId_{\CX_1}\circ j^?\simeq j^\bullet\circ \psId_{\CX_2},$$
where $j^?$ denotes the (continuous!) right adjoint of $j_\bullet$.
\end{cor}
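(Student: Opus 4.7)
The plan is to derive \corref{c:!and*} by passing to dual functors in \propref{p:!and*}. Applying the dualization equivalence \eqref{e:dualization} to the isomorphism
$$\psId_{\CX_2}\circ j_\bullet\simeq j_!\circ \psId_{\CX_1},$$
and using $(\sF_1\circ \sF_2)^\vee\simeq \sF_2^\vee\circ \sF_1^\vee$, one obtains an isomorphism
$$(j_\bullet)^\vee\circ (\psId_{\CX_2})^\vee\simeq (\psId_{\CX_1})^\vee\circ (j_!)^\vee,$$
where we use $\bD^{\on{Ve}}_{\CX_i}$ to identify $\Dmod(\CX_i)^\vee\simeq \Dmod(\CX_i)$ on both sides.

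Next, I will identify each of the dualized pieces. First, the general formula $(f_\blacktriangle)^\vee\simeq f^!$ recalled in \secref{sss:Verdier stacks}, combined with $j_\bullet\simeq j_\blacktriangle$ (which holds since $j$ is schematic, hence safe), yields $(j_\bullet)^\vee\simeq j^!$. Second, the adjunction $(j_!,j^!)$ dualizes, by \secref{sss:adjoint dual pair}, to the adjoint pair $((j^!)^\vee,(j_!)^\vee)$; combined with the previous identification, $(j_!)^\vee$ is the continuous right adjoint of $j_\bullet$, that is, $(j_!)^\vee\simeq j^?$. Third, \eqref{e:dual of ps} gives $(\psId_\bC)^\vee\simeq \psId_{\bC^\vee}$ in general, and to deduce $(\psId_{\CX_i})^\vee\simeq \psId_{\CX_i}$ under Verdier duality I need the defining kernel $(\Delta_{\CX_i})_!(k_{\CX_i})$ to be invariant under the flip on $\CX_i\times \CX_i$.

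Substituting the three identifications gives $j^!\circ \psId_{\CX_2}\simeq \psId_{\CX_1}\circ j^?$, and the conclusion follows since $j^!\simeq j^\bullet$ for an open embedding.

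The only step requiring any care is the flip-invariance of $(\Delta_\CX)_!(k_\CX)$, which is the direct analog for stacks of \eqref{e:dual of pseud}. This is essentially automatic from the equality $\sigma_{\CX\times \CX}\circ \Delta_\CX=\Delta_\CX$ together with the functoriality of both the (partially defined) $!$-pushforward and the formation of $k_\CX=\BD^{\on{Ve}}_\CX(\omega_\CX)$ with respect to the symmetry; no genuine obstacle appears.
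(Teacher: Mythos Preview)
Your proposal is correct and follows the same approach as the paper, which simply says ``Passing to the dual functors, we obtain'' before stating the corollary. You have spelled out the details of that passage: dualizing \propref{p:!and*}, identifying $(j_\bullet)^\vee\simeq j^!$, $(j_!)^\vee\simeq j^?$, and $(\psId_{\CX_i})^\vee\simeq \psId_{\CX_i}$ via flip-invariance of the kernel, then using $j^!\simeq j^\bullet$ for the open embedding.
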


\sssec{Truncative substacks}

Let $i:\CX_1\to \CX_2$ be a closed embedding. Recall (see \cite[Definition 3.1.5]{DrGa2}) that $i$ is said
to be \emph{truncative} if the partially defined left adjoint to $i_\bullet$, i.e., the functor 
$i^\bullet$, is defined on all of $\Dmod(\CX_2)$. 

\medskip

According to  \corref{c:left adj via check}, this is equivalent to the functor $i^!$ preserving compactness. 
Still, equivalently, $i$ is truncative if and only if the complementary open embedding is co-truncative;
see \cite[Sects. 3.1-3.3]{DrGa2} for a detailed discussion of the properties of truncativeness and
co-truncativeness.

\medskip

As in \propref{p:!and*} we show:

\begin{prop}   \label{p:trunc}
Let $i:\CX_1\to \CX_2$ be truncative. Then we have a canonical isomorphism
of functors
$$i^\bullet\circ \psId_{\CX_2}\simeq \psId_{\CX_1}\circ i^!.$$
\end{prop}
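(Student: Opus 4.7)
The plan is to mirror the proof of \propref{p:!and*} by identifying both sides of the desired isomorphism with the functor defined by a single common kernel on $\CX_2 \times \CX_1$. Set
$$\CQ := (\on{id}_{\CX_1} \times i)_\bullet(\omega_{\CX_1}) \in \Dmod(\CX_1 \times \CX_2),$$
which is coherent because $\on{id}_{\CX_1} \times i$ is a closed embedding. This $\CQ$ represents both the functor $i_\bullet : \Dmod(\CX_1) \to \Dmod(\CX_2)$ and, under the self-identification $\Dmod(\CX_1 \times \CX_2) \simeq \Dmod(\CX_2 \times \CX_1)$ used throughout the paper, the dual functor $(i_\bullet)^\vee \simeq i^! : \Dmod(\CX_2) \to \Dmod(\CX_1)$.

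First I apply \thmref{t:stacks} to $\sF = i_\bullet$, whose continuous right adjoint is $i^!$ (automatic for a closed embedding). This produces the identification
$$\sF_{\CX_2 \to \CX_1,\,\BD^{\on{Ve}}_{\CX_1 \times \CX_2}(\CQ)} \simeq \psId_{\CX_1} \circ i^!,$$
realizing the right-hand side of the proposition as the functor defined by the kernel $\BD^{\on{Ve}}_{\CX_1 \times \CX_2}(\CQ)$.

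For the left-hand side I apply \corref{c:stacks} to $\sF = i^!$, with the roles of source and target swapped in the statement of the corollary. Truncativeness of $i$ is precisely the assumption that $i^!$ preserves compactness, hence by \lemref{l:when right} it admits a continuous right adjoint, and the coherence hypothesis on the kernel has already been noted. The corollary then yields
$$\sF_{\CX_2 \to \CX_1,\,\BD^{\on{Ve}}_{\CX_1 \times \CX_2}(\CQ)} \simeq (i^!)^{\on{op}} \circ \psId_{\CX_2}.$$
The only non-routine step is the identification $(i^!)^{\on{op}} \simeq i^\bullet$: by \lemref{l:conjugate}, $(i^!)^{\on{op}}$ is the left adjoint of $(i^!)^\vee \simeq i_\bullet$, and this left adjoint is exactly $i^\bullet$, which is defined on all of $\Dmod(\CX_2)$ thanks to truncativeness. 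Comparing the two identifications of $\sF_{\CX_2 \to \CX_1,\,\BD^{\on{Ve}}_{\CX_1 \times \CX_2}(\CQ)}$ yields the claim. There is no serious obstacle here; the only point to keep track of is the chain of dualities relating $i_\bullet$, $i^!$ and $i^\bullet$, which is fully handled by the general formalism of \secref{ss:abstract Verdier} together with the Verdier self-duality of $\Dmod(\CX)$.
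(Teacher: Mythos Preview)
Your proof is correct and follows essentially the same approach the paper indicates (``As in \propref{p:!and*} we show''): you identify both $\psId_{\CX_1}\circ i^!$ and $i^\bullet\circ\psId_{\CX_2}$ with the functor $\sF_{\CX_2\to\CX_1,\BD^{\on{Ve}}_{\CX_1\times\CX_2}(\CQ)}$ by applying \thmref{t:stacks} to $i_\bullet$ and \corref{c:stacks} to $i^!$, respectively, with the identification $(i^!)^{\on{op}}\simeq i^\bullet$ handled exactly as you describe.
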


Passing to dual functors, one obtains:

\begin{cor}   \label{c:trunc}
There is a canonical isomorphism of functors
$$\psId_{\CX_2}\circ i_?\simeq i_\bullet\circ \psId_{\CX_1},$$
where $i_?$ is the (continuous!) right adjoint to $i^!$.
\end{cor}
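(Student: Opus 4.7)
The plan is to derive this corollary by passing to dual functors in the isomorphism
$$i^\bullet\circ \psId_{\CX_2}\simeq \psId_{\CX_1}\circ i^!$$
of \propref{p:trunc}, exactly as suggested by the sentence immediately preceding the statement. Since $i$ is truncative, the functor $i^!$ preserves compactness, so its right adjoint $i_?$ is continuous; likewise $i^\bullet$ is defined on all of $\Dmod(\CX_2)$ and continuous. Thus every functor appearing in \propref{p:trunc} is continuous between dualizable compactly generated DG categories, and the dualization equivalence \eqref{e:dualization} applies.

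Next I would identify the relevant duals under the Verdier self-duality $\bD^{\on{Ve}}_{\CX_i}:\Dmod(\CX_i)^\vee\simeq \Dmod(\CX_i)$. The identification $(i_\bullet)^\vee\simeq i^!$ is recalled in \secref{sss:ren}, and dually $(i^!)^\vee\simeq i_\bullet$. Applying \secref{sss:adjoint dual pair} to the adjoint pair $(i^\bullet,i_\bullet)$ together with $(i_\bullet)^\vee\simeq i^!$ shows that $(i^\bullet)^\vee$ is the right adjoint of $i^!$, hence $(i^\bullet)^\vee\simeq i_?$. Symmetrically, applying \secref{sss:adjoint dual pair} to $(i^!,i_?)$ yields $(i_?)^\vee\simeq i^\bullet$. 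Finally, \eqref{e:dual of ps} gives $(\psId_{\CX})^\vee\simeq \psId_{\CX^\vee}$, which under the Verdier identification $\Dmod(\CX)^\vee\simeq \Dmod(\CX)$ becomes $(\psId_{\CX})^\vee\simeq \psId_{\CX}$, reflecting the flip-equivariance of the defining kernel $(\Delta_{\CX})_!(k_{\CX})$ in analogy with \eqref{e:dual of pseud}.

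With these identifications in hand, I would take $(-)^\vee$ of both sides of the isomorphism of \propref{p:trunc} and use $(\sG\circ \sF)^\vee\simeq \sF^\vee\circ \sG^\vee$. The left-hand side becomes $(\psId_{\CX_2})^\vee\circ (i^\bullet)^\vee\simeq \psId_{\CX_2}\circ i_?$, and the right-hand side becomes $(i^!)^\vee\circ (\psId_{\CX_1})^\vee\simeq i_\bullet\circ \psId_{\CX_1}$, yielding the desired isomorphism. The only mildly subtle point, which I expect to be the main (though mild) obstacle, is verifying that the canonical identification $(\psId_{\CX})^\vee\simeq \psId_{\CX}$ used here is the one compatible with the other duality identifications so that the resulting isomorphism is canonical; this is automatic from packaging $\psId_{\CX}$ via its kernel $(\Delta_{\CX})_!(k_{\CX})\in \Dmod(\CX\times \CX)$ together with the flip-equivariance of that kernel, precisely as in the scheme case \eqref{e:dual of pseud}.
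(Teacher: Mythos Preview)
Your proposal is correct and follows exactly the approach indicated in the paper, which simply says ``passing to dual functors, one obtains'' before stating the corollary. You have carefully filled in the identifications $(i^!)^\vee\simeq i_\bullet$, $(i^\bullet)^\vee\simeq i_?$, and $(\psId_{\CX})^\vee\simeq \psId_{\CX}$ that make this dualization work, and your handling of the self-duality of $\psId_{\CX}$ via the flip-equivariance of its kernel is the right justification.
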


\ssec{Miraculous stacks}

\sssec{}

Following \cite[Definition 4.5.2]{DrGa2}, we shall say that a QCA stack $\CX$ is \emph{miraculous}
if the category $\Dmod(\CX)$ is Gorenstein (see \secref{ss:Gorenstein}), i.e., if the functor
$\psId_\CX$ is an equivalence. 

\medskip

From \propref{p:when Gor} and \thmref{t:stacks}, we obtain:

\begin{cor}  \label{c:miraculous}
Let $\CX$ be a QCA stack for which the functor $\psId_\CX$ preserves compactness.
Then $\CX$ is miraculous.
\end{cor}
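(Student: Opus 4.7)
The plan is to apply \propref{p:when Gor} to the DG category $\bC = \Dmod(\CX)$. This proposition reduces the claim to three verifications: that the unit $\bu_{\Dmod(\CX)}$ is reflexive, and that both $\psId_\CX$ and $\psId_{\Dmod(\CX)^\vee}$ satisfy the hypothesis of \thmref{t:general} (namely, that they preserve compactness and that the map \eqref{e:general} is an isomorphism for them).

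For the reflexivity of the unit, I would invoke \secref{sss:unit and counit stacks} to identify $\bu_{\Dmod(\CX)}$ with $(\Delta_\CX)_\bullet(\omega_\CX)$. Since $\omega_\CX$ is coherent and the diagonal $\Delta_\CX$ is representable (hence safe, so $(\Delta_\CX)_\bullet$ agrees with $(\Delta_\CX)_\blacktriangle$ and preserves coherence), this object has coherent cohomologies on $\CX \times \CX$. By \propref{p:Verdier and check}, any such object is reflexive.

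To verify the hypothesis of \thmref{t:general} for $\psId_\CX$ itself, I would note that its defining kernel $(\Delta_\CX)_!(k_\CX)$ is coherent by construction. The standing hypothesis of the corollary is that $\psId_\CX$ preserves compactness, which is precisely the setup of \thmref{t:stacks}; the proof of that theorem amounts to verifying the isomorphism \eqref{e:general} for any continuous functor given by a coherent kernel that preserves compactness. Thus the hypothesis of \thmref{t:general} holds for $\psId_\CX$ with no further work.

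The remaining verification, the analogous statement for $\psId_{\Dmod(\CX)^\vee}$, is where I expect the only subtlety. My plan is to use the Verdier self-duality $\bD_\CX^{\on{Ve}} \colon \Dmod(\CX)^\vee \simeq \Dmod(\CX)$ together with \eqref{e:dual of ps} to identify $\psId_{\Dmod(\CX)^\vee}$ with $(\psId_\CX)^\vee$, and then to observe that the kernel $(\Delta_\CX)_!(k_\CX)$ carries an equivariance structure with respect to the flip automorphism of $\CX \times \CX$, exactly in the manner of \eqref{e:dual of pseud} in the scheme case. This symmetry identifies $(\psId_\CX)^\vee$ canonically with $\psId_\CX$ itself, so preservation of compactness and the hypothesis of \thmref{t:general} both transfer from $\psId_\CX$ to its dual. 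The main obstacle is making the flip-equivariance precise at the level of kernels rather than merely as functors, but once that is done the corollary follows by a direct appeal to \propref{p:when Gor}.
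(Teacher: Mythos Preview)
Your proposal is correct and matches the paper's intended proof: the corollary is deduced from \propref{p:when Gor} together with \thmref{t:stacks}. The concern you flag as the ``main obstacle'' is not one: the object $(\Delta_\CX)_!(k_\CX)$ is manifestly invariant under the transposition of the two factors of $\CX\times\CX$ (since $\Delta_\CX$ lands in the fixed locus of the swap), so the identification $(\psId_\CX)^\vee\simeq\psId_\CX$ at the level of kernels is immediate, exactly as in \eqref{e:dual of pseud} for schemes.
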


\sssec{Classifying space of a group}  \label{sss:classifying}

Let $G$ be an affine algebraic group, and consider the stack $\CX:=\on{pt}/G$. We claim
that it is both mock-proper and miraculous.

\medskip

Indeed, the category $\Dmod(\on{pt}/G)$ is compactly generated by one object, namely,
$\pi_\bullet(k)$, where $\pi:\on{pt}\to \on{pt}/G$. Now,
$$(p_{\on{pt}/G})_\bullet(\pi_\bullet(k))\simeq k\in \Vect^c.$$
Hence $\on{pt}/G$ is mock-proper. 

\medskip

Similarly, it is easy to see that
$$(\Delta_{\on{pt}/G})_!(k_{\on{pt}/G})\simeq (\Delta_{\on{pt}/G})_\bullet(k_{\on{pt}/G})[-d_G]\simeq 
(\Delta_{\on{pt}/G})_\bullet(\omega_{\on{pt}/G})[-d_G+2\dim(G)],$$
where 
$$d_G=
\begin{cases}
&2\dim(G) \text{ if $G$ is unipotent}; \\
&\dim(G) \text{ if $G$ is reductive} .
\end{cases}
$$

Hence, 
$$\psId_{\on{pt}/G}\simeq \on{Id}_{\Dmod(\on{pt}/G)}[-d_G+2\dim(G)]].$$

\ssec{An example of a miraculous stack} \label{ss:vector space}

The results of this and the next subsection were obtained jointly with A.~Beilinson and V.~Drinfeld. 

\sssec{}

Let $V$ be a vector space, considered as a scheme, and consider the stack 
$V/\BG_m$.  We will prove:

\begin{prop} \label{p:vector space}
The stack $V/\BG_m$ is miraculous and mock-proper. 
\end{prop}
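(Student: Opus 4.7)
The plan is to exploit the stratification of $V/\mathbb{G}_m$ into the open substack $j : \mathbb{P}(V) := (V\setminus\{0\})/\mathbb{G}_m \hookrightarrow V/\mathbb{G}_m$ and the complementary closed substack $i : B\mathbb{G}_m := \{0\}/\mathbb{G}_m \hookrightarrow V/\mathbb{G}_m$, and to reduce both assertions to corresponding facts about these two strata. The case $\dim V = 0$ reduces to \secref{sss:classifying}, so I assume $\dim V = n \geq 1$; note that $V/\mathbb{G}_m$ is smooth of dimension $n-1$ and $\mathbb{P}(V)$ is a smooth projective scheme of dimension $n-1$. The smooth map $V/\mathbb{G}_m \to B\mathbb{G}_m$ realizes $V/\mathbb{G}_m$ as a vector-bundle stack over $B\mathbb{G}_m$ with zero section $i$.

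The crucial first step is to prove that $j$ is co-truncative (equivalently, by \secref{ss:trunc}, that $i$ is truncative). I would establish this using the fact that the $\mathbb{G}_m$-action on $V$ is contracting with unique fixed point $0$, so the attracting locus is all of $V$: for any $\mathcal{M} \in \Dmod(\mathbb{P}(V))^c$, a Braden-type hyperbolic-restriction argument (or a direct computation using the $\mathbb{G}_m$-equivariant compactification $V \subset \mathbb{P}(V \oplus k)$) shows that $i^!(j_\bullet(\mathcal{M}))$ is computable in terms of the de Rham cohomology of $\mathcal{M}$ on $\mathbb{P}(V)$ with an algebraic $\mathbb{G}_m$-weight decomposition of finite type, hence is compact on $B\mathbb{G}_m$; together with compactness of $j^\bullet j_\bullet(\mathcal{M}) \simeq \mathcal{M}$ and the recollement triangle $i_\bullet i^!(j_\bullet\mathcal{M}) \to j_\bullet\mathcal{M} \to j_\bullet j^\bullet(j_\bullet\mathcal{M})$, this yields that $j_\bullet$ preserves compactness.

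Granted co-truncativeness, the category $\Dmod(V/\mathbb{G}_m)^c$ is Karoubi-generated by $j_\bullet(\Dmod(\mathbb{P}(V))^c)$ and $i_\bullet(\Dmod(B\mathbb{G}_m)^c)$, via the triangle $i_\bullet i^!\mathcal{F} \to \mathcal{F} \to j_\bullet j^!\mathcal{F}$. Mock-properness is then immediate: $(p_{V/\mathbb{G}_m})_\bullet \circ j_\bullet \simeq (p_{\mathbb{P}(V)})_\bullet$ preserves compactness since $\mathbb{P}(V)$ is proper, and $(p_{V/\mathbb{G}_m})_\bullet \circ i_\bullet \simeq (p_{B\mathbb{G}_m})_\bullet$ preserves compactness by the mock-properness of $B\mathbb{G}_m$ established in \secref{sss:classifying}.

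For miraculousness, by \corref{c:miraculous} it suffices to show that $\psId_{V/\mathbb{G}_m}$ preserves compactness, which we check on the two types of generators. On $j_\bullet(\mathcal{M}_U)$, \propref{p:!and*} gives $\psId_{V/\mathbb{G}_m}(j_\bullet(\mathcal{M}_U)) \simeq j_!(\psId_{\mathbb{P}(V)}(\mathcal{M}_U))$; since $\mathbb{P}(V)$ is smooth and separated, $\psId_{\mathbb{P}(V)}$ is just the shift by $[-2(n-1)]$, and $j_!$ preserves compactness for co-truncative $j$ (its right adjoint $j^!$ is continuous). On $i_\bullet(\mathcal{M}_Z)$, use that the smooth closed embedding $i$ satisfies $i^! \simeq i^\bullet \otimes \mathcal{L}_0$ for an invertible D-module $\mathcal{L}_0$ (a shift-and-twist by the determinant of the normal bundle $V$ on $B\mathbb{G}_m$); taking right adjoints shows $i_? \simeq i_\bullet \circ (- \otimes \mathcal{L}_0^{-1})$, so every $i_\bullet(\mathcal{M}_Z)$ is of the form $i_?(\mathcal{M}_Z')$ for a compact $\mathcal{M}_Z'$, and \corref{c:trunc} then yields $\psId_{V/\mathbb{G}_m}(i_\bullet(\mathcal{M}_Z)) \simeq i_\bullet(\psId_{B\mathbb{G}_m}(\mathcal{M}_Z'))$, which is compact because $B\mathbb{G}_m$ is miraculous by \secref{sss:classifying}. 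The main obstacle is the first step: co-truncativeness of $j$ is the only genuine geometric input, not following formally from the framework, and requires either a Braden/hyperbolic-restriction calculation or a direct estimate via compactification; the rest is bookkeeping with the adjunction formulas of \secref{ss:trunc}.
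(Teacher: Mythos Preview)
Your overall strategy---reduce to generators and invoke \corref{c:miraculous}---matches the paper's, and your treatment of mock-properness and of $\psId$ on the $j_\bullet$-generators is correct. The genuine gap is in your handling of the closed stratum. The claim that $i^!\simeq i^\bullet\otimes\CL_0$ for an invertible D-module $\CL_0$ is \emph{false}: that formula holds for $\CO$-modules (or $\IndCoh$) along a regular embedding, but not for D-modules. Concretely, $i^! j_\bullet=0$ for any complementary open/closed pair, while $i^\bullet j_\bullet\neq 0$ here; equivalently, $i_?\simeq\pi^!$ sends $\omega_{\on{pt}/\BG_m}$ to $\omega_{V/\BG_m}$, which is supported on all of $V/\BG_m$, whereas everything in the image of $i_\bullet$ is supported on the zero section. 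So $i_\bullet$ and $i_?$ have entirely different essential images and cannot be related by a twist, and \corref{c:trunc} tells you nothing about $\psId_{V/\BG_m}\circ i_\bullet$.

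The paper's repair is to replace the second family of generators: instead of $j_\bullet$ and $i_\bullet$, use $j_\bullet$ and $\pi^!$. The contraction principle (imported from \cite{DrGa2}) gives $i^\bullet\simeq\pi_\bullet$, $i^!\simeq\pi_!$, hence $i_?\simeq\pi^!$; then \corref{c:trunc} reads $\psId_{V/\BG_m}\circ\pi^!\simeq i_\bullet\circ\psId_{\on{pt}/\BG_m}$, which preserves compactness since $\on{pt}/\BG_m$ is miraculous. Incidentally, your co-truncativeness argument also misfires as written: since $i^! j_\bullet=0$, the recollement triangle you invoke degenerates to $0\to j_\bullet\CM\to j_\bullet\CM$ and proves nothing. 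Co-truncativeness follows instead directly from the contraction identity $i^!\simeq\pi_!$: the right-hand side is left adjoint to the continuous functor $\pi^!$, hence preserves compactness, so $i$ is truncative.
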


\sssec{}

Let $i$ denote the closed embedding $\on{pt}/\BG_m\to V/\BG_m$, and let $j$ denote the complementary
open embedding
$$\BP(V)\simeq (V-0)/\BG_m\hookrightarrow V/\BG_m.$$
Let, in addition, $\pi$ denote the projection map $V/\BG_m\to \on{pt}/\BG_m$.

\medskip

According to \cite[Sect. 3.2.2]{DrGa2}, the closed embedding $i$ (resp., open embedding $j$) is
truncative (resp., co-truncative). Moreover, by \cite[Sect. 5.3]{DrGa2}, we have canonical isomorphisms of functors
$$i^\bullet\simeq \pi_\bullet,\quad i^!\simeq \pi_!,$$
and hence 
\begin{equation} \label{e:?!}
i_?\simeq \pi^!.
\end{equation} 

\sssec{}

The fact that $V/\BG_m$ is mock-proper follows from the fact that the functor $\pi_\bullet$ preserves
compactness (being the left adjoint of $i_\bullet$), combined with the fact that $\on{pt}/\BG_m$
is mock-proper. 

\sssec{}

Let us write down the isomorphisms of Propositions \ref{p:!and*} and \ref{p:trunc} and
Corollaries \ref{c:!and*} and \ref{c:trunc} in our case. 

\medskip

For that we note that the functor $\psId_{\BP(V)}$ identifies with $\on{Id}_{\Dmod(\BP(V))}[-2(\dim(V)-1)]$,
since $\BP(V)$ is a smooth separated scheme of dimension $\dim(V)-1$. 
By \secref{sss:classifying}, the functor $\psId_{\on{pt}/\BG_m}$ identifies with $\on{Id}_{\Dmod(\on{pt}/\BG_m)}[1]$.

\medskip

From \propref{p:!and*}, we obtain: 
\begin{equation} \label{e:psidV1}
\psId_{V/\BG_m}\circ j_\bullet\simeq j_![-2(\dim(V)-1)].
\end{equation}

From \corref{c:trunc} and \eqref{e:?!}, we obtain:
\begin{equation} \label{e:psidV4}
\psId_{V/\BG_m} \circ \pi^!\simeq i_\bullet[1].
\end{equation}

\medskip

From \propref{p:trunc} we obtain
$$i^\bullet \circ \psId_{V/\BG_m}\simeq i^![1]$$
and from \corref{c:!and*}: 
$$j^\bullet\circ \psId_{V/\BG_m}\simeq j^?[-2(\dim(V)-1)].$$

\sssec{}

In order to show that $V/\BG_m$ is miraculous, by \corref{c:miraculous}, it is sufficient to
show that the functor $\psId_{V/\BG_m}$ preserves compactness. 

\medskip

The category
$\Dmod(V/\BG_m)^c$ is generated by the essential images of $\Dmod((V-0)/\BG_m)^c$
and $\Dmod(\on{pt}/\BG_m)^c$ under the functors $j_\bullet$ and $\pi^!$, respectively. 
Hence, it is sufficient to show that the functors
$$\psId_{V/\BG_m}\circ j_\bullet \text{ and } \psId_{V/\BG_m}\circ \pi^!$$
preserve compactness.

\medskip

However, this follows from \eqref{e:psidV1} and \eqref{e:psidV4}, respectively. 

\begin{rem} 

To complete the picture, one can show that there is a canonical
isomorphism of functors
$$\psId_{V/\BG_m}\circ i_\bullet\simeq \pi^![-2(\dim(V))+1].$$

In particular, if $\dim(V)>1$, it is \emph{not} true that $\psId_{V/\BG_m}$ is an involution. 

\medskip

However, one can show that $\psId_{V/\BG_m}$ is an involution if $\dim(V)=1$. Indeed, for $V=k$,
one can show that $\psId_{V/\BG_m}$ is isomorphic to the functor of Fourier-Deligne transform. 

\end{rem}

\ssec{A non-example}

\sssec{}

Consider now the following stack $X:=(\BA^2-0)/\BG_m$, where we consider the {\it hyperbolic}
action of $\BG_m$ on $\BA^2$,
$$\lambda (x_1,x_2)=(\lambda\cdot x_1,\lambda^{-1}\cdot x_2).$$

In fact $X$ is a non-separated scheme, namely,
$\wt\BA^1$, i.e., $\BA^1$ with a double point. Let $i_1$ and $i_2$ denote 
the corresponding two closed embeddings $\on{pt}\to X$. 

\medskip

We claim that $X$ is \emph{not} miraculous. We will show that the functor $\psId_X$
fails to preserve compactness.

\sssec{}

Consider the canonical map
\begin{equation} \label{e:map of Deltas}
(\Delta_X)_!(k_X)\to (\Delta_X)_\bullet(k_X)\simeq  (\Delta_X)_\bullet(\omega_X)[-2].
\end{equation}

\begin{lem} \label{l:map of Deltas}
The cone of the map \eqref{e:map of Deltas} 
is isomorphic to 
the direct sum 
$$(i_1\times i_2)_\bullet(k\oplus k[-1])\oplus (i_2\times i_1)_\bullet(k\oplus k[-1]).$$
\end{lem}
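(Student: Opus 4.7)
First I will identify the geometry of $\Delta_X$. Cover $X$ by $U_1 \cup U_2$ with each $U_a \cong \BA^1$ and $U_1 \cap U_2 \cong \BG_m$; then $X \times X$ is covered by the four charts $U_a \times U_b \cong \BA^2$. On each chart, $\Delta_X$ restricts to the diagonal embedding of $U_a \cap U_b$. For $a=b$ this is the full closed diagonal $\BA^1 \hookrightarrow \BA^2$, while for $a \neq b$ it is the locally closed embedding $j: \BG_m \hookrightarrow \BA^2$, $t \mapsto (t,t)$, whose image omits the origin. Hence $\Delta_X(X)$ is locally closed and its closure adds exactly the points $Z := \{(i_1,i_2),(i_2,i_1)\}$. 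Factoring $\Delta_X = \ell \circ \iota$ with $\iota: X \hookrightarrow V := (X \times X) \setminus Z$ closed and $\ell: V \hookrightarrow X \times X$ open, the identity $\iota_! = \iota_\bullet$ rewrites the map as $\ell_! \iota_\bullet k_X \to \ell_\bullet \iota_\bullet k_X$; applying $\ell^\bullet$ returns the identity of $\iota_\bullet k_X$, so the cone is supported on $Z$ and can be computed Zariski-locally.

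The two points of $Z$ are swapped by the involution of $X \times X$ exchanging factors, so it suffices to work in the chart $U_1 \times U_2 \cong \BA^2$ containing $(i_1,i_2)$. Base change for $\ell_!$ and $\ell_\bullet$ along the open embedding $U_1 \times U_2 \hookrightarrow X \times X$ identifies the restricted map with
$$j_! k_{\BG_m} \to j_\bullet k_{\BG_m} \qquad \text{in } \Dmod(\BA^2).$$
Factor further $j = \Delta \circ j'$, with $\Delta: \BA^1 \hookrightarrow \BA^2$ the closed diagonal and $j': \BG_m \hookrightarrow \BA^1$ open. Using $\Delta_! = \Delta_\bullet$, the cone becomes $\Delta_\bullet \on{Cone}(j'_! k_{\BG_m} \to j'_\bullet k_{\BG_m})$, reducing the problem to a computation on $\BA^1$.

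The local computation on $\BA^1$ will use the two standard recollement triangles
$$j'_! k_{\BG_m} \to k_{\BA^1} \to (i_0)_\bullet(k), \qquad (i_0)_\bullet(k)[-2] \to k_{\BA^1} \to j'_\bullet k_{\BG_m},$$
which factor the map $j'_! k_{\BG_m} \to j'_\bullet k_{\BG_m}$ through $k_{\BA^1}$ with successive cones $(i_0)_\bullet(k)$ and $(i_0)_\bullet(k)[-1]$. The octahedral axiom then produces an exact triangle $(i_0)_\bullet(k) \to \on{Cone} \to (i_0)_\bullet(k)[-1]$, and the main step is to verify that it splits. The classifying extension lies in
$$\on{Ext}^2_{\Dmod(\BA^1)}\bigl((i_0)_\bullet(k),(i_0)_\bullet(k)\bigr) \simeq \on{Ext}^2_{\Vect}(k,k) = 0,$$
where the isomorphism uses the $((i_0)_\bullet, i_0^!)$-adjunction together with $i_0^!(i_0)_\bullet \simeq \on{id}$. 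Hence the local cone is $(i_0)_\bullet(k \oplus k[-1])$.

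Pushing forward by $\Delta_\bullet$ and identifying the origin of $U_1 \times U_2 \cong \BA^2$ with $(i_1,i_2) \in X \times X$ produces the contribution $(i_1 \times i_2)_\bullet(k \oplus k[-1])$; the analogous computation in the chart $U_2 \times U_1$ gives $(i_2 \times i_1)_\bullet(k \oplus k[-1])$. As these are supported on the two disjoint points of $Z$, the total cone is their direct sum, as claimed.
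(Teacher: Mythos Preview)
Your proof is correct and follows essentially the same approach as the paper: both reduce to the four charts $U_a\times U_b$, observe that the map is an isomorphism on the diagonal charts, and compute the contribution from each off-diagonal chart. The paper leaves the off-diagonal computation implicit, whereas you spell it out carefully via the factorization through the closed diagonal $\BA^1\hookrightarrow\BA^2$, the recollement triangles on $\BA^1$, the octahedral axiom, and the vanishing of $\on{Ext}^2_{\Dmod(\BA^1)}((i_0)_\bullet k,(i_0)_\bullet k)$ to split the resulting triangle; this is exactly the computation the paper is alluding to.
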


\begin{proof} 

Let $U_1$ and $U_2$ be the two open charts of $X$, each isomorphic to $\BA^1$, so that
$$U_1\cap U_2\to U_1\times U_2$$
is the map
$$\BA^1-0\hookrightarrow \BA^1\overset{\Delta}\longrightarrow \BA^1\times \BA^1.$$

The assertion of the lemma follows by calculating the map \eqref{e:map of Deltas} on the
charts $U_1\times U_1$ and $U_2\times U_2$ (where it
also an isomorphism), and $U_1\times U_2$ and $U_2\times U_1$, each of which
contributes the corresponding direct summand.

\end{proof}

\sssec{}

By \lemref{l:map of Deltas}, it is sufficient to show that the functor
$\Dmod(X)\to \Dmod(X)$, given by the kernel $(i_1\times i_2)_\bullet(k)$ does not
preserve compactness. 

\medskip

However, the letter functor identifies with $(i_2)_\bullet\circ (i_1)^!$, which
sends $\on{D}_X$ to a non-compact object. 

\section{Artin stacks: the non-quasi compact case} \label{s:Artin gen}

\ssec{Truncatable stacks}

\sssec{}

Let $\CX$ be an algebraic stack, which is locally QCA, i.e., one that can be covered by
quasi-compact algebraic stacks that are QCA.

\medskip

Recall (see \cite[Lemma 2.3.2]{DrGa2}) that the category $\Dmod(\CX)$ is equivalent to
$$\underset{U\in \on{Open-qc}(\CX)^{\on{op}}}{lim}\, \Dmod(U),$$
where $\on{Open-qc}(\CX)$ is the poset of quasi-compact open substacks of $\CX$. 

\sssec{}  \label{sss:truncatable}

We shall say that an open substack $U\subset \CX$ is co-truncative
if for any quasi-compact open substack $U'\subset \CX$, the inclusion 
$$U\cap U'\hookrightarrow U'$$
is co-truncative. 

\medskip

Recall (see \cite[Definition 4.1.1]{DrGa2}) that $\CX$ is said to be \emph{truncatable} if 
$\CX$ can be covered by its quasi-compact open co-truncative  substacks.

\medskip

Our main example of a truncatable stack is $\Bun_G$, the moduli stack of $G$-bundles
on a smooth complete curve $X$, where $G$ is a reductie group. This fact is proved in 
\cite[Theorem 4.1.8]{DrGa2}. 

\sssec{}

We let $\on{Ctrnk}(\CX)$ denote the poset of quasi-compact open co-truncative  
subsets of $\CX$. Note that according to \cite[Lemma 3.8.4]{DrGa2}, the union of co-truncative
subsets is co-truncative. Hence, $\on{Ctrnk}(\CX)$ is filtered.  

\medskip

Furthermore, the condition of being truncatable is equivalent to the map of posets
$$\on{Ctrnk}(\CX)\to \on{Open-qc}(\CX)$$
being co-final. I.e., $\CX$ is truncatable if and only if every quasi-compact open substack of $\CX$ is contained in one which is co-truncative.

\medskip

Hence, for $\CX$ truncatable, we have
$$\Dmod(\CX)\simeq \underset{U\in \on{Ctrnk}(\CX)^{\on{op}}}{lim}\, \Dmod(U).$$

\sssec{}  \label{sss:co}

From now on we will assume that all our algebraic stacks are locally QCA and truncatable. 

\medskip

According to \cite[Proposition 4.1.6]{DrGa2}, the category $\Dmod(\CX)$
is compactly generated. The set of compact generators is provided by the objects
$$j_!(\CF), \quad j:U\hookrightarrow \CX,\quad U\in \on{Ctrnk}(\CX),\quad \CF\in \Dmod(U)^c.$$

\medskip

We introduce the DG category $\Dmod(\CX)_{\on{co}}$ as
$$\underset{\on{Ctrnk}(\CX)^{\on{op}}}{lim}\, \Dmod^?,$$
where the functor
$\Dmod^?:\on{Ctrnk}(\CX)^{\on{op}}\to \StinftyCat_{\on{cont}}$ sends
$$U\rightsquigarrow \Dmod(U) \text{ and } (U_1\overset{j_{1,2}}\hookrightarrow U_2) \rightsquigarrow j_{1,2}^?$$
(the functor $j_{1,2}^?$ is the \emph{right} adjoint of $(j_{1,2})_\bullet$, see \secref{ss:trunc}, and also
\secref{sss:strange functors} below).

\medskip

For $(U\overset{j}\hookrightarrow \CX)\in  \on{Ctrnk}(\CX)$, the tautological evaluation functor
$$j^?:\Dmod(\CX)_{\on{co}}\to \Dmod(U)$$
admits a \emph{left} adjoint, denoted $j_{\on{co},\bullet}$. The category $\Dmod(\CX)_{\on{co}}$ is compactly
generated by objects
$$j_{\on{co},\bullet}(\CF), \quad (j:U\hookrightarrow \CX)\in \on{Ctrnk}(\CX),\quad \CF\in \Dmod(U)^c.$$

\medskip

For $U\overset{j}\hookrightarrow \CX$ as above, the functor $j_{\on{co},\bullet}$ also admits a left adjoint, denoted
$j^\bullet_{\on{co}}$. 
 
\sssec{}

By \cite[Corollaries 4.3.2 and 4.3.5]{DrGa2}, there is a canonically defined equivaence
$$\bD_\CX^{\on{Ve}}:\Dmod(\CX)^\vee\to \Dmod(\CX)_{\on{co}}.$$

It is characterized by the property that the corresponding functor
$$\BD_\CX^{\on{Ve}}:(\Dmod(\CX)^c)^{\on{op}}\to (\Dmod(\CX)_{\on{co}})^c$$ 
acts as follows
$$\BD_\CX^{\on{Ve}}\left(j_!(\CF)\right)=j_{\on{co},\bullet}(\BD_U^{\on{Ve}}(\CF)), \quad 
\CF\in \Dmod(U)^c,\quad (U\overset{j}\hookrightarrow \CX)\in  \on{Ctrnk}(\CX).$$ 

\sssec{}   \label{sss:strange functors}

For a co-truncative quasi-compact $U\overset{j}\hookrightarrow \CX$ we have the following
isomorphisms:
$$(j_!)^{\on{op}}\simeq j_{\on{co},\bullet},\quad (j_!)^\vee\simeq j^?, \quad (j^\bullet)^\vee\simeq j_{\on{co},\bullet},$$
from which, using \lemref{l:conjugate}, we obtain 
$$(j_\bullet)^\vee \simeq j^\bullet_{\on{co}},\quad (j^\bullet)^{\on{op}}\simeq j^\bullet_{\on{co}}.$$

\ssec{Additional properties of $\Dmod(\CX)_{\on{co}}$}

The following several additional pieces of information regarding the categories $\Dmod(\CX)$ and $\Dmod(\CX)_{\on{co}}$ will be
used in the sequel.

\sssec{}

According to \cite[Corollaries 4.3.2 and 4.3.5]{DrGa2}, the functors
$$j_{\on{co},\bullet}:\Dmod(U)\to \Dmod(\CX)_{\on{co}},\quad (j:U\hookrightarrow \CX) \in \on{Ctrnk}(\CX)$$
have the property that the induced functor
\begin{equation} \label{e:as colimit}
\underset{\on{Ctrnk}(\CX)}{colim}\, \Dmod_{\bullet}\to \Dmod(\CX)_{\on{co}}
\end{equation}
is an equivalence, where the functor
$$\Dmod_{\bullet}:\on{Ctrnk}(\CX)\to \StinftyCat_{\on{cont}}$$ sends
$$U\rightsquigarrow \Dmod(U) \text{ and } 
(U_1\overset{j_{12}}\hookrightarrow U_2) \rightsquigarrow (j_{12})_\bullet.$$

\sssec{}  \label{sss:black nonqc}

For a truncatable stack $\CX$ we define a continuous functor
$$(p_\CX)_\blacktriangle:\Dmod(\CX)_{\on{co}}\to \Vect$$
to be the dual of the functor
$$(p_\CX)^!:\Vect\to \Dmod(\CX).$$

In terms of the equivalence 
$$\underset{U\in \on{Ctrnk}(\CX)}{colim}\, \Dmod(U)\to \Dmod(\CX)$$
of \eqref{e:as colimit}, the funtcor $(p_\CX)_\blacktriangle$ corresponds to the
family of functors $\Dmod(U)\to \Vect$, given by $(p_U)_\blacktriangle$, which
are naturally compatible under
$$(p_{U_1})_\blacktriangle \circ (j_{1,2})_\bullet \simeq (p_{U_2})_\blacktriangle,\quad U_1\overset{j_{1,2}}\hookrightarrow U_2
\overset{j_2}\hookrightarrow \CX.$$

\sssec{}  \label{sss:sotimes}

Next, we claim that the $\sotimes$ operation defines a canonical action of the monoidal category
$\Dmod(\CX)$ on $\Dmod(\CX)_{\on{co}}$. In terms of the equivalence \eqref{e:as colimit}, for
$\CF\in \Dmod(\CX)$ and 
$$\CF_U\in \Dmod(U), \quad (U\overset{j}\hookrightarrow \CX)\in \on{Ctrnk}(\CX),$$
we have
$$\CF\sotimes j_{\on{co},\bullet}(\CF_U):=j_{\on{co},\bullet}(j^\bullet(\CF)\sotimes \CF_U).$$

\medskip

The following assertion will be used in the sequel:

\begin{lem} \label{l:Hom as coh}
For $\CF\in (\Dmod(\CX)_{\on{co}})^c$ and $\CF_1\in \Dmod(\CX)_{\on{co}}$, there is a canonical isomorphism
$$\CMaps_{\Dmod(\CX)_{\on{co}}}(\CF,\CF')\simeq (p_\CX)_\blacktriangle(\BD^{\on{Ve}}_{\Bun(G)}(\CF)\sotimes \CF'),$$
where $\CF\mapsto \BD^{\on{Ve}}_{\Bun(G)}(\CF)$ is the equivalence
$$((\Dmod(\CX)_{\on{co}})^c)^{\on{op}}\to \Dmod(\CX)^c,$$
induced by $\bD^{\on{Ve}}_\CX$.
\end{lem}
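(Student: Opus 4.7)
The plan is to reduce to the QCA case by unwinding both sides on compact generators. By continuity of both sides in $\CF'$ and the fact that $\Dmod(\CX)_{\on{co}}$ is compactly generated by objects of the form $j_{\on{co},\bullet}(\CG_U)$ for $j:U\hookrightarrow \CX$ a co-truncative quasi-compact open and $\CG_U\in \Dmod(U)^c$ (see \secref{sss:co}), it suffices to construct a canonical natural isomorphism in the case where $\CF = j_{\on{co},\bullet}(\CF_U)$ and $\CF'= j_{\on{co},\bullet}(\CF'_U)$ for a common co-truncative $U$; since $\on{Ctrnk}(\CX)$ is filtered, any pair of generators $j_{\on{co},\bullet}(\CF_U)$, $j'_{\on{co},\bullet}(\CF'_{U'})$ can be brought to this form by replacing $U,U'$ with $U\cup U'$ and transporting the generators along $(j_{12})_\bullet$, which preserves compactness by co-truncativeness.

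With $\CF, \CF'$ in this form, the left-hand side computes using the $(j_{\on{co},\bullet},j^?)$-adjunction and the full faithfulness of $j_{\on{co},\bullet}$ (which follows from the filtered colimit description \eqref{e:as colimit} together with full faithfulness of the transition functors $(j_{12})_\bullet$ for open embeddings):
\[
\CMaps_{\Dmod(\CX)_{\on{co}}}(j_{\on{co},\bullet}(\CF_U),j_{\on{co},\bullet}(\CF'_U))\simeq \CMaps_{\Dmod(U)}(\CF_U,\CF'_U).
\]
For the right-hand side, \secref{sss:co} identifies $\BD_\CX^{\on{Ve}}(j_{\on{co},\bullet}(\CF_U))$ with $j_!(\BD_U^{\on{Ve}}(\CF_U))$. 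The key technical step is the projection formula
\[
j_!(\BD_U^{\on{Ve}}(\CF_U))\sotimes j_{\on{co},\bullet}(\CF'_U)\simeq j_{\on{co},\bullet}\bigl(\BD_U^{\on{Ve}}(\CF_U)\sotimes \CF'_U\bigr),
\]
which follows from the explicit description of the $\Dmod(\CX)$-action on $\Dmod(\CX)_{\on{co}}$ in \secref{sss:sotimes} combined with $j^\bullet\circ j_!\simeq \on{id}_{\Dmod(U)}$ for the open embedding $j$. Applying $(p_\CX)_\blacktriangle$ and using $(p_\CX)_\blacktriangle\circ j_{\on{co},\bullet}\simeq (p_U)_\blacktriangle$ from \secref{sss:black nonqc}, the right-hand side reduces to $(p_U)_\blacktriangle\bigl(\BD_U^{\on{Ve}}(\CF_U)\sotimes \CF'_U\bigr)$.

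The proof is completed by the QCA analogue of the desired identity for $U$: Lemma 7.3.5 of \cite{DrGa1} gives $\CMaps_{\Dmod(U)}(\CF_U,\CF'_U)\simeq (p_U)_\bullet\bigl(\BD_U^{\on{Ve}}(\CF_U)\sotimes \CF'_U\bigr)$, and Proposition 9.3.7 of \cite{DrGa1} (applied to the projection $p_U$ and the compact object $\BD_U^{\on{Ve}}(\CF_U)$) identifies $(p_U)_\bullet$ with $(p_U)_\blacktriangle$ on the relevant object. The main step that requires attention is the projection formula above, which is the only place the interaction between $j_!$, $\sotimes$, and $j_{\on{co},\bullet}$ is exploited non-formally; everything else is a direct invocation of adjunctions and previously established facts.
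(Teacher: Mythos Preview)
The paper states this lemma without proof, presumably regarding it as immediate from the duality formalism. Your argument is correct and supplies the details that make this work.

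A more direct route, which is likely what the paper has in mind, goes via \eqref{e:eval dual}: for any compactly generated $\bC$ one has $\CMaps_{\bC^\vee}(\bc^\vee,\xi)\simeq \on{ev}_\bC(\bc\otimes\xi)$, so taking $\bC=\Dmod(\CX)$ and $\bC^\vee=\Dmod(\CX)_{\on{co}}$ gives
\[
\CMaps_{\Dmod(\CX)_{\on{co}}}(\CF,\CF')\simeq \on{ev}_{\Dmod(\CX)}\bigl(\BD^{\on{Ve}}_\CX(\CF)\otimes \CF'\bigr),
\]
and the content of the lemma is that the evaluation pairing $\Dmod(\CX)\otimes\Dmod(\CX)_{\on{co}}\to\Vect$ is $(p_\CX)_\blacktriangle(-\sotimes-)$. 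Your reduction to co-truncative opens is exactly a verification of this last identification. One minor simplification: in your final step you need not pass through $(p_U)_\bullet$ via \cite[Lemma~7.3.5 and Proposition~9.3.7]{DrGa1}; since for $U$ QCA the counit is \emph{defined} in \secref{sss:unit and counit stacks} as $(p_U)_\blacktriangle\circ\Delta_U^!$, the identity $\CMaps_{\Dmod(U)}(\CF_U,\CF'_U)\simeq (p_U)_\blacktriangle(\BD_U^{\on{Ve}}(\CF_U)\sotimes\CF'_U)$ is immediate from \eqref{e:eval dual}.
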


\ssec{Kernels in the non-quasi compact situation}

\sssec{}

For a pair of truncatable stacks $\CX_1$ and $\CX_2$, let $\CQ$ be an object of the category
$$\Dmod(\CX_1)\otimes \Dmod(\CX_2)_{\on{co}}.$$

We shall say that $\CQ$ is \emph{coherent} if for any pair of quasi-compact open co-truncative substacks $U_1\overset{j_1}\hookrightarrow \CX_1$
and $U_2\overset{j_2}\hookrightarrow \CX_2$ we have
$$((j_1)^\bullet\otimes (j_2)_{\on{co}}^\bullet)(\CQ)\in \Dmod(U_1)\otimes \Dmod(U_2)\simeq \Dmod(U_1\times U_2)$$
is coherent.

\medskip

We claim that for any $\CQ$ which is coherent, there is a well-defined object, denoted
$$\BD_{\CX_1\times \CX_2}^{\on{Ve}}(\CQ)\in \Dmod(\CX_1\times \CX_2)_{\on{coh}}$$
(Note that the notion of coherence for an object of $\Dmod(\CX)$ makes sense for
not necessarily quasi-compact algebraic stacks.)
 
\medskip

Namely, we define $\BD_{\CX_1\times \CX_2}^{\on{Ve}}(\CQ)$ be requiring that for any quasi-compact open co-truncative 
$U_1\overset{j_1}\hookrightarrow \CX_1$ and $U_2\overset{j_2}\hookrightarrow \CX_2$, we have:
$$(j_1\times j_2)^\bullet\left(\BD_{\CX_1\times \CX_2}^{\on{Ve}}(\CQ)\right)\simeq
\BD_{U_1\times U_2}^{\on{Ve}}\left(((j_1)^\bullet\otimes (j_2)_{\on{co}}^\bullet)(\CQ)\right).$$

\sssec{}

Let us note that by \secref{sss:functors and kernels}, the category 
$$\Dmod(\CX_1)\otimes \Dmod(\CX_2)_{\on{co}}.$$
is equivalent to that of continuous functors $$\Dmod(\CX_1)_{\on{co}}\to \Dmod(\CX_2)_{\on{co}}.$$

\medskip

The category
$$\Dmod(\CX_1)\otimes \Dmod(\CX_2)\simeq \Dmod(\CX_1\times \CX_2)$$
is equivalent to that of continuous functors $$\Dmod(\CX_1)_{\on{co}}\to \Dmod(\CX_2).$$

\medskip

In both cases, we will denote this assignment by
$$\CQ\rightsquigarrow \sF_{\CX_1\to \CX_2,\CQ}.$$



\sssec{}

Note now that for a stack $\CX$, the object 
$$(\Delta_{\CX})_!(k_\CX)\in \Dmod(\CX\times \CX)$$
is well-defined.

\medskip

It has the property that for every quasi-compact open $j:U\hookrightarrow \CX$, we have
$$(j\times j)^\bullet\left((\Delta_{\CX})_!(k_\CX)\right)\simeq (\Delta_{U})_!(k_U).$$

Indeed, the functor $(j\times j)^\bullet\circ (\Delta_{\CX})_!$ is the partially defined left adjoint to
$$\Delta_{\CX}^!\circ (j\times j)_\bullet\simeq j_\bullet \circ \Delta_U^!,$$
as is $(\Delta_U)_!\circ j^\bullet$. 

\sssec{}  \label{sss:Ps}

We define the functor 
$$\psId_\CX: \Dmod(\CX)_{\on{op}}\to \Dmod(\CX)$$ to be given by the kernel 
$$(\Delta_{\CX})_!(k_\CX)\in \Dmod(\CX\times \CX)\simeq \Dmod(\CX)\otimes \Dmod(\CX)
\simeq (\Dmod(\CX)_{\on{op}})^\vee\otimes \Dmod(\CX).$$

\sssec{}   \label{sss:naive}

Finally, in addition to the functor $\psId_\CX:\Dmod(\CX)_{\on{co}}\to \Dmod(\CX)$, introduced above, 
there is another functor, denoted $$\psId_\CX^{\on{naive}}:\Dmod(\CX)_{\on{co}}\to \Dmod(\CX).$$
It is given by the kernel
$$(\Delta_\CX)_\bullet(\omega_\CX)\in  \Dmod(\CX\times \CX).$$

\medskip

In terms of the equivalence \eqref{e:as colimit}, the functor $\psId_\CX^{\on{naive}}$ corresponds to the family of functors
$$\Dmod(U)\to \Dmod(\CX), \quad (U\overset{j}\hookrightarrow \CX)\in \on{Ctrnk}(\CX),$$
given by $j_\bullet$, that are compatible under the isomorphisms
$$(j_1)_\bullet \simeq  (j_{12})_\bullet\circ  (j_2)_\bullet, \quad U_1\overset{j_{12}}\hookrightarrow U_2 \overset{j_2}\hookrightarrow \CX.$$

This functor is not an equivalence, unless the closure of any quasi-compact open substack of $\CX$ is quasi-compact, 
see \cite[Proposition 4.4.5]{DrGa2}.

\medskip

In \secref{ss:bizarre} we will describe a particular
object in the kernel of this functor for $\CX=\Bun_G$.

\ssec{The theorem for truncatable stacks}

\sssec{}

The following is an extension of \thmref{t:stacks} to the case of truncatable (but not necessarily quasi-compact)
stacks:

\begin{thm} \label{t:stacks nonqc}
Let $\CX_1$ and $\CX_2$ be truncatable stacks, and let 
$$\CQ\in \Dmod(\CX_1)\otimes \Dmod(\CX_2)_{\on{co}}$$
be coherent. Assume that the corresponding functor 
$$\sF_{\CX_1\to \CX_2,\CQ}:\Dmod(\CX_1)_{\on{co}}\to \Dmod(\CX_2)_{\on{co}}$$
preserves compactness. Then we have a canonical isomorphism
$$\psId_{\CX_1}\circ (\sF_{\CX_1\to \CX_2,\CQ})^R\simeq \sF_{\CX_2\to \CX_1,\BD_{\CX_1\times \CX_2}^{\on{Ve}}(\CQ)}$$
as functors 
$$\Dmod(\CX_2)_{\on{co}}\to \Dmod(\CX_1).$$
\end{thm}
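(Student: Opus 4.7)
The strategy mirrors that of \thmref{t:stacks}: I will invoke the abstract \thmref{t:general} for the compactly generated DG categories $\bC_1 := \Dmod(\CX_1)_{\on{co}}$ and $\bC_2 := \Dmod(\CX_2)_{\on{co}}$, whose duals are identified with $\Dmod(\CX_1)$ and $\Dmod(\CX_2)$ respectively via $\bD^{\on{Ve}}_{\CX_i}$. Under these identifications, the kernel $\CQ \in \Dmod(\CX_1) \otimes \Dmod(\CX_2)_{\on{co}}$ is naturally an object of $\bC_1^\vee \otimes \bC_2$, and $\sF := \sF_{\CX_1 \to \CX_2, \CQ}$ is a continuous functor $\bC_1 \to \bC_2$ with continuous right adjoint $\sF^R$ by hypothesis.

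The heart of the proof is to verify the hypothesis of \thmref{t:general}, i.e., that the natural transformation
$$(\on{Id}_{\bC_1} \otimes \sF^{\on{op}})((\bu_{\bC_1})^\vee) \longrightarrow \CQ^\vee$$
is an isomorphism in $\bC_1 \otimes \bC_2^\vee = \Dmod(\CX_1)_{\on{co}} \otimes \Dmod(\CX_2)$. To check this, I would reduce to the QCA case. Using the colimit presentation \eqref{e:as colimit}
$$\Dmod(\CX_1)_{\on{co}} \simeq \underset{U_1 \in \on{Ctrnk}(\CX_1)}{\on{colim}} \Dmod(U_1)$$
and the analogous limit presentation of $\Dmod(\CX_2)$ over $\on{Ctrnk}(\CX_2)^{\on{op}}$, both sides of the map are determined by their restrictions via the functors $(j_1)_{\on{co}}^\bullet \otimes j_2^\bullet$ to pairs $(U_1, U_2)$ of quasi-compact open co-truncative substacks. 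On each such pair the restricted kernel $\CQ|_{U_1 \times U_2}$ is coherent by assumption, and the restricted functor $\sF_{U_1 \to U_2, \CQ|_{U_1 \times U_2}}$ preserves compactness because the compact generators of $\Dmod(\CX_1)_{\on{co}}$ are $j_{\on{co},\bullet}$-images of compacts from QCA substacks; thus \thmref{t:stacks} applies and gives the isomorphism locally. Base-change considerations analogous to those in the proof of \lemref{l:bdd estimate} ensure that the local isomorphisms are compatible with the transition maps, so they glue to a global isomorphism.

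With the hypothesis verified, \thmref{t:general} produces an isomorphism between $\psId_{\bC_1} \circ \sF^R$ and the functor $\sF_{\CX_2 \to \CX_1, \CQ^\vee_{\on{abs}}}$ of kernels $\bC_2 \to \bC_1$, where $\CQ^\vee_{\on{abs}} \in \bC_2^\vee \otimes \bC_1 = \Dmod(\CX_2) \otimes \Dmod(\CX_1)_{\on{co}}$ denotes the abstract dual kernel from \secref{ss:abstract Verdier}. The remaining task is to translate this abstract conclusion into the geometric statement of the theorem, which involves the geometrically defined $\psId_{\CX_1}$ of \secref{sss:Ps} (targeting $\Dmod(\CX_1)$ rather than $\Dmod(\CX_1)_{\on{co}}$) and the Verdier dual kernel $\BD^{\on{Ve}}_{\CX_1 \times \CX_2}(\CQ) \in \Dmod(\CX_1\times\CX_2)$. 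Using \propref{p:!} to identify $(\Delta_{\CX_1})_!(k_{\CX_1})$ with $\BD^{\on{Ve}}_{\CX_1 \times \CX_1}((\Delta_{\CX_1})_\bullet(\omega_{\CX_1}))$ and \propref{p:Verdier and check} to compare the abstract duality $\CF \mapsto \CF^\vee$ with the geometric $\BD^{\on{Ve}}$ on coherent objects, one verifies that postcomposing the abstract equivalence with the natural identification $\bC_1 \rightsquigarrow \Dmod(\CX_1)$ encoded by $\psId_{\CX_1}$ converts $\CQ^\vee_{\on{abs}}$ into $\BD^{\on{Ve}}_{\CX_1 \times \CX_2}(\CQ)$.

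The main obstacle I anticipate is precisely this final bookkeeping step: reconciling the abstract $\psId_{\bC_1}: \bC_1 \to \bC_1$ of \thmref{t:general} with the geometric $\psId_{\CX_1}: \Dmod(\CX_1)_{\on{co}} \to \Dmod(\CX_1)$, which have distinct targets, and tracking how the duality $\bD^{\on{Ve}}_{\CX_1 \times \CX_2}$ intertwines the two. The cleanest route appears to be to verify the resulting identifications after restriction to pairs $(U_1, U_2) \in \on{Ctrnk}(\CX_1) \times \on{Ctrnk}(\CX_2)$ of QCA co-truncative substacks, where the ambiguity disappears (since $\Dmod(U_i)_{\on{co}} = \Dmod(U_i)$) and the claim reduces to the already established \thmref{t:stacks}; the compatibility with transition maps then propagates the identification to $\CX_1, \CX_2$ globally.
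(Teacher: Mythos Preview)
Your plan has a genuine gap, and the paper explicitly flags it. In the Remark following \corref{c:stacks nonqc}, it is pointed out that \thmref{t:stacks nonqc} \emph{does not} fit into the paradigm of \thmref{t:general}: with $\bC_i=\Dmod(\CX_i)_{\on{co}}$, the abstract theorem would produce an isomorphism between two functors landing in $\bC_1=\Dmod(\CX_1)_{\on{co}}$, whereas the statement to be proved concerns functors landing in $\Dmod(\CX_1)$, which is a genuinely different category when $\CX_1$ is not quasi-compact. Your abstract $\psId_{\bC_1}$ is an endofunctor of $\Dmod(\CX_1)_{\on{co}}$ with kernel $(\bu_{\bC_1})^\vee\in \Dmod(\CX_1)\otimes\Dmod(\CX_1)_{\on{co}}$, while the geometric $\psId_{\CX_1}$ of \secref{sss:Ps} has kernel $(\Delta_{\CX_1})_!(k_{\CX_1})\in\Dmod(\CX_1\times\CX_1)=\Dmod(\CX_1)\otimes\Dmod(\CX_1)$; these live in different tensor products, and there is no natural identification $\Dmod(\CX_1)_{\on{co}}\simeq\Dmod(\CX_1)$ to mediate between them (indeed $\psId^{\on{naive}}$ is typically not an equivalence). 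Your ``bookkeeping step'' is therefore not bookkeeping at all but the substance of the result; the restriction-to-$(U_1,U_2)$ argument you sketch would need a precise identification of $(\bu_{\bC_1})^\vee$ and of how abstract duality interacts with the colimit presentation, which you have not supplied. Also note that \propref{p:!} and \propref{p:Verdier and check} are stated for QCA stacks and do not directly apply to a non-quasi-compact $\CX_1$.

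The paper avoids \thmref{t:general} entirely here and argues by a two-step reduction. First it treats the case $\CX_1$ quasi-compact: using the colimit description \eqref{e:as colimit} of $\Dmod(\CX_2)_{\on{co}}$, it suffices to check the isomorphism after precomposing with $(j_2)_{\on{co},\bullet}$ for each $U_2\in\on{Ctrnk}(\CX_2)$, and both sides then become the QCA statement of \thmref{t:stacks} for the kernel $\CQ_{U_2}=(\on{Id}\otimes (j_2)_{\on{co}}^\bullet)(\CQ)$. Second, for general $\CX_1$ it uses the limit description of $\Dmod(\CX_1)$: it suffices to check after postcomposing with $j_1^\bullet$ for each $U_1\in\on{Ctrnk}(\CX_1)$. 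The key input is \lemref{l:on open}, which gives $j_1^\bullet\circ\psId_{\CX_1}\simeq\psId_{U_1}\circ j_1^?$; together with $j_1^?\circ\sF^R\simeq(\sF\circ (j_1)_{\on{co},\bullet})^R$ this reduces to the already-established quasi-compact-source case for $U_1$. The role you hoped \thmref{t:general} would play is thus replaced by the geometric \lemref{l:on open}, which directly handles the passage between $\psId_{\CX_1}$ and $\psId_{U_1}$ without ever comparing $\Dmod(\CX_1)_{\on{co}}$ with $\Dmod(\CX_1)$ abstractly.
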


\sssec{}

By passing to the dual functors, we obtain: 

\begin{cor} \label{c:stacks nonqc}
Let $\CX_1$ and $\CX_2$ be truncatable stacks, and let 
$$\CQ\in \Dmod(\CX_1)\otimes \Dmod(\CX_2)_{\on{co}}$$
be coherent. Assume that the corresponding functor 
$$\sF_{\CX_1\to \CX_2,\CQ}:\Dmod(\CX_1)_{\on{co}}\to \Dmod(\CX_2)_{\on{co}}$$
preserves compactness.  Then we have a canonical isomorphism
$$(\sF_{\CX_1\to \CX_2,\CQ})^{\on{op}}\circ \psId_{\CX_1}\simeq \sF_{\CX_1\to \CX_2,\BD_{\CX_1\times \CX_2}^{\on{Ve}}(\CQ)}$$
as functors
$$\Dmod(\CX_1)_{\on{co}}\to \Dmod(\CX_2).$$
\end{cor}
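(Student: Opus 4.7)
The plan is to deduce \corref{c:stacks nonqc} from \thmref{t:stacks nonqc} purely by passage to dual functors, exactly as in the QCA case where \corref{c:stacks} was derived from \thmref{t:stacks} via \lemref{l:conjugate}. First I would apply the theorem to obtain the isomorphism
\[
\psId_{\CX_1}\circ (\sF_{\CX_1\to \CX_2,\CQ})^R\simeq \sF_{\CX_2\to \CX_1,\BD^{\on{Ve}}_{\CX_1\times \CX_2}(\CQ)}
\]
as continuous functors $\Dmod(\CX_2)_{\on{co}}\to \Dmod(\CX_1)$, and then pass to dual functors under the canonical self-dualities $\bD^{\on{Ve}}_{\CX_i}:\Dmod(\CX_i)^\vee\simeq \Dmod(\CX_i)_{\on{co}}$ recalled in the previous subsection.

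The next step is to identify each of the three dualized pieces. For the composition the general rule $(\sF'\circ \sG')^\vee\simeq (\sG')^\vee\circ (\sF')^\vee$ applies, so it suffices to understand $(\psId_{\CX_1})^\vee$, $((\sF_{\CX_1\to \CX_2,\CQ})^R)^\vee$, and $(\sF_{\CX_2\to \CX_1,\BD^{\on{Ve}}(\CQ)})^\vee$ separately. For the middle one, \corref{c:conjugate} gives
\[
((\sF_{\CX_1\to \CX_2,\CQ})^R)^\vee \simeq (\sF_{\CX_1\to \CX_2,\CQ})^{\on{op}}.
\]
For $\psId_{\CX_1}$, whose kernel $(\Delta_{\CX_1})_!(k_{\CX_1})$ carries an equivariance structure with respect to the flip of $\CX_1\times \CX_1$ (the analogue of \eqref{e:dual of pseud}, which extends to the truncatable setting since the argument is formal), passage to the dual yields $(\psId_{\CX_1})^\vee\simeq \psId_{\CX_1}$, now interpreted as a functor $\Dmod(\CX_1)_{\on{co}}\to \Dmod(\CX_1)$ via the identification $(\Dmod(\CX_1)_{\on{co}})^\vee\simeq \Dmod(\CX_1)$. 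Finally, for the right-hand side I use that under the equivalence $\Dmod(\CX_1\times \CX_2)\simeq \Dmod(\CX_2\times \CX_1)$, the same kernel $\BD^{\on{Ve}}_{\CX_1\times \CX_2}(\CQ)$ produces two dual functors, so that
\[
(\sF_{\CX_2\to \CX_1,\BD^{\on{Ve}}_{\CX_1\times \CX_2}(\CQ)})^\vee \simeq \sF_{\CX_1\to \CX_2,\BD^{\on{Ve}}_{\CX_1\times \CX_2}(\CQ)}
\]
as functors $\Dmod(\CX_1)_{\on{co}}\to \Dmod(\CX_2)$; this is the non-quasicompact analogue of the identification $(\sF_{X_1\to X_2,\CQ})^\vee\simeq \sF_{X_2\to X_1,\CQ}$ used in \secref{ss:conj functors}, and follows tautologically from the definition of the correspondence between kernels and functors in \secref{ss:abstract Verdier}.

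Assembling these three identifications with $(\sF'\circ \sG')^\vee\simeq (\sG')^\vee\circ (\sF')^\vee$ yields precisely
\[
(\sF_{\CX_1\to \CX_2,\CQ})^{\on{op}}\circ \psId_{\CX_1}\simeq \sF_{\CX_1\to \CX_2,\BD^{\on{Ve}}_{\CX_1\times \CX_2}(\CQ)},
\]
which is the asserted statement. The only real point requiring verification is the compatibility of the flip-symmetry argument with the asymmetric character of the dualities $\Dmod(\CX_i)^\vee\simeq \Dmod(\CX_i)_{\on{co}}$ in the truncatable setting — I would check it kernel-theoretically by observing that the flip interchanges the two natural factorizations of $(\Delta_{\CX_1})_!(k_{\CX_1})$ through $\Dmod(\CX_1)\otimes \Dmod(\CX_1)$ and $\Dmod(\CX_1)_{\on{co}}\otimes \Dmod(\CX_1)_{\on{co}}$, and is the principal (if minor) obstacle; everything else is a bookkeeping exercise in duality that parallels verbatim the QCA argument.
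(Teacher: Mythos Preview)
Your approach is correct and is exactly the paper's: the corollary is obtained from \thmref{t:stacks nonqc} by passing to dual functors, using \corref{c:conjugate} for the middle piece and the flip-symmetry of the kernel $(\Delta_{\CX_1})_!(k_{\CX_1})\in \Dmod(\CX_1\times \CX_1)$ for $(\psId_{\CX_1})^\vee\simeq \psId_{\CX_1}$. One small remark: your description of the ``principal obstacle'' is slightly off --- there is no factorization through $\Dmod(\CX_1)_{\on{co}}\otimes \Dmod(\CX_1)_{\on{co}}$; the kernel lives in $\Dmod(\CX_1)\otimes \Dmod(\CX_1)\simeq \Dmod(\CX_1\times \CX_1)$, and the self-duality of $\psId_{\CX_1}$ follows directly from the fact that this object is fixed by the swap of factors, since $\Delta_{\CX_1}$ lands in the diagonal.
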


\begin{rem}
Note that \thmref{t:stacks nonqc} \emph{does not} fit into the paradigm of \thmref{t:general}. Indeed, we start with
$\bC_i=\Dmod(\CX_i)_{\on{co}}$, $i=1,2$ and a functor $\sF:\bC_1\to \bC_2$, and while \thmref{t:general} talks about
an isomorphism between two functors $\bC_2\to \bC_1$, in \thmref{t:stacks nonqc}, the target category
is no longer $\bC_2=\Dmod(\CX_2)_{\on{co}}$, but rather $\Dmod(\CX_2)$.
\end{rem}

\ssec{Proof of \thmref{t:stacks nonqc}}



\sssec{}    \label{sss:source qc}

We shall first consider the case when $\CX_1$ is quasi-compact. In this case we will not distinguish between $\Dmod(\CX_1)$ and 
$\Dmod(\CX_1)_{\on{co}}$. 

\medskip

Using the equivalence \eqref{e:as colimit}, in order to prove the theorem, it suffices to construct a compatible family of isomorphisms of functors
\begin{multline} \label{e:restr to U_2}
\psId_{\CX_1}\circ (\sF_{\CX_1\to \CX_2,\CQ})^R \circ (j_2)_{\on{co},\bullet}\simeq \\
\simeq \sF_{\CX_2\to \CX_1,\BD_{\CX_1\times \CX_2}^{\on{Ve}}(\CQ)} \circ (j_2)_{\on{co},\bullet},\quad
(j_2:U_2\hookrightarrow \CX_2) \in \on{Ctrnk}(\CX_2).
\end{multline}

We have: 
$$(\sF_{\CX_1\to \CX_2,\CQ})^R \circ (j_2)_{\on{co},\bullet}\simeq 
\left((j_2)_{\on{co}}^\bullet \circ \sF_{\CX_1\to \CX_2,\CQ}\right)^R\simeq
(\sF_{\CX_1\to U_2,\CQ_U})^R,$$
where
$$\CQ_U:=\left(\on{Id}_{\Dmod(\CX_1)} \otimes (j_2)_{\on{co}}^\bullet \right)(\CQ).$$

The functor
$$\sF_{\CX_2\to \CX_1,\BD_{\CX_1\times \CX_2}^{\on{Ve}}(\CQ)} \circ (j_2)_{\on{co},\bullet}$$
is given by the kernel
$$(\on{Id}_{\Dmod(\CX_1)} \otimes ((j_2)_{\on{co},\bullet})^\vee)(\BD_{\CX_1\times \CX_2}^{\on{Ve}}(\CQ))\simeq
(\on{Id}_{\Dmod(\CX_1)} \otimes j_2^\bullet)(\BD_{\CX_1\times \CX_2}^{\on{Ve}}(\CQ)),$$
which by the definition of $\BD_{\CX_1\times \CX_2}^{\on{Ve}}$ identifies with
$$\BD_{\CX_1\times U_2}^{\on{Ve}}(\CQ_U).$$

\medskip

Hence, both sides of \eqref{e:restr to U_2} identify with the corresponding functors when we replace
$\CX_2$ by $U_2$ and $\CQ$ by $\CQ_U$. In this case, the required isomorhism for  \eqref{e:restr to U_2} 
follows from \thmref{t:stacks}. Furthermore, this system of isomorphisms is compatible under the restrictions
for $U'_2\hookrightarrow U_2$.

\medskip

This establishes the isomorphism of the theorem in the case when $\CX_1$ is quasi-compact. 

\sssec{}

Let now $\CX_1$ be general truncatable. By the definition of the category $\Dmod(\CX_1)$, it is enough to show that for every
quasi-compact open co-truncative $U_1\overset{j_1}\hookrightarrow \CX_1$, there exists a canonical 
isomorphism of functors  
\begin{equation} \label{e:restr to U_1}
j_1^\bullet\circ \psId_{\CX_1}\circ (\sF_{\CX_1\to \CX_2,\CQ})^R\simeq 
j_1^\bullet \circ \sF_{\CX_2\to \CX_1,\BD_{\CX_1\times \CX_2}^{\on{Ve}}(\CQ)},
\end{equation}
compatible with the restriction maps under $U'_1\hookrightarrow U''_1$.

\begin{lem} \label{l:on open} 
For a truncatable stack $\CX$ and $(U\overset{j}\hookrightarrow \CX)\in \on{Ctrnk}$
there is a canonical isomorphism of functors
$$j^\bullet\circ \psId_{\CX}\simeq \psId_{U}\circ j^?,\quad
\Dmod(\CX)_{\on{co}}\to \Dmod(U).$$
\end{lem}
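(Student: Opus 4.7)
The plan is to compare the two functors by identifying their kernels. Both $j^\bullet\circ \psId_\CX$ and $\psId_U\circ j^?$ are continuous functors $\Dmod(\CX)_{\on{co}}\to \Dmod(U)$, so by the kernel equivalence
\[
\on{Funct}_{\on{cont}}(\Dmod(\CX)_{\on{co}},\Dmod(U)) \simeq \Dmod(\CX)_{\on{co}}^\vee\otimes \Dmod(U) \simeq \Dmod(\CX)\otimes \Dmod(U) \simeq \Dmod(\CX\times U),
\]
it suffices to produce a canonical isomorphism between the two associated kernels in $\Dmod(\CX\times U)$. My candidate is the object $\wt j_!(k_U)$, where $\wt j\colon U\to \CX\times U$ is the graph map $u\mapsto (j(u),u)$, factored as $\wt j=(j\times \on{id}_U)\circ \Delta_U$; this factorization makes $\wt j$ representable, so $\wt j_!$ is defined on the holonomic object $k_U$.

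For the left-hand kernel, $\psId_\CX$ has kernel $(\Delta_\CX)_!(k_\CX)$, and post-composition with $j^\bullet$ corresponds to applying $\on{Id}_{\Dmod(\CX)}\otimes j^\bullet$ to this kernel. Since $j$ is an open embedding, $j^\bullet = j^!$, and this operation on kernels is $!$-pullback along $\on{id}_\CX\times j\colon \CX\times U\to \CX\times \CX$ (matching the behavior on external products). I then invoke base change along the Cartesian square
\[
\CD
U @>{j}>> \CX \\
@V{\wt j}VV @VV{\Delta_\CX}V \\
\CX\times U @>>{\on{id}_\CX \times j}> \CX\times\CX
\endCD
\]
(Cartesian because $j$ is a monomorphism) to conclude that $(\on{id}_\CX\times j)^!(\Delta_\CX)_!(k_\CX)\simeq \wt j_!\, j^!(k_\CX)=\wt j_!(k_U)$. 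For the right-hand kernel, since $U$ is QCA we have $\Dmod(U)_{\on{co}}=\Dmod(U)$ tautologically, so $\psId_U$ has kernel $(\Delta_U)_!(k_U)\in \Dmod(U\times U)$. Pre-composition with $j^?$ amounts to applying $(j^?)^\vee\otimes \on{Id}_{\Dmod(U)}$ to this kernel; the identity $(j_!)^\vee\simeq j^?$ recorded in \secref{sss:strange functors} dualizes to $(j^?)^\vee \simeq j_!$, so the resulting kernel is $(j\times \on{id}_U)_!(\Delta_U)_!(k_U)=\wt j_!(k_U)$, by the factorization $\wt j=(j\times \on{id}_U)\circ \Delta_U$.

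The only delicate ingredient is the base-change identity used in the first step, but it holds straightforwardly: $\on{id}_\CX\times j$ is an open embedding (so its $!$-pullback is defined on all of $\Dmod(\CX\times \CX)$ and coincides with $\bullet$-pullback), $\Delta_\CX$ is representable (hence safe), and $k_\CX$ is holonomic so that $(\Delta_\CX)_!(k_\CX)$ is well-defined. I therefore do not expect any serious obstacle; what remains beyond the two kernel computations is a routine diagram chase to verify that the resulting isomorphism of kernels is canonical and natural, i.e., descends to an isomorphism of the associated functors under the kernel equivalence.
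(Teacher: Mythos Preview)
Your proof is correct and follows essentially the same approach as the paper: both identify the two functors via their kernels in $\Dmod(\CX\times U)$ (resp.\ $\Dmod(U\times\CX)$ in the paper's factor ordering), compute the kernel of $j^\bullet\circ\psId_\CX$ by base change along the Cartesian square involving $\Delta_\CX$ and the open embedding, and compute the kernel of $\psId_U\circ j^?$ using $(j^?)^\vee\simeq j_!$, arriving at the common object $((j\times\on{id}_U)\circ\Delta_U)_!(k_U)$. The only point the paper spells out that you leave implicit is the identification of $j_!\otimes\on{Id}_{\Dmod(U)}$ with $(j\times\on{id}_U)_!$, which the paper justifies by noting both are left adjoint to $j^\bullet\otimes\on{Id}_{\Dmod(U)}\simeq(j\times\on{id}_U)^\bullet$.
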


(Note that if $\CX$ were quasi-compact, the assertion of the lemma is a particular case of \corref{c:!and*}.)

\begin{proof}

The functor $j^\bullet\circ \psId_{\CX}$ is given by the kernel
$$(j\times \on{id}_{\CX})^\bullet\left((\Delta_{\CX})_!(k_{\CX})\right),$$
which by base change identifies with
$$((\on{id}_{U}\times j)\circ \Delta_{U})_!(k_{U}),$$
i.e., $(\on{id}_{U}\times j)_! \circ (\Delta_{U})_!(k_{U})$. 

\medskip

The functor $\psId_{U}\circ j^?$ is given by the kernel
$$(\on{Id}_{\Dmod(U)}\otimes (j^?)^\vee)\left((\Delta_{U})_!(k_{U})\right)\simeq
(\on{Id}_{\Dmod(U)}\otimes j_!)\left((\Delta_{U})_!(k_{U})\right).$$

We note that the functor $\on{Id}_{\Dmod(U)}\otimes j_!$ is the left adjoint of
$\on{Id}_{\Dmod(U)}\otimes j^\bullet$, and hence identifies with
$(\on{id}_{U}\times j)_!$. 

\end{proof}

\sssec{}

Hence, we obtain that the left-hand side in \eqref{e:restr to U_1} identifies canonically with
$$\psId_{U_1}\circ j_1^? \circ (\sF_{\CX_1\to \CX_2,\CQ})^R,$$
which we further rewrite as
$$\psId_{U_1}\circ (\sF_{\CX_1\to \CX_2,\CQ}\circ (j_1)_{\on{co},\bullet})^R.$$
Note that the functor 
$$\sF_{\CX_1\to \CX_2,\CQ}\circ (j_1)_{\on{co},\bullet}:\Dmod(U_1)\to \Dmod(\CX_2)_{\on{co}}$$
preserves compactness since $(j_1)_{\on{co},\bullet}$ does. The above functor 
is given by the kernel
$$\CQ_U:=(((j_1)_{\on{co},\bullet})^\vee\otimes \on{Id}_{\Dmod(\CX_2)_{\on{co}}})(\CQ)
\simeq 
(j_1^\bullet \otimes \on{Id}_{\Dmod(\CX_2)_{\on{co}}})(\CQ)\in \Dmod(U_1)\otimes \Dmod(\CX_2)_{\on{co}}.$$

\medskip

Now, the functor $$j_1^\bullet \circ \sF_{\CX_2\to \CX_1,\BD_{\CX_1\times \CX_2}^{\on{Ve}}(\CQ)},$$
appearing in the right-hand side of \eqref{e:restr to U_1}, is given by the kernel
$$(j_1^\bullet\otimes \on{Id}_{\Dmod(\CX_2)})\left(\BD_{\CX_1\times \CX_2}^{\on{Ve}}(\CQ)\right)
\simeq  \BD_{U_1\times \CX_2}^{\on{Ve}}(\CQ_U).$$

\medskip 

Hence, both sides in \eqref{e:restr to U_1} identify with the corresponding functors when we 
replace $\CX_1$ by $U_1$ and $\CQ$ by $\CQ_U$. We define the isomorphism in \eqref{e:restr to U_1} 
to be the isomorphism of \secref{sss:source qc} for the stack $U_1\times \CX_2$.
These identifications are
compatible under further restrictions for $U'_1\hookrightarrow U''_1$.

\medskip

This establishes the required isomorphism 
$$\psId_{\CX_1}\circ (\sF_{\CX_1\to \CX_2,\CQ})^R\simeq 
\sF_{\CX_2\to \CX_1,\BD_{\CX_1\times \CX_2}^{\on{Ve}}(\CQ)}.$$

\qed

\ssec{Applications}

\sssec{}

First, passing to dual functors in \lemref{l:on open}, we obtain:
\begin{cor} \label{c:on open}
For $(U\overset{j}\hookrightarrow \CX)\in \on{Ctrnk}(\CX)$
there is a canonical isomorphism of functors
$$\psId_{\CX}\circ j_{\on{co},\bullet}\simeq j_!\circ \psId_{U},\quad
\Dmod(U)\to \Dmod(\CX).$$
\end{cor}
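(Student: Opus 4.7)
The plan is to deduce \corref{c:on open} from \lemref{l:on open} by applying the passage-to-dual-functors operation $\sF\mapsto \sF^\vee$, making use of the duality framework for truncatable stacks: the identifications $\bD_\CX^{\on{Ve}}:\Dmod(\CX)^\vee\simeq \Dmod(\CX)_{\on{co}}$ and $\bD_U^{\on{Ve}}:\Dmod(U)^\vee\simeq \Dmod(U)$ (the latter since $U$, being quasi-compact open in a locally QCA stack, is QCA), together with the compatibilities of \secref{sss:strange functors}.

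First, I would identify the duals of the two composed functors appearing in \lemref{l:on open}. Under the above self-duality identifications, \secref{sss:strange functors} gives $(j^\bullet)^\vee\simeq j_{\on{co},\bullet}$, and from $(j_!)^\vee\simeq j^?$ we obtain, by involutivity of duality on continuous functors between dualizable categories, $(j^?)^\vee\simeq j_!$. All functors in sight ($j^\bullet$, $j^?$, $\psId_\CX$, $\psId_U$, $j_{\on{co},\bullet}$, $j_!$) are continuous, so applying $\vee$ to the isomorphism of \lemref{l:on open} yields an isomorphism
\[
\psId_\CX^\vee \circ j_{\on{co},\bullet}\;\simeq\; j_!\circ \psId_U^\vee
\]
of continuous functors $\Dmod(U)\to \Dmod(\CX)$.

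The remaining step is to identify $\psId_\CX^\vee\simeq \psId_\CX$ and $\psId_U^\vee\simeq \psId_U$. This is the non-quasi-compact analog of \eqref{e:dual of pseud} and follows from the flip-symmetry of the defining kernel: the swap automorphism $\sigma:\CX\times\CX\to \CX\times\CX$ satisfies $\sigma\circ\Delta_\CX\simeq \Delta_\CX$, so $\sigma^!\bigl((\Delta_\CX)_!(k_\CX)\bigr)\simeq (\Delta_\CX)_!(k_\CX)$. Under the chain of identifications
\[
(\Dmod(\CX)_{\on{co}})^\vee\otimes \Dmod(\CX)\;\simeq\; \Dmod(\CX)\otimes \Dmod(\CX)\;\simeq\; \Dmod(\CX\times \CX),
\]
the operation of dualization of functors $\Dmod(\CX)_{\on{co}}\to \Dmod(\CX)$ corresponds to $\sigma^!$ on kernels (combined with the identifications $\bD_\CX^{\on{Ve}}$), so flip-invariance of $(\Delta_\CX)_!(k_\CX)$ translates exactly into $\psId_\CX^\vee\simeq \psId_\CX$. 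The same argument applies verbatim for the QCA stack $U$. Substituting into the displayed isomorphism above gives
\[
\psId_\CX\circ j_{\on{co},\bullet}\;\simeq\; j_!\circ \psId_U,
\]
as claimed.

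The only nontrivial ingredient is the flip-symmetry of $\psId$, which is essentially formal once one unwinds the definition; apart from that, the argument is a mechanical passage to duals. I do not anticipate any serious obstacle beyond making precise the correspondence between the $\vee$-operation on functors and the swap $\sigma^!$ on their kernels in the truncatable setting, which is already implicit in the constructions of \secref{sss:Ps}.
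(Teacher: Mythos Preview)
Your proposal is correct and follows exactly the same approach as the paper, which simply states that the corollary is obtained by ``passing to dual functors in \lemref{l:on open}.'' You have merely made explicit what this passage entails: the identifications $(j^\bullet)^\vee\simeq j_{\on{co},\bullet}$ and $(j^?)^\vee\simeq j_!$ from \secref{sss:strange functors}, together with the flip-symmetry $(\psId_\CX)^\vee\simeq \psId_\CX$ (the non-quasi-compact analog of \eqref{e:dual of pseud}).
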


\sssec{}

Let $\CX$ be a truncatable stack. We shall say that $\CX$ is miraculous if the functor
$\psId_\CX$ is an equivalence. 

\begin{prop}
For a stack $\CX$ the following conditions are equivalent:

\smallskip

\noindent{\em(a)} The $\CX$ is miraculous.

\smallskip

\noindent{\em(b)} Every co-truncatable quasi-compact open substack of $\CX$ is miraculous.

\smallskip

\noindent{\em(c)} There is a cofinal family in $\on{Ctrnk}(\CX)$ consisting of miraculous stacks.

\end{prop}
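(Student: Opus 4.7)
The plan is to prove (a) $\Leftrightarrow$ (b) $\Leftrightarrow$ (c), observing first that (b) $\Rightarrow$ (c) is immediate: since $\CX$ is truncatable, $\on{Ctrnk}(\CX)$ is itself a cofinal family in $\on{Ctrnk}(\CX)$, and by (b) every one of its members is miraculous. Both of the remaining implications rest on the twin functorialities of $\psId$ along a co-truncative open embedding $j:U\hookrightarrow \CX$: \corref{c:on open} provides $\psId_\CX\circ j_{\on{co},\bullet}\simeq j_!\circ \psId_U$, and \lemref{l:on open} provides $j^\bullet\circ \psId_\CX\simeq \psId_U\circ j^?$.

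For (a) $\Rightarrow$ (b), fix a co-truncative $j:U\hookrightarrow \CX$ with $\CX$ miraculous. To show $\psId_U$ is fully faithful, I would chain the following isomorphisms for $\CF,\CG\in \Dmod(U)$:
$$\CMaps(\psId_U\CF,\psId_U\CG)\simeq \CMaps(j_!\psId_U\CF,j_!\psId_U\CG)\simeq \CMaps(\psId_\CX j_{\on{co},\bullet}\CF,\psId_\CX j_{\on{co},\bullet}\CG)\simeq \CMaps(\CF,\CG),$$
using fully faithfulness of $j_!$ (standard for an open embedding), \corref{c:on open}, the hypothesis on $\psId_\CX$, and fully faithfulness of $j_{\on{co},\bullet}$ (which follows from the identification $(j_!)^{\on{op}}\simeq j_{\on{co},\bullet}$ in \secref{sss:strange functors} together with fully faithfulness of $j_!$). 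For essential surjectivity, given $\CH\in \Dmod(U)$, choose $\wt\CH\in \Dmod(\CX)_{\on{co}}$ with $\psId_\CX(\wt\CH)\simeq j_!(\CH)$; then
$$\CH\simeq j^\bullet j_!(\CH)\simeq j^\bullet\psId_\CX(\wt\CH)\simeq \psId_U(j^?(\wt\CH)),$$
the last step by \lemref{l:on open}.

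For (c) $\Rightarrow$ (a), let $A\subset \on{Ctrnk}(\CX)$ be a cofinal subset of miraculous stacks. The strategy is to realize $\psId_\CX$ as a colimit of the equivalences $\psId_{U_\alpha}$. By cofinality and \eqref{e:as colimit}, $\Dmod(\CX)_{\on{co}}\simeq \underset{\alpha\in A}{colim}\, \Dmod(U_\alpha)$ with transitions $(j_{\alpha\beta})_{\on{co},\bullet}$. Dually, using the Verdier identification $\Dmod(\CX)^\vee\simeq \Dmod(\CX)_{\on{co}}$ together with the duality $(j^\bullet)^\vee\simeq j_{\on{co},\bullet}$, one obtains $\Dmod(\CX)\simeq \underset{\alpha}{lim}\, \Dmod(U_\alpha)$ with transitions $j_{\alpha\beta}^\bullet$; since each $j_{\alpha\beta}$ is co-truncative, $j_{\alpha\beta}^\bullet$ admits a continuous left adjoint $j_{\alpha\beta,!}$, and passing to left adjoints rewrites this limit as $\underset{\alpha}{colim}\, \Dmod(U_\alpha)$ with transitions $j_{\alpha\beta,!}$. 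Now \corref{c:on open} gives commutative squares
$$\psId_{U_\beta}\circ (j_{\alpha\beta})_{\on{co},\bullet}\simeq j_{\alpha\beta,!}\circ \psId_{U_\alpha},$$
which assemble $\{\psId_{U_\alpha}\}_{\alpha\in A}$ into a morphism of diagrams whose colimit coincides with $\psId_\CX$ (this can be verified on the generators $j_{\alpha,\on{co},\bullet}(\CF)$, where \corref{c:on open} again applies directly). Since each $\psId_{U_\alpha}$ is an equivalence, the map of diagrams is an isomorphism, hence induces an equivalence on colimits.

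The main technical obstacle is not any of the individual isomorphisms, but rather the coherence needed to package $\{\psId_{U_\alpha}\}_{\alpha}$ into a genuine morphism of $\infty$-categorical diagrams in $\StinftyCat$: one must check that the squares above are compatible with composition $U_\alpha\hookrightarrow U_\beta\hookrightarrow U_\gamma$ and carry the required higher coherences. This should follow from the fact that both sides of \corref{c:on open} are built via base change, which is natural in all the obvious ways, but making this rigorous at the $\infty$-categorical level is the one point requiring genuine care.
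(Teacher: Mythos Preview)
Your proof is correct, but it takes a different route from the paper in both nontrivial implications.

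For (a) $\Rightarrow$ (b), the paper gives a slicker argument. Rather than checking full faithfulness and essential surjectivity of $\psId_U$ separately, it shows only that $\psId_U$ admits a \emph{left inverse}: from \corref{c:on open} one has $j_!\circ\psId_U\simeq\psId_\CX\circ j_{\on{co},\bullet}$, and each of $\psId_\CX$ and $j_{\on{co},\bullet}$ has a left inverse (the latter being $j^\bullet_{\on{co}}$), hence so does $j_!\circ\psId_U$, and since $j_!$ is fully faithful, so does $\psId_U$. The paper then invokes the self-duality $(\psId_U)^\vee\simeq\psId_U$ to convert the left inverse into a right inverse by dualizing. This avoids your separate essential-surjectivity step and the second appeal to \lemref{l:on open}. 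Your argument is perfectly valid; the paper's is just shorter.

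For (c) $\Rightarrow$ (a), the paper works with the \emph{limit} presentations
$$\Dmod(\CX)_{\on{co}}\simeq \underset{\on{Ctrnk}(\CX)^{\on{op}}}{lim}\,\Dmod(U)\quad\text{(transitions }j^?\text{)},\qquad
\Dmod(\CX)\simeq \underset{\on{Ctrnk}(\CX)^{\on{op}}}{lim}\,\Dmod(U)\quad\text{(transitions }j^\bullet\text{)},$$
and uses \lemref{l:on open} to see that the already-defined functor $\psId_\CX$ intertwines the evaluations with the levelwise equivalences $\psId_U$; since each $\psId_U$ is an equivalence, so is $\psId_\CX$. Your colimit approach via \corref{c:on open} is the dual picture and is equally valid. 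The slight advantage of the paper's formulation is that one does not have to \emph{assemble} $\psId_\CX$ from the $\psId_U$: it is already given as a single functor, and one merely checks its compatibility with the evaluation functors. This sidesteps, at least rhetorically, the coherence issue you flag---though of course the underlying content is the same, and the paper is equally terse on this point.
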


\begin{proof}

We reproduce the proof from \cite[Lemma 4.5.7]{DrGa2}. Assume that $\CX$ is miraculous, and let
$(j:U\to \CX)\in \on{Ctrnk}(\CX)$.  

\medskip

Let us first show that the functor $\psId_U$ has a left inverse.
For this it is enough to show that the composition $j_!\circ \psId_U$ has a left inverse.
Taking into account the isomorphism of \corref{c:on open},
it suffices to show that each of the functors $\psId_{\CX}$ and $j_{\on{co},\bullet}$ admits a left inverse. 
For $\psId_\CX$ this follows from the assumption that $\CX$ is miraculous. For $j_{\on{co},\bullet}$, the
left inverse is $j_{\on{co}}^\bullet$.

\medskip

Now, if $(\psId_U)^{-1,L}$ is the left inverse of $\psId_U$, passing to dual functors in
$$(\psId_U)^{-1,L}\circ \psId_U\simeq \on{Id}_{\Dmod(U)},$$
we obtain that $((\psId_U)^{-1,L})^\vee$ is the right inverse of $\psId_U$. Hence,
$\psId_U$ is an equivalence.

\medskip

The implication  (b) $\Rightarrow$ (c) is tautological. 

\medskip

The implication (c) $\Rightarrow$ (a) follows
from \lemref{l:on open}, since the functors $\psId_U$ define an equivalence between the limits
$$\underset{\on{Ctrnk}^{\on{op}}}{lim}\, \Dmod^?\to \underset{\on{Ctrnk}^{\on{op}}}{lim}\, \Dmod^\bullet.$$

\end{proof}

The proof of the following result is given in \cite{duality}:

\begin{thm}
The stack $\Bun_G$ of principal $G$-bundles on a complete smooth curve $X$, where $G$ is a reductive group,
is miraculous.
\end{thm}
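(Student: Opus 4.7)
The plan is to combine the reduction via cofinal families with a direct analysis on Harder--Narasimhan truncations. By the proposition immediately preceding the theorem, it suffices to exhibit a cofinal family in $\on{Ctrnk}(\Bun_G)$ consisting of miraculous QCA stacks. The natural candidate is the family of Harder--Narasimhan open substacks $\Bun_G^{\leq \mu}$, indexed by dominant coweights $\mu$; that these form a cofinal system of quasi-compact co-truncative open substacks of $\Bun_G$ is precisely what goes into establishing the truncatability of $\Bun_G$ in \cite{DrGa2}, so no additional work is required at this reduction step.

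The problem is therefore reduced to showing that each QCA stack $\CX := \Bun_G^{\leq \mu}$ is miraculous. By \corref{c:miraculous}, it suffices to show that the endo-functor $\psId_\CX$, given by the kernel $(\Delta_\CX)_!(k_\CX) \in \Dmod(\CX \times \CX)$, preserves compactness; equivalently, by \corref{c:when right D}, that it admits a continuous right adjoint. Since $\Bun_G$ is smooth of well-defined local dimension, one expects $\psId_\CX$ to differ from the identity (suitably shifted) by a correction supported on the boundary where automorphism groups jump.

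The main obstacle is the non-properness of the diagonal $\Delta_\CX$: near the boundary of $\Bun_G^{\leq \mu}$ the automorphism groups grow, so the kernel $(\Delta_\CX)_!(k_\CX)$ is not compact and one cannot invoke \thmref{t:stacks} directly. The strategy is to stratify $\CX \times \CX$ according to the relative position of two $G$-bundles, using parabolic reduction and Drinfeld's compactifications, and to analyze the contribution of each stratum to $(\Delta_\CX)_!(k_\CX)$. On each stratum, indexed by a standard parabolic $P \subset G$, one encounters along the unipotent radical direction a geometry of the $V/\BG_m$-type, whose miraculousness was established in \propref{p:vector space}, while the reductive Levi directions contribute via \secref{sss:classifying}.

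The hardest step will be assembling these stratum-wise contributions and matching the cohomological shifts produced by Verdier duality against the growth of automorphism groups, so as to conclude that the ``boundary correction'' in $\psId_\CX$ is itself an equivalence rather than merely an endomorphism. This is precisely the ``strange/miraculous functional equation'' underlying the geometric Langlands program, and the full argument is sketched in \cite{duality}; having it as an ingredient, the theorem follows by combining the cofinality reduction above with the equivalent characterization of miraculousness for truncatable stacks.
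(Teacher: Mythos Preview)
The paper does not actually prove this theorem: the sentence immediately preceding it reads ``The proof of the following result is sketched in \cite{duality},'' and no argument is given in the body of the paper. So there is no proof here for your proposal to be compared against.

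Your proposal is consistent with this: you correctly perform the reduction via the preceding proposition to a cofinal family of co-truncative quasi-compact opens (the Harder--Narasimhan truncations from \cite{DrGa2}), and you correctly identify \corref{c:miraculous} as the natural tool for each such truncation. But your proposal is not a proof either---it is an outline whose crucial step (showing that $\psId_{\CX}$ preserves compactness on each $\Bun_G^{\leq \mu}$, via a stratification by relative position and an assembly of $V/\BG_m$-type and $\on{pt}/G$-type contributions) is left as a sketch that you yourself defer to \cite{duality}. The phrases ``one expects,'' ``the strategy is,'' and ``having it as an ingredient'' signal that the substantive analytic content is not carried out here. In particular, the ``assembling'' step you flag as hardest is exactly where the work lies, and nothing in your write-up supplies it.

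In short: both the paper and your proposal point to \cite{duality} for the actual argument; your proposal adds a plausible roadmap but does not constitute an independent proof.
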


\sssec{}

We shall say that a truncatable stack $\CX$ is mock-proper if the functor $(p_\CX)_\blacktriangle$
(defined in \secref{sss:black nonqc}) preserves compactness.  

\medskip

By \lemref{l:conjugate}, $\CX$ mock-proper if and only if the functor 
$$(p_\CX)_!:\Dmod(\CX)\to \on{pt},$$
right adjoint to $p_\CX^!$, is defined.

\medskip

The following assertion is proved in \cite[Corollary 4.3.2]{Contr}:

\begin{prop}
The stack $\Bun_G$ is mock-proper.
\end{prop}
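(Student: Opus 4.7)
The plan is a two-step reduction. First I would pass from $\Bun_G$ to its quasi-compact co-truncative open pieces, and then analyze each piece via the Harder-Narasimhan stratification.

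Under the equivalence \eqref{e:as colimit}, the functor $(p_{\Bun_G})_\blacktriangle$ is characterized by the compatible family $\{(p_U)_\blacktriangle\}$ indexed by $U \in \on{Ctrnk}(\Bun_G)$. Since the compact generators of $\Dmod(\Bun_G)_{\on{co}}$ have the form $j_{\on{co},\bullet}(\CF)$ for $(U \overset{j}\hookrightarrow \Bun_G) \in \on{Ctrnk}(\Bun_G)$ and $\CF \in \Dmod(U)^c$, and since $(p_{\Bun_G})_\blacktriangle(j_{\on{co},\bullet}(\CF)) \simeq (p_U)_\blacktriangle(\CF)$, mock-properness of $\Bun_G$ is equivalent to mock-properness in the QCA sense of each $U \in \on{Ctrnk}(\Bun_G)$. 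Moreover, mock-properness descends under co-truncative open immersions: if $V \overset{j}\hookrightarrow U$ is co-truncative and $U$ is mock-proper, then $(p_V)_\blacktriangle \simeq (p_U)_\blacktriangle \circ j_\bullet$ preserves compactness. By \cite[Theorem 4.1.8]{DrGa2}, the Harder-Narasimhan truncations $\Bun_G^{\leq \mu}$ form a cofinal family in $\on{Ctrnk}(\Bun_G)$, so it suffices to prove mock-properness of each $\Bun_G^{\leq \mu}$.

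For a fixed $\mu$, the strategy is to stratify $\Bun_G^{\leq \mu}$ by the locally closed HN substacks indexed by conjugacy classes of standard parabolics $P$ and instability characters $\mu' \leq \mu$, and to decompose $(p_{\Bun_G^{\leq \mu}})_\bullet$ over the resulting finite sequence of excision triangles. Each HN stratum identifies with an open substack of $\Bun_P$ for a parabolic $P$, fibering via $\Bun_P \to \Bun_M$ over the semistable locus inside $\Bun_M$ for the Levi quotient $M$. This reduces the problem, by induction on semisimple rank, to control of $\Bun_M^{\on{ss}}$ for proper Levis together with control of the unipotent radical fibers of $\Bun_P \to \Bun_M$.

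The principal obstacle is producing uniform cohomological-amplitude bounds: one must show that on each stratum the de Rham cohomology of a compact D-module is bounded and of finite total dimension, and that these bounds combine well over the finite stratification. The finite-dimensionality and boundedness for the unipotent fibrations rest on the contractibility and affineness theorems for Drinfeld's compactifications proved in \cite{Contr}, while the base case of $\Bun_M^{\on{ss}}$ is handled by noting that the semistable locus admits a coarse moduli space of finite type, so that its de Rham cohomology of a compact object is finite. Assembling these pieces yields mock-properness of $\Bun_G^{\leq \mu}$ and hence of $\Bun_G$.
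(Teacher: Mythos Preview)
The paper does not prove this proposition; it cites \cite[Corollary 4.3.2]{Contr}. The argument there, which you can reconstruct from \lemref{l:cohomology BunG} and \thmref{t:contr} in the present paper, proceeds via the Ran Grassmannian rather than the Harder--Narasimhan stratification: since $\Gr_{G,\Ran(X)}$ is a colimit of proper schemes, the functor $(p_{\Gr_{G,\Ran(X)}})_\bullet$ is left adjoint to $p_{\Gr_{G,\Ran(X)}}^!$; combined with the full faithfulness of $\pi^!:\Dmod(\Bun_G)\to\Dmod(\Gr_{G,\Ran(X)})$ (the contractibility theorem of \cite{Contr}), one gets $(p_{\Bun_G})_!\simeq (p_{\Gr_{G,\Ran(X)}})_\bullet\circ\pi^!$, which is therefore defined on all of $\Dmod(\Bun_G)$.

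Your HN-stratification approach is a genuinely different route. The first reduction step (to mock-properness of each $\Bun_G^{\leq\mu}$) is correct and is exactly the lemma preceding \corref{c:cotrunk of BunG}. The inductive shape is also reasonable: the HN strata are truncative by \cite{DrGa2}, so excision reduces to mock-properness of each stratum. But two points in your sketch do not hold up. First, your appeal to \cite{Contr} for ``contractibility and affineness theorems for Drinfeld's compactifications'' controlling the fibration $\Bun_P\to\Bun_M$ is misplaced: that paper is about the Ran Grassmannian, and what you actually need about these fibers (that they are iterated stacky affine spaces over $\Bun_M$) is an elementary deformation-theoretic fact unrelated to Drinfeld's compactifications. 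Second, the open stratum in your stratification is $\Bun_G^{\on{ss}}$ itself, and here ``admits a coarse moduli space of finite type'' is not enough. You need the coarse moduli to be \emph{proper} (Ramanathan's theorem), and even then you must show that $(\,\cdot\,)_\blacktriangle$ along the non-representable map $\Bun_G^{\on{ss}}\to M_G^{\on{ss}}$ preserves compactness, which is not automatic since the fibers over strictly semistable points are complicated quotient stacks. Without this, your induction has no base, and supplying it is essentially as hard as the original problem. The Ran-Grassmannian argument in \cite{Contr} sidesteps this difficulty entirely.
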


\sssec{}  \label{sss:contrunk proper}

The following is immediate from the definitions:

\begin{lem}
For a stack $\CX$ the following conditions are equivalent:

\smallskip

\noindent{\em(a)} The $\CX$ is mock-proper.

\smallskip

\noindent{\em(b)} Every quasi-compact open co-truncative substack of $\CX$ is mock-proper.

\smallskip

\noindent{\em(c)} There is a cofinal family in $\on{Ctrnk}(\CX)$ consisting of mock-proper stacks.

\end{lem}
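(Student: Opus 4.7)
The plan is to use the explicit description of compact generators of $\Dmod(\CX)_{\on{co}}$ together with the compatibility of $(p_\CX)_\blacktriangle$ with the insertion functors $j_{\on{co},\bullet}$, and then invoke the elementary fact that a continuous functor between compactly generated categories preserves compactness if and only if it does so on a set of compact generators.

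First, I would recall from \secref{sss:co} that $\Dmod(\CX)_{\on{co}}$ is compactly generated by objects of the form $j_{\on{co},\bullet}(\CF)$ for $(j:U\hookrightarrow \CX)\in \on{Ctrnk}(\CX)$ and $\CF\in \Dmod(U)^c$. Next, from \secref{sss:black nonqc}, the functor $(p_\CX)_\blacktriangle$ is defined so that $(p_\CX)_\blacktriangle\circ j_{\on{co},\bullet}\simeq (p_U)_\blacktriangle$ for every such $j$. Thus $(p_\CX)_\blacktriangle$ preserves compactness if and only if $(p_U)_\blacktriangle(\CF)\in \Vect$ is compact for every $U\in \on{Ctrnk}(\CX)$ and every $\CF\in \Dmod(U)^c$, which is exactly the statement that every $U\in \on{Ctrnk}(\CX)$ is mock-proper. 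This establishes (a)~$\Leftrightarrow$~(b), and the implication (b)~$\Rightarrow$~(c) is tautological (take the cofinal family to be all of $\on{Ctrnk}(\CX)$).

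For (c)~$\Rightarrow$~(a), I would argue that it suffices to check compactness of $(p_\CX)_\blacktriangle(j_{\on{co},\bullet}(\CF))$ for $j:U\hookrightarrow\CX$ in $\on{Ctrnk}(\CX)$ and $\CF\in \Dmod(U)^c$. By the assumed cofinality, pick $U'\in \on{Ctrnk}(\CX)$ mock-proper and containing $U$, and write $j=j'\circ j_{U\to U'}$. Since both $U$ and $U'$ lie in $\on{Ctrnk}(\CX)$, the inclusion $j_{U\to U'}$ is co-truncative (using the definition of $\on{Ctrnk}(\CX)$ with $U'\cap U=U$), hence $(j_{U\to U'})_\bullet$ preserves compactness. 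The compatibility of the insertion functors in the colimit presentation \eqref{e:as colimit} yields $j_{\on{co},\bullet}(\CF)\simeq (j')_{\on{co},\bullet}\bigl((j_{U\to U'})_\bullet(\CF)\bigr)$, and then $(p_\CX)_\blacktriangle(j_{\on{co},\bullet}(\CF))\simeq (p_{U'})_\blacktriangle\bigl((j_{U\to U'})_\bullet(\CF)\bigr)$ is compact by mock-properness of $U'$.

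Since the author himself flags the assertion as ``immediate from the definitions,'' I do not anticipate a genuine obstacle; the only point requiring care is the cofinality step in (c)~$\Rightarrow$~(a), specifically the observation that inclusions within $\on{Ctrnk}(\CX)$ are automatically co-truncative, which is needed to ensure that transition morphisms in the colimit preserve compactness.
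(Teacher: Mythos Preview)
Your proposal is correct and is exactly the natural unpacking of what the paper intends; the paper itself gives no proof beyond the phrase ``immediate from the definitions,'' and your argument using the compact generators $j_{\on{co},\bullet}(\CF)$ together with the compatibility $(p_\CX)_\blacktriangle\circ j_{\on{co},\bullet}\simeq (p_U)_\blacktriangle$ is precisely what that phrase is gesturing at. One small simplification in your (c)~$\Rightarrow$~(a): since a cofinal family $I\subset\on{Ctrnk}(\CX)$ already computes the colimit \eqref{e:as colimit}, the objects $j'_{\on{co},\bullet}(\CF')$ for $U'\in I$ and $\CF'\in\Dmod(U')^c$ are themselves a set of compact generators, so you need not pass through an arbitrary $U$ at all---but your version via the co-truncativeness of $U\hookrightarrow U'$ is equally valid.
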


Hence, we obtain:

\begin{cor}  \label{c:cotrunk of BunG}
Every quasi-compact open co-truncative substack of $\Bun_G$ is mock-proper.
\end{cor}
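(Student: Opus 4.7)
The proof is essentially immediate from the two facts just established. The plan is to apply the lemma in \secref{sss:contrunk proper} with $\CX := \Bun_G$. That lemma states the equivalence of (a) mock-properness of $\CX$, (b) mock-properness of every quasi-compact open co-truncative substack of $\CX$, and (c) the existence of a cofinal family in $\on{Ctrnk}(\CX)$ consisting of mock-proper stacks.

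Since the proposition immediately preceding the corollary asserts that $\Bun_G$ itself is mock-proper, condition (a) holds for $\CX = \Bun_G$. Invoking the implication (a) $\Rightarrow$ (b) of the lemma then yields exactly the statement of the corollary: every quasi-compact open co-truncative substack $U \hookrightarrow \Bun_G$ is mock-proper, i.e., the functor $(p_U)_\blacktriangle : \Dmod(U) \to \Vect$ preserves compactness.

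There is no real obstacle here; the corollary is just the specialization of a general formal equivalence to the concrete input supplied by the mock-properness of $\Bun_G$. The only thing worth remarking is that the content of the statement, viewed this way, is genuinely carried by the nontrivial inputs: the mock-properness of $\Bun_G$ (whose proof is external, cited from \cite[Corollary 4.3.2]{Contr}), and the descent/compatibility properties of $(p_\CX)_\blacktriangle$ under restriction to co-truncative quasi-compact opens that underlie the lemma in \secref{sss:contrunk proper}.
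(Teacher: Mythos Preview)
Your proposal is correct and matches the paper's approach exactly: the corollary is stated immediately after the lemma in \secref{sss:contrunk proper} with the words ``Hence, we obtain,'' and is deduced precisely by combining the mock-properness of $\Bun_G$ (the preceding proposition) with the implication (a) $\Rightarrow$ (b) of that lemma.
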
 

\sssec{}

The next assertion is proved in the same way as \propref{p:mock-proper}:

\begin{prop}
Let $\CX$ be mock-proper and smooth of dimension $n$. Then there exists a canonical isomorphism of functors
$$(p_\CX)_\blacktriangle \simeq (p_\CX)_!\circ \psId_\CX[2n].$$
\end{prop}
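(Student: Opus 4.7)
The plan is to apply \corref{c:stacks nonqc} to the functor $(p_\CX)_\blacktriangle: \Dmod(\CX)_{\on{co}} \to \Vect$, taking $\CX_1 = \CX$ and $\CX_2 = \on{pt}$. First I would identify $(p_\CX)_\blacktriangle$ as the functor $\sF_{\CX \to \on{pt}, \omega_\CX}$ attached to the kernel $\omega_\CX \in \Dmod(\CX) = \Dmod(\CX) \otimes \Dmod(\on{pt})_{\on{co}}$. Using the equivalence \eqref{e:as colimit} together with the description of the $\Dmod(\CX)$-action on $\Dmod(\CX)_{\on{co}}$ from \secref{sss:sotimes}, and the definition of $(p_\CX)_\blacktriangle$ in \secref{sss:black nonqc}, this amounts to the tautology that for $(U \overset{j}\hookrightarrow \CX)\in \on{Ctrnk}(\CX)$ and $\CF_U \in \Dmod(U)$,
$$
(p_\CX)_\blacktriangle(\omega_\CX \sotimes j_{\on{co},\bullet}(\CF_U))
\simeq (p_U)_\blacktriangle(\omega_U \sotimes \CF_U)
\simeq (p_U)_\blacktriangle(\CF_U)
\simeq (p_\CX)_\blacktriangle(j_{\on{co},\bullet}(\CF_U)).
$$
The kernel $\omega_\CX$ is coherent (it is in fact compact after pullback to any quasi-compact open), and by the mock-proper hypothesis $(p_\CX)_\blacktriangle$ preserves compactness, so the assumptions of \corref{c:stacks nonqc} are satisfied.

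Next I would invoke \corref{c:stacks nonqc} directly to produce a canonical isomorphism
$$
\bigl((p_\CX)_\blacktriangle\bigr)^{\on{op}} \circ \psId_\CX
\simeq \sF_{\CX \to \on{pt},\, \BD^{\on{Ve}}_\CX(\omega_\CX)}.
$$
The left-hand side requires identifying the conjugate functor: by construction in \secref{sss:black nonqc}, the dual $((p_\CX)_\blacktriangle)^\vee$ equals $p_\CX^!$, so by \lemref{l:conjugate} the conjugate $((p_\CX)_\blacktriangle)^{\on{op}}$ is the left adjoint of $p_\CX^!$, i.e.\ the functor $(p_\CX)_!$, which is defined precisely because $\CX$ is mock-proper.

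Finally, on the right-hand side I would note that $\BD^{\on{Ve}}_\CX(\omega_\CX) = k_\CX$ by definition, and the smoothness of $\CX$ of dimension $n$ yields $k_\CX \simeq \omega_\CX[-2n]$ (smooth-locally, hence globally). Therefore $\sF_{\CX \to \on{pt},\, k_\CX}$ is canonically identified with $(p_\CX)_\blacktriangle[-2n]$, via the same colimit/kernel identification as in the first paragraph. Combining these identifications gives
$$
(p_\CX)_! \circ \psId_\CX \simeq (p_\CX)_\blacktriangle[-2n],
$$
and shifting by $[2n]$ produces the claimed formula. I do not anticipate a substantive obstacle; the main subtlety is purely bookkeeping, namely keeping track of which category ($\Dmod(\CX)$ vs.\ $\Dmod(\CX)_{\on{co}}$) each functor lands in, and confirming that the identification $((p_\CX)_\blacktriangle)^{\on{op}} = (p_\CX)_!$ goes through in the non-quasi-compact truncatable setting just as it did for QCA stacks in \propref{p:mock-proper}.
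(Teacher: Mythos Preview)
Your proposal is correct and follows the same route the paper intends: it says the assertion ``is proved in the same way as \propref{p:mock-proper}'', and your argument is exactly the non-quasi-compact transcription of that proof, invoking \corref{c:stacks nonqc} in place of \corref{c:stacks}. One small inaccuracy worth fixing: the parenthetical claim that $\omega_\CX$ is ``compact after pullback to any quasi-compact open'' is not right in general (for a non-safe QCA stack $U$, the object $\omega_U$ is coherent but need not be compact); fortunately only coherence is required for the hypothesis of \corref{c:stacks nonqc}, so the argument is unaffected.
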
 

Passing to dual functors, and using \lemref{l:conjugate}, we obtain:

\begin{cor} \label{c:mock-proper nonqc}
Let $\CX$ be mock-proper and smooth of dimension $n$. Then we have a canonical isomorphism of functors
$$p_\CX^!\simeq \psId_\CX\circ ((p_\CX)_\blacktriangle)^R[2n].$$
\end{cor}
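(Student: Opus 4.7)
The plan is to dualize the preceding proposition $(p_\CX)_\blacktriangle \simeq (p_\CX)_!\circ \psId_\CX[2n]$ of functors $\Dmod(\CX)_{\on{co}} \to \Vect$. Under the equivalences $\bD^{\on{Ve}}_\CX:\Dmod(\CX)^\vee \simeq \Dmod(\CX)_{\on{co}}$ and its inverse $\Dmod(\CX)_{\on{co}}^\vee \simeq \Dmod(\CX)$, both sides dualize to functors $\Vect \to \Dmod(\CX)$, and I will identify these duals with the two sides of the claimed isomorphism.

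First, by the very definition of $(p_\CX)_\blacktriangle$ in \secref{sss:black nonqc} as the dual of $p_\CX^!$, we have $((p_\CX)_\blacktriangle)^\vee \simeq p_\CX^!$. Next, I turn to the RHS. The functor $(p_\CX)_!$ is the left adjoint to $p_\CX^!$, so by \secref{sss:adjoint dual pair} the functor $((p_\CX)_!)^\vee$ is right adjoint to $(p_\CX^!)^\vee = (p_\CX)_\blacktriangle$; i.e., $((p_\CX)_!)^\vee \simeq ((p_\CX)_\blacktriangle)^R$. Finally, I need the identification $(\psId_\CX)^\vee \simeq \psId_\CX$: the functor $\psId_\CX$ is defined by the kernel $(\Delta_\CX)_!(k_\CX) \in \Dmod(\CX\times \CX)$, which is supported on the diagonal and hence invariant under the flip automorphism swapping the two factors of $\Dmod(\CX)\otimes \Dmod(\CX)$. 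Passing to the dual of a functor corresponds precisely to this swap of kernels, so $(\psId_\CX)^\vee \simeq \psId_\CX$ (cf.\ the analogous \eqref{e:dual of ps} in the QCA setting).

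Assembling these: the dual of the RHS of the proposition is
\begin{equation*}
((p_\CX)_! \circ \psId_\CX[2n])^\vee \simeq (\psId_\CX)^\vee \circ ((p_\CX)_!)^\vee [2n] \simeq \psId_\CX \circ ((p_\CX)_\blacktriangle)^R [2n],
\end{equation*}
which equals $p_\CX^!$ by the dualization of the LHS. The main conceptual point to verify carefully is the identification $(\psId_\CX)^\vee \simeq \psId_\CX$, since in the non-quasi-compact setting $\psId_\CX$ is a functor $\Dmod(\CX)_{\on{co}} \to \Dmod(\CX)$ between genuinely different categories, so one must keep track of where each factor of $\Dmod(\CX)\otimes \Dmod(\CX)$ sits under the identifications $\Dmod(\CX)_{\on{co}}^\vee \simeq \Dmod(\CX)$ and $\Dmod(\CX)^\vee \simeq \Dmod(\CX)_{\on{co}}$; the symmetry of the kernel under the diagonal flip makes this bookkeeping come out correctly.
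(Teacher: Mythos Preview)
Your proof is correct and follows essentially the same approach as the paper, which simply says ``Passing to dual functors, and using \lemref{l:conjugate}.'' You have spelled out the details: dualizing the proposition, identifying $((p_\CX)_\blacktriangle)^\vee \simeq p_\CX^!$ by definition, identifying $((p_\CX)_!)^\vee \simeq ((p_\CX)_\blacktriangle)^R$ via the adjoint--dual relationship (equivalently, via \lemref{l:conjugate}/\corref{c:conjugate} applied to $\sF=(p_\CX)_\blacktriangle$), and noting the self-duality $(\psId_\CX)^\vee \simeq \psId_\CX$ from the symmetry of the kernel, exactly as in \eqref{e:dual of ps}. Your remark about the bookkeeping between $\Dmod(\CX)$ and $\Dmod(\CX)_{\on{co}}$ in the non-quasi-compact case is a useful clarification that the paper leaves implicit.
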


For a mock-proper stack, we shall denote by $\omega_{\CX,\on{mock}}$ the object
$$((p_\CX)_\blacktriangle)^R(k)\in  \Dmod(X)_{\on{co}}.$$

Hence, \corref{c:mock-proper nonqc} can be reformulated as saying that for $\CX$ smooth of dimension $n$ we have:
$$\psId_\CX(\omega_{\CX,\on{mock}})[2n]\simeq \omega_\CX.$$

\ssec{A bizarre object in $\Dmod(\Bun_G)_{\on{co}}$}  \label{ss:bizarre}

\sssec{}

We consider the object 
$$\omega_{\Bun_G,\on{mock}}\in \Dmod(\Bun_G)_{\on{co}}.$$

The goal of this subsection and the next is to prove the following assertion:

\begin{thm} \label{t:weird in ker}
Let $G$ be a reductive group with a non-trivial semi-simple part (i.e., $G$ is not a torus). Then
the object $\omega_{\Bun_G,\on{mock}}$ belongs to the kernel of the functor
$$\psId_{\Bun_G}^{\on{naive}}:\Dmod(\Bun_G)_{\on{co}}\to \Dmod(\Bun_G).$$
\end{thm}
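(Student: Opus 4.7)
The plan is to reformulate the vanishing $\psId^{\on{naive}}_{\Bun_G}(\omega_{\Bun_G,\on{mock}})=0$ as a concrete cohomological statement on $\Bun_G$, and then establish that statement using the specific structure of $\Bun_G$ at infinity. The reformulation will rest on two formal identifications. First, since $\omega_{\Bun_G,\on{mock}}=((p_{\Bun_G})_\blacktriangle)^R(k)$ and dualization swaps left/right adjoints while sending $(p_{\Bun_G})_\blacktriangle$ to $p_{\Bun_G}^!$, under $\Dmod(\Bun_G)^\vee\simeq\Dmod(\Bun_G)_{\on{co}}$ the object $\omega_{\Bun_G,\on{mock}}$ is identified with the functional $(p_{\Bun_G})_!:\Dmod(\Bun_G)\to\Vect$ (which exists because $\Bun_G$ is mock-proper); explicitly,
\[
(p_{\Bun_G})_\blacktriangle(\omega_{\Bun_G,\on{mock}}\sotimes\CG)\simeq (p_{\Bun_G})_!(\CG),\quad \CG\in\Dmod(\Bun_G).
\]
Second, the kernel $(\Delta_{\Bun_G})_\bullet(\omega_{\Bun_G})$ defining $\psId^{\on{naive}}_{\Bun_G}$ is symmetric in its two factors, so $\psId^{\on{naive}}_{\Bun_G}$ is self-dual; combined with the identifications $(j^!)^\vee\simeq j_{\on{co},\bullet}$ and $(j_\bullet)^\vee\simeq j^\bullet_{\on{co}}$ from \secref{sss:strange functors}, a direct computation (expand $\CMaps(j_!(\CG_U),\psId^{\on{naive}}(\CF))$ by Verdier duality, use self-duality to move $\psId^{\on{naive}}$ onto the other argument, reduce to a pairing on the QCA stack $U$, and invoke Yoneda) yields
\[
j^!\circ\psId^{\on{naive}}_{\Bun_G}\simeq j^\bullet_{\on{co}}
\]
as functors $\Dmod(\Bun_G)_{\on{co}}\to\Dmod(U)$, for every $(j:U\hookrightarrow\Bun_G)\in\on{Ctrnk}(\Bun_G)$.

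Because $\Dmod(\Bun_G)\simeq\underset{U\in\on{Ctrnk}^{\on{op}}}{\lim}\Dmod(U)$, the vanishing $\psId^{\on{naive}}_{\Bun_G}(\omega_{\Bun_G,\on{mock}})=0$ is equivalent by the second identification to $j^\bullet_{\on{co}}(\omega_{\Bun_G,\on{mock}})=0$ for every $U\in\on{Ctrnk}(\Bun_G)$. Under the self-duality $\Dmod(U)\simeq\Dmod(U)^\vee$ on the QCA stack $U$, the object $j^\bullet_{\on{co}}(\omega_{\Bun_G,\on{mock}})\in\Dmod(U)$ corresponds to the functional $\CG_U\mapsto (p_U)_\blacktriangle(j^\bullet_{\on{co}}(\omega_{\Bun_G,\on{mock}})\sotimes \CG_U)$, which via the duality $(j_\bullet)^\vee\simeq j^\bullet_{\on{co}}$ equals $(p_{\Bun_G})_\blacktriangle(\omega_{\Bun_G,\on{mock}}\sotimes j_\bullet(\CG_U))$, hence by the first identification equals $(p_{\Bun_G})_!(j_\bullet(\CG_U))$. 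The theorem is therefore reduced to the vanishing
\[
(p_{\Bun_G})_!(j_\bullet(\CG))=0\quad\text{for every } (U\overset{j}\hookrightarrow\Bun_G)\in\on{Ctrnk}(\Bun_G),\ \CG\in\Dmod(U)^c,
\]
or equivalently, via the adjunctions $(p_{\Bun_G})_!\dashv p_{\Bun_G}^!$ and $j_\bullet\dashv j^?$ (with $j^?$ the continuous right adjoint of $j_\bullet$, which exists because $j$ is representable), to $j^?(\omega_{\Bun_G})=0$ for every co-truncative $U\subset\Bun_G$.

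The vanishing $(p_{\Bun_G})_!(j_\bullet(\CG))=0$ is the heart of the theorem, and where the nontrivial-semisimple-part hypothesis enters. I would prove it on the cofinal family in $\on{Ctrnk}(\Bun_G)$ of Harder--Narasimhan truncations $U=\Bun_G^{\leq\mu}$, by applying $(p_{\Bun_G})_!$ to the canonical fiber sequence $j_!(\CG)\to j_\bullet(\CG)\to i_\bullet i^\bullet j_\bullet(\CG)$ with $i:\Bun_G\setminus\Bun_G^{\leq\mu}\hookrightarrow\Bun_G$: the first term contributes $(p_U)_!(\CG)$, while the third captures the HN strata at infinity. The hard part will be showing that these two contributions cancel precisely. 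For $G$ a torus this cancellation cannot occur, since $\Bun_T$ decomposes into QCA components on which $j_\bullet=j_!$ and the third term vanishes identically; this explains why the hypothesis on $G$ is needed. For $G$ with nontrivial semisimple part, the HN strata form an infinite tower of affine bundles over products of $\Bun_M$ for proper Levi subgroups $M\subsetneq G$, and a delicate parabolic/constant-term analysis, building on the structural results for $\Bun_G$ in \cite{DrGa2} and the sketch of miraculousness of $\Bun_G$ in \cite{duality}, identifies the connecting morphism in the long exact sequence as an isomorphism -- yielding the desired cancellation and hence the theorem.
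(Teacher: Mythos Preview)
Your formal reductions are correct and useful: the identifications $j^!\circ\psId^{\on{naive}}_{\Bun_G}\simeq j^\bullet_{\on{co}}$ and the pairing computation showing that $\psId^{\on{naive}}_{\Bun_G}(\omega_{\Bun_G,\on{mock}})=0$ is equivalent to $(p_{\Bun_G})_!\bigl(j_\bullet(\CG)\bigr)=0$ for all co-truncative $U$ and all $\CG\in\Dmod(U)^c$ (equivalently, $j^?(\omega_{\Bun_G})=0$) are valid. This is a clean reformulation of the statement.

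The gap is that you have not proved the vanishing $(p_{\Bun_G})_!\bigl(j_\bullet(\CG)\bigr)=0$. Your proposed fiber sequence $j_!(\CG)\to j_\bullet(\CG)\to i_\bullet i^\bullet j_\bullet(\CG)$ involves the closed complement $\Bun_G\setminus\Bun_G^{\leq\mu}$, which is \emph{not} quasi-compact; it is an infinite union of HN strata, and the functor $i^\bullet$ has no evident meaning beyond ``cofiber of $j_!\to j_\bullet$''. More seriously, the phrase ``a delicate parabolic/constant-term analysis \ldots\ identifies the connecting morphism as an isomorphism'' is not an argument: you have not said what cancels with what, nor why the nontrivial-semisimple-part hypothesis forces it. Pointing to \cite{duality} (itself a sketch) does not fill this in. As written, the heart of the theorem is absent.

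The paper proceeds entirely differently and avoids this difficulty. It first proves (\thmref{t:weird}) that $\omega_{\Bun_G,\on{mock}}\simeq\pi_\bullet(\omega_{\Gr_{G,\Ran(X)}})$, where $\pi:\Gr_{G,\Ran(X)}\to\Bun_G$ is the Ran Grassmannian uniformization. Then it shows $\psId^{\on{naive}}_{\Bun_G}\circ(\pi_n)_\bullet(\omega_{\Gr_{G,X^n}})=0$ for each $n$ by a cohomological amplitude argument: the functor $\psId^{\on{naive}}_{\Bun_G}\circ(\pi_n)_\bullet$ has right amplitude bounded by $n$ (because $\Gr_{G,X^n}\to X^n\times\Bun_G$ is ind-affine), while $\omega_{\Gr_{G,X^n}}$ is \emph{infinitely connective}. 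The latter is where the hypothesis on $G$ enters: it reduces (via factorization and Zariski-locality of the t-structure) to the fact that the open cell $\Gr_G^0\simeq\BA^\infty$ is infinite-dimensional affine space, so $\omega_{\BA^\infty}$ lives in arbitrarily negative degrees. This is a short, direct geometric input that your HN-stratification outline does not supply.
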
 
(See \secref{sss:naive} where the functor $\psId_{\CX}^{\on{naive}}$ is introduced.)

\medskip

In order to prove this theorem we will use a description of the object $\omega_{\Bun_G,\on{mock}}$, which
is valid for any reductive group.

\sssec{}

Let us recall the setting of \cite[Sect. 4.1.1]{Contr}. We let $\Gr_{G,\Ran(X)}$ denote the prestack, which is
the Ran version of affine Grassmannian for $G$. Let $\pi$ denote the canonical map
$$\Gr_{G,\Ran(X)}\to \Bun_G.$$

The following is \cite[Theorem 4.1.6]{Contr}:

\begin{thm}  \label{t:contr}
The functor $\pi^!:\Dmod(\Bun_G)\to \Dmod(\Gr_{G,\Ran(X)})$ is fully faithful.
\end{thm}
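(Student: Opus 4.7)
The plan is to establish full faithfulness by showing that the natural map
$$\CMaps_{\Dmod(\Bun_G)}(\CF_1,\CF_2) \to \CMaps_{\Dmod(\Gr_{G,\Ran(X)})}(\pi^!(\CF_1),\pi^!(\CF_2))$$
is an isomorphism for all $\CF_1,\CF_2 \in \Dmod(\Bun_G)$. Using that $\pi^!$ intertwines the $\sotimes$-actions of $\Dmod(\Bun_G)$ on both sides and a standard projection-formula argument, it suffices to treat the case $\CF_1 = \omega_U$ for $U$ ranging over a cofinal family of co-truncative quasi-compact open substacks $U \hookrightarrow \Bun_G$, as supplied by the truncatability of $\Bun_G$. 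Equivalently, I would show that the partially defined left adjoint $\pi_!$ satisfies $\pi_! \circ \pi^!(\omega_U) \simeq \omega_U$ after base-changing to each such $U$.

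First I would present $\Gr_{G,\Ran(X)}$ as the colimit (over nonempty finite sets $I$, under the diagonal embeddings $X^I \hookrightarrow X^J$ for $I \twoheadleftarrow J$) of the Beilinson–Drinfeld affine Grassmannians $\pi_I:\Gr_{G,X^I} \to \Bun_G$, so that $\pi^!\simeq \underset{I^{\on{op}}}{\on{lim}}\, \pi_I^!$. The key geometric input is Beauville–Laszlo together with the uniform boundedness of the Harder–Narasimhan stratification on $U$: for every quasi-compact $U \subset \Bun_G$ there exists $N$ such that for $|I|\geq N$, the restriction $\pi_I^{-1}(U) \to U$ is surjective and its fibers over a point $\CP\in U$ are canonically identified with the spaces of trivializations of $\CP|_{X\setminus I}$.

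The crucial ingredient is then the \emph{contractibility of the Ran space} in the D-module sense: the functor $p_{\Ran(X)}^!:\Vect \to \Dmod(\Ran(X))$ is fully faithful, i.e. $(p_{\Ran(X)})_\blacktriangle \circ p_{\Ran(X)}^! \simeq \on{Id}_\Vect$. I would apply a $\Bun_G$-relative twisted form of this statement: after passing to $U$ and using the Beauville–Laszlo step above, the Ran-style colimit $\underset{I}{\on{colim}}\, (\pi_I|_U)_\blacktriangle\, \omega_{\pi_I^{-1}(U)}$ computes $\omega_U \sotimes (p_{\Ran(X)})_\blacktriangle(\omega_{\Ran(X)})$, and contractibility collapses the second factor to $k$. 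This yields $(\pi|_U)_\blacktriangle \circ (\pi|_U)^!(\omega_U) \simeq \omega_U$, which upgrades to the required identification of Hom-spaces.

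The main obstacle is technical and has two facets. First, one must make rigorous sense of $\pi_\blacktriangle$ and of base change for the non-representable map $\pi$ out of the prestack $\Gr_{G,\Ran(X)}$; this requires carefully organizing everything as a filtered colimit over finite sets $I$ where each $\pi_I$ is a reasonable (ind-schematic) map. Second, one must establish the twisted/relative form of Ran contractibility over $U$, rather than just over a point — this amounts to showing that the contractibility is uniform in the $G$-bundle parameter, which is the real content behind the citation to \cite{Contr} and where the chiral/factorization structure of $\Gr_{G,\Ran(X)}$ enters decisively. Granting these two inputs, the assembly into a proof of full faithfulness of $\pi^!$ is a formal manipulation of adjunctions and projection formulas.
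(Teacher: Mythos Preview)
The paper does not prove this theorem: it is quoted verbatim as \cite[Theorem 4.1.6]{Contr} and used as a black box (see the sentence ``The following is \cite[Theorem 4.1.6]{Contr}'' immediately preceding the statement). So there is no proof in the present paper to compare against.

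Your outline is, in broad strokes, the strategy of \cite{Contr}: reduce full faithfulness of $\pi^!$ to a statement about the unit/counit of an adjunction, pass to the colimit presentation over $\Gr_{G,X^I}$, and invoke the homological contractibility of $\Ran(X)$ in a $\Bun_G$-relative form. Two small caveats. First, in \cite{Contr} the argument is organized slightly differently: rather than reducing to $\CF_1=\omega_U$ for co-truncative $U$, one works with the functor $\pi_!$ (left adjoint to $\pi^!$, which exists because the $\Gr_{G,X^I}$ are ind-proper over $X^I$ and the transition maps are closed embeddings) and shows directly that the counit $\pi_!\circ\pi^!\to\on{Id}$ is an isomorphism; the reduction to quasi-compact opens is implicit in the compact generation of $\Dmod(\Bun_G)$. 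Second, be careful not to conflate $\pi_!$ with $\pi_\blacktriangle$: the map $\pi_I$ is ind-affine over $X^n\times\Bun_G$ (as used later in the paper in the proof of \propref{p:Gr to BunG affine}), not ind-proper over $\Bun_G$, so the two pushforwards are genuinely different and it is $\pi_!$ that is relevant for full faithfulness of $\pi^!$. Granting these adjustments, your identification of Ran contractibility as the essential input and of the relative/twisted form as the main technical hurdle is exactly right.
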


\sssec{}

We recall that the pre-stack $\Gr_{G,\Ran(X)}$ is by definition the colimit 
\begin{equation} \label{e:Gr as colimit}
\underset{i\in I}{colim}\, Z_i,
\end{equation}
where $Z_i$ are proper schemes, and $I$ is some index category. In particular, for $\alpha:i\to j$, the
corresponding map $f_\alpha:Z_i\to Z_j$ is proper. We let $f_i$ denote the corresponding map $Z_i\to \Gr_{G,\Ran(X)}$. 

\medskip

The category $\Dmod(\Gr_{G,\Ran(X)})$ is the limit
\begin{equation} \label{e:on Gr as limit}
\underset{i\in I^{\on{op}}}{lim}\, \Dmod(Z_i),
\end{equation}
where for $(\alpha:i\to j)\in I$, the functor $\Dmod(Z_j)\to \Dmod(Z_i)$ is $f_\alpha^!$.
The corresponding evaluation functor $\Dmod(\Gr_{G,\Ran(X)})\to \Dmod(Z_i)$ is $f_i^!$. 

\medskip

Hence, by \cite[Proposition 1.7.5]{DrGa2}, we have a canonical equivalence
\begin{equation} \label{e:on Gr as colimit}
\Dmod(\Gr_{G,\Ran(X)})\simeq \underset{i\in I}{colim}\, \Dmod(Z_i),
\end{equation}
where for $(\alpha:i\to j)\in I$, the functor $\Dmod(Z_i)\to \Dmod(Z_j)$ is $(f_\alpha)_\bullet$.
For $i\in I$, the corresponding functor $\Dmod(Z_i)\to \Dmod(\Gr_{G,\Ran(X)})$ will be denoted $(f_i)_\bullet$,
and it is the left adjoint of $f_i^!$.

\medskip

In particular, by \cite[Proposition 1.8.3]{DrGa2}, the Verdier duality equivalences
$$\bD_{Z_i}^{\on{Ve}}:\Dmod(Z_i)^\vee\to \Dmod(Z_i)$$
give rise to an equivalence
$$\bD_{\Gr_{G,\Ran(X)}}^{\on{Ve}}:\Dmod(\Gr_{G,\Ran(X)})^\vee\simeq \Dmod(\Gr_{G,\Ran(X)});$$
under which we have:
$$(f_i^!)^\vee\simeq (f_i)_\bullet.$$

\sssec{}

Let 
$$\pi_\bullet: \Dmod(\Gr_{G,\Ran(X)})\to \Dmod(\Bun_G)_{\on{co}}$$
denote the functor dual to $\pi^!$ under the identifications $\bD_{\Gr_{G,\Ran(X)}}^{\on{Ve}}$ and $\bD^{\on{Ve}}_{\Bun_G}$.

\medskip

The functor $\pi_\bullet$ can be described more explicity as follows. By \eqref{e:on Gr as colimit},
the datum of $\pi_\bullet$ is equivalent to a compatible collection of functors
$$(\pi\circ f_i)_\bullet:\Dmod(Z_i)\to \Dmod(\Bun_G)_{\on{co}}.$$

\medskip

For each $i$, the category of factorizations of the map $\pi\circ f_i$ as
\begin{equation} \label{e:factorization}
Z_i\overset{f_{i,U}}\longrightarrow  U\overset{j}\hookrightarrow \Bun_G, \quad U\in \on{Ctrnk}(\Bun_G).
\end{equation}
is cofinal in $\on{Ctrnk}(\Bun_G)$, and hence, is contractible. 

\medskip

The sought-for functor $(\pi\circ f_i)_\bullet$ is 
$$j_{\on{co},\bullet}\circ (f_{i,U})_\bullet$$
for some/any factorization \eqref{e:factorization}. 

\medskip

In the sequel, we will use the following version of the projection formula, which follows immediately from the defintions:

\begin{lem} \label{l:proj formula}
For $\CF\in \Dmod(\Bun_G)$ and $\CF'\in \Dmod(\Gr_{G,\Ran(X)})$ there is a canonical isomorphism
$$\CF\sotimes \pi_\bullet(\CF')\simeq \pi_\bullet(\pi^!(\CF)\sotimes \CF'),$$
where $\sotimes$ in the left-hand side is understood in the sense of \secref{sss:sotimes}.
\end{lem}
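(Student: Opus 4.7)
The plan is to establish the projection formula by reducing to the constituent proper schemes in the ind-presentation $\Gr_{G,\Ran(X)}=\underset{i\in I}{colim}\, Z_i$, and then combining the classical projection formula for proper schematic maps with the structural description of the $\Dmod(\Bun_G)$-action on $\Dmod(\Bun_G)_{\on{co}}$ from \secref{sss:sotimes}.

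First, both sides of the proposed isomorphism are continuous in $\CF'$: the left-hand side because $\pi_\bullet$ is continuous and $\CF\sotimes -$ is continuous (being defined via $\Delta^!\circ (-\boxtimes -)$), and the right-hand side because $\pi^!$, $\pi_\bullet$ and $\sotimes$ are all continuous. By the equivalence \eqref{e:on Gr as colimit}, the objects $(f_i)_\bullet(\CF'_i)$ with $i\in I$ and $\CF'_i\in \Dmod(Z_i)$ generate $\Dmod(\Gr_{G,\Ran(X)})$ under colimits, so it suffices to exhibit a canonical isomorphism when $\CF'=(f_i)_\bullet(\CF'_i)$, compatibly in $i$.

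Fix $i$ and choose a factorization $\pi\circ f_i=j\circ f_{i,U}$ through a quasi-compact co-truncative open $j:U\hookrightarrow \Bun_G$ with $f_{i,U}:Z_i\to U$ proper; by the discussion preceding the lemma, such factorizations form a contractible category. Using the description recalled above, $\pi_\bullet\circ (f_i)_\bullet=(\pi\circ f_i)_\bullet=j_{\on{co},\bullet}\circ (f_{i,U})_\bullet$, and by \secref{sss:sotimes} the $\sotimes$-action satisfies $\CF\sotimes j_{\on{co},\bullet}(-)\simeq j_{\on{co},\bullet}(j^\bullet(\CF)\sotimes -)$. Hence the left-hand side of the lemma reads
\[
\CF\sotimes \pi_\bullet((f_i)_\bullet(\CF'_i))\simeq j_{\on{co},\bullet}\bigl(j^\bullet(\CF)\sotimes (f_{i,U})_\bullet(\CF'_i)\bigr).
\]
On the other hand, the classical projection formula for the proper schematic map $f_i$ gives
\[
\pi^!(\CF)\sotimes (f_i)_\bullet(\CF'_i)\simeq (f_i)_\bullet\bigl(f_i^!\pi^!(\CF)\sotimes \CF'_i\bigr),
\]
so applying $\pi_\bullet$ and using $f_i^!\pi^!=(\pi\circ f_i)^!=f_{i,U}^!j^\bullet$ together with $\pi_\bullet\circ (f_i)_\bullet=j_{\on{co},\bullet}\circ (f_{i,U})_\bullet$, one obtains
\[
\pi_\bullet\bigl(\pi^!(\CF)\sotimes (f_i)_\bullet(\CF'_i)\bigr)\simeq j_{\on{co},\bullet}\bigl((f_{i,U})_\bullet(f_{i,U}^!j^\bullet(\CF)\sotimes \CF'_i)\bigr).
\]
The two expressions are matched via the projection formula for the proper map $f_{i,U}:Z_i\to U$ (a schematic proper morphism to a QCA stack, where the formula is standard).

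The main obstacle is to upgrade this pointwise construction to a genuine natural transformation of continuous bifunctors $\Dmod(\Bun_G)\otimes \Dmod(\Gr_{G,\Ran(X)})\to \Dmod(\Bun_G)_{\on{co}}$. Concretely, one must verify compatibility of the two projection-formula isomorphisms (for $f_i$ and for $f_{i,U}$) with the transition functors $(f_\alpha)_\bullet$ for $\alpha:i\to j$ in $I$, and with the passage to larger co-truncative opens $U\subseteq U'$ (which affects the chosen factorization). This coherence reduces to the naturality of the base-change isomorphisms $f^!g_\bullet\simeq g'_\bullet f'{}^!$ for the various proper Cartesian squares involved, together with the compatibility of $j_{\on{co},\bullet}$ with restriction; all of these are standard, but organizing them coherently with respect to the colimit presentation of $\Dmod(\Gr_{G,\Ran(X)})$ is the substantive part of the argument.
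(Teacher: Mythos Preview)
Your proposal is correct and is precisely an unpacking of what the paper means when it says the lemma ``follows immediately from the definitions'': the paper gives no argument beyond that phrase. Your reduction to the building blocks $(f_i)_\bullet(\CF'_i)$ via the colimit presentation \eqref{e:on Gr as colimit}, combined with the defining formula $\CF\sotimes j_{\on{co},\bullet}(-)\simeq j_{\on{co},\bullet}(j^\bullet(\CF)\sotimes -)$ from \secref{sss:sotimes} and the standard projection formula for the proper schematic maps $f_{i,U}:Z_i\to U$, is exactly the content of ``immediate from the definitions,'' and your honest remark that the coherence across $i$ and $U$ is routine (but must be checked) is appropriate.
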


\sssec{}

We claim:

\begin{thm} \label{t:weird}
There exists a canonical isomorphism
$$\omega_{\Bun_G,\on{mock}}\simeq \pi_\bullet(\omega_{\Gr_{G,\Ran(X)}}).$$
\end{thm}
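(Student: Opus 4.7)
The plan is to verify that $\pi_\bullet(\omega_{\Gr_{G,\Ran(X)}})$ enjoys the defining universal property of $\omega_{\Bun_G,\on{mock}}=((p_{\Bun_G})_\blacktriangle)^R(k)$. Concretely, I would construct a functorial isomorphism
\[
\CMaps_{\Dmod(\Bun_G)_{\on{co}}}(\CF,\pi_\bullet(\omega_{\Gr_{G,\Ran(X)}}))\simeq \CMaps_{\Vect}((p_{\Bun_G})_\blacktriangle(\CF),k)
\]
for $\CF\in \Dmod(\Bun_G)_{\on{co}}$. Since both sides are continuous in $\CF$, it is enough to treat compact $\CF=j_{\on{co},\bullet}(\CF_U)$ where $(j:U\hookrightarrow \Bun_G)\in \on{Ctrnk}(\Bun_G)$ and $\CF_U\in \Dmod(U)^c$; set $\wt\CF:=\BD^{\on{Ve}}(\CF)=j_!(\wt\CF_U)$ with $\wt\CF_U:=\BD^{\on{Ve}}(\CF_U)$.

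First I would rewrite the left-hand side. By \lemref{l:Hom as coh} it equals $(p_{\Bun_G})_\blacktriangle(\wt\CF\sotimes \pi_\bullet(\omega_{\Gr_{G,\Ran(X)}}))$, which by the projection formula \lemref{l:proj formula}, together with the fact that $\omega_{\Gr_{G,\Ran(X)}}$ is the unit for $\sotimes$, becomes $(p_{\Bun_G})_\blacktriangle\circ \pi_\bullet(\pi^!(\wt\CF))$. Now observe the tautological identity $(p_{\Bun_G})_\blacktriangle\circ \pi_\bullet=(p_{\Gr_{G,\Ran(X)}})_\blacktriangle$: both sides are canonically the dual of $p_{\Gr_{G,\Ran(X)}}^!=\pi^!\circ p_{\Bun_G}^!$. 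Hence the left-hand side is $(p_{\Gr_{G,\Ran(X)}})_\blacktriangle(\pi^!(\wt\CF))$. For the right-hand side, since $\Bun_G$ is mock-proper the object $(p_{\Bun_G})_\blacktriangle(\CF)=(p_U)_\bullet(\CF_U)$ lies in $\Vect^c$, and \lemref{l:!} identifies its linear dual with $(p_U)_!(\wt\CF_U)$.

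It therefore suffices to prove the identification
\[
(p_{\Gr_{G,\Ran(X)}})_\blacktriangle(\pi^!(j_!(\wt\CF_U)))\simeq (p_U)_!(\wt\CF_U).
\]
Here enters \thmref{t:contr}: since $\pi^!$ is fully faithful, we have $\CMaps_{\Dmod(\Gr_{G,\Ran(X)})}(\pi^!(j_!(\wt\CF_U)),\omega_{\Gr_{G,\Ran(X)}})\simeq \CMaps_{\Dmod(\Bun_G)}(j_!(\wt\CF_U),\omega_{\Bun_G})\simeq \CMaps_{\Dmod(U)}(\wt\CF_U,\omega_U)$, and the last object equals $\CMaps_{\Vect}((p_U)_!(\wt\CF_U),k)$ by the adjunction $(p_U)_!\dashv p_U^!$. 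To convert this $\CMaps(-,\omega)$ computation into the desired value of $(p_{\Gr_{G,\Ran(X)}})_\blacktriangle$, I would combine it with base change $\pi^!\circ j_!\simeq (j_\Gr)_!\circ \pi_U^!$ for the Cartesian square with $\Gr_U:=\Gr_{G,\Ran(X)}\times_{\Bun_G}U$, then use the adjunction identity $(p_{\Gr_{G,\Ran(X)}})_\blacktriangle\circ (j_\Gr)_!\simeq (p_{\Gr_U})_!$ together with the factorization $p_{\Gr_U}=p_U\circ \pi_U$, and finally invoke the base-changed version of \thmref{t:contr} (which gives $\pi_U^!$ fully faithful, hence $(\pi_U)_!\circ \pi_U^!\simeq \on{Id}$).

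The main obstacle will be the last step: verifying that the partially defined $(\pi_U)_!$ and $(p_{\Gr_U})_!$ are defined on the relevant objects, and that the abstract duality identity $(p_{\Gr_{G,\Ran(X)}})_\blacktriangle\circ (j_\Gr)_!\simeq (p_{\Gr_U})_!$ extends from compact objects (where it follows by adjunction) to the not-necessarily-compact object $\pi_U^!(\wt\CF_U)$. The cleanest resolution is to package everything categorically via \lemref{l:conjugate bis}: apply it to $\sG=p_{\Bun_G}^!$, $\bc_1=\wt\CF\in \Dmod(\Bun_G)^c$ (so that $\sG^\vee(\bc_1^\vee)=(p_{\Bun_G})_\blacktriangle(\CF)$ is compact in $\Vect$), which gives $(p_{\Bun_G})_!(\wt\CF)\simeq((p_U)_\bullet(\CF_U))^\vee$, and then match this with $(p_{\Gr_{G,\Ran(X)}})_\blacktriangle(\pi^!(\wt\CF))$ using that $\pi^!$ is fully faithful. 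In effect, fully faithfulness of $\pi^!$ is the input that lets us replace computations on the inaccessible $\Bun_G$ by computations on $\Gr_{G,\Ran(X)}$, and mock-properness of $\Bun_G$ (and the co-truncative pieces $U$) is what makes the linear duals land in $\Vect^c$ so that \lemref{l:!} applies.
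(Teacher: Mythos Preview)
Your approach is essentially the paper's: rewrite $\CMaps(\CF,\pi_\bullet(\omega_{\Gr}))$ via \lemref{l:Hom as coh} and the projection formula as $(p_{\Gr_{G,\Ran(X)}})_\bullet(\pi^!(\wt\CF))$, rewrite $\CMaps(\CF,\omega_{\Bun_G,\on{mock}})$ as $(p_{\Bun_G})_!(\wt\CF)$, and match the two using \thmref{t:contr}. The paper isolates this last step as a separate lemma: for any $\CF'\in\Dmod(\Bun_G)^c$ one has $(p_{\Bun_G})_!(\CF')\simeq (p_{\Gr_{G,\Ran(X)}})_\bullet(\pi^!(\CF'))$.

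Where you diverge is in the proof of that final identity, and here you make it harder than necessary. The point you are missing is that $(p_{\Gr_{G,\Ran(X)}})_\bullet$ is not merely the \emph{dual} of $p_{\Gr_{G,\Ran(X)}}^!$ but also its \emph{left adjoint}: $\Gr_{G,\Ran(X)}$ is a colimit of proper schemes $Z_i$, and on each $Z_i$ the pushforward $(p_{Z_i})_\bullet$ is left adjoint to $p_{Z_i}^!$. Once you have this adjunction, the identity is three lines:
\[
\CMaps_{\Vect}\bigl((p_{\Gr})_\bullet(\pi^!(\CF')),V\bigr)\simeq
\CMaps_{\Dmod(\Gr)}\bigl(\pi^!(\CF'),p_{\Gr}^!(V)\bigr)\simeq
\CMaps_{\Dmod(\Bun_G)}\bigl(\CF',p_{\Bun_G}^!(V)\bigr),
\]
the last isomorphism by full faithfulness of $\pi^!$ together with $p_{\Gr}^!=\pi^!\circ p_{\Bun_G}^!$. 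There is no need for base change to $\Gr_U$, no partially defined $(\pi_U)_!$ or $(p_{\Gr_U})_!$, and no worry about whether objects are compact enough. Your detours through these issues, and the vague ``match using full faithfulness'' at the end, all dissolve once the ind-proper adjunction is in hand.
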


\begin{proof}

We need to establish a functorial isomorphism
\begin{equation} \label{e:bizarre} 
\CMaps_{\Dmod(\Bun_G)_{\on{co}}}(\CF,\omega_{\Bun_G,\on{mock}})\simeq 
\CMaps_{\Dmod(\Bun_G)_{\on{co}}}\left(\CF,\pi_\bullet(\omega_{\Gr_{G,\Ran(X)}})\right).
\end{equation}
for $\CF\in (\Dmod(\Bun_G)_{\on{co}})^c$. 

\medskip

By definition, the left-hand side in \eqref{e:bizarre} can be rewritten as
$$\CMaps_{\Vect}\left((p_{\Bun_G})_\blacktriangle(\CF),k\right),$$
and further, by \lemref{l:conjugate}, as
\begin{equation} \label{e:bizarre1} 
(p_{\Bun_G})_!(\BD_{\Bun_G}^{\on{Ve}}(\CF)),
\end{equation}
where 
$$\BD_{\Bun_G}^{\on{Ve}}:(\Dmod(\Bun_G)_{\on{co}}^c)^{\on{op}}\simeq \Dmod(\Bun_G)^c$$
is the equivalence indiced by 
$$\bD_{\Bun_G}^{\on{Ve}}:(\Dmod(\Bun_G)_{\on{co}})^\vee\simeq \Dmod(\Bun_G).$$

\medskip

We rewrite the right-hand side of \eqref{e:bizarre} using \lemref{l:Hom as coh}
as
$$(p_{\Bun_G})_\blacktriangle\left(\BD_{\Bun_G}^{\on{Ve}}(\CF)\sotimes \pi_\bullet(\omega_{\Gr_{G,\Ran(X)}})\right).$$

Using \lemref{sss:sotimes}, we further rewrite it as
$$(p_{\Bun_G})_\blacktriangle\circ \pi_\bullet(\pi^!(\BD_{\Bun_G}^{\on{Ve}}(\CF))\sotimes \omega_{\Gr_{G,\Ran(X)}})\simeq 
(p_{\Bun_G})_\blacktriangle\circ \pi_\bullet(\pi^!(\BD_{\Bun_G}^{\on{Ve}}(\CF))),$$
and hence as
\begin{equation} \label{e:bizarre2} 
(p_{\Gr_{G,\Ran(X)}})_\bullet (\pi^!(\BD_{\Bun_G}^{\on{Ve}}(\CF))).
\end{equation}

Comparing \eqref{e:bizarre1} and \eqref{e:bizarre2}, the assertion of the theorem follows from the next lemma:

\begin{lem} \label{l:cohomology BunG}
For $\CF'\in \Dmod(\Bun_G)$ there is a canonical isomorphism 
$$(p_{\Bun_G})_!(\CF')\simeq (p_{\Gr_{G,\Ran(X)}})_\bullet(\pi^!(\CF')).$$
\end{lem}

\end{proof}

\begin{proof}[Proof of \lemref{l:cohomology BunG}]

It is enough to establish the isomorphism in question in the case when $\CF'\in \Dmod(\Bun_G)^c$.
We will show that
$$\CMaps_{\Vect}((p_{\Bun_G})_!(\CF'),V)\simeq \CMaps_{\Vect}((p_{\Gr_{G,\Ran(X)}})_\bullet(\pi^!(\CF')),V), \quad V\in \Vect.$$

We rewrite the left-hand side and the right-hand side as
$$\CMaps_{\Dmod(\Bun_G)}(\CF',p_{\Bun_G}^!(V)) \text{ and } 
\CMaps_{\Dmod(\Gr_{G,\Ran(X)})}(\pi^!(\CF'),p_{\Gr_{G,\Ran(X)}}^!(V)),$$
respectively, and the required assertion follows from  \thmref{t:contr}.

\end{proof}

\ssec{Proof of \thmref{t:weird in ker}}

\sssec{}

Taking into account \thmref{t:weird}, we need to show that the object
$$\psId_{\Bun_G}^{\on{naive}}\circ \pi_\bullet(\omega_{\Gr_{G,\Ran(X)}})\in \Dmod(\Bun_G)$$
is zero.

\medskip 

First, we recall that the prestack $\Gr_{G,\Ran(X)}$ is the colimit of ind-schemes, denoted $\Gr_{G,X^n}$, see \cite[Sect. 4.1.1]{Contr}.
We will show that for every $n$
\begin{equation} \label{e:req van}
\psId_{\Bun_G}^{\on{naive}}\circ (\pi_n)_\bullet(\omega_{\Gr_{G,X^n}})=0,
\end{equation}
where $\pi_n$ denotes the map $\Gr_{G,X^n}\to \Bun_G$.

\sssec{}

Recall that for an ind-scheme (of ind-finite type) $\CX$ the category $\Dmod(\CX)$ carries a canonical t-structure,
see \cite[Sect. 4.3]{Crys}. It is characterized by the property that if 
$$\CX\simeq \underset{i\in I}{colim}\, X_i,$$
where $f_i:X_i\to \CX$ are closed subschemes of $\CX$, the category $\Dmod(\CX)^{\leq 0}$ is generated under
colimits by the essential images of the categories $\Dmod(X_i)$ under the functors $(f_i)_\bullet$. 

\medskip

The assertion of \eqref{e:req van} follows from the combination of the following two statements:

\begin{prop} \label{p:Gr to BunG affine}
For a reductive group $G$, the functor 
$$\psId_{\Bun_G}^{\on{naive}}\circ (\pi_n)_\bullet:\Dmod(\Gr_{G,X^n})\to \Dmod(\Bun_G)$$
has cohomological amplitude bounded on the right by $n$. 
\end{prop}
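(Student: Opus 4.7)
The plan is to reduce the statement to a computation on finite-type closed subschemes of the ind-scheme $\Gr_{G,X^n}$, then combine amplitude estimates for the $\bullet$-pushforwards and the $*$-extension across co-truncative opens.

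First, by the characterization of the t-structure on $\Dmod(\Gr_{G,X^n})$ recalled before the proposition, the subcategory $\Dmod(\Gr_{G,X^n})^{\leq 0}$ is generated under colimits by the essential images of $\Dmod(Z)^{\leq 0}$ under $(f_Z)_\bullet$, where $Z \hookrightarrow \Gr_{G,X^n}$ ranges over the closed subschemes of finite type. Since both the source and target functors commute with colimits, it suffices to prove that for each such $Z$, the functor $\psId_{\Bun_G}^{\on{naive}} \circ (\pi_n \circ f_Z)_\bullet : \Dmod(Z) \to \Dmod(\Bun_G)$ has cohomological amplitude bounded on the right by $n$.

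Second, since $Z$ is of finite type and $\Bun_G$ is truncatable, the image of $\pi_n \circ f_Z$ is contained in some co-truncative quasi-compact open substack $U \overset{j}\hookrightarrow \Bun_G$; denote by $\pi_{Z,U}: Z \to U$ the induced map. By the description of $\psId_{\Bun_G}^{\on{naive}}$ in \secref{sss:naive} as corresponding, under the equivalence \eqref{e:as colimit}, to the family of $*$-pushforwards $\{(j_V)_\bullet\}$ across co-truncative opens, we obtain a canonical identification
\begin{equation*}
\psId_{\Bun_G}^{\on{naive}} \circ (\pi_n \circ f_Z)_\bullet \;\simeq\; j_\bullet \circ (\pi_{Z,U})_\bullet.
\end{equation*}
Thus the question becomes one of controlling the cohomological amplitudes of $j_\bullet$ and of the de Rham pushforward $(\pi_{Z,U})_\bullet$ for a map from a finite-type scheme $Z$ to a QCA stack $U$.

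Third, we analyze the resulting composition geometrically. The map $\pi_{Z,U}$ factors through the projection $Z \to X^n$ coming from the structure of $\Gr_{G,X^n}$, and fiberwise over $X^n$ involves only modifications of $G$-bundles parameterized by the chosen points; for reductive $G$ the affine-Grassmannian fibers are ind-proper and their $\bullet$-cohomology is concentrated in non-positive degrees, which forces the right amplitude of $(\pi_{Z,U})_\bullet$ to be governed by the base $X^n$. Combined with the left t-exactness of the open $*$-extension $j_\bullet$ (with the appropriate bookkeeping of $j_\bullet$'s right amplitude along the boundary strata of $U$ in $\Bun_G$), this yields the required upper bound of $n = \dim X^n$. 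The main obstacle is the bookkeeping in this step: one must show that the "extra" cohomological degrees introduced by $j_\bullet$ on non-compact directions of $\Bun_G$ are exactly cancelled by the finiteness properties of the fibers of $\pi_n$ that follow from reductivity of $G$, so that the net bound is precisely $n$ and not larger. This is where the reductive hypothesis enters essentially — for a non-reductive $G$ with unipotent part, the fibers would contribute additional positive-degree cohomology and break the bound.
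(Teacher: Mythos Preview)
Your first two steps are correct and match the paper: reduce to closed finite-type subschemes $Z\hookrightarrow \Gr_{G,X^n}$, and observe that on such a $Z$ the composite $\psId_{\Bun_G}^{\on{naive}}\circ(\pi_n)_\bullet\circ(f_Z)_\bullet$ is nothing but the ordinary de Rham pushforward along $\pi_n\circ f_Z:Z\to\Bun_G$.

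The gap is in your third step. You appeal to the ind-properness of the fibers of $\Gr_{G,X^n}\to X^n$, but this cannot yield a uniform bound: the fibers of $Z\to X^n$ have dimension growing without bound as $Z$ exhausts $\Gr_{G,X^n}$, so proper pushforward along them has arbitrarily large right amplitude. The claim that their ``$\bullet$-cohomology is concentrated in non-positive degrees'' is not correct for proper varieties of positive dimension, and the promised ``bookkeeping'' of the right amplitude of $j_\bullet$ against these fiber contributions is never carried out (nor is it clear it could be, since $j_\bullet$ for a general open embedding is only left t-exact).

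The paper's argument uses a different factorization: one writes
\[
Z\;\xrightarrow{\;(s_n\times\on{id})\circ f_Z\;}\;X^n\times\Bun_G\;\longrightarrow\;\Bun_G.
\]
The second map is smooth with fibers $X^n$, so its $\bullet$-pushforward has right amplitude $\le n$. The key geometric input is that the \emph{first} map is schematic and \emph{affine}, hence its $\bullet$-pushforward is right t-exact; this follows from the fact that $s_n\times\pi_n:\Gr_{G,X^n}\to X^n\times\Bun_G$ is ind-affine. This is where the structure of $G$ enters --- not via ind-properness of $\Gr_G$, but via the affineness of the space of trivializations of a $G$-bundle on the open curve $X\smallsetminus\{x_1,\dots,x_n\}$. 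Once you have this, the bound of $n$ is immediate with no cancellation needed.
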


\begin{prop} \label{p:omega on Gr}
If the semi-simple compnent of $G$ is non-trivial, the object
$$\omega_{\Gr_{G,X^n}}\in \Dmod(\Gr_{G,X^n})$$
is infinitely connective, i.e., belongs to $\Dmod(\Gr_{G,X^n})^{\leq -n}$ for any $n$.
\end{prop}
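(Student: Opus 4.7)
The plan is to exhibit $\omega_{\Gr_{G,X^n}}$ as a filtered colimit of $\bullet$-pushforwards of dualizing complexes of closed subschemes whose dimensions tend to infinity, forcing the result to lie arbitrarily deep in the connective part of the t-structure.

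First, I would reduce to the single-point case: it suffices to show that $\omega_{\Gr_{G,x}}$ is infinitely connective in $\Dmod(\Gr_{G,x})$ for a fixed closed point $x\in X$. The Beilinson--Drinfeld Grassmannian $\Gr_{G,X^n}$ is fibered over the smooth scheme $X^n$, and its dualizing complex is related fiberwise to $\omega_{\Gr_{G,x}}$ up to a contribution from $\omega_{X^n}$, which has bounded cohomological amplitude. Combined with factorization over the disjoint locus $U_n\subset X^n$ and Zariski-local patching, this reduces the question to infinite connectivity of $\omega_{\Gr_{G,x}}$.

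Then write $\Gr_{G,x}=\underset{\lambda}{colim}\,\overline{\Gr^\lambda}$ as a filtered union of Schubert varieties $f_\lambda:\overline{\Gr^\lambda}\hookrightarrow \Gr_{G,x}$, indexed by dominant coweights $\lambda$ of $G$. Using the adjunction $(f_\lambda)_\bullet\dashv f_\lambda^!$ and the compatibility $f_\lambda^!(\omega_{\Gr_{G,x}})\simeq \omega_{\overline{\Gr^\lambda}}$, I would identify
$$\omega_{\Gr_{G,x}}\simeq \underset{\lambda}{colim}\,(f_\lambda)_\bullet(\omega_{\overline{\Gr^\lambda}}),$$
using the equivalence between the colimit and limit presentations of $\Dmod(\Gr_{G,x})$ under $\bullet$-pushforward and $!$-pullback respectively.

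The crucial geometric input is the dimension formula $\dim(\overline{\Gr^\lambda})=\langle 2\rho,\lambda\rangle$ together with the Cohen--Macaulayness of Schubert varieties, so that $\omega_{\overline{\Gr^\lambda}}$ is concentrated in a single cohomological degree whose absolute value grows with $\dim\overline{\Gr^\lambda}$. The hypothesis that $G$ has a non-trivial semisimple component ensures that $\langle 2\rho,\lambda\rangle$ is unbounded above as $\lambda$ ranges over dominant coweights, whereas for $G$ a torus the affine Grassmannian would be discrete and the statement fails. Since $(f_\lambda)_\bullet$ is t-exact for closed embeddings, each $(f_\lambda)_\bullet(\omega_{\overline{\Gr^\lambda}})$ lies arbitrarily deep in $\Dmod(\Gr_{G,x})^{\leq -m}$ as $\lambda$ grows; closure of this subcategory under filtered colimits gives the conclusion. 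The main obstacle will be tracking the precise cohomological shift of $\omega_Y$ in the t-structure convention of the paper (pinned down via $k_Y\simeq \omega_Y[-2\dim Y]$ for smooth $Y$), ensuring that the growth with $\dim\overline{\Gr^\lambda}$ is genuinely unbounded rather than merely bounded negative.
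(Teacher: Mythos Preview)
Your reduction to the single-point Grassmannian $\Gr_{G,x}$ and the colimit presentation $\omega_{\Gr_{G,x}}\simeq \underset{\lambda}{colim}\,(f_\lambda)_\bullet(\omega_{\overline{\Gr^\lambda}})$ are both fine. The gap is in your appeal to Cohen--Macaulayness. That property constrains the \emph{coherent} dualizing complex in $\Coh(Y)$, not the D-module $\omega_Y=p_Y^!(k)\in\Dmod(Y)$ used here. The latter is concentrated in perverse degree $-d$ iff $k_Y[d]$ is perverse, i.e.\ iff the topological local cohomology $H^j_y(Y;k)$ vanishes for $j<d$ at every point; this is a topological condition, not an algebraic one. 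It fails for CM varieties in general: the affine cone $Y$ over the Segre embedding $\BP^2\times\BP^2\hookrightarrow\BP^8$ is Cohen--Macaulay of dimension $5$, yet a Gysin computation for the link gives $H^3_0(Y;k)\neq 0$, so $\omega_Y\notin\Dmod(Y)^{\leq -4}$. Since affine Schubert varieties are typically not local complete intersections, you would need a separate argument that the perverse connectivity of $\omega_{\overline{\Gr^\lambda}}$ tends to $-\infty$ with $\lambda$; this may be true, but it is not a consequence of CM-ness and is not obviously easier than the proposition itself.

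The paper avoids singular closed subschemes altogether. After reducing to $\Gr_G=\Gr_{G,x}$ (via the diagonal stratification of $X^n$ and factorization), it proves a lemma that the t-structure on $\Dmod$ of an ind-scheme is Zariski-local, and then covers $\Gr_G$ by loop-group translates of the open Bruhat cell $\Gr_G^0$. The key geometric input is that, for $G$ with nontrivial semisimple part, $\Gr_G^0\simeq\BA^\infty=\underset{m}{colim}\,\BA^m$. Since each $\BA^m$ is \emph{smooth}, $\omega_{\BA^m}$ is genuinely concentrated in degree $-m$, and cofinality of $\{m\geq n\}$ gives $\omega_{\BA^\infty}\in\Dmod(\BA^\infty)^{\leq -n}$ for every $n$ immediately. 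This sidesteps any question about dualizing complexes on singular varieties.
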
 

\sssec{Proof of \propref{p:Gr to BunG affine}}

Let us write $\Gr_{G,X^n}$ as 
$$\underset{i\in I}{colim}\, Z_i,$$
where $Z_i$'s are closed subschemes of $\Gr_{G,X^n}$. 

\medskip

By the definition of the t-structure on $\Dmod(\Gr_{G,X^n})$, it is enough to show that each of functors
$$\psId_{\Bun_G}^{\on{naive}}\circ (\pi_n)_\bullet\circ (f_i)_\bullet:\Dmod(Z_i)\to \Dmod(\Bun_G)$$
has cohomological amplitude bounded on the right by $n$. 

\medskip

However, it follows from the definitions, that the above composed functor is the usual direct image functor
for the map 
$$(\pi_n\circ f_i):Z_i\to \Bun_G.$$
We factor the above map as a composition
$$Z_i\overset{(s_n\times f_i)\times (\pi_n\circ f_i)}\longrightarrow
X^n\times \Bun_G\to \Bun_G,$$
where $s_n$ is the natural projection $\Gr_{G,X^n}\to X^n$. 

\medskip

The required assertion follows from the fact that the map
$$(s_n\times f_i)\times (\pi_n\circ f_i):Z_i\to X^n\times \Bun_G$$
is schematic and affine. The latter follows from the fact that the map 
$$s_n\times \pi:\Gr_{G,X^n}\to X^n\times \Bun_G$$
is ind-affine.

\qed

\sssec{Proof of \propref{p:omega on Gr}}

Consider the diagonal stratification of $X^n$. It is easy to see that it is sufficient to show
that the !-restriction of $\omega_{\Gr_{G,X^n}}$ to the preimage of each stratum is
infinitely connective. 

\medskip

Using the factorization property of $\Gr_{G,X^n}$ over $X^n$, the assertion is
further reduced to the case when instead of $\Gr_{G,X^n}$ we consider $\Gr_{G,x}$,
i.e., its local version at some point $x\in X$.

\medskip

In the latter case we can assume that $X=\BP^1$ and $x=\infty\in \BP^1$. Denote the
corresponding ind-scheme simply by $\Gr_G$. We need to show that $\omega_{\Gr_G}$
is infinitely connective. 

\medskip

We have the following lemma, proved below:

\begin{lem} \label{l:t on ind}
For an ind-scheme $\CX$, the t-structure on $\Dmod(\CX)$ is local in the Zariski
topology, i.e., if $\CX=\underset{i}\cup\, U_i$, where $U_i\subset \CX$ are Zariski open
subschemes, then an object $\CF\in \Dmod(\CX)$ is connective/coconnective if and
only if its restrictions to $U_i$ have this property.
\end{lem}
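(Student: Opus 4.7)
The plan is to establish two auxiliary facts for any open embedding $j_i:U_i\hookrightarrow\CX$ appearing in the cover: (A) the functor $j_i^!$ is t-exact; (B) the collection $\{j_i^!\}_i$ is jointly conservative on $\Dmod(\CX)$. Granted these, locality of the t-structure follows formally: $\CF\in\Dmod(\CX)^{\leq 0}$ iff $\tau^{>0}(\CF)=0$, iff by (B) one has $j_i^!(\tau^{>0}\CF)=0$ for every $i$, iff by (A) one has $\tau^{>0}(j_i^!\CF)=0$ for every $i$, iff $j_i^!\CF\in\Dmod(U_i)^{\leq 0}$ for every $i$; the case of coconnectivity is symmetric.

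To set up the proof, I would write $\CX=\underset{\alpha}{colim}\, X_\alpha$ with $f_\alpha:X_\alpha\hookrightarrow\CX$ closed embeddings of schemes, and set $X_\alpha^i:=X_\alpha\underset{\CX}\times U_i$; this yields a Zariski cover $X_\alpha=\underset{i}\cup\, X_\alpha^i$ at each level and a presentation $U_i=\underset{\alpha}{colim}\, X_\alpha^i$. Writing $j_i^\alpha:X_\alpha^i\hookrightarrow X_\alpha$ and $f_\alpha^i:X_\alpha^i\hookrightarrow U_i$ for the induced embeddings, compositionality of $!$-pullback gives $(f_\alpha^i)^!\circ j_i^!\simeq(j_i^\alpha)^!\circ f_\alpha^!$, and base change (closed $\bullet$-pushforward versus open $!$-pullback) gives $j_i^!\circ(f_\alpha)_\bullet\simeq(f_\alpha^i)_\bullet\circ(j_i^\alpha)^!$.

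For (A), right t-exactness follows from the given characterization of $\Dmod(\CX)^{\leq 0}$ as generated under colimits by $(f_\alpha)_\bullet(\Dmod(X_\alpha)^{\leq 0})$: since $j_i^!$ is continuous, it suffices to test on such generators, and the base-change identity together with t-exactness of the open $!$-pullback and of the closed $\bullet$-pushforward on schemes yields the result. For left t-exactness, I would use the dual characterization $\Dmod(\CX)^{\geq 0}=\{\CG:f_\alpha^!\CG\in\Dmod(X_\alpha)^{\geq 0}\text{ for all }\alpha\}$, obtained by right-orthogonality from the generating description of $\Dmod(\CX)^{\leq 0}$ together with the $((f_\alpha)_\bullet,f_\alpha^!)$-adjunction; compositionality plus t-exactness of $(j_i^\alpha)^!$ on schemes, combined with the same characterization applied to the ind-scheme $U_i$, identifies the conclusion.

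For (B), if $j_i^!\CG=0$ for all $i$ then $(j_i^\alpha)^!(f_\alpha^!\CG)\simeq(f_\alpha^i)^!(j_i^!\CG)=0$ for all $\alpha,i$; Zariski conservativity on the scheme $X_\alpha$ then forces $f_\alpha^!\CG=0$, and the limit presentation $\Dmod(\CX)=\underset{\alpha}{lim}\, \Dmod(X_\alpha)$ (under $!$-pullback along the closed embeddings, equivalent to the colimit presentation under $\bullet$-pushforward) forces $\CG=0$ via joint conservativity of its structural functors $\{f_\alpha^!\}$. The main technical point is the asymmetry between the two sides of the t-structure on $\Dmod(\CX)$: the strata $!$-pullbacks detect coconnectivity but not connectivity, so the two halves of (A) must be argued via different descriptions of the t-structure---generators for right t-exactness, right-orthogonality for left t-exactness.
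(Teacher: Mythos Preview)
Your argument is correct and complete; it takes a genuinely different route from the paper's proof.

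The paper begins by asserting (without details) that restriction to an open is t-exact, and then handles the non-trivial direction---local (co)connectivity implies global---via Zariski descent: one writes $\CMaps_{\Dmod(\CX)}(\CF',\CF)$ as the totalization of the \v{C}ech cosimplicial object $\CMaps_{\Dmod(U^\bullet)}(\CF'|_{U^\bullet},\CF|_{U^\bullet})$, observes that each term lies in $\Vect^{>0}$, and concludes using left t-exactness of totalization. You instead work entirely through the closed-strata presentation of the ind-scheme: you prove t-exactness of $j_i^!$ in detail (right t-exactness on colimit generators via base change, left t-exactness via the dual description $\Dmod(\CX)^{\geq 0}=\{\CG:f_\alpha^!\CG\in\Dmod(X_\alpha)^{\geq 0}\}$), then establish joint conservativity of the $j_i^!$ by reducing to Zariski conservativity on each scheme $X_\alpha$, and finally invoke the formal fact that a jointly conservative family of t-exact functors detects the t-structure. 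Your approach is more self-contained in that it supplies the argument for t-exactness that the paper omits, and it avoids invoking \v{C}ech descent for $\Dmod$ on ind-schemes; the paper's approach, once t-exactness is granted, is somewhat shorter and treats both halves of the t-structure by the same mechanism. One small point you leave implicit is that $j_i^!$ is continuous; this follows because in the limit presentation $\Dmod(\CX)=\underset{\alpha}{lim}\,\Dmod(X_\alpha)$ all transition functors admit left adjoints, so the evaluation functors $f_\alpha^!$ are continuous, and then continuity of $j_i^!$ follows from the compatibility $(f_\alpha^i)^!\circ j_i^!\simeq (j_i^\alpha)^!\circ f_\alpha^!$ together with joint conservativity of the $(f_\alpha^i)^!$.
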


Let
$$\Gr^0_G\subset \Gr_G$$
be the open Bruhat cell, i.e., the preimage of $\on{pt}/G\subset \Bun_G$ under the map $\pi$. The entire
ind-scheme $\Gr_G$ can be covered by translates of $\Gr^0_G$ by means of the loop group.
Hence, by \lemref{l:t on ind}, it is sufficient to show that $\omega_{\Gr^0_G}$ is infinitely connective.

\medskip

However, it is known that for a reductive group with a nontrivial semi-simple part, the ind-scheme 
$\Gr^0_G$ is isomorphic to 
$$\BA^\infty\simeq \underset{k\geq 0}{colim}\, \BA^k.$$

Now, for any $n$, we can write
$$\omega_{\BA^\infty}\simeq  \underset{m\geq n}{colim}\, (i_m)_\bullet(\omega_{\BA^m}),$$
where $i_m:\BA^m\to \BA^\infty$. The functors $(i_m)_\bullet$ are t-exact, and 
$$\omega_{\BA^m}\in \Dmod(\BA^m)^\heartsuit[m]\subset \Dmod(\BA^m)^{\leq -n},$$
since $m\geq n$.  Hence, 
$$\omega_{\BA^\infty}\in \Dmod(\BA^\infty)^{\leq -n},$$
as required. 

\qed 

\sssec{Proof of \lemref{l:t on ind}}

First, we note that the functor of restriction
$$\Dmod(\CX)\to \Dmod(U)$$
for an open embedding $U\hookrightarrow \CX$ is t-exact. 

\medskip

Let us show that the property of being coconnective is local in the Zariski topology. 
I.e.,
let $\CF\in \Dmod(\CX)$ be such that $\CF|_{U_i}\in \Dmod(U_i)^{> 0}$, and we need to
show that $\CF\in \Dmod(\CX)^{>0}$. 

\medskip

I.e., we need to show that for $\CF'\in \Dmod(\CX)^{\leq 0}$, we have
$$\CMaps_{\Dmod(\CX)}(\CF',\CF)\in \Vect^{>0}.$$

Let $U^\bullet$ be the \v{C}ech nerve of the cover $\underset{i}\cup\, U_i\to \CX$. 

The category $\Dmod(\CX)$ satisfies Zariski descent. Hence, $\CMaps_{\Dmod(\CX)}(\CF',\CF)$
is the totalization of a co-simplicial object of $\Vect$ whose $n$-th term is 
$$\CMaps_{\Dmod(U^n)}(\CF'|_{U^n},\CF|_{U^n}).$$

However, $\CF'|_{U^n}\in \Dmod(U^n)^{\leq 0}$ and $\CF|_{U^n}\in \Dmod(U^n)^{> 0}$, and the assertion
follows, as the functor of totalization is left t-exact.

\medskip

The proof in the connective case is similar.

\qed

\end{document}